\newcommand{\spar}
[1]{\medskip\refstepcounter{subsection}\noindent\arabic{section}.\arabic{subsection}.\label{#1}}
\tikzset{
	plain trans/.style = {circle, minimum size = 14, inner sep=2, draw, font=\scriptsize}}
\tikzset{
	plain inv trans/.style = {circle, minimum size = 18, inner sep=1, draw, font=\tiny}}
\tikzset{
	plain dual trans/.style = {circle, minimum size = 14, inner sep=2, draw, font=\scriptsize}}
\tikzset{
	plain inv dual trans/.style = {circle, minimum size = 18, inner sep=1, draw, font=\tiny}}
\tikzset{trans/.style={circle, minimum size = 14, inner sep=1, draw, font=\scriptsize,
	path picture={%
	\filldraw[anchor=center,black]
	($(path picture bounding box.south east)!0.15!(path picture bounding box.north east)$) to [out=-135, in=0]	(path picture bounding box.south) to [out=180, in=-45]
	($(path picture bounding box.south west)!0.15!(path picture bounding box.north west)$) to cycle;
      }}}
\tikzset{inv trans/.style={circle, minimum size = 14, inner sep=1, draw, font=\tiny,
	path picture={%
	\filldraw[anchor=center,black]
	($(path picture bounding box.south east)!0.15!(path picture bounding box.north east)$) to [out=-135, in=0]	(path picture bounding box.south) to [out=180, in=-45]
	($(path picture bounding box.south west)!0.15!(path picture bounding box.north west)$) to cycle;
      }}}      
\tikzset{dual trans/.style={circle, minimum size = 14, inner sep=1, draw, font=\scriptsize,
	path picture={%
	\filldraw[anchor=center,black]
	($(path picture bounding box.north east)!0.15!(path picture bounding box.south east)$) to [out=135, in=0]	(path picture bounding box.north) to [out=180, in=45]
	($(path picture bounding box.north west)!0.15!(path picture bounding box.south west)$) to cycle;
      }}}      
\tikzset{inv dual trans/.style={circle, minimum size = 14, inner sep=1, draw, font=\tiny,
	path picture={%
	\filldraw[anchor=center,black]
	($(path picture bounding box.north east)!0.15!(path picture bounding box.south east)$) to [out=135, in=0]	(path picture bounding box.north) to [out=180, in=45]
	($(path picture bounding box.north west)!0.15!(path picture bounding box.south west)$) to cycle;
      }}}      
\tikzset{
	func/.style = {font=\scriptsize},
	in func/.style = {font=\scriptsize, above, text opacity=0},
	in func visible/.style = {font=\scriptsize, above},
	out func/.style = {font=\scriptsize, below, text opacity=0},	
	out func visible/.style = {font=\scriptsize, below},	
	mid func/.style = {font=\scriptsize, inner sep=1, fill=white} 
}
\tikzset{
	cat/.style = {font=\scriptsize}}
\tikzset{
	frame/.style = {double,rounded corners}}
\tikzset{
	paste/.style = {dashed,opacity=0.5}}
\tikzset{
    dir/.style={},
    dir end/.style={},    
    dual/.style={},
    eq/.style={inner sep=0pt},
    plain/.style={} 
}
\newcommand{\Q}{\mathbb{Q}}
\newcommand{\m}{\mathfrak{m}}
\renewcommand{\a}{\mathfrak{a}}
\newcommand{\Ddual}{\mathfrak{D}}
\newcommand{\Hom}{\operatorname{Hom}}
\newcommand{\Map}{\operatorname{Map}}
\newcommand{\Fun}{\operatorname{Fun}}
\newcommand{\End}{\operatorname{End}}
\newcommand{\iso}{\overset{\sim}{\longrightarrow}}
\renewcommand{\=}{\cong}
\newcommand{\Id}{\operatorname{Id}}
\newcommand{\invlim}{\operatorname{\underleftarrow{lim}}}
\newcommand{\colim}{\operatorname{\underrightarrow{colim}}}
\newcommand{\Set}{\operatorname{Set}}
\newcommand{\xto}{\xrightarrow}
\newcommand{\set}[1]{\left\{{#1}\right\}}
\newcommand{\al}{\alpha}
\newcommand{\M}{\mathcal{M}}
\newcommand{\A}{\mathcal{A}}
\newcommand{\E}{\mathcal{E}}
\newcommand{\C}{\mathcal{C}}
\newcommand{\D}{\mathcal{D}}
\newcommand{\N}{\mathcal{N}}
\newcommand{\Xx}{\mathcal{X}}
\newcommand{\Yy}{\mathcal{Y}}
\newcommand{\LM}{\mathcal{LM}}
\newcommand{\Alg}{\operatorname{Alg}}
\newcommand{\CAlg}{\operatorname{CAlg}}
\newcommand{\LMod}{\operatorname{LMod}}
\newcommand{\Ass}{\operatorname{Assoc^\otimes}}
\newcommand{\Bmod}{\operatorname{BMod}}
\newcommand{\BMod}[2]{\tensor*[_{#1}]{\Bmod}{_{#2}} }
\newcommand{\BLMod}{\operatorname{BLMod}}
\renewcommand{\Bar}{\operatorname{Bar}}
\newcommand{\cosk}{\operatorname{cosk}}
\newcommand{\PSh}{\operatorname{PSh}}
\newcommand{\Dec}{\operatorname{Dec}}
\mathchardef\mhyphen="2D % Define a "math hyphen"
\newcommand{\Ss}{\mathcal{S}}
\newcommand{\adj}{\rightleftarrows}
\newcommand{\op}{{^\mathrm{op}} }
\newcommand{\Psh}{\operatorname{Psh}}
\newcommand{\Tens}{\operatorname{Tens}}
\newcommand{\one}{\mathbbm{1}}
\newcommand{\Mon}{{\mathrm{Mon}}}
\newcommand{\Cat}{\mathcal{C}\mathrm{at}}
\newcommand{\ol}[1]{\overline{#1}}
\newcommand{\inv}{^{-1}}
\newcommand{\Fin}{\mathbf{Fin}}
\newtheorem{thm}{Theorem}[section]
\numberwithin{thm}{subsection}
\newtheorem{prop}[thm]{Proposition}
\newtheorem{cor}[thm]{Corollary}
\newtheorem{lem}[thm]{Lemma}
\newtheorem{mydef}[thm]{Definition}
\theoremstyle{definition}
\newtheorem{rem}[thm]{Remark}
\newtheorem{ex}[thm]{Example}
\newtheorem{notation}[thm]{Notation}
\newtheorem{construction}[thm]{Construction}
\newtheorem{terminology}[thm]{Terminology}
\newcommand{\cof}{\hookrightarrow}
\newcommand{\fib}{\twoheadrightarrow}
\newcommand{\pbcorner}{\arrow[dr, phantom, "\ulcorner" description, very near start]}
\newcommand{\POp}{\mathcal{P}\mathrm{Op}_\infty}
\newcommand{\Oo}{\mathcal{O}}
\newcommand\noloc{
  \nobreak
  \mspace{6mu plus 1mu}
  {:}
  \nonscript\mkern-\thinmuskip
  \mathpunct{}
  \mspace{2mu}
}
\newcommand{\opnm}{\operatorname}
\newcommand{\cl}{\mathcal}
\newcommand{\Cc}{\cl{C}}
\newcommand{\TwArr}{\opnm{TwArr}}
\newcommand{\la}{\lambda}
\newcommand{\ep}{\epsilon}
\newcommand{\mar}{\marginpar}
\newcommand{\ka}{\kappa}
\newcommand{\from}{\leftarrow}
\newcommand{\coAlg}{\opnm{coAlg}}
\newcommand{\DK}{\Ddual_\mathrm{Ksz}}
\newcommand{\DKpr}{\Ddual'_\mathrm{Ksz}}
\newcommand{\foot}{\footnote}
\newcommand{\Assoc}{\mathrm{Assoc}^\otimes}
\newcommand{\new}{\newcommand}
\new{\BM}{\mathcal{BM}}
\new{\angled}[1]{\langle #1 \rangle}
\new{\mM}{\mathfrak{m}}
\new{\nato}{{\vec \nabla}}
\new{\be}{\beta}
\new{\Aa}{\mathcal{A}}
\new{\Dd}{\mathcal{D}}
\new{\fF}{\mathfrak{f}}
\new{\aA}{\mathfrak{a}}
\new{\MAss}{\mathrm{MAss}}
\new{\Mm}{\M}
\new{\Ee}{\mathcal{E}}
\new{\Ff}{\mathcal{F}}
\new{\Gg}{\mathcal{G}}
\new{\lan}{\langle}
\new{\ran}{\rangle}
\new{\LAlg}{\opnm{LAlg}}
\new{\comment}[1]{***\footnote{Temp: #1}\mar{see foot}}
\new{\oo}{$\infty$}
\new{\Mul}{\opnm{Mul}}
\new{\qq}{\langle}
\new{\pp}{\rangle}
\new{\Si}{\Sigma}
\new{\Finst}{\Fin_*}
\new{\ArDopconv}{\opnm{Ar}(\Delta)_{\rm{convex}}^\op}
\new{\Cut}{\opnm{Cut}}
\new{\Cleave}{\opnm{Cleave}}
\new{\Prebar}{\opnm{Prebar}}
\title[Relative tensor products]{Relative tensor products and Koszul duality in monoidal $\infty$-categories}
\author{Ishai Dan-Cohen, Asaf Horev}
\thanks{\textbf{Acknowledgement of support:} This research was supported by ISF grants 726/17 and 621/21 to I. Dan-Cohen and by ERC grant ERC-2017-STG 75908 to Dan Petersen. \textbf{Political disclaimer:} Both authors ask not to be considered responsible for the actions of any government that does not fully embrace the principles of democracy. }
\date{\today}
\begin{document}
\maketitle

\begin{abstract}
This semi-expository work covers central aspects of the theory of relative tensor products as developed in \cite{HA}, as well as their application to Koszul duality for algebras in monoidal $\infty$-categories. Part of our goal is to expand on the rather condensed account of loc. cit. Along the way, we generalize various aspects of the theory. For instance, given a monoidal $\infty$-category $\Cc$, an $\infty$-category $\Mm$ which is left-tensored over $\Cc$, and an algebra $A$ in $\Cc$, we construct an action of $A$-$A$-bimodules $N$ in $\Cc$ on left $A$-modules $M \in \Mm$ by an \emph{external relative tensor product} $N \otimes_A M$. (Up until now, even the special (``internal'') case $\Cc = \Mm$ appears to have escaped the literature.) 

As an application, we generalize the Koszul duality of loc. cit. to include modules. Our straightforward approach requires that we at this point assume certain compatibilities between tensor products and limits; these assumptions have recently been shown to be unnecessary in \cite[\S3]{BrantnerCamposNuiten}.
\end{abstract}

%\mar{no running title}
%\mar{Changed slash paragarph into subsection*'s. Debate.}

\setcounter{tocdepth}{2}
\tableofcontents

%%%%%%%%%%%%
%%%%%%%%%%%%
%%%%%%%%%%%%
%%%%%%%%%%%%
\section{Introduction}%%%
\label{SectionIntro}%%%
%%%%%%%%%%%%
%%%%%%%%%%%%
%%%%%%%%%%%%
%%%%%%%%%%%%
%%%%%%%%%%%%

%1
%\input{Intro.tex}

% \input{KoszulIntro.tex}
Let $\C^\otimes$ be a monoidal $\infty$-category and let $A$ be an algebra object in $\C^\otimes$. 
Koszul duality for algebras in monoidal $\infty$-categories aims to set up an adjunction 
\[
\tag{A}
\Alg^{\opnm{aug}}(\C) \overset{\Ddual_\mathrm{Ksz}} 
{\underset{\Ddual'_\mathrm{Ksz}}  \rightleftarrows}
\opnm{coAlg}^{\opnm{aug}}(\C)
\]
between the category of augmented algebras in $\C$ and augmented coalgebras in $\C$ which generalizes the classical bar-cobar adjunction used, for instance, in delooping and recognition theorems in classical topology~\cite{MayLoops}, and developed in the setting of dga's, for instance, by Loday-Vallette in~\cite{LodayVallette}.
\footnote{More recent related works which should be compared to the present one include ~\cite{FrancisGaitsgory, calaque2020moduli, pridham2018tannaka, PositsTwoKinds, BrantnerCamposNuiten}}
\footnote{Let $A \in \Alg^{\opnm{aug}}(\C)$. We warn the reader that some authors, who restrict attention to situations where a coalgebra can be turned into an algebra simply by taking linear duals, use the term ``Koszul'' to refer to the linear dual of our $\Ddual_\mathrm{Ksz}(A)$.}
In particular, if $A$ is an augmented algebra object in $\C$, then the object of $\C$ underlying $\DK(A)$ should be given by the relative tensor product $\one \otimes_A \one$.

%The construction, which we learned from Section 5.2.2 of Lurie \cite{HA}, is based heavily on the theory of relative tensor products, and many of the pages that follow are devoted to revisiting, and somewhat generalizing this theory. 

The adjunction above can be upgraded to include modules. This generalizes the classical constructions which, indeed, include path-spaces (and not only loop-spaces) in their scope. It also relates to the Koszul duality for modules of Positselski \cite{PositsTwoKinds}, but in the general abstract setting of monoidal $\infty$-categories. For example, for a reasonably nice monoidal \oo-category $\C^\otimes$, we obtain adjoint functors
\footnote{In fact, we may work more generally with an $\LM$-monoidal category; see  \cref{BarCobarAdjunction-XXXversion}  for the precise statement.}
\[
\tag{M}
\LMod^{\opnm{aug}}(\C)
\overset{\DK} 
{\underset{\DKpr}  \rightleftarrows}
\opnm{coLMod}^{\opnm{aug}}(\C).
\]
Here $\LMod^{\opnm{aug}}(\C)$ is the $\infty$-category of pairs $(A,X)$ consisting of an augmented algebra $A \to \one$ and an $A$-module $X$,
and
$\opnm{coLMod}^{\opnm{aug}}(\C) = \LMod^{\opnm{aug}}(\C^\op)^\op$; 
see \cref{cons:C_1} for the precise definition.
Moreover, 
\[
\DK(A,X)
\]
witnesses a left coaction of the augmented coalgebra $\DK(A) = \one \otimes_A \one$ on the object $\one \otimes_A X \in \C$.

We learned how to construct the adjunction (A) from section 5.2.2 of Lurie \cite{HA}. Lurie's construction (and hence ours), revolves around an augmented algebra 
\[
M = (A \to \one)
\]
in $\C$ (in which $\one$ is temporarily assumed to be terminal
\footnote{More precisely, we work with an $\LM$-monoidal category $\Cc^\otimes$ in which $\one$ is (temporarily) assumed to be terminal in $\Cc_\aA$.}),
regarded as an algebra object of the twisted arrow category $\M = \TwArr(\C)$. Heavy use is made of (at least certain aspects of) a monoidal structure on the category $\BMod{M}{M}(\M)$ of $M$-$M$-bimodules in the twisted arrow category. In our approach, for the upgrade to modules, we also need a left action of this category on the category $\LMod_M(\M)$ of left $M$-modules in the twisted arrow category. Many of the pages below are devoted to the construction of this expected action. 

As we learned from the work of Brantner--Campos--Nuiten \cite[\S3]{BrantnerCamposNuiten}, it is in fact possible to dodge the construction of this left-action, and to derive the Koszul duality for modules (M) from the Koszul duality for algebras (A). Moreover, in their indirect approach, it's possible to avoid certain technical assumptions concerning the compatibility of tensor products with limits, which we have been unable to avoid in our direct approach. Nevertheless, we hope that our foundational work, and our straightforward approach to the case of modules, helps to paint a coherent and conceptually satisfying picture, even if one that becomes unavailable (or undesirable) beyond a certain level of generality.\footnote{An earlier draft of this paper contained an erratum pertaining to an inaccuracy in Lemma 5.2.2.29 of \cite{HA}. It's precisely here that our need to impose extra assumptions first arises; this need becomes stronger, in fact, when we switch to working with the $\LM$-operad. Since the appearance of \cite{BrantnerCamposNuiten}, however, we've decided to deemphasize this point.}

Let $\Cc$ be a monoidal \oo-category and let $\Ee$ be an \oo-category left-tensored over $\Cc$. Let $A$ be an algebra object in $\Cc$, let $E$ be a left $A$-module in $\Ee$, let $B$ be a coalgebra object in $\Cc$ and let $F$ be a left $B$-comodule in $\Ee$. A \emph{twisted homomorphism from $(A,E)$ to $(B,F)$} consists of a morphism $A \to B$ in $\Cc$, and a morphism $E \to F$ in $\Ee$ which, together, intertwine the algebra and module structure of $(A,E)$ with the coalgebra and comodule structure of $(B,F)$ up to coherent higher homotopy (see \cref{260727A} for a precise definition). The Koszul dual $\DK(A,E)$ may be characterized as the target of the universal twisted homomorphism from the pair $(A,E)$ to a pair consisting of a coalgebra in $\Cc$ and a comodule in $\Ee$ as above, provided such an object exists. Our main application to Koszul duality may be stated informally as follows. We assume for simplicity that the unit object of $\Cc$ is both initial and terminal. We must also assume that $\Cc$, $\Ee$ admit certain limits and colimits, and that tensor products preserve certain limits and colimits.
 
\begin{thm}
Every pair $(A,E)$ as above admits a Koszul dual $\DK(A,E)$. The assignment $(A,E) \mapsto \DK(A,E)$ may be upgraded to a functor $\DK$. Moreover, the functor $\DK$ admits a right adjoint $\DK'$ which may be characterized in similar (dual) terms. 
\end{thm}

\noindent
See \cref{BarCobarAdjunction-XXXversion} for the precise (and somewhat more general) statement. 

Our paper includes two overviews. 
One, which begins presently, explains the structure of the paper in broad outline: what are the main results, and how they fit together. 
The second, in \cref{SectionOverview}, provides an informal overview of core aspects of Lurie's construction and of ours; this section functions as a second introduction and is not logically needed for what follows. In section \ref{infop} we review the foundations of the theory of \oo-operads. 

In \cref{sec:Morita_double_cat} we refine Lurie's treatment of the relative tensor product and its associativity structure. By a \emph{multimodule} we mean a sequence of algebras $A_0, \dots, A_n$ and for each $ 0 \le i < n$ an $A_i$-$A_{i+1}$-bimodule $M_{i,i+1}$ (\cref{29.1}). Recall that when $\C^\otimes$ is a monoidal category in which $\otimes$ respects realizations, there is an associated double $\infty$-category 
\[
\Bmod(\C)^\circledast \to \Delta^\op
\]
whose 
% higher vertical morphisms are, roughly speaking,
fiber over $[n]$ is the $\infty$-category of
multimodules as above. We check that this construction can be refined in the following way: dropping the blanket assumption on compatibility with realizations, one may restrict attention to the class $\A$ of algebras in $\C$ which are \emph{bar compatible}, in the sense that for any $A \in \A$ and $A$-bimodules $X,Y$, the geometric realization of the bar construction $\Bar_A(X,Y)_\bullet$ exists and is compatible with $\otimes$.\footnote{As we explain below, the need for this arrises from the fact that such blanket assumptions are not inherited by twisted arrow categories.} 
In this way we obtain a double $\infty$-category 
\[
\Bmod_\A(\C)^\circledast \to \Delta^\op
\]
whose higher vertical morphisms are \emph{$\A$-multimodules}, by which we mean that the algebras $A_i$ belong to $\A$. The main result of this section, which amounts to a mild refinement of~\cite[prop. 4.4.3.9]{HA}, is our \cref{link1}.

Given $A \in \A$, we use $\Bmod_\A(\C)^\circledast$ in \cref{sec:BLMod} to endow $\BMod{A}{A}(\C)$ with a monoidal structure given by relative tensor product over $A$. We also construct  an $\LM$-monoidal category
\[
\BLMod_A(\C)^\otimes \to \LM^\otimes
\]
which witnesses a left action of the monoidal $\infty$-category $\BMod{A}{A}(\C)^\otimes$ of $A$-$A$-bimodules on the category $\LMod_A(\C)$ of left $A$-modules, the action being given on the level of individual objects by the relative tensor product. \footnote{Actually, our results in this section are somewhat more general: given an arbitrary double $\infty$-category and objects $A$ and $B$, we construct an action of the monoidal $\infty$-category of horizontal endomorphisms of $A$ on the $\infty$-category of horizontal morphisms from $B$ to $A$.} 

\Cref{sec:External} concerns a variant of the results of sections \ref{sec:Morita_double_cat} and \ref{sec:BLMod} in which $\Cc^\otimes$, instead of being monoidal, is $\LM$-monoidal, and $A \in \Alg(\Cc_\a)$. 
Under suitable assumptions on $A$, we construct a left-action of the monoidal $\infty$-category $\BMod{A}{A}(\Cc_\a)$ of $A$-$A$-bimodules in $\Cc_\a$ on the $\infty$-category $\LMod_A(\Cc)$ of left $A$-modules in $\Cc_\mM$. 
Strictly speaking, this construction generalizes the analogous construction from the previous two sections, and it is this generalized version that is needed in the sequel.
Indeed, even if one is interested in applying Koszul duality for modules to a monoidal category
\[
\Cc^\otimes \to \Ass,
\]
if the unit object $\one \in \Cc$ is not terminal, the methods of sections \ref{SectionKoszLM} and \ref{sec:Twisted_arrow_bar_construction} necessitate that we work with an associated $\LM$-monoidal category which witnesses $\Cc$ as left-tensored over the monoidal slice category $\Cc^\otimes_{/\one}$.
\footnote{Thus, in some sense sections \ref{sec:Morita_double_cat} and \ref{sec:BLMod} treat a special case before moving on to the generality we need in section \ref{sec:External}. 
This may not be logically efficient, but we feel that what we lose in logical efficiency we gain in clarity of exposition. Additionally, this gives us an opportunity to discuss the closely related construction of the action of horizontal endomorphisms on horizontal morphisms in the general setting of a double $\infty$-category.} 

In \cref{sec:LM_monoidal_pairings} we review $\LM$-monoidal categories and $\LM$-monoidal pairings in preparation for \cref{SectionKoszLM}.
\Cref{LM52227}, which takes place in the setting of general $\LM$-monoidal pairings and generalizes Proposition 5.2.2.27 of~\cite{HA}, may reasonably be regarded as the main theorem on Koszul duality for modules, but in an abstract, axiomatic form. As hinted above, \cref{sec:Overview} is devoted to an overview of its proof, preceded by an overview of the proof of the analogous Theorem 5.2.2.27 of~\cite{HA}. 

In \cref{sec:Twisted_arrow_bar_construction} we specialize from the generality of $\LM$-monoidal pairings to the case of the twisted arrow category. 
We show that if $\C^\otimes$ is an $\LM$-monoidal $\infty$-category and $M$ is an algebra in the twisted arrow category of $\Cc_\aA$ which lies over algebras $A$ in $\C_\aA$ and $B$ in $\C^\op_\aA$ with $B$ trivial, then $\TwArr(\C)$ \emph{admits realizations of $M$-bar constructions $\otimes$-compatibly} (see \cref{def:admits_A_bar}).

We arrive at our $\LM$-monoidal version of~\cite[thm. 5.2.2.17]{HA} in our \cref{BarCobarAdjunction}. 
This completes the construction of the $\LM$-monoidal bar-cobar adjunction under the assumption that the unit object of $\C$ is both initial and terminal. 
It remains to dispense with this assumption and, more significantly, to compare the result with the relative tensor product $\one \otimes_A X$ mentioned above. 
The final result is summarized in \cref{BarCobarAdjunction-XXXversion}. 

Let us indicate how \cref{BarCobarAdjunction-XXXversion} may be applied, for instance, to the study of commutative algebras in motives. This is meant only to point in the direction of future applications --- more significant applications will require more work. Let $Z$ be a Noetherian scheme and let $\D^\otimes = \mathrm{DA}^{\mathrm{\acute{e}t}}(Z,\Q)^\otimes$ denote the monoidal derived $\infty$-category of \'etale motives over $Z$ with $\Q$ coefficients; see, for instance, Ayoub \cite[\S2.1.1]{ayoub2014algebre} for a model-categorical construction. The $\infty$-category $\D^\otimes$ is presentably symmetric monoidal; hence $ \C^\otimes = \CAlg \D$ with its coCartesian symmetric monoidal structure admits geometric realizations of bar constructions $\otimes$-compatibly, and \cref{BarCobarAdjunction-XXXversion} applies to produce a functor 
\[
\Ddual_\mathrm{Ksz} =    
\Ddual_{\LMod(\la)}^\op \colon 
\LMod^{aug}(\C) 
\to \opnm{coLMod}^{aug}(\C).
\]
Let $A \in \C$ be a commutative algebra in $\D$ and let
\[
x,y: A \rightrightarrows \one
\]
be two augmentations. In view of the equivalence
\[
\Alg \C \simeq \C,
\]
$(A,x)$ may be lifted to an object of $\Alg^{aug}(\C)$. We may then view $\one$ as a left module via $y$ --- we write $_y\one$ for this structure. Together, we obtain an object
\[
(A \xto{x} \one, {_y\one})
\]
of $\LMod^{aug}(\C)$. Applying $\Ddual_\mathrm{Ksz}$, we obtain an augmented  coalgebra structure on $\one \otimes_{x,A,x} \one$ together with a coaction on $\one \otimes_{x,A,y} \one$ which makes the latter into a torsoric comodule in $\C = \CAlg \D$. 

Suppose $A$ is the dual algebra of the coalgebra associated to a smooth $Z$-scheme $X$, and $x,y$ are the augmentations associated to $Z$-points of $X$.
Suppose, moreover, that all motives in sight belong to a triangulated subcategory possessing a \emph{motivic} t-structure (alternatively, replace motives with any realization). Then 
\[
\pi_1^\mathrm{un}(X,x) = \opnm{Spec} H^0(\one \otimes_{x,A,x} \one)
\]
is the unipotent fundamental group of $X$ at $x$ and 
\[
\pi_1^\mathrm{un}(X,x,y) = \opnm{Spec} H^0(\one \otimes_{x,A,y} \one)
\]
is the torsor of unipotent homotopy classes of paths from $y$ to $x$; see, for instance \cite{DCShRatMot}, \cite{iwanari2017motivic}. \foot{In this setting, following Deligne \cite{deligne1989groupe}, ``$\opnm{Spec}$'' just means that we view the given object as an object of the opposite Tannakian category.} Thus 
\[
\Ddual_\mathrm{Ksz}(A \xto{x} \one, {_y\one})
\]
lifts the torsor structure of $\pi_1^\mathrm{un}(X,y,x)$ to the level of highly structured algebras in motives.

%We remark that in this setting, the tensor product does \textit{not} preserve totalizations, and hence our construction of the dual Koszul-dual functor $\Ddual'_\mathrm{Koszul}$ is not available. While the construction of \cite[\S3]{BrantnerCamposNuiten} \textit{is} available,  this may not compare correctly with the expected totalization. 

\subsection*{Acknowledgements}
We would like to thank Jonathan Beardsley, Lukas Brantner, Rune Haugseng, Marc Hoyois, Thomas Nikolaus, Joost Nuiten, Piotr Pstragowski, Dan Petersen, Jon Pridham, Tomer Schlank, and Lior Yanovski for helpful conversations and email exchanges concerning the subject of this paper. We wish to express our heartfelt gratitude to the referee for a very careful reading and many helpful comments.

%%%%%%%%%
%%%%%%%%%%%%
%%%%%%%%%%%%
%%%%%%%%%%%%
%%%%%%%%%%%%
\section{Overview}%%%%%%
\label{sec:Overview}
%%%%%%%%%%%
%%%%%%%%%%%%
%%%%%%%%%%%%
%%%%%%%%%%%%
%%%%%%%%%%%%

%2
%\input{Overview.tex}

% \input{KoszulOverview.tex}
\label{SectionOverview}

We begin with an overview of the proof of Theorem 5.2.2.27 of Lurie \cite{HA}, followed by an overview of our proof of our analogous \cref{LM52227}.

\subsection{Review: Koszul duality and bimodules}
\label{sec:review_duality_bimodules}
Let us first explain the relationship between the coalgebra structure of the Koszul dual and the monoidal structure on bimodules. Let $\C^\otimes$ be a symmetric monoidal $\infty$-category, and assume that $\C$ admits geometric realizations of simplicial objects and that the tensor product functor
\[
\otimes \colon \C \times \C \to \C
\]
preserves geometric realizations of simplicial objects separately in each variable. 
Under these assumptions, bimodules in $\C$ admit relative tensor products ~\cite[sec. 4.4.3]{HA}. It is then easy to describe the underlying object of $\DK(A)$; it is given by the relative tensor product 
$\one \otimes_A \one$.
To see the coalgebra structure, consider
\[
\one \otimes_A \one \simeq \one \otimes_A A \otimes_A \one
\]
as the image of $A$ under the functor 
\begin{equation}\label{eq:free_obj_on_A_bimodule}
\one \otimes_A - \otimes_A \one \colon \BMod{A}{A}(\C)^\op \to \C^\op.
\end{equation}
Being the unit of a monoidal category, $A \in \BMod{A}{A}(\C)^\op$ admits an essentially unique algebra structure.
If the functor~\eqref{eq:free_obj_on_A_bimodule} is lax monoidal then the induced functor 
\[
\one \otimes_A - \otimes_A \one \colon \Alg \left( \BMod{A}{A}(\C)^\op \right) \to \Alg \left( \C^\op \right),
\]
equips the Koszul dual $\one \otimes_A A \otimes_A \one \simeq \one \otimes_A \one$ with a coalgebra structure. Let us point out two salient features of the constructions that follow.
\begin{enumerate}
\item The lax monoidality is additional \emph{structure} on the functor~\eqref{eq:free_obj_on_A_bimodule}, which depends on the choice of augmentation $ A \to \one$. The constructions of~\cite[sec. 5.2.2]{HA} may be thought of in part as the construction of this extra structure.
\item It is not immediately clear how to turn the above construction into a functor 
\[
\DK \colon \Alg^\mathrm{aug}(\C)^\op \to \Alg^\mathrm{aug}(\C^\op),
\]
since different algebras require us to examine different bimodule categories. The theory of monoidal pairings of ~\cite[sec. 5.2.1]{HA} helps to solve this problem.
\end{enumerate}

\subsection{Review: functorial characterization of Koszul duality for algebras}

Let $\C^\otimes$ be a monoidal $\infty$-category. 
Lurie begins by formulating a functorial characterization of the coalgebra $\DK(A)$ associated to an algebra $A$ in $\C$, which may be summarized as follows. After possibly replacing $\C$ by $\C_{/\one}$ we may assume the unit object is terminal. 
There's a right fibration 
\[
\la = \la_\Cc: \TwArr(\C) \to \C \times \C^\op
\]
which classifies the Hom-functor $\la_\Cc\inv(A,B) = \Hom_\C(A,B)$, so an object of the \emph{twisted arrow category} $\TwArr(\C)$ over $(A,B)\in \C \times \C^\op$ is a map $A \to B$ in $\C$. The right fibration $\la_\Cc$ is an example of a \textit{pairing}, i.e. a right fibration of the form
\[
\M \to \C \times \D,
\]
and we refer to $\la_\Cc$ as the \emph{twisted arrow pairing of $\Cc$}. The monoidal structure on $\C$ endows the twisted arrow category $\TwArr(\C)$ with a monoidal structure, and $\la_\Cc$ may be upgraded to a pairing between categories of algebras
\[
\tag{*}
\Alg(\la): \Alg \TwArr(\C)
\to
\Alg( \C) \times \Alg(\C^\op).
\]
The pairing $\Alg(\la)$ induces a functor
\[
\hat \Ddual_{\Alg(\la)}: \Alg(\C)^\op \to \PSh \Alg(\C^\op)
\]
to the category of presheaves, and the \emph{Koszul dual}
$\DK(A)$ of $A$ may be defined as \emph{the} object of $\Alg(\C^\op)$ representing $\hat \Ddual_{\Alg(\la)}(A)$ (unique up to essentially unique equivalence), provided that such an object exists. 

As such, $\DK(A)$ comes equipped with an object $M^\mathrm{univ} \in \Alg \TwArr(\C)$ lying over the pair $\big( A, \DK(A) \big)$. Unwinding definitions, we find that $M^\mathrm{univ}$ defines a morphism $A \to \DK(A)$ which intertwines the algebra structure of $A$ with the coalgebra structure of $\DK(A)$. Thus, for instance, $M^\mathrm{univ}$ includes the data of a coherently commuting diagram
\begin{equation*}
\begin{tikzcd}
A \otimes A  \ar[r] \ar[d] & \DK(A) \otimes \DK(A)
\\
A  \ar[r] & \DK(A). \ar[u]
\end{tikzcd}
\end{equation*}

\begin{mydef}
\label{260826A}
Let $\Cc^\otimes$ be a monoidal \oo-category, let $\Alg(\la)$ be the associated paring between categories of algebras as in (*), let $A \in \Alg(\Cc)$ be an algebra in $\Cc$ and let $B \in \Alg(\Cc^\op)$ be a coalgebra in $\Cc$. A \emph{twisted homomorphism from $A$ to $B$} is an object of $\Alg \TwArr(\Cc)$ lying over $(A,B)$. 
\footnote{N.B. Similar terminology that conflicts with ours appears elsewhere in the literature.}
\end{mydef}

With this terminology, we may characterize $\DK(A)$ as the target of the \textit{universal twisted homomorphism from the algebra $A$ to a coalgebra}.  

If $\one \in \C$ is initial, we may reverse direction. 
The pairing $\Alg(\la)$ also induces a functor 
\[
\PSh \Alg(\C) \from \Alg(\C^\op)^\op: \hat \Ddual'_{\Alg(\la)},
\]
and the Koszul dual $\DKpr(B)$ of $B$ may be defined as the object of $\Alg(\C)$ representing $\hat \Ddual'_{\Alg(\la)}(B)$, provided that such an object exists. 

If $\one \in \C$ is both initial and terminal, and if Koszul duals in both directions are representable, then it follows formally that $\DK^\op$ is left adjoint to $\DKpr$. 

\subsection{Review: construction of Koszul duality for algebras} \label{sec:review_duality_algebras}

We assume $\one \in \C$ to be terminal. 
In rough outline, the construction may be seen to proceed in three main steps.
\begin{enumerate}
\item
We upgrade the (essentially unique) augmentation of $A$ to a twisted homomorphism $M: A \to \one$. 
\item
We construct an $M$-$M$-bimodule in the twisted arrow category $M^\mathrm{univ}$ which is in a suitable sense universal over the $A$-$A$-bimodule $A$. 
\item
We promote $M^\mathrm{univ}$ to an associative algebra in the twisted arrow category.
\end{enumerate} 

It's convenient to carry out these steps in a somewhat more general, hence more flexible setting. A pairing of $\infty$-categories
\[
\la: \M \to \C \times \D
\]
is \emph{left representable} if the associated functor 
\[
\hat \Ddual_\la : \C^\op \to \PSh \D
\]
factors through the Yoneda embedding of $\D$. For example, the pairng $\la$ associated to the twisted arrow category above is representable --- indeed, in this case $\hat \Ddual_\la$ \textit{is} the Yoneda embedding of $\C^\op$. There's a natural notion of \emph{monoidal pairing} of monoidal $\infty$-categories
\[
\la^\otimes: \M^\otimes \to \C^\otimes \times_{\Ass} \D^\otimes
\]
\cite[Definition 5.2.2.20]{HA}. A monoidal pairing $\la^\otimes$ gives rise to a pairing
\[
\Alg(\la): \Alg(\M) \to \Alg(\C) \times \Alg(\D)
\]
of categories of algebras. Theorem 5.2.2.27 of~\cite{HA} states that under certain assumptions on $\la^\otimes$, $\Alg(\la)$ is left representable, hence gives rise to a functor
\[
\Ddual_{\Alg(\la)}: (\Alg \C)^\op \to \Alg \D.
\]

One assumption on $\la^\otimes$ that will intervene in our summary is that the underlying pairing $\la$ is left representable, hence gives rise to a functor 
\[
\Ddual_{\la} : \C^\op \to \D.
\]
Another assumption ensures the existence of an essentially unique algebra $M \in \Alg(\M)$ lifting $(A, \one_\D)$. 
% Roughly speaking, in the case of the twisted arrow category we are given an augmented algebra $A \to \one$, regarded as an algebra object of the overcategory $\C_{/\one}$ of the unit object. 
% The object $M$ will then represent the augmentation.
There is then an induced monoidal pairing
\[
\la_M \colon {\BMod{M}{M}(\M)}^\otimes \to 
{\BMod{A}{A}(\C)}^\otimes \times_{\Ass} \D^\otimes
\]
of categories of bimodules. If $F = A \otimes E \otimes A$ is the free $A$-$A$-bimodule generated by $E \in  \C$, then its $\la_M$-dual $\hat \Ddual_{\la_M}(F) \in \Psh  \D$ with respect to the pairing $\la_M$ is represented by
\[
\Ddual_{\la_M}(F) = \Ddual_{\la} (E).
\]
However, while $A$ is the unit object of ${\BMod{A}{A}}$, it is \emph{not free} as $A$-$A$-bimodule. 
Using free resolutions, one constructs an $M$-$M$-bimodule $M^\mathrm{univ}$ lying over a pair $(A,B)$ which witnesses the fact that the presheaf $\hat \Ddual_{\la_M}(A) \in \PSh(\D)$ is represented by an object $B \in \D$ as in step (2) above. 
Since $A$ has the structure of a coalgebra in $A$-$A$-bimodules, and since $\hat \Ddual_{\la_M}$ is in a natural way lax-monoidal, $B =  \Ddual_{\la_M}(A) \in \D$ inherits an algebra structure, as witnessed by an algebra structure on $M^\mathrm{univ}$. 
Dispensing with the spent bimodule structures, we obtain an algebra object $M^\mathrm{univ}$ of $\M^\otimes$ witnessing the fact that $B$ represents $\Ddual_{\Alg(\la)}(A)$.

\subsection{Review: comparison with the relative tensor product}
\label{comprev}
We now return to the case of the twisted arrow category, and relate the functor $\Ddual_{\Alg(\la)}$ to the description given in \cref{sec:review_duality_bimodules}.
Suppose $\D = \C^\op$, $\M = \TwArr(\C)$ is the twisted arrow category, and $M \in \Alg(\M)$ witnesses the augmentation $A\to \one$. 
Then there's a natural augmentation $M \to \one$.
Pullback of objects of $\M$ (regarded as $\one$-$\one$-bimodules) along the augmentation, gives rise to a \textit{morphism $f$ of pairings}
\begin{equation*}
\begin{tikzcd}
\TwArr(\C) \ar[d, "\la"] \ar[r] & 
{\BMod{M}{M}}\big( \TwArr(\C)  \big) \ar[d] 
\\
\C \times \C^\op \ar[r] & 
{\BMod{A}{A}}(\Cc) \times \C^\op.
\end{tikzcd}
\end{equation*}
A certain compatibility with induced duality functors ($f$ is a ``right representable morphism of right representable pairings'') leads to a commuting square  
\begin{equation*}
\begin{tikzcd}
\C \ar[d] & \C \ar[l,"\Ddual'_\la"] \ar[d, "="] \\
{\BMod{A}{A}}(\C) & \C \ar[l,"\Ddual'_{\la_M}"]
\end{tikzcd}
\end{equation*}
in which the left vertical arrow is given by pullback along the augmentation. Since $\Ddual'_\la$ is the identity functor, this shows that $\Ddual'_{\la_M}$ is given by pullback of bimodules along the augmentation. Hence its left adjoint $\Ddual_{\la_M}$ is given by 
\[
\Ddual_{\la_M}(X) = \one \otimes_A X \otimes_A \one.
\]
In particular,  $\DK(A) = \one \otimes_A \one$.

\subsection{\texorpdfstring{$\LM$}{LM}-variant: functorial characterization of Koszul duals}
Recall that the $\infty$-operad $\LM^\otimes$ has two colors $\a$ and $\m$, that (by definition) an $\LM$-monoidal category 
\[
\C^\otimes \to \LM^\otimes
\]
witnesses the category $\C_\m$ as left-tensored over the monoidal category $\C_\a^\otimes$, and that an $\LM$-algebra $X$, that is a morphism of $\infty$-operads
\[
X : \LM^\otimes \to \C^\otimes 
\]
over $\LM^\otimes$, models a pair $(X_\a, X_\m)$ consisting of an algebra object $X_\a$ in $\C_\a$ and an $X_\a$-module $X_\m$ in $\C_\m$. 
Given $X \in \LMod(\C)$ we say that $X$ \emph{endows $X_\m$ with the structure of a left $X_\a$-module}.

Suppose given an \emph{$\LM$-monoidal pairing}
\begin{align}
\tag{*} \label{ast:LM_monoidal_pairing}
\la^\otimes \colon \M^\otimes \to \C^\otimes 
\times_{\LM^\otimes} \D^\otimes 
\end{align}
where the unit object of $\D_\a$ is initial. 
We assume the underlying pairings $\la_\a$, $\la_\m$ give rise to duality functors
\[
\Ddual_{\la_\a}: \C_\a^\op \to \D_\a, 
\quad
\Ddual_{\la_\m}: \C_\m^\op \to \D_\m.
\]
The pairing \eqref{ast:LM_monoidal_pairing} may be upgraded to a pairing of left-module categories
% \mar{Notation $\la^{\opnm{mod}}$?} AH: no, unless consistently throughout the paper.
\[
\LMod(\la) \colon \LMod(\M) \to \LMod(\C) \times \LMod(\D)
\]
which subsequently gives rise to a functor 
\[
\hat \Ddual_{\LMod(\la)} \colon \LMod(\C)^\op \to \PSh \LMod \D
\]
to the category of presheaves on $\LMod \D$. The functor $\hat \Ddual_{\LMod(\la)}$ has the expected compatibility with the functor 
\[
\hat \Ddual_{\Alg(\la_\a)}: \Alg(\C_\a)^\op \to \PSh \Alg(\D_\a^\op)
\]
associated to the pairing
\[
\la_\a: \M_\a^\otimes \to \C_\a^\otimes 
\times_{\Ass} \D_\a^\otimes
\]
so that if $X = (X_\a, X_\m)$ is a left-module object of $\C^\otimes$ and $\hat \Ddual_{\LMod(\la)}(X) = (Y_\a, Y_\m)$ is its candidate Koszul dual, then 
\[
\hat \Ddual_{\Alg(\la_\a)}(X_\a) = Y_\a.
\]
Mirroring \cite[5.2.2.27]{HA}, our theorem \ref{LM52227} states that under certain assumptions, the functor $\hat \Ddual_{\LMod(\la)}$ factors through a functor
\[
\Ddual_{\LMod(\la)} \colon \LMod(\C)^\op \to  \LMod \D.
\]

We may associate to our $\LM$-monoidal category $\Cc^\otimes$ an $\LM$-monoidal category $\TwArr(\Cc)^\otimes$, the \emph{$\LM$-monoidal twisted arrow category}, witnessing $\TwArr(\Cc_\mM)$ as left-tensored under $\TwArr(\Cc_\aA)^\otimes$, as well as an $\LM$-monoidal pairing of \oo-categories
\[
\la^\otimes:
\TwArr(\Cc)^\otimes \to \Cc^\otimes \times_{\LM^\otimes} (\Cc^\op)^\otimes
\]
(\cref{lmtw}). By \cite[Remark 5.2.2.26]{HA} there's an associated pairing
\[
\tag{*}
\LMod(\la):
\LMod(\TwArr \Cc) \to \LMod(\Cc) \times \LMod(\Cc^\op).
\]

The pairing $\LMod(\la)$ induces a functor 
\[
\hat \Ddual_{\LMod(\la)}: \LMod(\Cc)^\op
\to
\PSh \LMod(\Cc^\op)
\]
to the category of prehseaves, and the \emph{Koszul dual} $\DK(X)$ of $X$ may be defined as \emph{the} object of $\LMod(\Cc^\op)$ representing $\hat \Ddual_{\LMod(\la)}(X)$, provided such an object exists. 

In this case, there's an associated object 
$
M^\mathrm{univ} \in \LMod \TwArr(\Cc)
$
lying over the pair $\big(X, \DK(X)\big)$. Unwinding definitions, we find that $M^\mathrm{univ}$ defines a morphism
\[
X_\aA \to \DK(X)_\aA
\]
which intertwines the algebra structure of $X_\aA$ with the coalgebra structure of $\DK(X)_\aA$, as well as a morphism
\[
X_\mM \to \DK(X)_\mM
\]
which intertwines the module structure of $X_\mM$ with the comodule structure of $\DK(X)_\mM$. This motivates the following definition.

\begin{mydef}
\label{260727A}
Let $\Cc^\otimes$ be an $\LM$-monoidal \oo-category, and let $\LMod(\la)$ be the associated pairing between categories of $\LM$-algebras as in (*). Consider objects $X \in \LMod(\Cc)$ and $Y \in \LMod(\Cc^\op)$. A \emph{twisted homomorphism from $X$ to $Y$} is an object of $\LMod \TwArr(\Cc)$ lying over $(X,Y)$.  
\end{mydef}

In terms of \cref{260727A}, $\DK(X)$ may be characterized as the target of the universal twisted homomorphism from the $\LM$-algebra $X$ in $\Cc^\otimes$ to an $\LM$-coalgebra in $\Cc^\otimes$. 

\subsection{\texorpdfstring{$\LM$}{LM}-variant: construction of Koszul duality functors}
In order to prove theorem \ref{LM52227}, we adapt the techniques of \cref{sec:review_duality_algebras} to the $\LM$-monoidal setting.
Let
\[
\M^\otimes \to \LM^\otimes
\]
be an $\LM$-monoidal category, and let $M$ be an algebra object of the monoidal category $\M_\a$.
In \cref{def:BLMod_A} we construct (under certain assumptions) an $\LM$-monoidal category $\BLMod_{M}(\M)^\otimes$  witnessing a left action of the category of bimodules 
\[
\BLMod_{M}(\M)_\a^\otimes = 
{_{M} \Bmod_{M}}(\M_\a)^\otimes
\]
on the category of left modules
\[
\BLMod_{M}(\M)_\m = \LMod_{M}(\M_\m).
\]
We show that an $\LM$-algebra 
\[
Z = (Z_\a, Z_\m) \in \LMod \BLMod_{M}(\M)
\]
is nothing more than an $M$-algebra $Z_\a$ and a $Z_\a$-module $Z_\m$.\footnote{
So, while the construction of the $\LM$-monoidal category $\BLMod_{M}(\M)^\otimes$ is somewhat intricate, its category of $\LM$-algebras is predetermined. 
}
We refer to such an object as an \emph{$\LM$-algebra over $M$}. 
The characters above each play a role analogous to a character in the previous play as in the following table.

\bigskip
\begin{center}
\begin{tabular}{ c| c  }
$\Ass$ & $\LM^\otimes$  \\ 
\hline
${\BMod{M}{M}}(\M)$ & 
${\BMod{M}{M}}(\M_\a)$,  ${\LMod_M}(\M_\m)$ 
\\ % 
${\BMod{M}{M}}(\M)^\otimes$ &
$\BLMod_M(\M)^\otimes$
\\ %
${\Alg\left({\BMod{M}{M}}(\M)\right)}$ &
$\LMod \left( \BLMod_M(\M) \right)$
\end{tabular}
\end{center} 
\bigskip

Returning to the $\LM$-monoidal pairing above, fix a left-module object $X = (X_\a, X_\m)$ of the $\LM$-monoidal category $\C^\otimes$. 
To construct the left-module object $\Ddual_{\LMod(\la)}(X)$ of $\LMod \D$ (and hence prove theorem \ref{LM52227}), we first lift $X_\a$ to an algebra object $M$ of $\M_\a$ lying above the pair of algebras $(X_\a, \one_{\D_\a})$ just as in the monoidal case. 
We then construct an $\LM$-monoidal pairing 
% \mar{Notation: $\la_M^\otimes$?}
\[
\la^\otimes_{M}: \BLMod_{M}(\M)^\otimes
\to 
\BLMod_{X_\a}(\C)^\otimes
\times_{\LM^\otimes} \D^\otimes.
\]
The $\LM$-monoidal pairing $\la^\otimes_{M}$
plays the role of the monoidal pairing of bimodule categories in the monoidal case. 
In fact, it includes a monoidal pairing
\[
{_{M}\Bmod_{M}}(\M_\a)^\otimes
\xto{\la_{M, \a}^\otimes}
{_{X_\a}\Bmod_{X_\a}}(\C_\a)^\otimes
\times \D_\a^\otimes,
\]
as well as a pairing of $\infty$-categories of left modules
\[
\LMod_{M}(\M_\m) 
\xto{\la_{M, \m}}
\LMod_{X_\a}(\C_\m) \times \D_\m,
\]
and enables us to construct a pairing of $\infty$-categories
\[
\la_{M}: \LMod \BLMod_{M}(\M) \to
\LMod \BLMod_{X_\a}(\C)
\times
\LMod \D
\]
of left modules \textit{over}
\[
M \mapsto (X_\a, \one_{\D_\a}).
\]

Using free resolutions as above, we construct an $M$-$M$-bimodule $Z_\a$ lying over a pair
\[
(X_\a,Y_\a) \in 
{_{X_\a}\Bmod_{X_\a}}(\C_\a) \times \D_\a
\]
witnessing the fact that $Y_\a \in \D_\a$ represents the presheaf $\hat \Ddual_{\la_{M, \a}}(X_\a) \in \PSh(\D_\a)$. 
Similarly, if $F = X_\a \otimes E$ is the free left $X_\a$-module generated by $E \in \C_\m$, then its dual $\hat \Ddual_{\la_M, \m}(F) \in \PSh \D_\m$ with respect to the pairing $\la_{M, \m}$ is represented by the object
\[
\Ddual_{\la_M, \m}(F) = \Ddual_{\la_\m}(E).
\]
Applying this observation to a free resolution, one obtains a left $M$-module $Z_\m$ lying over a pair
\[
(X_\m,Y_\m) \in 
{\LMod_{X_\a}}(\C_\m) \times \D_\m
\]
witnessing the fact that $Y_\m \in \D_\m$ represents the presheaf $\hat \Ddual_{\la_{M, \m}}(X_\m) \in \PSh(\D_\m)$. 
Since $X_\a$ is the unit object of ${_{X_\a}\Bmod_{X_\a}}(\C_\a)$ and since $\hat \Ddual_{\la_{M}}$ is lax-$\LM$-monoidal, $Y = (Y_\a, Y_\m)$ inherits a left module structure, as witnessed by a left module structure on $Z = (Z_\a, Z_\m)$. 
Dispensing with the spent $M$-algebra structure of $Z_\a$, we obtain a left module $Z$ in $\M^\otimes$ witnessing the fact that $Y$ represents $\Ddual_{\LMod(\la)}(X)$.

This completes our overview of the proof of \cref{LM52227}. Comparison with the bar construction in the case of the twisted arrow category is achieved similarly to \S \ref{comprev} above, and we refer the reader to the proof of \cref{BarCobarAdjunction} for details. 
%%%%%%%%%%%%

%%%%%%%%%%%%%
%%%%%%%%%%%%%
%%%%%%%%%%%%
%%%%%%%%%%%%
%%%%%%%%%%%%%%%%%%%%%
\section{Review of $\infty$-operads}
\label{infop}
%%%%%%%%%%%%%%%%%%
%%%%%%%%%%%%%%%%
%%%%%%%%%%%%%
%%%%%%%%%%%%%
%%%%%%%%%%%%%
%%%%%%%%%%%%%

%3
%\input{Review.tex}

%By exception, in this section we use numbered paragraphs generated using our \spar and \sspar commands

This section too is preliminary and not logically necessary for what follows. On the recommendation of the referee, we recall salient features of the foundations of $\infty$-operads, since these play a central role in all that follows. This may help to orient some readers, but will not replace the need to refer frequently to Chapters 2 and 3 of \textit{Higher Algebra}. Moreover, we've attempted to design our presentation so as to complement, rather than replace, the introductory material in Higher Algebra. Thus, the reader should be warned that we do not reproduce in detail essential points that are covered in sufficient detail already in loc. cit.

\spar{}
\label{21205A}
A colored operad $\Oo$ in sets consists of a family of \emph{colors} (or \emph{objects}), for every finite family $\{X_i\}_{i\in I}$ of objects and every object $Y$ a set $\Mul(\{X_i\}_{i\in I}, Y)$ of \emph{morphisms}, plus special elements
\[
id_X\in \Mul(\{X\}, X)
\]
plus composition laws
\[
\tag{*}
\prod_{j \in J}
\Mul(\{X_i\}_{i \in I_j}, Y_j) \times
\Mul(\{Y_j\}_{j \in J}, Z)
\to
\Mul(\{X_i\}_{i\in I}, Z).
\]
In addition to the evident families of objects, (*) also depends on the choice of a map $I \to J$ of index sets, and $I_j$ denotes the fiber of $I$ above $j\in J$. These are required to be unital and associative in an unsurprising way --- see~\cite[Definition 2.1.1.1]{HA} for detail. 

If $\Oo$, $\Oo'$ are colored operads, a \emph{map $F: \Oo \to \Oo'$ of colored operads} consists of a map of colors, plus maps
\[
\Mul(\{X_i\}_{i\in I}, Y) \to 
\Mul(\{FX_i\}_{i\in I}, FY)
\]
which are compatible with unit objects and compositions.

\spar{}
\label{21205B}
For $n \in \mathbb{N}$ a natural number, we set
$
\langle n \rangle^\circ = \{1, \dots, n\}.
$
If $\Oo$ has only one color $\one$ then there's a unique family of objects $\one^I$ associated to any index set $I$ and we define
\[
\Oo_n = \Mul(\one^{\qq n \pp^\circ}, \one).
\]
The composition laws
\[
\prod_{j \in \qq n \pp^\circ}
\Mul (\{\one\}, \one) \times
\Mul(\one^{\qq n \pp^\circ}, \one)
\to
\Mul(\one^{\qq n \pp^\circ}, \one)
\]
associated to the bijections $\qq n \pp^\circ \to \qq n \pp^\circ$, applied to the identity elements $id \in \Mul(\{\one\}, \one)$, provide an action of the symmetric group $\Si_n$ on $\Oo_n$.

\spar{}
\label{21205C}
We let $\Finst$ denote the category of finite pointed sets. We often restrict attention to the equivalent full subcategory of pointed sets of the form
\[
\langle n \rangle = \{\ast, 1, \dots, n\}
\quad (n \ge 0)
\]
in which $\ast$ denotes the distinguished element. 
Suppose $\Oo$ is a colored operad in sets. Following \cite[Construction 2.1.1.7]{HA}, we may associate to $\Oo$ a functor between ordinary  categories
\[
p:\Oo^\otimes \to \Finst
\]
as follows. An object of $\Oo^\otimes$ is a finite sequence of colors (we may equivalently restrict attention to sequences indexed by the sets $\qq n \pp^\circ$). A morphism 
\[
(X_1, \dots, X_m) \xto{(\al, \phi)}
(Y_1, \dots, Y_n)
\]
consists of a morphism $\al: \qq m \pp \to \qq n \pp$ in $\Finst$ plus a 
\[
\phi_j \in \Mul(\{X_i\}_{i \in \al\inv(j)}, Y_j)
\]
for each $j \in \qq n \pp^\circ$. The composition law and the functor $p$ are evident. The data
\[
p:\Oo^\otimes \to \Finst
\]
then forms an \emph{$\infty$-operad} in the sense of \cite[Definition 2.1.1.10]{HA}. Here and below, we identify an ordinary (1-)category with the associated $\infty$-category. 

The general definition of an \oo-operad may not be essential on first reading. It is, however, important, at least psychologically, to see how the above construction may be reversed in order to retrieve the colored operad $\Oo$ from the associated \oo-operad $\Oo^\otimes \to \Finst$. See~\cite[Construction 2.1.1.7]{HA}.

\spar{}
If $p: \Oo^\otimes \to \Finst$ is an \oo-operad, then the fiber $\Oo^\otimes_{\langle 1 \rangle}$ of $p$ over the object $\langle 1 \rangle \in \Finst$ is called the \emph{underlying \oo-category} and is frequently denoted by $\Oo$. 

In practice, the superscript $\otimes$ often becomes cumbersome and is frequently dropped (just as one identifies a group with its \textit{underlying set}). This generally does \textit{not} mean return to a primordial \textit{colored operad} as in paragraph \ref{21205A}. Indeed, this last notion will not intervene beyond the present review.

\spar{}
\label{21205Ca}
Nevertheless, let $\Oo$ be a colored operad in sets as in paragraph \ref{21205A}. If $\C$ is a symmetric monoidal category, an \emph{$\Oo$-algebra in $\Cc$} consists of the following: for each color $X \in \Oo$, a set $V_X$, plus maps of sets
\[
\Mul(\{X_i\}_{i\in I}, Y) \times \prod_{i\in I} V_{X_i}
\to V_Y
\]
which are required to be compatible with identity elements and composition laws in an unsurprising way. 

The notion of ``$\Oo$-algebra'' may be reworded in terms of maps of colored operads. Indeed, the symmetric monoidal category $\Cc$ itself may be upgraded to a colored operad whose colors are the objects of $\Cc$ by defining
\[
\Mul_\Cc\big(\{X_i\}_{i\in I}, Y\big):=
\Hom_\Cc \big(
\bigotimes_i X_i, Y
\big).
\]
An $\Oo$-algebra in $\Cc$ is then the same as a map of operads $\Oo \to \Cc$. 

On the other hand, when $\Oo$ has only one color, the notion of ``$\Oo$-algebra'' may also be phrased in terms of the \emph{endomorphism operad} $\Ee nd(V)$ defined by $\Ee nd_n(V) = \Hom(V^{\otimes n}, V)$: in this case, an $\Oo$-algebra corresponds to a morphism of operads $\Oo \to \Ee nd(V)$. Throughout the body of the article, we will replace the above notion with a notion of algebra over an $\infty$-operad. Nevertheless, this temporary definition is useful in gaining a first acquaintance with some of the operads we will encounter. 

\spar{}
\textbf{Example: the commutative operad.}
\label{21205E}
We set $\mathrm{Comm}_n = \{*\}$ for all $n \ge 0$. Thus, a $\mathrm{Comm}$-algebra in sets consists of a set $V$ equipped with one map
\[
\mu_n: V^{\otimes n} \to V
\]
for each $n$. The map $\mu_1: V \to V$ is forced to be the identity. Moreover, $\mu_n$ for $n >2$ is determined by $\mu_i$ for $i <n$. For instance, if $n = i+j$ is any partition of $n$ into two parts, then $\mu_n$ is equal to the composite
\[
V^{\otimes n} \simeq
V^{\otimes i} \otimes V^{\otimes j} 
\xto{\mu_i \otimes \mu_j} 
V \otimes V
\xto{\mu_2}
V.
\]
In this way, the $\mathrm{Comm}$-algebra structure is uniquely determined by $\mu_2$ and $\mu_0$. In turn, these are forced to determine the structure of an associative commutative unital monoid. For the commutativity, note that the map
\[
\mathrm{Comm}_2 \to \Ee nd_2(V)
\]
is forced to be compatible with the action of $\Si_2$, and that the universal family of binary operations
\[
\Ee nd_2(V) \otimes V^{\otimes 2}
\to V
\]
coequalizes the two maps
\[
\Ee nd_2(V) \otimes V^{\otimes 2}
\rightrightarrows
\Ee nd_2(V) \otimes V^{\otimes 2}
\]
induced by the transposition. A similar discussion applies with the category of sets replaced by the symmetric monoidal category of modules over a commutative ring $R$ and yields $R$-algebras in the usual sense.

These examples provide some intuition for the operad $\mathrm{Comm}$; in turn, this helps to motivate the definition of the $\infty$-operad $\mathrm{Comm}^\otimes$ via the construction of \S\ref{21205C}. Since $\mathrm{Comm}$ has only one color, the objects of $\mathrm{Comm}^\otimes$ may be identified with the objects of $\Finst$. Moreover, a morphism 
\[
(\al, \phi):(\one, \dots, \one) \to 
(\one, \dots, \one)
\]
is uniquely determined by $\al$ with no restrictions imposed. So $q: \mathrm{Comm}^\otimes \to \Finst$ is an \textit{isomorphism} of categories. 

\spar{}
\label{21205G}
A morphism
$
g:\langle m \rangle \to \langle n \rangle
$
is said to be \emph{inert} if for each $i \in \langle n \rangle^\circ$, $g^{-1}(i)$ has exactly one element.
Let
\[
p:\Oo^\otimes \to \Finst
\]
be an $\infty$-operad. A morphism $f$ in $\Oo^\otimes$ is said to be \emph{inert} if $p(f)$ is inert and $f$ is $p$-coCartesian. A morphism
\[
\Oo^\otimes \to \Oo’^\otimes
\]
of simplicial sets over $\Finst$ is an $\infty$-\textit{operad map} if it carries inert morphisms to inert morphisms. These span a full $\infty$-subcategory $\Alg_\Oo(\Oo’)$ of the $\infty$-category of all functors. An object of $\Alg_\Oo(\Oo’)$ is called an \emph{$\Oo$-algebra in $\Oo'$} ~\cite[def. 2.1.2.7]{HA}.

\spar{}
Let $\Oo^\otimes \to \Finst$ be an $\infty$-operad. An \emph{$\Oo$-monoidal category} consists of a coCartesian fibration of $\infty$-operads
\[
\Cc^\otimes \to \Oo^\otimes
\]
in the sense of \cite[Definition 2.1.2.13]{HA}.

\spar{}
\textbf{Example.}
A morphism
$
g:\langle m \rangle \to \langle n \rangle
$
in $\Finst$ is said to be \emph{active} if $g^{-1}(\ast) = \ast$. A \emph{symmetric monoidal category} \cite[Definition 2.0.0.1]{HA} is a coCartesian fibration
\[
q : \Cc^\otimes \to \Finst
\]
with \emph{underlying category} $\Cc := \Cc^\otimes_{\qq 1 \pp}$ such that for each $n$, the functor
\[
\Cc^\otimes_{\qq n \pp} \to \Cc^n
\]
induced by the inert maps
\[
\rho^i : \qq n \pp \to \qq 1 \pp
\quad
(i = 1, \dots, n) 
\]
is an equivalence. The active maps
\[
\qq 2 \pp \to \qq 1 \pp
\from \qq 0 \pp
\]
determine functors
\[
\Cc^2 \to \Cc \from \Cc^0 \simeq \Delta^0
\]
which define the \emph{unit} and the \emph{tensor product}. 

If $\Cc^\otimes \to \Finst$ is a symmetric monoidal category, then a \emph{commutative algebra} in $\C$ is an \oo-operad map $\mathrm{Comm}^\otimes \to \C^\otimes$. The \emph{\oo-category of commutative algebras in $\C$} is defined by 
\[
\CAlg(\Cc) := \Alg_{\mathrm{Comm}^\otimes}(\Cc).
\]

\spar{}
\textbf{Remark.}
If one regards the sets $\Oo_n$ of an operad in sets as discrete topological spaces, then operads in sets become part of the larger and more flexible category of topological operads. The latter is equipped with a natural model structure which allows for the possibility of replacing $\Oo$ by a ``cofibrant replacement'' $\Oo^c$. Suppose the symmetric monoidal model category $\Cc$ is enriched over nice topological spaces. Then a suitable notion of \emph{$\Oo^c$-algebras in $\Cc$} provides a well behaved model category of ``strict models for $\Oo$-algebras in $\Cc$''. Moreover, instead of applying the small object argument, in concrete examples $\Oo^c$ may often be realized by a concrete and charming model.

However, if one is willing to make do with highly structured algebras in highly structured categories as we do here, then one can dispense with this part of the theory.

\spar{}
\label{Review_of_Assoc}
The foundations of \oo-operads as presented in \textit{Higher Algebra} make symmetric monoidal categories and commutative algebras particularly natural. However, below, (merely) monoidal categories, associative algebras, and modules of various sorts, will play a more central role. We now rapidly recall the relevant \oo-operads. 

The associative operad $\mathrm{Assoc}$ is defined in \cite[Definition 4.1.1.1]{HA}. We recall that there is only one color $\a$ and for every finite set $I$, $\Mul(\{\a\}_{i \in I}, \a)$ is the set of linear orderings on $I$. Thus, the objects of the associated $\infty$-operad $\Assoc$ are the same as those of $\Finst$ and a morphism
\[
\langle m \rangle \to \langle n \rangle
\]
in $\Assoc$ consists of a pair $(\al, \{\succ_i\}_{1 \le i \le n})$ where $\al:\langle m \rangle \to \langle n \rangle$ is a map of pointed finite sets and $\succ_i$ is a linear ordering on $\al^{-1}(i)$. This is spelled out in complete detail in \cite[Remark 4.1.1.4]{HA}. If
\[
q: \C^\otimes \to \Assoc
\]
is a fibration of $\infty$-operads, then we define an \emph{associative algebra in $\C^\otimes$} to be an $\infty$-operad section of $q$, and we define the $\infty$-category of such by
\[
\Alg(\C) := \Alg_{\mathrm{Assoc}}(\C).
\]
This is Definition 4.1.1.6 of \cite{HA}.

The \emph{left-module $\infty$-operad} 
$\LM^\otimes \to \Fin_*$ (which is just the $\infty$-operad associated to the \emph{left-module colored operad} encountered elsewhere in the literature)
is constructed in ~\cite[def. 4.2.1.7]{HA}. 
We denote the two objects of the underlying category $\LM^\otimes_{\angled{1}}$ by $\a,\m$. 
An $\LM$-monoidal category $\C^\otimes$ is a coCartesian fibration
\[
\C^\otimes \to \LM^\otimes
\]
of $\infty$-operads (see~\cite[def. 2.1.2.13]{HA}).
Following~\cite[sec. 4.2.1]{HA} we denote: 
\begin{align*}
\C^\otimes_\a := \C^\otimes \times_{\LM^\otimes} \Ass , 
\quad \C_\a := \C^\otimes \times_{\LM^\otimes} \set{\a}, 
\quad \C_\m := \C^\otimes \times_{\LM^\otimes} \set{\m}. 
\end{align*}
Note that 
$\C^\otimes_\a \to \Ass$ defines a monoidal structure on the $\infty$-category $\C_\a$ (see~\cite[rem. 4.2.1.10]{HA}).
Following~\cite[def. 4.2.1.19]{HA} we say that $\C^\otimes \to \LM^\otimes$ \emph{exhibits $\C_\m$ as left tensored over $\C_\a$}.

We denote the $\infty$-category of $\LM$-algebras in $\C$ by $\LMod(\C) = \Alg_{\LM}(\C)$.
Explicitly, an object $X\in \LMod(\C)$ is a map of $\infty$-operads 
\[
\LM^\otimes \to \C^\otimes
\]
over $\LM^\otimes$.
Precomposition with the inclusion $\Ass \to \LM^\otimes$ defines a functor
\[
\LMod(\C) \to \Alg(\C_\a).
\]
Given an algebra $A\in \Alg(\C_\a)$ we define the $\infty$-category $\LMod_A(\C_\m)$
of \emph{$A$-modules in $\C_\m$} as the fiber of
\[
\LMod(\C) \to \Alg(\C_\a)
\]
over $A$.
We call an object of $\LMod_A(\C_\m)$ a left \emph{$A$-module} in $\C_\m$. Given an $\LM$-algebra $X\in \LMod(\C)$, we denote by $X_\a \in \Alg(\C_\a)$ its image under
\[
\LMod(\C) \to \Alg(\C_\a)
\]
(so $X$ is a left $X_\a$-module) and we denote by $X_\m \in \C_\m$ the object corresponding to 
\[
\set{\m} \to \LM^\otimes \xto{X} \C.
\]
As a matter of notation, we allow ourselves to drop the subscript $\mM$ in `$\LMod_A(\C_\m)$'.

We will make particularly detailed use of the \emph{bimodule $\infty$-operad} $\BM$. The reader who is familiar with the \emph{usual bimodule colored operad} will be relieved to hear that this is indeed just the \oo-operad associated to the latter.

\begin{notation}
Let $\BM^\otimes$ be the $\infty$-operad of~\cite[def. 4.3.1.6]{HA}.
We can describe $\BM^\otimes$ as the (nerve of) the following ordinary (1-)category (see~\cite[ex. 4.4.1.7]{HA}):
\begin{enumerate}
\item An object of $\BM^\otimes$ is a tuple $(\angled{n}, c_-,c_+)$ where $\angled{n} \in \Fin_*$ is a finite pointed set and 
\[
c_-,c_+ \colon \angled{n}^\circ \rightrightarrows [1]
\]
are maps of sets satisfying 
\[
c_-(i) \leq c_+(i) \quad \text{for }  i \in \angled{n}^\circ.
\]
\item Let $(\angled{n}, c_-,c_+)$ and $(\angled{n'}, c'_-,c'_+)$ be objects of $\BM^\otimes$.
A morphism from $(\angled{n}, c_-,c_+)$ to $(\angled{n'}, c'_-,c'_+)$ consists of a morphism $\al \colon \angled{n} \to \angled{n'}$ in $\Ass$ satisfying the following conditions:
for each $j\in \angled{n'}^\circ$ with fiber 
\[
\al^{-1}(j) = \set{ i_0 \prec \cdots \prec i_m}:
\]
\begin{align*}
&c_-(i_0) = c'_-(j), \\ 
&c_+(i_m) = c'_+(j), \text{and} \\
&c_-(i_0) \leq c_+(i_0) = c_-(i_1) \leq c_+(i_1) = c_-(i_2) \leq \cdots \leq c_+(i_{m-1}) = c_-(i_m) \leq c_+(i_m).
\end{align*}
\end{enumerate}
The $\infty$-operad $\BM^\otimes$ has three colors, given by the objects $(\angled{1},c_-,c_+) \in \BM^\otimes_{<1>}$ with 
\[
c_0=c_1=0, \quad 
c_-=0, c_+=1 , \quad
c_-=c_+=1. 
\]
These are denoted variously by $\a_- = \a_0, \m$ and $\a_+ = \a_1$, respectively.
\end{notation}

\begin{mydef}
A $\BM$-monoidal $\infty$-category is by definition a coCartesian fibration of \oo-operads $\C^\otimes \to \BM^\otimes$. 
We let $\C_- = \C_0, \C_\m, \C_+= \C_1$ denote the fibers over $\a_-, \m, \a_+$ respectively, so that $\C^\otimes$ exhibits $\C_\m$ as bitensored over $\C_-$ and $\C_+$ (\cite[def. 4.3.1.17]{HA}).
\end{mydef}

\spar{}
Throughout the text, we make constant use of the techniques of \emph{operadic (co)limits} and \emph{operadic Kan extensions} developed in \cite[Sections 3.1.1, 3.1.2]{HA}. Let us note here that in practice (or at least in much of what follows), in view of \cite[3.1.1.15, 3.1.1.16]{HA}, operadic colimits may frequently be computed like so. Let
\[
q: \C^\otimes \to \Oo^\otimes
\]
be a coCartesian fibration of \oo-operads. Suppose given a diagram
\[
Y: K \to \C^\otimes_\mathrm{act} \subset \C^\otimes
\]
whose arrows correspond to active morphisms (see e.g. \cite[2.2.4.3]{HA} for the \emph{active subcategory}) and a commuting square
\[
\begin{tikzcd}
K \ar[r,"Y"] \ar[d] & \C^\otimes \ar[d,"q"]
\\
K^\rhd \ar[r, "X"] & \Oo^\otimes.
\end{tikzcd}
\]
We denote the cone point of $K^\rhd$ by $\infty$ and for $i \in K$ we write $X_i := X(i)$. For each edge $\ep:i \to \infty$ in $K^\rhd$ with source $i \in K$, we may lift the morphism
\[
X(\ep): X_i \to X_\infty
\]
in $\Oo^\otimes$ to a $q$-coCartesian edge $\kappa_i$ in $\C^\otimes$ with source $Y_i:=Y(i)$. Doing so amounts to applying various operations (``tensor products'') to the components of $Y_i$ as indicated by the morphism $X(\ep)$. As $i$ varies, the targets $Y'_i$ of the morphisms $\kappa_i$ in $\C^\otimes$ assemble to a coherently commuting diagram
\[
Y':K \to \Cc^\otimes_{X_\infty}
\]
in the fiber of $\C^\otimes$ above the object $X_\infty \in \Oo^\otimes$. If the ordinary colimit of $Y'$ is in a suitable sense compatible with the operations of $\Oo$ (condition (2) of Proposition 3.1.1.16 of \cite{HA}), then the latter is equal to the operadic $q$-colimit of $Y$.

\spar{}
This completes our review of the foundations of \oo-operads. Beginning immediately, however, we will have to rely on the more general theory of \textit{generalized \oo-operads}. We refer the reader to section 2.3.2 of Higher Algebra \cite{HA}. See, in particular, Definition 2.3.2.10 of loc. cit. for the notion of a \emph{family of \oo-operads}.

%%%%%%%%%%%%
%%%%%%%%%%%%
%%%%%%%%%%%%%%%%
\section{Restricted double \texorpdfstring{$\infty$}{infinity}-categories of bimodules in a monoidal \texorpdfstring{$\infty$}{infinity}-category}%%%%%%%%
\label{sec:Morita_double_cat}
%%%%%%%%%%%%%%%%
%%%%%%%%%%%%
%%%%%%%%%%%%
%%%%%%%%%%%%
%%%%%%%%%%%%

%4
%\input{RestrictedDouble.tex}

In~\cite[lem. 4.4.3.9]{HA} Lurie constructs an $\infty$-categorical version of the double Morita category of algebras and bimodules in a monoidal $\infty$-category $\C$, under the assumption that tensor products in $\C$ are compatible with geometric realizations. In this section we weaken the compatibility assumption:
%Our goal in this section (\cref{link1}) to modification of Lurie's construction which weakens the compatibility assumption: 
we only assume compatibility with a restricted collection of geometric realizations, prescribed by a class of algebras $\A$. This serves as an opportunity to recall the material of \cite[sec. 4.4.3]{HA} while making occasional adjustments along the way. 
The results of this section will be used in \cref{sec:BLMod} to package the relative tensor product of an $A$-$A$-bimodule and a left $A$-module into an $\LM$-monoidal $\infty$-category.

\subsection{Multimodules}
\label{FF44}
The combinatorics of the associativity of relative tensor products up to coherent higher homotopies are encoded by the $\Delta^\op$-family of $\infty$-operads
\[
\tag{*}
\Tens^\otimes \to \Delta^\op \times \Fin_*
\]
of \cite[Definition 4.4.1.1]{HA}. We recall the precise definition:

\begin{mydef}
\label{def:Tens}
Let $\Ass$ be the $\infty$-operad of~\cite[def. 4.1.1.3]{HA} (see also \S\ref{Review_of_Assoc}). The 1-category $\Tens^\otimes$ is defined as follows:
\begin{enumerate}
\item An object of $\Tens^\otimes$ is a tuple $(\angled{n},[k],c_-,c_+)$ where $\angled{n} \in \Fin_*, [k]\in \Delta^\op$ and the maps
\(
c_-, c_+ \colon \angled{n}^\circ \to [k]
\)
satisfy the following condition:
\[
\mbox{for all }i \in \angled{n}^\circ: \quad c_-(i) \leq c_+(i) \leq c_-(i)+1.
\]
In other words, either $c_+(i) =c_-(i)$
or $c_+(i)=c_-(i)+1$.
\item Let $(\angled{n},[k],c_-,c_+)$ and $(\angled{n'},[k'],c'_-,c'_+)$ be two objects of $\Tens^\otimes$.
A morphism from $(\angled{n},[k],c_-,c_+)$ to $(\angled{n'},[k'],c'_-,c'_+)$ is a tuple $(\al,\la)$ where 
\[
\al \colon \angled{n} \to \angled{n'} \in \Ass,
\quad \la \colon [k'] \to [k] \in \Delta
\]
satisfy the following condition.
For every $j\in \angled{n'}^\circ$ with totally ordered fiber $\al^{-1}(j)= \set{i_0 \prec i_1 \prec \cdots \prec i_m}$ the following equalities hold:
\begin{align*}
& c_-(i_0) = \la(c'_-(j)), \quad c_+(i_m) = \la(c'_+(j)), \\
& c_-(i_0) \leq c_+(i_0) = c_-(i_1) \leq c_+(i_1) = \cdots = c_-(i_m) \leq c_+(i_m).
\end{align*}
\end{enumerate}
The functor (*) is given by
\[
(\angled{n},[k],c_-,c_+) \mapsto
([k], \angled{n}) \in \Delta^\op \times \Finst.
\]
\end{mydef}

Thus, for each $k \ge 1$, the fiber $\Tens^\otimes_{[k]}$ of $\Tens^\otimes$ over $[k] \in \Delta^\op$ is an \oo-operad with colors
\[
\a_0 , \a_1 , \a_2 , \cdots , \a_k
\quad \text{and} \quad
\m_{0,1} , \m_{1,2} , \cdots , \m_{k-1, k}
\quad
\in 
\Tens^\otimes_{[k],\langle 1 \rangle}
\]
\cite[Notation 4.4.1.9]{HA}. (Variants: We write $\aA_i = \mM_{i,i}$. We add a superscript `$k$' when working with variable $k$.) We have
\[
\Tens^\otimes_{[0]} = \Assoc
\quad \text{and} \quad
\Tens^\otimes_{[1]} = \BM^\otimes.
\]
Turning to general $[k] \in \Delta^\op$, if
\[
M \colon \Tens^\otimes_{[k]} \to \Cc^\otimes
\]
is a $\Tens^\otimes_{[k]}$-algebra over $\Ass$ in a monoidal $\infty$-category $\C^\otimes$, then associated to each color $\a_i$ is an algebra $A_i$ in $\C$, and associated to each color $\m_{i,i+1}$ is an $A_i$-$A_{i+1}$-bimodule in $\C$. We refer to $\Tens^\otimes$ as the \emph{Morita $\infty$-operad family}.

If $\al: S \to \Delta^\op$ is a map of simplicial sets, we denote the pullback of (*) along 
\[
\al \times id:
S\times \Fin_* 
\to
\Delta^\op \times \Fin_* 
\]
by $\Tens^\otimes_S$ or by $\Tens^\otimes_\al$ if we wish to emphasize the dependence on the map $\al$. 
In particular, if $\al:s \to s'$ is a morphism in $\Delta^\op$, we write $\Tens^\otimes_\al$ for the pullback along the associated morphism
\[
\al: \Delta^1 \to \Delta^\op.
\]

\begin{terminology}[$\A$-multimodules] \label{29.1}
There's an evident map of simplicial sets
\[
\Tens^\otimes \to \Delta^\op \times \Ass
\]
factoring \ref{FF44}(*). We refer to a $\Tens^{\otimes}_{[n]}$-algebra
\[
M \colon \Tens^\otimes_{[n]} \to \C^\otimes
\]
over $\Ass$ in a monoidal category $\C^\otimes$ as a \emph{multimodule}. We'll use the notation
\[
M= 
{_{A_0}{M^{0,1}}_{A_{1}} M^{1,2}_{A_2}}
\cdots
{_{A_{n-1}} M^{n-1,n}_{A_n}}
\]
which displays the constituents of $M$ typographically  with the algebras as subscripts and the bimodules between them placed on the line. 
An \emph{$(A_0,\ldots,A_n)$-multimodule} (or a \emph{multimodule over $(A_0, \dots, A_n)$}), is simply a multimodule whose algebras are $(A_0,\ldots,A_n)$.
If we wish to specify the number $n$ we say \emph{$n$-module} instead of \emph{multimodule}.
Thus, for instance, a 2-module looks like so:
\[
_{A_0}M_{A_1}N_{A_2}.
\]

Let $\A \subset \Alg(C)$ be a class of algebras closed under equivalences. 
By an \emph{$\A$-multimodule}, we mean a multimodule 
\[
M= 
_{A_0}{M^{0,1}}_{A_{1}} M^{1,2}_{A_2}
\cdots
_{A_{n-1}} M^{n-1,n}_{A_n}
\]
whose algebras $A_0, \dots, A_n$ belong to $\A$.
\end{terminology}

\begin{rem}
We let $\Tens_{[k]} = \Tens^\otimes_{[k]\langle 1 \rangle}$ denote the category of colors of the \oo-operad $\Tens_{[k]}^\otimes$. Denote the color $\aA_i$ also by $\mM_{i,i}$. Then under the equivalence 
\[
\Tens^\otimes_{[k]\langle n \rangle}
\simeq
\Tens^n_{[k]},  
\]
the object $\big( \angled{n}, [k], c_-,c_+ \big) \in \Tens^\otimes_{[k]\langle n \rangle}$ corresponds to the family of colors
\[
( \mM^k_{c_-(1), c_+(1)}, \mM^k_{c_-(2), c_+(2)}, 
\dots,  \mM^k_{c_-(n), c_+(n)}).
\]
\end{rem}

\begin{mydef}
\label{bbb}
Let $\succ: \Delta^1 \to \Delta^\op$ denote the map of simplicial sets associated to the morphism 
\[
\succ^\op:[1] \simeq \set{0,2} \cof [2]
\]
in $\Delta$, and let $\Tens^\otimes_{\succ}$ denote the pullback of $\Tens^\otimes$ along $\succ$. Let $\Cc^\otimes \to \Ass$ be a monoidal \oo-category. A \emph{bilinear map of bimodules in $\Cc$} is a map of generalized \oo-operads over $\Ass$
\[
F: \Tens^\otimes_{\succ} \to \Cc^\otimes.
\]
(See~\cite[def. 2.3.2.2]{HA} for the notion of a map of generalized \oo-operads.)
\end{mydef}

\begin{ex}
\label{LabelChaos}
Suppose given a bilinear map of bimodules as above. Then $F|_{\Tens^\otimes_{[2]}}$ is a $2$-module ${_{A_0} M^{0,1} _{A_1} M^{1,2} _{A_2}}$ in $\Cc$, and $F|_{\Tens^\otimes_{[1]}}$ is a $1$-module ${_{B_0} {N^{0,1}} _{B_1}}$ in $\Cc$. The bilinear map of bimodules $F$ then includes the data of algebra maps
\[
f:A_0 \to B_0
\quad \text{and} \quad
g: A_2 \to B_1,
\]
a map
\[
b: M^{0,1} \otimes M^{1,2} \to N^{0,1},
\]
as well as coherently commuting squares
\[
\tag{*}
\begin{tikzcd}
A_0 \otimes M^{0,1} \otimes M^{1,2}
\ar[r] \ar[d]
&
B_0 \otimes N^{0,1}
\ar[d]
\\
M^{0,1} \otimes M^{1,2}
\ar[r]
&
N^{0,1}
\end{tikzcd}
\]
{\footnotesize
\[
\begin{tikzcd}[column sep=1.5cm]
M^{0,1} \otimes A_1 \otimes M^{1,2}
\ar[r, "{\mathrm{mult}_{A_1, M^{1,2}}}"] 
\ar[d,  "{\mathrm{mult}_{M^{0,1}, A_1}}"]
&
M^{0,1} \otimes M^{1,2}
\ar[d]
&
M^{0,1} \otimes M^{1,2} \otimes A_{2}
\ar[r] \ar[d]
&
M^{0,1} \otimes M^{1,2}
\ar[d]
\\
M^{0,1} \otimes M^{1,2}
\ar[r]
&
N^{0,1}
&
N^{0,1} \otimes B_2
\ar[r]
&
N^{0,1}
\end{tikzcd}
\]}
in $\C$, and so we can reasonably say that \emph{$F$ witnesses $b$ as an $(f,g)$-bilinear map}.

As some amount of dexterity in working with bilinear maps is needed throughout much of the material that follows, the reader may wish to take the time to work this out. We include our own constructions of $b$ and of (*) for the reader's convenience. The morphism from $\aA_0 \in \Tens_{[2]}$ to $\aA_0 \in \Tens_{[1]}$ given by
\[\begin{tikzcd}
	1 & {\langle1\rangle^\circ} & {[2]} \\
	1 & {\langle1\rangle^\circ} & {[1]}
	\arrow["\in"{marking, allow upside down}, draw=none, from=1-1, to=1-2]
	\arrow[maps to, from=1-1, to=2-1]
	\arrow["0"', shift right, from=1-2, to=1-3]
	\arrow["0", shift left, from=1-2, to=1-3]
	\arrow[from=1-2, to=2-2]
	\arrow["\in"{marking, allow upside down}, draw=none, from=2-1, to=2-2]
	\arrow["0", shift left, from=2-2, to=2-3]
	\arrow["0"', shift right, from=2-2, to=2-3]
	\arrow["{\succ^\mathrm{op}}"', from=2-3, to=1-3]
\end{tikzcd}\]
gives rise to a homomorphism
$
f: A_0 \to B_0
$ of associative algebras in $\Cc$.
The morphism from $\aA_2 \in \Tens_{[2]}$ to $\aA_1 \in \Tens_{[1]}$ given by
\[\begin{tikzcd}
	1 & {\langle1\rangle^\circ} & {[2]} \\
	1 & {\langle1\rangle^\circ} & {[1]}
	\arrow["\in"{marking, allow upside down}, draw=none, from=1-1, to=1-2]
	\arrow[maps to, from=1-1, to=2-1]
	\arrow["2"', shift right, from=1-2, to=1-3]
	\arrow["2", shift left, from=1-2, to=1-3]
	\arrow[from=1-2, to=2-2]
	\arrow["\in"{marking, allow upside down}, draw=none, from=2-1, to=2-2]
	\arrow["1", shift left, from=2-2, to=2-3]
	\arrow["1"', shift right, from=2-2, to=2-3]
	\arrow["{\succ^\mathrm{op}}"', from=2-3, to=1-3]
\end{tikzcd}\]
gives rise to a homomorphism
$
g: A_2 \to B_1. 
$
The morphism from $(\mM_{0,1}, \mM_{1,2}) \in \Tens^\otimes_{[2]\angled{2}} \simeq \Tens^2_{[2]}$ to $\mM_{0,1} \in \Tens^\otimes_{[1]\angled{1}} = \Tens_{[1]}$ given by
\[\begin{tikzcd}
	& 2 & {1,2} \\
	& 1 & {0,1} \\
	{1,2} & {\langle 2 \rangle^\circ} & {[2]} \\
	1 & {\langle1\rangle^\circ} & {[1]} \\
	& 1 & {0,1}
	\arrow[maps to, from=1-2, to=1-3]
	\arrow[maps to, from=2-2, to=2-3]
	\arrow["\in"{marking, allow upside down}, draw=none, from=2-2, to=3-2]
	\arrow["\in"{marking, allow upside down}, draw=none, from=2-3, to=3-3]
	\arrow["\in"{marking, allow upside down}, draw=none, from=3-1, to=3-2]
	\arrow[maps to, from=3-1, to=4-1]
	\arrow[shift right, from=3-2, to=3-3]
	\arrow[shift left, from=3-2, to=3-3]
	\arrow["\alpha"', from=3-2, to=4-2]
	\arrow["\in"{marking, allow upside down}, draw=none, from=4-1, to=4-2]
	\arrow[shift left, from=4-2, to=4-3]
	\arrow[shift right, from=4-2, to=4-3]
	\arrow["{\succ^\mathrm{op}}"', from=4-3, to=3-3]
	\arrow["\in"{marking, allow upside down}, draw=none, from=5-2, to=4-2]
	\arrow[from=5-2, to=5-3]
	\arrow["\in"{marking, allow upside down}, draw=none, from=5-3, to=4-3]
\end{tikzcd}\]
gives rise to a morphism 
$(M^{0,1}, M^{1,2}) \to N^{0,1}$
in $\Cc^\otimes$ over the morphism $\al$ in $\Ass$. The choice of a coCartesian edge
$(M^{0,1}, M^{1,2}) \to M^{0,1} \otimes M^{1,2}$ over $\al$ gives rise to a morphism $b$ as above.

We turn to the construction of the coherently commuting square (*). Consider the commuting square
\[
\begin{tikzcd}
\big(
\langle 3\rangle, [2], c_\pm^{nw} 
\big)
:=
(\aA_0^2, \mM_{0,1}^2, \mM_{1,2}^2)
\ar[r, "{(\alpha^n, \succ)}"] \ar[d, "{(\al^w, id_{[2]})}"']
&
\big(
\langle 2\rangle, [1], c_\pm^{ne} 
\big)
:=
(\aA_0^1, \mM_{0,1}^1)
\ar[d,"{(\al^e, id_{[1]})}"]
\\
\big(
\langle 2\rangle, [2], c_\pm^{sw} 
\big)
:=
(\mM_{0,1}^2, \mM_{1,2}^2)
\ar[r,"{(\al^s, \succ)}"']
&
\big(
\langle 1\rangle, [1], c_\pm^{se} 
\big)
:=
(\mM_{0,1}^1)
\end{tikzcd}
\]
in $\Tens^\otimes_\succ$, in which $\al^n$ is given by
\[
\al^n(1) = 1, \quad \al^n(2) = \al^n(3) = 2,
\quad 2 \prec 3,
\]
$\al^w$ is given by
\[
\al^w(1) = \al^w(2) = 1,
\quad
\al^w(3) = 2, 
\quad
1 \prec 2,
\]
and
\[
\al^e = \al^s \quad \text{are given by} \quad 1,2 \mapsto 1,
\quad
1 \prec 2.
\]
Applying the functor $F$, we obtain a coherently commuting square
\[
\tag{$\rm{sq}^\otimes$}
\begin{tikzcd}
(A_0, M^{0,1}, M^{1,2})
\ar[r] \ar[d]
&
(B_0, N^{0,1})
\ar[d]
\\
(M^{0,1}, M^{1,2})
\ar[r]
&
N^{0,1}
\end{tikzcd}
\]
in $\Cc^\otimes$. Using the active morphisms $\langle n \rangle \to \langle 1 \rangle$ given by $n \succ n-1 \succ \cdots \succ 1$, its image in $\Ass$ may be extended to a diagram $\boxempty^\rhd \to \Ass$ ($\boxempty = \Delta^1 \times \Delta^1$). Using the natural map $\boxempty \times \Delta^1 \to \boxempty^\rhd$ which sends $\boxempty \times \{1\}$ to the cone point, we obtain a commuting square (solid arrows)
\[\begin{tikzcd}
\Delta^0 \ar[r] \ar[d]
&
(\Cc^\otimes)^\boxempty
\ar[d,"q^\boxempty"]
\\
\Delta^1
\ar[r] \ar[ur, dashed]
&
(\Ass)^\boxempty
\end{tikzcd}\]
of simplicial sets. Since $q^\boxempty$ is a coCartesian fibration, there exists a diagonal dashed arrow as shown. Taking the image of $1 \in \Delta^1$ we obtain the the commuting square (*).
\end{ex}

\begin{terminology} [Relative tensor product of $\A$-multimodules] \label{27.2}
Let 
\[
q \colon \C^\otimes \to \Ass
\]
be a monoidal $\infty$-category, and let $\A \subset \Alg(\C)$ be a class of algebras closed under equivalences.
We say that $\C$ \emph{admits operadic left Kan extensions from $\A$-multimodules} if for every edge $\al: s\to s'$ in $\Delta^\op$ and every $\A$-multimodule $F_0 \in \Alg_{\Tens_s/\Ass}(\C)$ 
there exists an operadic $q$-left Kan extension
\[
F \in \Alg_{\Tens_\al/\Ass}(\C) 
\]
of $F_0$, as in the following diagram:
\begin{equation*}
\begin{tikzcd}
\Tens^\otimes_s \ar[d] \ar[r,"F_0"] & \C^\otimes \ar[d] \\
\Tens^\otimes_\al \ar[r] \ar[ur, dashed, "F"] & \Ass. 
\end{tikzcd}
\end{equation*}
We also say that \emph{$\C$ admits relative tensor products of $\A$-multimodules}; this generalizes~\cite[def. 4.4.2.3]{HA}.
\end{terminology}  

\subsection{The Morita pre-\textit{double \texorpdfstring{$\infty$}{infinity}-category}} \label{27.1}
Let 
\[
\C^\otimes \to \Ass
\]
be a monoidal \oo-category. We will construct the Morita double $\infty$-category 
\[
\Bmod_\A(\C)^\circledast
\]
as a simplicial subset of the Morita double $\infty$-category $\Bmod(\C)^\circledast$ \cite[cor. 4.4.3.2 and def. 4.4.3.10]{HA}:

\begin{mydef}\label{def:BMod_C_star}
For variable $[n] \in \Delta^\op$, the $\infty$-categories of $n$-multimodules in $\C$ arrange themselves into a map of simplicial sets 
\begin{equation}\label{eq:BmodC_map}
{\Bmod(\C)}^\circledast \to \Delta^\op
\end{equation}
defined by the following universal property:
\begin{adjustwidth}{1cm}{1cm}
For every map of simplicial sets $K \to \Delta^\op$,
we have a bijection
% the set
\[
\Hom_{(\Set_\Delta)/\Delta^\op} (K, {\Bmod(\C)}^\circledast)
\= \Alg_{\Tens_K / \Ass}(\C)_{0}
\]
% is in bijection with 
between the set of simplicial maps $K \to \Bmod(\C)^{\circledast}$ over $\Delta^\op$ and the set\footnote{We emphasize that the right hand side is considered as a set by adding the subscript 0}
of $\Tens_K^\otimes$-algebras in $\C^\otimes$ over $\Ass$, functorial in $K$.
\end{adjustwidth}
We refer to ${\Bmod(\C)}^\circledast$ as the Morita pre-\textit{double $\infty$-category} of $\C$.\footnote{This is the same as~\cite[Definition 4.4.3.10]{HA}, except that we do not assume that the tensor product in $\C$ is compatible with geometric realizations (in the sense of~\cite[def. 3.1.1.18]{HA}).}
\end{mydef}

\begin{rem}
For ease of reference, let us note that ~\cref{def:BMod_C_star} relates dotted arrows as in the following diagram:
\[\begin{tikzcd}
	& {\mathrm{BMod}(\mathcal{C})^\circledast} \\
	K & {\Delta^\mathrm{op}}
	\arrow[from=1-2, to=2-2]
	\arrow[dotted, from=2-1, to=1-2]
	\arrow[from=2-1, to=2-2]
\end{tikzcd}\]
to dotted arrows as in the following diagram:
\[\begin{tikzcd}
	{\mathrm{Tens}_K^\otimes} & {\mathrm{Tens}^\otimes} & {\mathcal{C}^\otimes} \\
	{K \times \mathrm{Assoc}^\otimes} & {\Delta^\mathrm{op} \times \mathrm{Assoc}^\otimes} & {\mathrm{Assoc}^\otimes}.
	\arrow[from=1-1, to=1-2]
	\arrow[curve={height=-18pt}, dotted, from=1-1, to=1-3]
	\arrow[from=1-1, to=2-1]
	\arrow[from=1-2, to=2-2]
	\arrow[from=1-3, to=2-3]
	\arrow[from=2-1, to=2-2]
	\arrow[from=2-2, to=2-3]
\end{tikzcd}\]
The functoriality in $K$ implies that a
% with the obvious functoriality in $K$, so that the 
diagram of simplicial sets 
% over $\Delta^\op$
\begin{equation*}
\begin{tikzcd}
L \ar[d] \ar[r] & {\Bmod(\C)}^\circledast \ar[d] \\
K \ar[r] \ar[ur] & \Delta^\op
\end{tikzcd}
\end{equation*}
commutes if and only if the corresponding diagram 
\begin{equation*}
\begin{tikzcd}
\Tens_L^\otimes \ar[d] \ar[r] & \C^\otimes \ar[d] \\
\Tens_K^\otimes \ar[r] \ar[ur] & \Assoc
\end{tikzcd}
\end{equation*}
commutes. 
\end{rem}

In the following paragraphs we assemble a few first properties of $\rm{BMod}(\C)^\circledast$.
\begin{lem} \label{link2}
Let $q \colon \C^\otimes \to \Ass$ be a monoidal $\infty$-category, and
\[
p \colon {\Bmod(\C)}^\circledast \to \Delta^\op
\]
as above. Then $p$ is an inner fibration.
\end{lem}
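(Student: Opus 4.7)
The plan is to exploit the defining universal property of $\Bmod(\C)^\circledast$ to convert an inner horn lifting problem against $p$ into an extension problem for operadic algebras over the Morita $\infty$-operad family. Given an inner horn inclusion $\Lambda^n_i \hookrightarrow \Delta^n$ with $0 < i < n$ and a commuting square
\begin{equation*}
\begin{tikzcd}
\Lambda^n_i \ar[d, hook] \ar[r] & \Bmod(\C)^\circledast \ar[d, "p"] \\
\Delta^n \ar[r] & \Delta^\op,
\end{tikzcd}
\end{equation*}
\cref{def:BMod_C_star} identifies the upper edge with a morphism of $\infty$-operads $F_0 \colon \Tens^\otimes_{\Lambda^n_i} \to \C^\otimes$ over $\Ass$, and a lift $\Delta^n \to \Bmod(\C)^\circledast$ with an extension of $F_0$ along the pullback inclusion $\Tens^\otimes_{\Lambda^n_i} \hookrightarrow \Tens^\otimes_{\Delta^n}$, again over $\Ass$.

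To produce such an extension, I would invoke the following input on the Morita family $\Tens^\otimes \to \Delta^\op \times \Fin_*$ established in \cite[\S 4.4.1]{HA}: pullback along inner anodyne maps in $\Delta^\op$ preserves the relevant class of categorical equivalences of $\infty$-operads over $\Ass$, so that the restriction functor
\[
\Alg_{\Tens_{\Delta^n}/\Ass}(\C) \longrightarrow \Alg_{\Tens_{\Lambda^n_i}/\Ass}(\C)
\]
is a trivial Kan fibration (in particular, surjective on objects). Choosing any preimage of $F_0$ yields the desired extension and hence the required lift against $p$.

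The delicate point, and main potential obstacle, is ensuring that the cited compatibility of $\Tens^\otimes$ with inner anodyne maps of $\Delta^\op$ applies in our setting. Here the key observation is that our construction of $\Bmod(\C)^\circledast$ in \cref{def:BMod_C_star} agrees verbatim with \cite[def. 4.4.3.10]{HA}: the blanket assumption we have dropped (namely, that the tensor product on $\C$ is compatible with geometric realizations) is used elsewhere only to guarantee the \emph{existence} of the operadic left Kan extensions that compute relative tensor products, and plays no role in verifying the inner fibration property. Consequently the argument reduces, with no essential change, to the corresponding statement in \cite{HA}, and the lemma follows.
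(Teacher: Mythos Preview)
Your approach is correct and essentially the same as the paper's: both convert the inner horn lifting problem against $p$ into an extension problem along $\Tens^\otimes_{\Lambda^n_i} \hookrightarrow \Tens^\otimes_{\Delta^n}$ over $\Ass$. The paper's execution is slightly more direct and its citations more precise---the key input is the \emph{flatness} of $\Tens^\otimes \to \Delta^\op$ (\cite[thm.~4.4.3.1]{HA}, not \S4.4.1) together with \cite[prop.~B.3.14]{HA}, which shows $\Tens^\otimes_{\Lambda^n_i} \to \Tens^\otimes_{\Delta^n}$ is a trivial Joyal cofibration, after which one lifts directly against the categorical fibration $q$ rather than passing through a trivial Kan fibration on algebra categories.
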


\begin{proof}
We verify that $p$ has the right lifting property with respect to all inner anodyne maps.
% We first verify that $p\colon {\Bmod(\C)}^\circledast \to \Delta^\op$ is an inner fibration.
By definition of ${\Bmod(\C)}^\circledast$ checking the lifting condition against an inner anodyne map $A \to B$ is equivalent to solving the following lifting problem
\begin{equation*}
\begin{tikzcd}
\Tens^\otimes_A \ar[d] \ar[r] & \C^\otimes \ar[d] \\
\Tens^\otimes_B \ar[r] \ar[ur, dashed] & \Ass. 
\end{tikzcd}
\end{equation*}
By~\cite[thm. 4.4.3.1]{HA} (flatness of $\Tens^\otimes \to \Delta^\op$) and~\cite[prop. B.3.14]{HA} the monomorphism $\Tens^\otimes_A \to \Tens^\otimes_B$ is a trivial cofibration in the Joyal model structure (\cite[thm. 2.2.5.1]{HTT}).
It follows that the solution to the above lifting problem exists, as the right vertical morphism $\C^\otimes \to \Ass$ is a categorical fibration.
\end{proof}

\begin{mydef}
\label{666a}
Let $\C^\otimes$ be a monoidal $\infty$-category and let $\A \subset \Alg(\C)$ be a class of algebras closed under equivalences. 
We say that 
\[
{\Bmod(\C)}^\circledast \to \Delta^\op
\]
\emph{admits coCartesian lifts from $\A$-multimodules} if whenever $\al:s \to s'$ is an edge in $\Delta^\op$ and $F_0$ is an $\A$-multimodule,
there exists a $p$-coCartesian lift of $\al$ extending $F_0$.
\end{mydef}

\begin{prop} \label{2}
Let $q \colon \C^\otimes \to \Ass$ be a monoidal $\infty$-category.
Let $\al: s \to s'$ be an edge in $\Delta^\op$ and let $F_0$ be a vertex of $\Alg_{\Tens_s/\Ass}(\C)$.
The vertex $F_0$ corresponds to an element of\footnote{Here we consider $s$ as a map of simplicial sets
$s \colon \set{s}=\Delta^0 \xto{0} \Delta^1 \xto{\al} \Delta^\op$.}
\[
\Hom_{{\Set_\Delta}/\Delta^\op} (\set{s}, {\Bmod(\C)}^\circledast),
\]
and hence to a vertex of ${\Bmod(\C)}^\circledast$ lying over $s$.
Let $F$ be a vertex of $\Alg_{\Tens_\al/\Ass}(\C)$ extending $F_0$; equivalently, $F$ is an edge of ${\Bmod(\C)}^\circledast$ lying over $\al$ with source $F_0$.
If $F$, regarded as
\[
\Tens^\otimes_\al \to \C^\otimes,
\]
is an operadic $q$-left Kan extension of $F_0$, then $F$ regarded as an edge of 
$
{\Bmod(\C)}^\circledast
$
is $p$-coCartesian. 
In particular, if $\C$ admits operadic left Kan extensions from $\A$-multimodules (\cref{27.2}), then $\Bmod(\C)^\circledast$ admits coCartesian lifts from $\A$-multimodules (\cref{666a}). 
\end{prop}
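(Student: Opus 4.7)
The plan is to verify the $p$-coCartesian lifting property directly by using \cref{def:BMod_C_star} to translate each lifting problem in $\Bmod(\C)^\circledast$ into an extension problem for $\Tens^\otimes$-algebras, and then to solve the latter using the defining universal property of operadic $q$-left Kan extensions.

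First I would spell out what is required: $F$ is $p$-coCartesian precisely when for every $n\geq 2$ and every commutative square
\[
\begin{tikzcd}
\Lambda^n_0 \ar[r,"G_0"] \ar[d] & \Bmod(\C)^\circledast \ar[d,"p"] \\
\Delta^n \ar[r,"\be"] \ar[ur,dashed] & \Delta^\op
\end{tikzcd}
\]
whose restriction to $\Delta^{\{0,1\}}$ agrees with $F$, a dashed filler exists. Applying the universal property of \cref{def:BMod_C_star} to the pair $(\Lambda^n_0,\Delta^n)$ over $\Delta^\op$ (both equipped with the map coming from $\be$), this lifting problem is equivalent to an extension problem
\[
\begin{tikzcd}
\Tens^\otimes_{\Lambda^n_0} \ar[r,"\widetilde G_0"] \ar[d] & \C^\otimes \ar[d,"q"] \\
\Tens^\otimes_{\Delta^n} \ar[r] \ar[ur,dashed] & \Ass
\end{tikzcd}
\]
in which $\widetilde G_0$ restricted to $\Tens^\otimes_\al \sseq \Tens^\otimes_{\Lambda^n_0}$ agrees with $F$.

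Next I would invoke the hypothesis: $F$ is an operadic $q$-left Kan extension of $F_0 = F|_{\Tens^\otimes_s}$. By the universal characterization of operadic left Kan extensions (Lurie, \cite[\S 3.1.2-3.1.3]{HA}), extensions of $F_0$ to any simplicial set $K$ that contains $\Tens^\otimes_s$ and sits over $\Ass$ are controlled, up to a contractible choice, by $F_0$ together with the initial Kan extension datum on $\Tens^\otimes_\al$. Concretely, the restriction of $\widetilde G_0$ along $\Tens^\otimes_\al \hookrightarrow \Tens^\otimes_{\Lambda^n_0}$ being $F$ means that extensions of $\widetilde G_0$ to $\Tens^\otimes_{\Delta^n}$ over $\Ass$ are computed by further extending along the remaining cellular inclusions, and the operadic Kan extension property guarantees that the space of such extensions is contractible. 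In particular a filler exists.

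The main obstacle I anticipate is the cellular bookkeeping needed to feed this into Lurie's framework cleanly: one must analyze the inclusion $\Tens^\otimes_{\Lambda^n_0} \hookrightarrow \Tens^\otimes_{\Delta^n}$ as a composite of inclusions to which the operadic left Kan extension criterion applies, and check that the resulting categorical (co)fibrancy holds. The flatness of $\Tens^\otimes \to \Delta^\op$ (\cite[thm. 4.4.3.1]{HA}) together with~\cite[prop. B.3.14]{HA}, already used in the proof of \cref{link2}, ensures that $\Tens^\otimes_{\Lambda^n_0} \to \Tens^\otimes_{\Delta^n}$ is a trivial cofibration in the Joyal model structure, so the categorical-lifting half of the argument is immediate from categorical fibrancy of $q$; the genuinely operadic content then enters only through the first (initial) cell, which is exactly where the operadic left Kan extension hypothesis on $F$ applies.

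The final ``in particular'' assertion is an immediate consequence: if $\C$ admits operadic $q$-left Kan extensions from $\A$-multimodules, then for every edge $\al:s\to s'$ in $\Delta^\op$ and every $\A$-multimodule $F_0$ over $\Tens^\otimes_s$ we may choose such an extension $F$, and the preceding argument upgrades $F$ to a $p$-coCartesian lift of $\al$ in $\Bmod(\C)^\circledast$ extending $F_0$.
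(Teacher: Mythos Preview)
Your setup is correct and matches the paper: reduce to the outer-horn lifting problem, then translate via the universal property of $\Bmod(\C)^\circledast$ to an extension problem of maps $\Tens^\otimes_{\Lambda^n_0}\to\C^\otimes$ over $\Ass$.

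The gap is in your resolution. You claim that flatness of $\Tens^\otimes\to\Delta^\op$ together with \cite[prop.\ B.3.14]{HA} makes $\Tens^\otimes_{\Lambda^n_0}\to\Tens^\otimes_{\Delta^n}$ a trivial cofibration in the Joyal model structure. But \cite[prop.\ B.3.14]{HA} only applies when the base map is already a categorical equivalence, and $\Lambda^n_0\hookrightarrow\Delta^n$ is an \emph{outer} horn, not inner anodyne, hence not a categorical equivalence. (Indeed, were your claim true, every edge of $\Bmod(\C)^\circledast$ would be $p$-coCartesian and the operadic Kan extension hypothesis would be superfluous.) Your subsequent remark about ``genuinely operadic content entering through the first cell'' is then not a repair: you have not specified any decomposition of $\Lambda^n_0\to\Delta^n$ into a cell governed by the Kan extension hypothesis followed by inner-anodyne cells, and no such decomposition exists in general.

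What actually closes the argument is a single citation: \cite[thm.\ 3.1.2.3(B)]{HA}. That theorem says precisely that if $F\colon\Tens^\otimes_\al\to\C^\otimes$ is an operadic $q$-left Kan extension of $F_0$, then for the $\Delta^n$-family of $\infty$-operads $\Tens^\otimes_{\Delta^n}\to\Fin_*\times\Delta^n$ one can extend any compatible map on $\Tens^\otimes_{\Lambda^n_0}$ to all of $\Tens^\otimes_{\Delta^n}$. This is the content you gestured at with ``\S3.1.2--3.1.3'' but did not pin down; it is not reducible to a Joyal-model-structure lifting argument.
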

This is essentially Corollary 4.4.3.2 of \cite{HA}. We nevertheless repeat the proof here for the convenience of the reader, since we found the argument in loc. cit. a bit condensed. 

\begin{proof}
It's enough to solve the lifting problem of~\cite[rem. 2.4.1.4]{HTT} (dualized):
\begin{equation}
\tag{*}
\text{For $n \ge 2$:} \quad \quad
\begin{tikzcd}
\Delta^{\{0,1\}} 
\ar[dr,"F"]
\ar[d, hookrightarrow] \\
\Lambda^n_0 \ar[r] \ar[d, hookrightarrow] &
\Bmod(\C)^\circledast \ar[d,"p"] \\
\Delta^n \ar[r,"\sigma"] \ar[ur,dashed] & \Delta^\op.
\end{tikzcd}
\end{equation}
Base-changing $\Tens^\otimes$ along $\sigma$, we have a $\Delta^n$-family of $\infty$-operads 
\[
r \colon \Tens^\otimes_\sigma \to \Fin_* \times \Delta^n. 
\]
Let $\gamma$ be the composite map
\[
\Tens^\otimes_{\Delta^n} = \Tens_\sigma^\otimes \to \Tens^\otimes \to \Ass. 
\]
Diagram (*) gives rise to a commuting diagram of $\infty$-operad family maps (solid-arrow diagram below).
\begin{equation*}
\tag{**}
\begin{tikzcd}
\Tens^\otimes_{\Delta^{\{0\}}} 
\ar[ddr,"F_0"] \ar[d] \\
\Tens^\otimes_{\Delta^{\{0,1\}}} 
\ar[dr,"F"] \ar[d]
\\
\Tens^\otimes_{\Lambda^n_0}
\ar[r] \ar[d]
&
\C^\otimes \ar[d,"p"]
\\
\Tens^\otimes_{\Delta^n} 
\ar[r,"\gamma"] \ar[ur,dashed, "f"]
&
\Ass
\end{tikzcd}
\end{equation*}
By assumption, the functor $F$ appearing in diagram (**) is an operadic $q$-left Kan extension of $F_0$.
By~\cite[thm. 3.1.2.3 (B)]{HA} there exists a map $f$ of generalized operads making diagram (**) commute.
This solves the lifting problem (*). 
\end{proof}

\subsection{Review of the bar construction}
We review~\cite[Construction 4.4.2.7]{HA}.

\begin{mydef}
\label{AB12}
Let
\[
\Cut: \Delta^\op \to \Ass
\]
be the functor sending the object $[n]$ to $\langle n \rangle$ and the morphism
$
\al: [n] \to [m]
$
in $\Delta$ to the morphism
\[
\Cut(\al): \langle m \rangle \to \langle n \rangle
\]
which sends $i \in \langle m \rangle^\circ$ to the least $j \in \langle n \rangle^\circ$ such that $i \le \al(j)$, or to $*$ if no such $j$ exists (i.e. if $i> \al(j)$ for all $j \in \langle n \rangle^\circ$). Let
\[
\ArDopconv \subset \Fun([1], \Delta)^\op
\]
(denoted $\opnm{Step}$ in \cite[Definition 4.4.1.12]{HA}) denote the full subcategory spanned by morphisms with convex image. Define a functor 
\[
\Cleave: \ArDopconv \to \Tens^\otimes
\]
as follows. Let $f: [n] \to [k]$ be a morphism in $\Delta$ with convex image. Then 
\[
\Cleave(f) := \big(
\langle n \rangle, [k], c_\pm: \langle n \rangle^\circ
\rightrightarrows [k]
\big)
\]
with $c_+$, $c_-$ given by $c_+(i) = f(i)$ and $c_-(i) = f(i-1)$. Now let
\[
\begin{tikzcd}
{[n]} \ar[r,"f "] \ar[d,"\al_0"] & {[k]} \ar[d,"\al_1"]
\\
{[n']} \ar[r,"f'"] & {[k']}
\end{tikzcd}
\]
be a commuting square in $\Delta$ in which $f$, $f'$ both have convex image. Viewing $\al = (\al_0, \al_1)$ as a morphism $f' \to f$ in $\ArDopconv$, we set 
\[
\Cleave(\al) := \big(\Cut(\al_0), \al_1 \big).
\]
\end{mydef}

\begin{mydef}
\label{AB13}
We define a functor 
\[
\Prebar^ +: (\Delta_+)^\op \to \ArDopconv
\]
(denoted `$u_+$' in \cite[Notation 4.4.2.4]{HA}). On the level of objects, $\Prebar^+$ is given as follows. If $n \ge 0$, then $\Prebar^+([n])$ is the morphism with convex image (in fact surjective)
\[
[n+2] \to [2]
\]
sending $0 \mapsto 0$, $n+2 \mapsto 2$ and everything in between $\mapsto 1$. For the remaining case $n = -1$, we set $\Prebar^+([-1])$ equal to the identity map $[1] \to [1]$. 

We turn our attention to morphisms. If $f:[n] \to [m]$ is a morphism in $\Delta$, we define
\[
\Prebar^+(f): \Prebar^+([m]) \to ([n])
\]
using the evident commuting square in $\Delta$:
\[
\begin{tikzcd}
{[n+2]} \ar[r] \ar[d] & {[2]} \ar[d, "\sim"]
\\
{[m+2]} \ar[r] & {[2]}.
\end{tikzcd}
\]
If, instead, $f$ is a morphism $[-1] \to [n]$ in $\Delta_+$, we define $\Prebar^+(f)$ using the commuting square in $\Delta$:
\[
\begin{tikzcd}
{[1]} \ar[d, "id"] \ar[r,"\sim"] &
{\{0, n+2\}} \ar[r]  & {[n+2]} \ar[d, "{\Prebar^+([n])}"]
\\
{[1]} \ar[r,"\sim"]
&
{\{0,2\}} \ar[r] & {[2]}.
\end{tikzcd}
\]
\end{mydef}

\begin{mydef}
\label{AB13}
The composition
\[
(\Delta_+)^\op \xto{\Prebar^+}
\ArDopconv \xto{\Cleave}
\Tens^\otimes
\]
lands inside $ \Tens^\otimes_{\succ}$ (~\cref{bbb}), hence defines a functor 
\[
\rm{bar}^+:  {(\Delta_+)}^\op \to \Tens^\otimes_{\succ}.
\]
Its restriction to $\Delta^\op$ defines a functor
\[
\rm{bar} \colon \Delta^\op \to  \Tens^\otimes_{[2]}.
\]
\end{mydef}

\begin{mydef}\label{def:bar_const}
Let $q \colon \C^\otimes \to \Ass$ be a monoidal $\infty$-category. 
Let $F_0 = {_{A}M_{B}N_{C}}$ be an $(A,B,C)$-module in $\C$.
% over an algebra $A$ in $\C$. 
The \emph{operadic bar construction on $(M,N)$ over $B$} is given by
\[
{\Bar_B^\otimes(M,N)}_\bullet \colon %:= F_0 \circ \text{bar}.
\Delta^\op \xto{\rm{bar}} \Tens^\otimes_{[2]} \xto{F_0} \C^\otimes.
\]
% Then $F_0$ fits in 
The commuting diagram of simplicial sets
\begin{equation*}
\begin{tikzcd}
\Delta^\op \ar[r,"\rm{bar}"] \ar[d, hookrightarrow]
& \Tens^\otimes_{[2]} \ar[d] \ar[r,"F_0"]
& \C^\otimes \ar[d,"q"] \\
\Delta_+^\op \ar[r,"\rm{bar}^+"]
& \Tens^\otimes_{\succ} \ar[r]
& \Ass.
\end{tikzcd}
\end{equation*}
% The above diagram 
gives rise to a commuting square of simplicial sets (solid arrow diagram) 
\begin{equation*}
\begin{tikzcd}
\Delta^{\set{0}} \ar[rr,"{\Bar_B^\otimes(M,N)_\bullet}"]
\ar[d,hookrightarrow]
& & \Fun(\Delta^\op, \C^\otimes )
\ar[d,"q^{\Delta^\op}"] \\
\Delta^1 \ar[rr,"\sigma"] \ar[urr,dashed,"\beta"]
& & \Fun(\Delta^\op, \Ass).
\end{tikzcd}
\end{equation*}
By~\cite[prop. 3.1.3.1]{HTT}, $q^{\Delta^\op}$ is again a coCartesian fibration.
Hence the edge $\sigma$ in $\Fun(\Delta^\op, \Ass)$ admits a coCartesian lift $\beta$ as indicated, extending the vertex ${\Bar_B^\otimes(M,N)}_\bullet$ of $\Fun(\Delta^\op, \C^\otimes )$.
We define the \emph{(underlying) bar construction on $(M,N)$ over $B$} to be the simplicial object of the underlying category $\C$ given by 
\[
{\Bar_B(M,N)}_\bullet := \beta(1).
\]
The construction provides a natural transformation
\[
\beta \colon {\Bar_B^\otimes(M,N)}_\bullet 
\to 
{\Bar_B(M,N)}_\bullet
\]
of functors $\Delta^\op \to \C^\otimes$.
\end{mydef}

\begin{mydef} \label{def:admits_A_bar}
Let $A\in \Alg(\C)$ be an algebra.
We say that \emph{ $\C$ admits realizations of $A$-bar constructions $\otimes$-compatibly} if the following conditions hold:
\begin{enumerate}
\item
The $\infty$-category $\C$ admits realizations of the bar complexes $\Bar_A(M,N)_\bullet$. 
\item Fix $X,Y\in \C$. One always has
\[
\Bar_A(X \otimes M, N \otimes Y )_\bullet \simeq 
X \otimes \Bar_A( M, N )_\bullet \otimes Y
\]
and we require that the canonical map
\[
\left| \Bar_A(X \otimes M,N\otimes Y )_\bullet \right| \to X \otimes \left| \Bar_A(M,N)_\bullet \right| \otimes Y
\]
be an equivalence. 
\end{enumerate}

Let $\A$ be a class of algebras in $\C$.
We say that \emph{$\C$ admits realizations of $\A$-bar constructions $\otimes$-compatibly} 
if $\C$ admits realizations of $A$-bar constructions $\otimes$-compatibly for every $A\in \A$. When $\A$ is the class of all algebras, we simply say that $\C$ \emph{admits realizations of bar constructions $\otimes$-compatibly}.

Dually, let $A \in \coAlg(\C) = \Alg(\C^\op)^\op$ be a coalgebra. 
We say that \emph{$\C$ admits totalizations of $A$-cobar constructions $\otimes$-compatibly} if $\C^\op$ admits realizations of $A$-bar constructions $\otimes$-compatibly, and similarly for ``\emph{admits totalizations of $\A$-cobar constructions $\otimes$-compatibly}'' and ``\emph{admits totalizations of cobar constructions $\otimes$-compatibly}''.
\end{mydef}

\begin{prop}[Refinement of HA.4.4.2.8 (1)] \label{29.2}
Let
\[
q \colon \C^\otimes \to \Ass
\]
be a monoidal $\infty$-category (so in particular a coCartesian fibration).
Let $F_0 = {_AM_BN_C}$ be an $(A,B,C)$-module in $\C$.
Assume ${\Bar_B(M,N)}_\bullet$ admits a geometric realization compatibly with $\otimes$.
Suppose given a commutative diagram of generalized $\infty$-operads (solid arrow diagram below)
\begin{equation*}
\begin{tikzcd}
\Tens^\otimes_{[2]} \ar[r,"F_0"] \ar[d] & \C^\otimes \ar[d,"q"] \\
\Tens^\otimes_\succ \ar[r,"f"] \ar[ur,dashed,"F"] & \Ass.
\end{tikzcd}
\end{equation*}
Then there exists an operadic $q$-left Kan extension $F$ of $F_0$ (dotted arrow) making the diagram commute.
\end{prop}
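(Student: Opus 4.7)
The plan is to follow Lurie's proof of \cite[prop. 4.4.2.8 (1)]{HA} essentially verbatim, isolating the single step where the blanket $\otimes$-compatibility hypothesis on $\C$ is invoked, and verifying that our targeted hypothesis on $\Bar_B(M,N)_\bullet$ suffices in its place. First I would reduce to a pointwise existence problem for operadic colimits. By \cite[thm. 3.1.2.3]{HA}, the existence of the operadic $q$-left Kan extension $F$ is equivalent to the assertion that for every object $X$ of $\Tens^\otimes_\succ$ lying over the vertex $1 \in \Delta^1$ (equivalently, in the fiber $\Tens^\otimes_{[1]}$), the associated active slice diagram admits an operadic $q$-colimit in $\C^\otimes$ which is preserved under base change along active morphisms.

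Next I would make these slice diagrams concrete. An object $X$ of $\Tens^\otimes_{[1]}$ is a finite pointed set with non-basepoint elements labeled by the colors $\{\a_0, \a_1, \m_{0,1}\}$. For the trivial labelings by the algebra colors $\a_0$ or $\a_1$, the operadic extension is forced by $F_0$ to be $A$ or $C$ and nothing needs checking. For a single entry labeled $\m_{0,1}$, Lurie's simplicial object $\text{bar} \colon \Delta^\op \to \Tens^\otimes_{[2]}$ provides a cofinal map into the active slice, so the corresponding operadic colimit is identified with $\left| \Bar_B(M,N) \right|$, which exists by hypothesis. For a general labeled $X$, the active slice decomposes into a product of the basic slices interleaved by constant algebra factors, and its operadic colimit is identified with an iterated realization of bar complexes tensored with constants of the form $A$ and $C$ on either side.

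The compatibility clause of our hypothesis, namely that the canonical map
\[
  \left| \Bar_B(X' \otimes M, N \otimes Y') \right| \to X' \otimes \left| \Bar_B(M,N) \right| \otimes Y'
\]
is an equivalence for all $X', Y' \in \C$, is exactly what is needed to conclude that these iterated realizations exist in $\C$ and to verify their preservation under active base change. Lurie's packaging theorem \cite[thm. 3.1.2.3 (B)]{HA} then assembles these pointwise results into the desired morphism $F$ of generalized $\infty$-operads making the diagram commute.

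The hard part will be the combinatorial identification of the slice diagrams with iterated realizations of the bar construction for the \emph{given} $(A,B,C)$-module, up to tensoring by constants --- rather than with bar constructions over other algebras or involving unrelated multimodules. This hinges on the fact that only the single algebra $B$ appears between $M$ and $N$ in $F_0$, so that the faces and degeneracies relevant to each slice are exactly those recorded by Lurie's $\text{bar}$ functor. Once that identification is established, the remainder of the argument is a direct transcription of Lurie's, with the single invocation of the blanket compatibility hypothesis replaced by our targeted one.
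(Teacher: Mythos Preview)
Your outline is correct, but it takes a different route from the paper. You appeal to the general existence criterion \cite[thm. 3.1.2.3]{HA} and then analyze the active slices over each object of $\Tens^\otimes_{[1]}$ by hand, invoking cofinality of $\text{bar}$ and the $\otimes$-compatibility hypothesis for the iterated case. The paper instead constructs the augmented operadic bar diagram $\Bar_B^\otimes(M,N)_\bullet^+ \colon \Delta^\op_+ \to \C^\otimes$ directly: it first takes the colimit of the underlying bar construction, observes via \cite[prop. 3.1.1.16]{HA} that the $\otimes$-compatibility hypothesis makes this an operadic $q$-colimit diagram in $\C^\otimes$, and then lifts the augmentation back along the coCartesian transformation $\beta$ by a right-anodyne/right-fibration argument (using $\C^\otimes_{/P} \to \C^\otimes \times_{\Ass} \Ass_{/\a}$). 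Having produced an operadic $q$-colimit diagram extending $\Bar_B^\otimes(M,N)_\bullet$, the paper simply invokes \cite[prop. 4.4.2.5]{HA}, which packages precisely the slice analysis you propose to carry out.

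The practical difference is that \cite[prop. 4.4.2.5]{HA} already encodes the cofinality of $\text{bar}$ and the reduction from general objects of $\Tens^\otimes_{[1]}$ to the single color $\m_{0,1}$, so the paper never has to touch the ``iterated realization'' combinatorics you flag as the hard part. Your approach is not wrong, but it re-derives the content of 4.4.2.5 rather than citing it; the paper's route is shorter and isolates the new input (the targeted $\otimes$-compatibility) more cleanly as the single ingredient needed to make \cite[prop. 3.1.1.16]{HA} apply.
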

In the situation of \cref{29.2} we say that $F$ exhibits $X = F|_{\Tens^\otimes_{[1]}}$ as a relative tensor product of $M$ and $N$ (see~\cite[def. 4.4.2.3]{HA}).
\begin{proof}
By assumption, the underlying bar construction ${\Bar_B(M,N)}_\bullet$ extends to a colimit diagram
\[
{\Bar_B(M,N)}_\bullet^+: 
\Delta^\op_+ \to \C.
\]
Moreover, since the colimit is assumed to be compatible with tensor product, the composite
\[
\Delta^\op_+ \xto{{\Bar_B(M,N)}_\bullet^+} \C
\subset \C^\otimes
\]
is in fact an operadic colimit diagram in $\C^\otimes$, see~\cite[prop. 3.1.1.16]{HTT}.
Denote the colimit by $P \in \C$. 
We now upgrade the \textit{augmented} underlying bar construction ${\Bar_B(M,N)}_\bullet^+$ to a diagram
\[
{\Bar_B^\otimes(M,N)}_\bullet^+: \Delta^\op_+ \to \C^\otimes 
\]
with cone point $P$, such that
\[
{\Bar_B^\otimes(M,N)}_\bullet^+|_{\Delta^\op} = 
{\Bar_B^\otimes(M,N)}_\bullet,
\]
along with a natural transformation
\[
\beta^+: {\Bar_B^\otimes(M,N)}_\bullet^+ 
\to {\Bar_B(M,N)}_\bullet^+
\]
extending the given natural transformation
\[
\beta \colon {\Bar_B^\otimes(M,N)}_\bullet 
\to {\Bar_B(M,N)}_\bullet.
\]
The natural transformation $\beta^+$ will be used to verify that ${\Bar_B^\otimes(M,N)}^+_\bullet$ too is an operadic colimit diagram.

Let $\a = \langle 1 \rangle$ denote the unique color of $\Ass$.
The map
\[
\Delta^\op_+ \xto{\text{bar}^+} \Tens^\otimes_{\succ} 
\to \Ass
\]
corresponds to a map
\[
\al^+_o \colon \Delta^\op \to \Ass_{/\a}. 
\]
Since $\a$ is terminal in the overcategory (and since $\Ass$ is the nerve of a 1-category), there's a unique natural transformation
\[
\al^+ \colon \Delta^\op \times \Delta^1 \to \Ass_{/\a}
\]
from $\al^+_o = \al^+|_{\Delta^\op \times \{0\}}$ to the constant diagram $\al^+|_{\Delta^\op \times \{1\}}$ with value $\a$.
Direct verification shows that the square solid-arrow diagram of simplicial sets 
\begin{equation*}
\begin{tikzcd}
\Delta^\op \times \set{1}
\ar[r,"{\Bar_B(M,N)}_\bullet^+"]
\ar[d,hookrightarrow,"\kappa"]
& \C^\otimes_{/P}
\ar[d,"\phi"] \\
\Delta^\op \times \Delta^1
\ar[r,"{(\beta,\al^+)}"]
% \ar[r,"(\beta,\al^+)"]
\ar[ur,dashed,"\beta^+"]
& \C^\otimes \times_{\Ass} \Ass_{/\a}
\end{tikzcd}
\end{equation*}
commutes.
By~\cite[prop. 2.1.2.1]{HTT}, $\phi$ is a right fibration. 
By~\cite[cor. 2.1.2.7]{HTT} $\kappa$ is right anodyne. 
Hence there exists a dotted arrow $\beta^+$ as indicated, making both triangles of simplicial sets commute.
This completes the construction of the promised natural transformation $\beta^+: \Delta^\op_+ \times \Delta^1 \to \C^\otimes$.

We now verify that ${\Bar_B^\otimes(M,N)}_\bullet^+$ is an operadic colimit diagram.
By construction, for every vertex $x \in \Delta^\op$, the restriction $\beta^+|_{\{x\} \times \Delta^1} = \beta|_{\{x\} \times \Delta^1}$ is a $q$-coCartesian edge in $\C^\otimes$.
Moreover, $\beta^+|_{\{[-1]\} \times \Delta^1}$ is equal to the identity morphism of $P$, hence is again $q$-coCartesian.
Hence~\cite[prop. 3.1.1.15]{HA} applies to show that the \textit{augmented} operadic bar construction ${\Bar_B^\otimes(M,N)}_\bullet^+$ is an operadic $q$-colimit diagram.
The theorem follows by applying~\cite[prop. 4.4.2.5]{HA}. 
\end{proof}

\begin{prop}[Refinement of Lemma HA.4.4.3.9 (1)] \label{27.3}
Let $q: \C^\otimes \to \Ass$ be a monoidal $\infty$-category and let $\A$ be a class of algebra objects of $\C$ closed under equivalences.
Assume that $\C$ admits realizations of $\A$-bar constructions $\otimes$-compatibly.
Let 
\[
p \colon {\Bmod(\C)}^\circledast \to \Delta^\op
\]
be the associated Morita pre-\textit{double $\infty$-category} (\cref{def:BMod_C_star}). 
Then ${\Bmod(\C)}^\circledast$ admits coCartesian lifts from $\A$-multimodules (\cref{666a}).
\end{prop}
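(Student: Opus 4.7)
The plan is to invoke \cref{2}, which reduces the statement to showing that $\C$ admits operadic $q$-left Kan extensions from $\A$-multimodules: for each edge $\alpha: s \to s'$ in $\Delta^\op$ and each $\A$-multimodule $F_0 \in \Alg_{\Tens_s/\Ass}(\C)$, we must produce an operadic $q$-left Kan extension $F \in \Alg_{\Tens_\alpha/\Ass}(\C)$.

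The first step is to reduce to elementary edges. Every morphism in $\Delta$ admits an epi-mono factorization as a codegeneracy followed by a coface; reversing, every edge in $\Delta^\op$ factors as a ``face'' edge (opposite of a coface inclusion) followed by a ``degeneracy'' edge (opposite of a codegeneracy surjection). Since operadic $q$-left Kan extensions compose, and since the intermediate multimodules produced at each stage remain $\A$-multimodules (no new algebras appear—only restrictions, duplications, or tensor products over algebras already belonging to $\A$), it suffices to handle each elementary edge separately.

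The key case is the inner face: an edge in $\Delta^\op$ corresponding to an inner coface map $d^i: [n-1] \hookrightarrow [n]$ in $\Delta$ with $0 < i < n$. Here the pushforward of an $n$-multimodule $_{A_0}{M^{0,1}}_{A_1}\cdots _{A_{n-1}}{M^{n-1,n}}_{A_n}$ is an $(n-1)$-multimodule in which the algebra $A_i$ is deleted and the adjacent bimodules $M^{i-1,i}$ and $M^{i,i+1}$ are replaced by their relative tensor product over $A_i$. This is precisely the local situation treated by \cref{29.2}, with ``spectator'' algebras and bimodules on either side. The assumption that $F_0$ is an $\A$-multimodule guarantees $A_i \in \A$, so the assumption of \cref{29.2} that the relevant bar construction $\Bar_{A_i}(M^{i-1,i}, M^{i,i+1})$ admits a geometric realization $\otimes$-compatibly is satisfied. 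Thus \cref{29.2} delivers the required operadic $q$-left Kan extension.

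The remaining cases are formal. Outer cofaces ($d^0$ or $d^n$) correspond to forgetting the first or last algebra and adjacent bimodule; the operadic left Kan extension is given by restriction and requires no new colimits. Codegeneracy edges in $\Delta^\op$ correspond to inserting a duplicate of some algebra $A_j \in \A$ together with the regular bimodule $_{A_j}{(A_j)}_{A_j}$, which is the unit for relative tensor product and therefore presents no colimit problem (see \cite[Prop.~4.4.2.8(2)]{HA}). The principal technical obstacle lies in the inner face case: \cref{29.2} is stated for a single $2$-module $_AM_BN_C$, and one must check that invoking it within a larger multimodule produces a genuine operadic $q$-left Kan extension over all of $\Tens^\otimes_\alpha$ rather than merely on a subdiagram. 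This is handled by the pointwise nature of operadic left Kan extensions: each required pointwise operadic colimit at a new object of $\Tens^\otimes_{[n-1]}$ involves only $M^{i-1,i}$, $M^{i,i+1}$, and $A_i$ together with external tensoring, so \cref{29.2} applies fiberwise.
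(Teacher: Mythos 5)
Your proposal is correct and follows essentially the same route as the paper: reduce via \cref{2} to producing operadic $q$-left Kan extensions, factor the edge in $\Delta^\op$ into elementary pieces so that the only non-formal case is an inner face, and settle that case with \cref{29.2}, checking along the way that the intermediate outputs remain $\A$-multimodules. The paper differs only cosmetically, bundling your outer-face and degeneracy cases into a single ``convex image'' map handled trivially by \cite[lem.~4.4.3.5(1)]{HA}, and making your ``pointwise globalization'' step for the inner face precise by verifying the hypotheses of \cite[lem.~4.4.3.8]{HA}.
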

\begin{proof}
By \cref{2} it will suffice to show that for every map
\[
\al \colon [n]\to[m]
\]
in $\Delta$ and every $\A$-multimodule $F_0 \in \Alg_{\Tens_{[m]}/\Ass}(\C)$,
there exists an operadic $q$-left Kan extension $F \in \Alg_{\Tens_\al/\Ass}(\C)$ of $F_0$.
For each $r \ge 0$, we define 
\[
\al_i:[r] \to [r+1]
\]
to be the order-preserving map with image $[r+1] \setminus \{i\}$. 
We can factor $\al$ as a composition 
\[
[n] \xto{\beta} [k] \to [k+1] \to \cdots \to [m]
\]
where $\beta$ has convex image and the remaining maps are of the form 
\[
\al_i:[p-1] \to [p]
\]
for some $0<i<p$.
Using~\cite[thm. 4.4.3.1]{HA} (flatness of $\Tens^\otimes \to \Delta^\op$) and~\cite[thm. 3.1.4.1]{HA} (transitivity of operadic left Kan extensions) it's enough to show that if $\al$ is of either of these forms,
then there exists an operadic $q$-left Kan extension $F$ as above such that the associated output multimodule
\[
F_1: \Tens^\otimes_{[n]} \to \Tens_{\al}^\otimes \xto{F} \C^\otimes
\]
is again an $\A$-multimodule. 
When $\al$ has convex image this follows from~\cite[lem. 4.4.3.5.(1)]{HA}. 
Indeed, let $f$ denote the map
\[
\Tens^\otimes_{[n]} \to \Ass.  
\]
In the notation of that lemma, for every object $X \in \Tens_{[n]}$, there exists a map 
\[
F_0(v_\al X) \to C
\]
in $\C^\otimes$ lying over the map
\[
\tag{*}
f(v_\al X) \to f(X)
\]
given by an operadic $q$-colimit diagram $\Delta^1 \to \C^\otimes$;
this holds trivially since the map (*) is necessarily the identity map of the unique color of $\Ass$. 
Hence~\cite[lem. 4.4.3.5(1)]{HA} implies that there exists an operadic $q$-left Kan extension $F$ of $F_0$.
Moreover, by construction, if $F_0$ corresponds to an $A$-multimodule
\[
_{A_0}{M^{0,1}}_{A_{1}} M^{1,2}_{A_2}
\cdots
_{A_{n-1}} M^{n-1,n}_{A_n}
\]
then $F_1$ corresponds to a two-sided truncation of the above, and hence to an $\A$-multimodule. 

When $\al$ is of the form $\al_i$ we construct the desired operadic $q$-left Kan extension $F$ using~\cite[lem 4.4.3.8]{HA}.
Conditions (a), (b) and (c) hold trivially (since in the notation of that lemma the maps $f(a_{\al(j)}) \to f(b_j), f(m_{j-1,j}) \to f(n_{j-1,j}), f(m_{j,j+1}) \to f(n_{j,j+1})$ are all the identity of the unique object of $\Ass_{<1>}$.)
Condition (d) holds by \cref{29.2}.
Condition ($a'$) shows that $F_1$ is again an $\A$-multimodule.
\end{proof}

\begin{cor} \label{link4}
In the situation and the notation of \cref{27.3}, let
\[
{\Bmod_{\A}(\C)}^\circledast \subset {\Bmod(\C)}^\circledast
\]
be the full subcategory supported on $\A$-multimodules.
Assume that $\C$ admits realizations of $\A$-bar constructions $\otimes$-compatibly.
Then the restriction of $p$ to
\[
{\Bmod_\A(\C)}^{\circledast} \to \Delta^\op
\]
is a coCartesian fibration.
\end{cor}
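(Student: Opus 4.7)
The plan is to verify the two defining properties of a coCartesian fibration by inheriting the conclusions of \cref{link2} and \cref{27.3}, with the only additional work being to check that the full-subcategory restriction preserves the relevant lifting properties. The key combinatorial observation driving everything is that for $n \ge 2$, every vertex of $\Delta^n$ already lies in $\Lambda^n_k$ (for any $k$), so any lift in $\Bmod(\C)^\circledast$ of a horn whose vertices are $\A$-multimodules automatically lands in the full subcategory $\Bmod_\A(\C)^\circledast$.

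First I would show that $p|_{\Bmod_\A(\C)^\circledast}$ is an inner fibration. Given an inner horn $\Lambda^n_k \to \Bmod_\A(\C)^\circledast$ (with $0 < k < n$, so necessarily $n \ge 2$) together with a compatible $\Delta^n \to \Delta^\op$, \cref{link2} produces a filler $\Delta^n \to \Bmod(\C)^\circledast$. By the vertex-counting observation its vertices are all $\A$-multimodules, and fullness of $\Bmod_\A(\C)^\circledast \sseq \Bmod(\C)^\circledast$ forces the filler to land in the subcategory.

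Next I would produce coCartesian lifts. Given $\al: s \to s'$ in $\Delta^\op$ and an $\A$-multimodule $F_0$ over $s$, \cref{27.3} yields a $p$-coCartesian lift $F$ extending $F_0$. The crucial point is that the proof of \cref{27.3} was arranged precisely so that the target multimodule $F_1$ is again an $\A$-multimodule: in the convex-image factor $F_1$ is a two-sided truncation of the input, and in the $\al_i$-factor condition ($a'$) of \cite[lem. 4.4.3.8]{HA} secures this. Hence $F$ is an edge of $\Bmod_\A(\C)^\circledast$.

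Finally I would check that $F$ remains $p|_{\Bmod_\A(\C)^\circledast}$-coCartesian via the lifting characterization \cite[rem. 2.4.1.4]{HTT} against $\Lambda^n_0 \sseq \Delta^n$ for $n \ge 2$: the required fillers exist in $\Bmod(\C)^\circledast$ by the already-established coCartesian property of $F$ there, and the vertex-counting observation again places them in the full subcategory. There is no real obstacle — the substantive content lies entirely in \cref{27.3}, and the present corollary is a short bookkeeping consequence asserting that the class of $\A$-multimodule vertices is closed under passage to the target of a coCartesian lift.
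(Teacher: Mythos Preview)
Your argument is correct. Both you and the paper identify the substantive point as \cref{27.3} (specifically, that the target $F_1$ of the constructed coCartesian lift is again an $\A$-multimodule), and treat the remainder as bookkeeping about full subcategories. The difference is in how that bookkeeping is handled: you argue by direct horn-filling, using the observation that for $n \ge 2$ every vertex of $\Delta^n$ already appears in any horn $\Lambda^n_k$, so fullness pulls the filler back into the subcategory; the paper instead notes that the inclusion $\Bmod_\A(\C)^\circledast \hookrightarrow \Bmod(\C)^\circledast$ is an inner fibration (full subcategories always are), deduces that $p'$ is an inner fibration as a composite, and then invokes \cite[prop.~2.4.1.3(3)]{HTT} to conclude that a $p$-coCartesian edge lying in the subcategory is automatically $p'$-coCartesian. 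Your route is more self-contained and avoids an external citation; the paper's route is a one-line appeal to a standard lemma. Neither has any real advantage beyond taste.
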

\begin{proof}
The restriction 
\[
p' \colon {\Bmod_\A(\C)}^\circledast \to {\Bmod(\C)}^\circledast \xto{p} \Delta^\op
\]
is an inner fibration as a composition of inner fibrations.
Let $X \in  {\Bmod_\A(\C)}^\circledast$ and let $\al \colon x \to y$ be a map in $\Delta^\op$ with $x=p(X)$.
By \cref{27.3} there exists a $p$-coCartesian lift $f \colon X \to Y$ of $\al$ in ${\Bmod_\A(\C)}^\circledast$.
By~\cite[prop. 2.4.1.3(3)]{HTT} the edge $f$ is $p'$-coCartesian.
\end{proof}

\subsection{The Morita double \texorpdfstring{$\infty$-}{infinity}category of \texorpdfstring{$\A$}{A}-multimodules}
\begin{terminology} \label{05192}
We say that a morphism $\al \colon [n] \to [m]$ in $\Delta^\op$ is \emph{convex} if the associated map of totally ordered sets $[n] \from [m]$ has convex image.
Noting that identity morphisms are convex, and that the composite of two convex morphisms is again convex, we define the \emph{convex ordinal category} $\Delta^\op_\text{Convex} \subset \Delta^\op$ to be the subcategory whose objects are the same as those of $\Delta^\op$ and whose morphisms are the convex morphisms in $\Delta^\op$. 
\end{terminology}

\begin{prop} \label{05191}
Let
\[
q \colon \C^\otimes \to \Ass
\]
be a monoidal $\infty$-category and let 
\[
p \colon B^\circledast = {\Bmod(\C)}^\circledast \to \Delta^\op
\]
be the associated Morita pre-\textit{double $\infty$-category} (\cref{27.1}). 
Then the pullback $p_\text{Convex}$ of $p$ to $\Delta^\op_\text{Convex}$ as in the following diagram
\begin{equation*}
\begin{tikzcd}
B^\circledast_\text{Convex} \ar[r] \ar[d,"p_\text{Convex}"] & B^\circledast \ar[d,"p"] \\
\Delta^\op_\text{Convex} \ar[r] & \Delta^\op
\end{tikzcd}
\end{equation*}
is a coCartesian fibration. 
Moreover, for every $n>1$, the induced map
\[
B^\circledast_{[n]} \to
B^\circledast_{[1]} \times_{B^\circledast_{[0]}} 
\cdots \times_{B^\circledast_{[0]}} B^\circledast_{[1]} 
\]
is an equivalence of $\infty$-categories. 
\end{prop}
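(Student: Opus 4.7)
The statement has two parts: (a) that $p_\text{Convex}$ is a coCartesian fibration, and (b) the Segal-type decomposition of $B^\circledast_{[n]}$. I expect (b) to be the substantive point; (a) comes nearly for free from what has already been done.

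For (a), \cref{link2} tells us $p$ is an inner fibration, hence so is its pullback $p_\text{Convex}$, and what remains is to produce coCartesian lifts of convex edges. Given a convex morphism $\alpha \colon s \to s'$ in $\Delta^\op$ and a vertex $F_0$ of $B^\circledast$ lying over $s$, \cref{2} reduces the construction of a $p$-coCartesian (hence $p_\text{Convex}$-coCartesian) lift to the construction of an operadic $q$-left Kan extension $F$ of $F_0$ along $\Tens^\otimes_s \hookrightarrow \Tens^\otimes_\alpha$. The existence of such an $F$ for convex $\alpha$ is precisely the first case treated in the proof of \cref{27.3}, which appeals only to \cite[Lem. 4.4.3.5.(1)]{HA} and uses neither any bar-construction compatibility nor any restriction on the algebras appearing in $F_0$. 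Hence the lift exists for \emph{arbitrary} $F_0$, not only for $\A$-multimodules, establishing (a).

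For (b), I would first use the universal property of $B^\circledast$ from \cref{def:BMod_C_star}: applied to simplicial sets $K$ equipped with the constant structure map $K \to \{[k]\} \hookrightarrow \Delta^\op$, it yields natural bijections $\Hom(K, B^\circledast_{[k]}) \cong \Hom(K, \Alg_{\Tens^\otimes_{[k]}/\Ass}(\C))_0$, so that $B^\circledast_{[k]} \simeq \Alg_{\Tens^\otimes_{[k]}/\Ass}(\C)$ as simplicial sets. Functoriality of the bijection, combined with the functoriality clause in \cref{def:BMod_C_star}, identifies the Segal map with the image under $\Alg_{-/\Ass}(\C)$ of the canonical comparison map of generalized $\infty$-operads over $\Ass$
\[
\Tens^\otimes_{[1]} \coprod_{\Tens^\otimes_{[0]}} \cdots \coprod_{\Tens^\otimes_{[0]}} \Tens^\otimes_{[1]} \longrightarrow \Tens^\otimes_{[n]}.
\]
It therefore suffices to show this map is a categorical equivalence, after which applying $\Alg_{-/\Ass}(\C)$ converts the operadic pushout into the pullback of $\infty$-categories appearing in the statement.

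The main technical obstacle is the last step: establishing that $\Tens^\otimes_{[n]}$ is the amalgamated sum of its length-one constituents glued at the length-zero ones, in the appropriate homotopical sense. This is a combinatorial property of the Morita $\infty$-operad family that I would verify directly from its definition (\cite[Def. 4.4.1.1]{HA}): the colors of $\Tens^\otimes_{[n]}$ are already the set-theoretic amalgamation of those of the $\Tens^\otimes_{[1]}$-factors along the $\a_i$, and inspection of the multi-operations shows the analogous decomposition holds level by level. To pass from this pointwise combinatorial fact to a categorical equivalence of generalized $\infty$-operads I would invoke flatness of $\Tens^\otimes \to \Delta^\op$ (\cite[Thm. 4.4.3.1]{HA}) to ensure the relevant pushouts of simplicial sets are homotopy pushouts in the Joyal model structure, so that they model the corresponding pushouts of generalized $\infty$-operads and are preserved by $\Alg_{-/\Ass}(\C)$.
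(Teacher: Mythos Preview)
Your argument for part (a) is correct and matches the paper's proof essentially word for word: both appeal to \cite[Lem.~4.4.3.5(1)]{HA} (the convex case, which needs no hypotheses on $\C$ or on the multimodule) to produce the operadic left Kan extension, then invoke \cref{2} to conclude the resulting edge is $p$-coCartesian.

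For part (b), however, you are working much harder than necessary, and the hardest step of your sketch has a gap. The paper's proof is a single sentence: it cites \cite[Prop.~4.4.1.11]{HA}, which is precisely the Segal condition for $\Tens^\otimes$ and already gives the equivalence
\[
\Alg_{\Tens_{[n]}/\Ass}(\C) \xrightarrow{\sim}
\Alg_{\Tens_{[1]}/\Ass}(\C) \times_{\Alg_{\Tens_{[0]}/\Ass}(\C)} \cdots \times_{\Alg_{\Tens_{[0]}/\Ass}(\C)} \Alg_{\Tens_{[1]}/\Ass}(\C)
\]
induced by the convex inclusions $\{i-1,i\} \hookrightarrow [n]$. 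Combined with the identification $B^\circledast_{[k]} = \Alg_{\Tens_{[k]}/\Ass}(\C)$ from \cref{def:BMod_C_star} (which you correctly observe), this finishes the proof immediately.

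Your proposed route instead attempts to re-derive \cite[Prop.~4.4.1.11]{HA} by exhibiting $\Tens^\otimes_{[n]}$ as an iterated homotopy pushout and then applying $\Alg_{-/\Ass}(\C)$. The idea is reasonable, but your appeal to flatness of $\Tens^\otimes \to \Delta^\op$ is misplaced: flatness (\cite[Def.~B.3.8]{HA}) is a statement about \emph{pullbacks} along the fibration preserving categorical equivalences, and says nothing about pushouts of fibers being homotopy pushouts. What you would actually need is that the inclusions $\Tens^\otimes_{[0]} \hookrightarrow \Tens^\otimes_{\{i-1,i\}}$ are cofibrations (which they are, being monomorphisms) and then a direct combinatorial verification that the strict pushout of nerves agrees with $\Tens^\otimes_{[n]}$; this is essentially what Lurie does in proving \cite[Prop.~4.4.1.11]{HA}. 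So you may as well just cite that proposition.
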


\begin{proof}
Let $\al:[m] \to [n]$ in $\Delta^\op$ be convex, and let $p(X_0) = [m]$.
Together, $\al$ and $X_0$ give rise to a commuting square (solid arrow diagram)
\begin{equation*}
\begin{tikzcd}
\Tens^\otimes_{[m]} \ar[r,"X_0"] \ar[d] & \C^\otimes \ar[d] \\
\Tens^\otimes_\al \ar[r] \ar[ur,dashed,"X"] & \Ass.
\end{tikzcd}
\end{equation*}
The condition of~\cite[lem. 4.4.3.5(1)]{HA} applies trivially to show that there exists an operadic $q$-\textit{left Kan extension} $X$ of $X_0$ as indicated.
By~\cref{2}, $X$ corresponds to a coCartesian edge in $B^\circledast$ with source $X_0$ lifting $\al$.
By~\cite[prop. 2.4.1.3(2)]{HTT} it follows that $p_\text{Convex}$ is a coCartesian fibration.
Using~\cite[prop. 4.4.1.11]{HA} (Segal condition for $\Tens^\otimes$), we see that for each $n \ge 0$, the inclusions $[1] \simeq \{i-1, i\} \cof [n]$ induce an equivalence of $\infty$-categories as stated. 
\end{proof}

\begin{terminology}
\label{capable}
We say a class $\A$ of algebras in a monoidal $\infty$-category $\C$ is \emph{capable of relative tensor products} if $\A$ is closed under equivalences and if $\C$ admits geometric realizations of $\A$-bar constructions $\otimes$-compatibly.
\end{terminology}

%\begin{mydef}[{\cite{def. 2.1.4}[Rune_operads_via_SymSeq]}]
\begin{mydef}[{\cite[def. 2.1.4]{Rune_operads_via_SymSeq} }] \label{def:double_inf_cat}
A \emph{double $\infty$-category} is a coCartesian fibration $\M \to \Delta^{op}$ such that for all $n > 1$, the induced functor of $\infty$-categories 
\[
\M_{[n]} \xto{} \M_{[1]} \times_{\M_{[0]}} 
\cdots \times_{\M_{[0]}}  \M_{[1]}
\]
is an equivalence. 
\end{mydef}

\begin{cor} \label{link1}
Let $q \colon \C^\otimes \to \Ass$ be a monoidal $\infty$-category, and let $\A$ be a class of algebras in $\C$ which is capable of relative tensor products.  Let
\[
{\Bmod_\A(\C)}^\circledast \subset {\Bmod(\C)}^\circledast
\]
denote the full subcategory supported on the class of $\A$-multimodules.
Then
\[
p_\A: {\Bmod_\A(\C)}^\circledast \to \Delta^\op
\]
is a \emph{double $\infty$-category}.
\end{cor}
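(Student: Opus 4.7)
The plan is to verify the two defining conditions of a double $\infty$-category (\cref{def:double_inf_cat}) in turn for $p_\A$. Since $\A$ is capable of relative tensor products, in particular $\C$ admits realizations of $\A$-bar constructions $\otimes$-compatibly, so the first condition --- that $p_\A$ is a coCartesian fibration --- is delivered directly by \cref{link4}, and the entire remaining content reduces to verifying the Segal condition for ${\Bmod_\A(\C)}^\circledast$.

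For the Segal condition I will piggyback on \cref{05191}, which establishes the Segal equivalence for the unrestricted Morita pre-\textit{double $\infty$-category} ${\Bmod(\C)}^\circledast$. By construction, ${\Bmod_\A(\C)}^\circledast_{[n]} \subset {\Bmod(\C)}^\circledast_{[n]}$ is the full subcategory on those $n$-multimodules whose $n+1$ component algebras all lie in $\A$; crucially, this same condition ``all component algebras lie in $\A$'' cuts out, from the iterated fiber product
\[
{\Bmod(\C)}^\circledast_{[1]} \times_{{\Bmod(\C)}^\circledast_{[0]}} \cdots \times_{{\Bmod(\C)}^\circledast_{[0]}} {\Bmod(\C)}^\circledast_{[1]},
\]
precisely the iterated fiber product of the $\A$-subcategories. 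Hence the Segal equivalence of \cref{05191} restricts to an equivalence between matching full subcategories, yielding the desired equivalence for ${\Bmod_\A(\C)}^\circledast$.

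The one point worth verifying carefully is that ``being an $\A$-multimodule'' is detected Segal-wise by the component algebras, which is immediate from \cref{29.1}. Beyond that I do not foresee any real obstacle; all the substantive work has been absorbed into the prior results, chiefly \cref{27.3} on the existence of coCartesian lifts from $\A$-multimodules and \cref{05191} on the Segal condition for ${\Bmod(\C)}^\circledast$.
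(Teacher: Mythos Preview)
Your proposal is correct and follows exactly the paper's own proof: invoke \cref{link4} for the coCartesian fibration and \cref{05191} for the Segal condition. The paper's proof is the terse two-liner ``According to \cref{link4}, $p_\A$ is a coCartesian fibration. According to \cref{05191}, $p_\A$ satisfies the Segal condition,'' so your additional paragraph explaining why the Segal equivalence of \cref{05191} restricts to matching full subcategories is simply a more explicit rendering of what the paper leaves implicit.
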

\begin{proof}
According to \cref{link4}, $p_\A$ is a coCartesian fibration.
According to \cref{05191}, $p_\A$ satisfies the Segal condition.
\end{proof}

In the situation of the above corollary, we refer to $ {\Bmod_\A(\C)}^\circledast$ as the \emph{Morita double $\infty$-category of $\C$ over $\A$}. We end this section with a remark which, we hope, will help to clarify its structure.  

\begin{rem}
Let $q:\Cc^\otimes \to \Ass$ be a monoidal $\infty$-category, and assume for simplicity that $\Cc$ is capable of relative tensor products so that 
\[
p: \Bmod(\Cc)^\circledast \to \Delta^\op
\]
is a double $\infty$-category. If
\[
\al:[m] \to [n]
\]
is a morphism in $\Delta^\op$, we have an induced functor
\[
\al_!: \Bmod(\Cc)^\circledast_{[m]}
\to
\Bmod(\Cc)^\circledast_{[n]}.
\]
Moreover, if the morphism $F_\to$ in the diagram
\begin{equation*}
\begin{tikzcd}
&
\Tens^\otimes_{[m]} \ar[r,"F"] \ar[d, "\iota_0"] & \C^\otimes \ar[d] \\
\Tens^\otimes_{[n]} \ar[r,"\iota_1"]
&
\Tens^\otimes_\al \ar[r, "f"] \ar[ur,dashed,"F_\to"] & \Ass
\end{tikzcd}
\end{equation*}
(in which $f$ is the composite $\Tens^\otimes_\al \to \Tens^\otimes \to \Ass$) is an operadic $q$-left Kan extension, then 
\[
\al_! F = F_\to \circ \iota_1.
\]
Two cases are of particular interest.
\begin{enumerate}

\item

Suppose the morphism $\al^\op: [m] \from [n]$ in $\Delta$ has convex image. Then $\iota_0$ admits a natural retraction $V_\al$  and 
\[
F_\to = F \circ V_\al
\]
is an operadic $q$-left Kan extension (compare Example 4.4.3.6 of Higher Algebra \cite{HA} or the proof of ~\cref{20510e}). Moreover,
\[
V_\al \circ \iota_1 = v_\al
\]
is given by composition with $\al$. It follows that the $n$-module $\al_! F$ is a projection of the $m$-module $F$ onto those components singled out by $\al$. 

\item

Suppose $n = m-1$ and $\al^\op: [m] \from [n]$ is an inner face map. Then the operadic $q$-left Kan extension $F_\to$ is constructed via an associated operadic $q$-left Kan extension along
\[
\Tens^\otimes_{[2]} \to \Tens^\otimes_\succ
\]
which \textit{defines} the relative tensor product. In this case, the $n$-module $\al_! F$ is given by a relative tensor product of neighboring bimodules in the $m$-module $F$ as indicated by $\al$. 

\end{enumerate}

To assemble these two building blocks into a natural \textit{formula} for general $\al$, it's useful to introduce the following notation: if $F: \Tens^\otimes_{[k]} \to \Cc^\otimes$ is a $k$-module in the monoidal $\infty$-category $\Cc$, we denote the associated algebras and bimodules by
\[
F_{ij} = F(\mM_{ij})
\quad \mbox{and} \quad
F_i = F(\aA_i).
\]
Thus, for each $i = 1, \dots, k$, $F_{i-1, i}$ is an $F_{i-1}$-$F_{i}$-bimodule. Also, if
\[
\al:[m] \to [n]
\]
is a morphism in $\Delta^\op$ as above, and $i \in [n] = \{0, \dots, n\}$, we denote $\al^\op(i)$ by $\al_i$. In these notations, we have for $i = 0, \dots, m$, 
\[
(\al_! F)_{i} = F_{\al_i},
\]
and for $0 \le i < i+1 = j \le m$, 
\[
(\al_! F)_{ij} = 
F_{\al_i, \al_i+1}
\otimes_{F_{\al_{i}+1}}
F_{\al_i+1, \al_i+2}
\otimes_{F_{\al_{i}+2}}
\cdots
\otimes_{F_{\al_j-1}}
F_{\al_j-1, \al_j}.
\]

\end{rem}

%%%%%%%%%%%%%%%
%%%%%%%%%%%%%%%
%%%%%%%%%%%%%%%
%%%%%%%%%%%%%%%
%%%%%%%%%%%%%%%
\section{Action of endomorphism categories on morphism categories in a double \texorpdfstring{$\infty$}{infinity}-category} \label{sec:BLMod}
%%%%%%%%%%%%
%%%%%%%%%%%%%%%
%%%%%%%%%%%%%%%
%%%%%%%%%%%%%%%
%%%%%%%%%%%%%%%
%%%%%%%%%%%%%%%

%5
%\input{Action.tex}

In this section we construct the $\LM$-monoidal $\infty$-category $\BLMod_A(\C)$ (see \cref{def:BLMod_A}), which exhibits the $\infty$-category of left $A$-modules as left tensored over $A$-bimodules. We assume only that $\C$ admits bar constructions over $A$ which are compatible with tensor product.
In particular, we do not assume that $\C$ admits geometric realizations in general. As explained in the introduction, this level of generality is needed in order to handle relative tensor products in the twisted arrow category (see \cref{sec:Twisted_arrow_bar_construction}).

In fact, our construction takes place in the general setting of a double $\infty$-category. If $\M$ is a double $\infty$-category and $A$, $B$ are objects, we define the \emph{$\infty$-category $\Map_\M^h(B,A)$ of horizontal morphisms from $B$ to $A$ in $\M$} in \cref{lbl4}.
We then construct an $\LM$-monoidal category 
$
{\opnm{LMap}^h_\M(B,A)}^\otimes
$
which exhibits the $\infty$-category $\Map_\M^h(A,B)$ as left-tensored over the monoidal $\infty$-category ${\Map^h_\M(A,A)}^\otimes$ (\cref{lb9}).
We obtain the desired action of bimodules on left modules by applying this construction to the double $\infty$-category 
\[
\M = {\Bmod_\A(\C)}^\circledast \to \Delta^\op
\]
of \cref{link2}.

\subsection{The $\infty$-category of horizontal morphisms}
\begin{mydef}\label{lbl2}%222222222
For $\Yy$ an $\infty$-category we let ${(\Set^+_\Delta)}_{/\Yy}$ denote the simplicial model category of marked simplicial sets over $\Yy$ with coCartesian model structure (\cite[prop. 3.1.3.7, rem. 3.1.3.9, cor. 3.1.4.4]{HTT}). 
We let $\Cat_{\mathrm{coCart}/\Yy}$ be the simplicial nerve $N \big({(\Set^+_\Delta)}_{/\Yy}^\circ \big)$ of the full subcategory ${(\Set^+_\Delta)}_{/\Yy}^\circ$ of fibrant-cofibrant objects. 
\end{mydef}

\begin{rem}\label{lbl3}%3333333333333
There is a natural bijection between vertices of $\Cat_{\mathrm{coCart}/\Yy}$ and coCartesian fibrations of simplicial sets $\Xx \to \Yy$. 
Given two such, $\Xx$, $\Xx'$, the set of edges $\Xx \to \Xx'$ in $\Cat_{\mathrm{coCart}/\Yy}$ is in canonical bijection with the set of 
morphisms of simplicial sets over $\Yy$
% functors of $\infty$-categories 
which send coCartesian edges to coCartesian edges. 
\end{rem}

\begin{rem}\label{lbl3.1}
The proof of Theorem 1.2 of \cite{GepnerHauRu} may be dualized to show that the inclusion 
\[
\Cat_{\mathrm{coCart}/\Yy} \to (\Cat_\infty)_{/\Yy}
\] 
admits a left adjoint, hence preserves all limits. The further functor
\[
(\Cat_\infty)_{/\Yy} \to \Cat_\infty
\]
preserves pullbacks.

\end{rem}

\begin{mydef}\label{lbl4}%444444444444444
Let $\M$ be a double $\infty$-category (\cref{def:double_inf_cat}).
Given objects $A,B \in \M_{[0]}$, we define the \emph{$\infty$-cat\-egory of horizontal morphisms $B \to A$ in $\M$},
\(
\Map_{\M}^h(B,A),
\)
by the pullback diagram
\[
\begin{tikzcd}
\Map_{\M}^h(B,A) \ar[r] \ar[d] \pbcorner &
\M_{[1]} \ar[d,"\delta_0 \times \delta_1"]
\\
\set{*} \ar[r,"{(B,A)}"] &
\M_{[0]} \times \M_{[0]}
\end{tikzcd}
%\xymatrix{
%\Map_{\M}^h(A,B) \ar[r] \ar[d] &
%\M_{[1]} \ar[d]^-{\delta_0 \times \delta_1}
%\\
%\{*\} \ar[r]_-{(A,B)} &
%\M_{[0]} \times \M_{[0]}
%}
\]
in $\Cat_\infty$, where the $\delta_i$ are induced by the two face maps $[0] \to [1]$.
\end{mydef}

\begin{ex}\label{lbl4.5}
Let $\C^\otimes \to \Ass$ be a monoidal $\infty$-category and let $\A$ be a class of algebras in $\Cc$ capable of relative tensor products (\cref{capable}); we denote the associated full subcategory of $\Alg \C$ again by $\A$.
Then
\[
{\Bmod(\C)}^{\circledast}_{[0]} = \A
\] 
and given $A,B \in \A$ we have equivalences of $\infty$-categories
\[
\Map^h_{\Bmod(\C)}(B,A) \simeq {\BMod{A}{B}(\C)}.
\]
\end{ex}
%the $\infty$-category $\Map^h_\M(A,A)$ with a monoidal structure,

\subsection{Left actions of horizontal endomorphisms} 
Let $\M$ be a double \oo-category and let $A,B \in \M_{[0]}$ be objects. In \cref{lb9} we will endow $\Map^h_\M(B,A)$ with a left-action of $\Map^h_\M(A,A)$.
The monoidal structure of $\Map^h_\M(A,A)$ is constructed in~\cite[def. 3.4.2 and proof of prop. 3.4.8]{Rune_operads_via_SymSeq}.
We recall its construction after fixing some relevant notation.
%in \cref{lbl5} 
% by recalling this construction.
\begin{notation} \label{bl1}%1111111111111
% Given a coCartesian fibration of simplicial sets $\Xx \to \Yy$, we denote the associated functor $\Yy \to \Cat_\infty$ obtained by the straightening construction of~\cite[thm. 3.2.0.1]{HTT} by $\overline \Xx$.
% Conversely, given a functor of $\infty$-categories
% \[
%   \Xx_\bullet: \Yy \to \Cat_\infty,
% \]
% we denote the associated coCartesian fibration by $\widetilde {\Xx_\bullet} \to \Yy$. 
Let $\Xx_\bullet$ be a simplicial object in an $\infty$-category $\C$ admitting limits.
Recall that the \emph{$0$-coskeleton} $\cosk_0 \Xx_\bullet$ is the simplicial object given by the right Kan extension
\[
\begin{tikzcd}
\set{*} \ar[r, "\Xx_0"] \ar[d, phantom, "\bigcap" description] & \C \\
\Delta^\op \ar[ur,"\cosk_0 \Xx_\bullet"'] &
\end{tikzcd}
%\xymatrix{
%\{*\} \ar[r]^-{\Xx_0} \ar@{}[d]|\bigcap & \C 
%\\
%\Delta^\op \ar[ur]_-{\cosk_0 \Xx_\bullet} &
%}
\]
(see, e.g.,\cite[not. 6.5.3.1]{HTT}). 
In particular this applies to a diagram of $\infty$-categories $\Xx_\bullet \colon \Delta^\op \to \Cat_\infty$.

We now use straightening/unstraightening (\cite[thm. 3.2.0.1]{HTT}) to define the $0$-coskeleton of a coCartesian fibration over $\Delta^\op$.
Given a coCartesian fibration
\[
\Xx \to \Delta^\op
\]
we let $\Xx_\bullet \colon \Delta^\op \to \Cat_\infty$ denote its straightening.
Define the coCartesian fibration 
\[
\cosk_0 \Xx \to \Delta^\op
\]
as the un-straightening of $\cosk_0 \Xx_\bullet \colon \Delta^\op \to \Cat_\infty$.
Note that 
\[
{(\cosk_0\Xx)}_{[n]} \simeq \Xx_{[0]} \times \cdots \times \Xx_{[0]}
\]
($n+1$ copies).
\end{notation}

\begin{construction}[{\cite[def. 3.4.2]{Rune_operads_via_SymSeq}}] \label{lbl5}%55555555555555
Let $\M \to \Delta^\op$ be a double $\infty$-category (\cref{def:double_inf_cat}).
By construction, $\cosk_0\M$ comes equipped with a canonical morphism of coCartesian fibrations over $\Delta^\op$
\[
\ka \colon \M \to \cosk_0 \M
\]
in which the map of fibers over $[n] \in \Delta^\op$ is the map
\[
\M_{[n]} \to  \M_{[0]} \times \cdots \times \M_{[0]}
\]
induced by the $n+1$ face maps $[n] \from [0]$ in $\Delta$.
Given $A \in \M_{[0]}$, we construct a section 
$A^* \colon \Delta^\op \to \cosk_0 \M$ 
which sends 
\[ 
\Delta^\op \ni [n] \mapsto (A, \dots, A) \in {(\cosk_0 \M)}_{[n]}
\] 
as follows.
Let $\opnm{Const}(\ast)$ denote the constant simplicial $\infty$-category
\[
\opnm{Const}(\ast) \colon  \Delta^\op \to \Cat_\infty
\]
with value $\ast \in \Cat_\infty$, and let $\iota_{[0]}$ denote the functor $\ast \to \Delta^\op$ classified by $[0] \in \Delta^\op$.
The inclusion $A \in \M_{[0]}$ gives rise to a natural transformation
\[
\opnm{Const}(\ast) \circ \iota_{[0]} \to \M_{[0]}
\]
of functors $\ast \to \Cat_\infty$, hence to a natural transformation 
\[
\opnm{Const}(*) \to \cosk_0 \M_\bullet
\]
of functors $\Delta^\op \to \Cat_\infty$, since the right Kan extension $\cosk_0$ is right adjoint to 
\[
\iota_{[0]}^* \colon \Fun(\ast,\Cat_\infty) \to \Fun(\Delta^\op,\Cat_\infty).
\]
Unstraightening defines a section $A^*$ as hoped.

We define ${\End^h_\M(A)}^\circledast$ by the pullback diagram
\[
\tag{*}
\begin{tikzcd}
\End^h_\M(A)^\circledast \ar[r] \ar[d,"{\ka_A}"] \pbcorner &
\M \ar[d,"{\ka}"]
\\
\Delta^\op \ar[r,"{A^*}"'] &
\cosk_0 \M.
\end{tikzcd}
\]
in the category $\Cat_{\mathrm{coCart}/\Delta^\op}$ of \cref{lbl2}.
The associated simplicial $\infty$-category
\[
% \overline{{\End_\M(A)}^\circledast} 
{\End^h_\M(A)}^\circledast_\bullet \colon \Delta^\op \to \Cat_\infty
\]
is a monoid object in the sense of~\cite[def. 4.1.2.5]{HA}.
Hence by~\cite[rem. 4.1.2.6, prop. 2.4.2.5 and rem. 2.4.2.6]{HA} we obtain a monoidal $\infty$-category 
\[
{\Map^h_\M(A,A)}^\otimes \to \Ass
\]
with underlying $\infty$-category equivalent to $\Map^{h}_\M(A,A)$.
\end{construction}

\begin{ex}\label{lb5.5}%5.5.5.5.5.5.5.5.
Let $\C^\otimes \to \Ass$ be a monoidal $\infty$-category, let $\A$ be a class of algebras in $\Cc$ capable of relative tensor products (\cref{capable}), and let $\M = {\Bmod_\A(\C)}^\circledast$ be the Morita double $\infty$-category of $\A$-multimodules constructed in \cref{link1}.

The map $\ka$ of \cref{lbl5} diagram (*) corresponds via straightening to a morphism of coherently commutative simplicial $\infty$-categories given on the level of objects and 1-morphisms by a diagram like so, in which $\A$ is regarded as a full subcategory of $\Alg \C$ (we omit degeneracy maps):  
\[
\begin{tikzcd}
\vdots 
\ar[d, shift left=3]\ar[d, shift left=1] \ar[d, shift right=1] \ar[d, shift right=3]
&
\vdots
\ar[d, shift left=3]\ar[d, shift left=1] \ar[d, shift right=1] \ar[d, shift right=3]
\\
\Bmod_\A(\C) \times_{\A} \Bmod_\A(\C)
\ar[d, shift left=2] \ar[d] \ar[d, shift right=2]
\ar[r]
&
\A \times \A \times \A
\ar[d, shift left=2] \ar[d] \ar[d, shift right=2]
\\
\Bmod_\A(\C) 
\ar[d, shift left=1] \ar[d, shift right=1]
\ar[r]
&
\A \times \A
\ar[d, shift left=1] \ar[d, shift right=1]
\\
\A
\ar[r]
&
\A.
\end{tikzcd}
\]

Given $A \in \A$ then,  $\End^h_{{\Bmod_\A(\C)}^\circledast}(A)$ corresponds to a coherently commutative simplicial $\infty$-category given on the level of objects and $1$-morphisms by
\[
\begin{tikzcd}
\vdots 
\ar[d, shift left=3]\ar[d, shift left=1] \ar[d, shift right=1] \ar[d, shift right=3]
\\
\BMod{A}{A}(\C) \times \BMod{A}{A}(\C)
\ar[d, shift left=2] \ar[d] \ar[d, shift right=2]
\\
\BMod{A}{A}(\C) 
\ar[d, shift left=1] \ar[d, shift right=1]
\\
\set{*}.
\end{tikzcd}
\]
As a special case of \cref{lbl5}, this endows the category
\[
\BMod{A}{A}(\C)
\simeq
\Map^h_{
{\Bmod_\A(\C)}^\circledast
}
(A,A)
\]
of $A$-$A$-bimodules in $\Cc$ with a monoidal structure.
\end{ex}

\begin{rem}
\textit{Higher Algebra} does not explicitly mention a monoidal structure on the particular model $\BMod{A}{A}(\C)$ for the \oo-category of bimodules. Instead, in \cite[4.4.3.12]{HA}, Lurie endows a different model $\opnm{Mod}_A^\mathrm{Assoc}(\Cc)$ with a monoidal structure. He also obtains an equivalence of underlying \oo-categories 
\[
\opnm{Mod}_A^\mathrm{Assoc}(\Cc) \simeq 
\BMod{A}{A}(\C)
\]
and states that the tensor product on $\opnm{Mod}_A^\mathrm{Assoc}(\Cc)$ corresponds to the relative tensor product of bimodules. 

However, as the proof of \cite[4.4.3.12]{HA} shows, Lurie's construction of a monoidal structure on $\opnm{Mod}_A^\mathrm{Assoc}(\Cc)$ essentially includes our construction of a monoidal structure on $\BMod{A}{A}(\C)$ followed by a further equivalence of \oo-operads
\[
\opnm{Mod}_A^\mathrm{Assoc}(\Cc)^\otimes \simeq 
\BMod{A}{A}(\C)^\otimes
\]
over $\Assoc$ furnished by \cite[Theorem 4.4.1.28]{HA}. In particular, \cite[4.4.3.12]{HA} holds under our (weaker) assumptions. 
\end{rem}

We now turn to the construction of the action of the $\infty$-category
\(
\Map^{h}_\M(A,A)
\)
of horizontal endomorphisms of $A$ in $\M$ on the $\infty$-category $\Map^{h}_\M(B,A)$ of horizontal morphisms from $B$ to $A$. 
We first import the classical construction of the d\'ecalage of a simplicial set to the context of coCartesian fibrations over $\Delta^\op$.

\begin{construction}\label{lb6}%666666666666666
Let $+^\op$ denote the functor $\Delta \to \Delta$ defined on objects by
\[
S \mapsto S_+ := S \coprod \{\infty\}
\]
and on morphisms in the obvious way.
Let $+$ denote the induced functor $\Delta^\op \to \Delta^\op$. 
Suppose $\Yy \to \Delta^\op$ is a coCartesian fibration.
Then the \emph{d\'ecalage $\opnm{Dec} \Yy \to \Delta^\op$ of $\Yy$} is the coCartesian fibration obtained from $\Yy$ via pullback along the functor $+$. 

We let 
$\nato := \Delta^\op \times \Delta^1$.
Consider the two functors of 1-categories 
\[
\begin{tikzcd}
\Delta^\op \ar[r, bend left, "{+}"] \ar[r, bend right, "{\Id}"']
& \Delta^\op.
\end{tikzcd}
\]
The natural inclusions $S \subset S_+$ give rise to a natural transformation 
\(
+ \Rightarrow \Id,
\)
hence to a functor
\[
\delta \colon  \nato \to \Delta^\op.
\]
Similarly, let $\ep^\op$ denote the composite functor
\[
\Delta \to \{*\} \to \Delta
\]
sending all objects to a one-element totally ordered set, and let $\ep$ denote the induced functor on opposite categories. 
The maps of sets $\{*\} \to S_+$ sending $* \mapsto \infty$ give rise to a natural transformation $+ \Rightarrow \ep$, hence to a functor 
\[
\pi \colon \nato = \Delta^\op \times \Delta^1
\to \Delta^\op.
\]

Returning to the coCartesian fibration $\Yy \to \Delta^\op$, the pullbacks
\[
\delta^* \Yy \to \nato,
\quad 
\pi^* \Yy \to \nato
\]
correspond to morphisms 
\[
\begin{tikzcd}
\Dec \Yy \ar[r] \ar[d] & \Yy \\
\Yy_{[0]} \times \Delta^\op
\end{tikzcd}
%\xymatrix{
%\Dec \Yy \ar[r] \ar[d] & \Yy \\
%\Yy_{[0]} \times \Delta^\op
%}
\]
of coCartesian fibrations over $\Delta^\op$.
\end{construction}

\begin{prop}\label{lb7}%7777777777
Let $\Yy \to \Delta^\op$ be a coCartesian fibration of simplicial sets.
Then the d\'ecalage of $\cosk_0 \Yy$ decomposes as a product
\[
\opnm{Dec} \cosk_0 \Yy \simeq
(\cosk_0 \Yy) \times_{\Delta^\op} (\Yy_{[0]} \times \Delta^\op)
\]
of coCartesian fibrations over $\Delta^\op$, where $\Yy_{[0]} \times \Delta^\op$ denotes the constant fibration with fiber $\Yy_{[0]}$.
\end{prop}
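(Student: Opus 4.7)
The plan is to pass to straightenings, verify the decomposition pointwise via the pointwise formula for right Kan extensions, and then unstraighten. Let $\Yy_\bullet \colon \Delta^\op \to \Cat_\infty$ denote the straightening of $\Yy$ and set $X = \Yy_{[0]}$. By construction $\cosk_0 \Yy_\bullet$ is the right Kan extension along $\iota_{[0]} \colon \{*\} \to \Delta^\op$ of the functor $\{*\}\to\Cat_\infty$ classifying $X$. Since $\Cat_\infty$ admits all small limits, the pointwise formula gives
\[
(\cosk_0 \Yy_\bullet)([n]) \;\simeq\; \prod_{\Hom_\Delta([0],[n])} X,
\]
the relevant comma category being discrete on the $n+1$ face maps $[0] \hookrightarrow [n]$ in $\Delta$, recovering the fiberwise description already recorded in \cref{bl1}.

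Next I would specialize to d\'ecalage. By definition $(\Dec \cosk_0 \Yy_\bullet)([n]) = (\cosk_0 \Yy_\bullet)([n]_+)$, where $[n]_+ = [n] \sqcup \{\infty\}$. The set-theoretic splitting $\Hom_\Delta([0], [n]_+) = \Hom_\Delta([0],[n]) \sqcup \{\infty\}$ is natural in $[n]\in\Delta$, because $+^\op$ sends each morphism $f\colon[m]\to[n]$ in $\Delta$ to the extension $f_+\colon[m]_+\to[n]_+$ that fixes $\infty$. Feeding this splitting into the formula above produces a natural equivalence of simplicial $\infty$-categories
\[
\Dec\cosk_0 \Yy_\bullet \;\iso\; (\cosk_0 \Yy_\bullet) \times \opnm{Const}(X),
\]
whose two projections are induced by $S \hookrightarrow S_+$ and by $\{*\}\to S_+$ picking $\infty$---precisely the natural transformations $+\Rightarrow \Id$ and $+\Rightarrow \ep$ that appear in \cref{lb6} to define $\delta$ and $\pi$. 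Consequently the two projections unstraighten to the morphisms $\Dec\cosk_0\Yy \to \cosk_0\Yy$ and $\Dec\cosk_0\Yy \to \Yy_{[0]}\times \Delta^\op$ corresponding to $\delta^*\cosk_0\Yy$ and $\pi^*\cosk_0\Yy$. Since by \cref{lbl3.1} the forgetful functor $\Cat_{\mathrm{coCart}/\Delta^\op}\to\Cat_\infty$ preserves limits, the product decomposition above unstraightens to the asserted equivalence in $\Cat_{\mathrm{coCart}/\Delta^\op}$.

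The main point requiring care is bookkeeping at the combinatorial level: one must verify both that the splitting $[n]_+ = [n]\sqcup\{\infty\}$ is functorial in $[n]$ through the structure maps of $\Delta^\op$, and that the resulting pair of projections really matches the $\delta,\pi$ constructed in \cref{lb6}. Both checks amount to unwinding the definition of $+^\op$ and of the two natural transformations $+\Rightarrow \Id$ and $+\Rightarrow \ep$; these are the only places where one could slip. Everything else reduces formally to the pointwise formula for $\cosk_0$ and the continuity of the unstraightening functor.
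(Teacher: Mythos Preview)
Your proof is correct and follows essentially the same approach as the paper: construct the comparison map using the two natural transformations $+\Rightarrow\Id$ and $+\Rightarrow\ep$ of \cref{lb6}, then verify it is an equivalence via the fiberwise formula $(\cosk_0\Yy)_{[n]}\simeq \Yy_{[0]}^{\times(n+1)}$. The paper's version is simply more terse---it writes two sentences where you unwind the straightened side explicitly---and your invocation of \cref{lbl3.1} at the end is slightly beside the point (what you actually use is that straightening/unstraightening is an equivalence and hence carries pointwise products to fiber products over $\Delta^\op$), but this does not affect the validity of the argument.
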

\begin{proof}
Applying \cref{lb6} to $\cosk_0 \Yy$, and noting that
\[
{(\cosk_0\Yy)}_{[0]} = \Yy_{[0]},
\]
we obtain a map $\psi$ from left to right. 
Additionally, taking fibers over $[n] \in \Delta$, we find that both sides are equivalent to ${(\Yy_{[0]})}^{\times (n+2)}$ compatibly with $\psi$.
It follows that $\psi$ is an equivalence. 
\end{proof}

\begin{ex}\label{lb8}%888888888
We apply \cref{lb6} and \cref{lb7} to the Morita double $\infty$-category ${\Bmod_\A(\C)}^\circledast \to \Delta^\op$ of \cref{link1}. % \cref{lbl4.5}. 
The morphism of coCartesian fibrations 
\[
\Dec {\Bmod_\A(\C)}^\circledast \to {\Bmod_\A(\C)}^\circledast 
\]
corresponds via straightening to a diagram of $\infty$-categories
\[
%\tiny
\tag{D}
\begin{tikzcd}
\vdots 
%\ar[d]\ar[d] \ar[d] \ar[d]
\ar[d, shift left=3]\ar[d, shift left=1] \ar[d, shift right=1] \ar[d, shift right=3]
&
\vdots
%\ar[d]\ar[d] \ar[d] \ar[d] 
\ar[d, shift left=3]\ar[d, shift left=1] \ar[d, shift right=1] \ar[d, shift right=3]
\\
\Bmod_\A(\C) \times_{\A} \Bmod_\A(\C)
\times_{\A} \Bmod_\A(\C)
%\ar[d] \ar[d] \ar[d]
\ar[d, shift left=2] \ar[d] \ar[d, shift right=2]
\ar[r]
&
\Bmod_\A(\C) \times_{\A} \Bmod_\A(\C)
%\ar[d] \ar[d] \ar[d]
\ar[d, shift left=2] \ar[d] \ar[d, shift right=2]
\\
\Bmod_\A(\C) \times_{\A} \Bmod_\A(\C) 
%\ar[d] \ar[d]
\ar[d, shift left=1] \ar[d, shift right=1]
\ar[r]
&
\Bmod_\A(\C)
%\ar[d] \ar[d]
\ar[d, shift left=1] \ar[d, shift right=1]
\\
\Bmod_\A(\C)
\ar[r]
&
\A
\end{tikzcd}
%\xymatrix{
%\vdots 
%\ar@<.9ex>[d]\ar@<.3ex>[d] \ar@<-.3ex>[d] \ar@<-.9ex>[d]
%&
%\vdots
%\ar@<.9ex>[d]\ar@<.3ex>[d] \ar@<-.3ex>[d] \ar@<-.9ex>[d] 
%\\
%\Bmod_\A(\C) \times_{\A} \Bmod_\A(\C)
% \times_{\A} \Bmod_\A(\C)
%\ar@<.7ex>[d] \ar[d] \ar@<-.7ex>[d]
%\ar[r]
%&
%\Bmod_\A(\C) \times_{\A} \Bmod_\A(\C)
%\ar@<.7ex>[d] \ar[d] \ar@<-.7ex>[d]
%\\
%\Bmod_\A(\C) \times_{\A} \Bmod_\A(\C) 
%\ar@<.5ex>[d] \ar@<-.5ex>[d]
%\ar[r]
%&
%\Bmod_\A(\C)
%\ar@<.5ex>[d] \ar@<-.5ex>[d]
%\\
%\Bmod_\A(\C)
%\ar[r]
%&
%\A
%}
\]
in which the horizontal maps forget the rightmost algebra as well as the rightmost bimodule. 
The morphism of coCartesian fibrations 
\[
\Dec \cosk_0 {\Bmod_\A(\C)}^\circledast
\to \cosk_0 {\Bmod_\A(\C)}^\circledast 
\]
corresponds via straightening to a diagram of $\infty$-categories
\[
%\tiny
\tag{DC}
\begin{tikzcd}
\vdots 
%\ar@<.9ex>[d]\ar@<.3ex>[d] \ar@<-.3ex>[d] \ar@<-.9ex>[d]
\ar[d, shift left=3]\ar[d, shift left=1] \ar[d, shift right=1] \ar[d, shift right=3]
&
\vdots
%\ar@<.9ex>[d]\ar@<.3ex>[d] \ar@<-.3ex>[d] \ar@<-.9ex>[d] 
\ar[d, shift left=3]\ar[d, shift left=1] \ar[d, shift right=1] \ar[d, shift right=3]
\\
\A \times \A \times \A \times \A
%\ar@<.7ex>[d] \ar[d] \ar@<-.7ex>[d]
\ar[d, shift left=2] \ar[d] \ar[d, shift right=2]
\ar[r]
&
\A \times \A \times \A 
%\ar@<.7ex>[d] \ar[d] \ar@<-.7ex>[d]
\ar[d, shift left=2] \ar[d] \ar[d, shift right=2]
\\
\A \times \A \times \A  
%\ar@<.5ex>[d] \ar@<-.5ex>[d]
\ar[d, shift left=1] \ar[d, shift right=1]
\ar[r]
&
\A \times \A
%\ar@<.5ex>[d] \ar@<-.5ex>[d]
\ar[d, shift left=1] \ar[d, shift right=1]
\\
\A \times \A
\ar[r]
&
\A.
\end{tikzcd}
\]
The proof of \cref{lb7} shows that, regarded as a morphism of simplicial objects, this is just projection from a product. 
\end{ex}

The left action of $ \Map^h_\M (A,A)$ on $\Map^h_\M(B,A)$ will be given in terms of a \textit{left action object in $\Cat_\infty$}; we recall how these correspond to $\LM$-monoidal categories.

\begin{rem}
\label{AC13}
The construction of \cite[prop. 4.2.2.9]{HA} provides us with an equivalence of $\infty$-categories
\[
\Mon_{\LM}(\Cat_\infty) \to \mathrm{LMon}(\Cat_\infty)
\]
from \emph{$\LM$-monoid objects} in $\Cat_\infty$ to left action objects in $\Cat_\infty$.
According to~\cite[prop. 2.4.2.5 and rem. 2.4.2.6]{HA}, $\LM$-monoid objects of $\Cat_\infty$ can be identified with $\LM$-algebra objects of $\Cat_\infty$, which, in turn, may be identified with $\LM$-monoidal categories.

More explicitly, suppose given a coCartesian fibration $\Xx \to \nato$ such that the associated natural transformation in $\Fun(\Delta^\op, \Cat_\infty)$ is a left action object. Then the associated $\LM$-monoidal category is given simply by pullback along a certain composition of functors
\[
\nato
\xto{\kappa}
\ArDopconv \times_{\Delta^\op} \{[1]\}
\xto{\Cleave_{[1]}}
\Tens^\otimes_{[1]} = \BM^\otimes
\]
whose image lies in $\LM^\otimes \subset \BM^\otimes$.  

These are constructed out of the category $\ArDopconv$ and the functors $\Cut$ and $\Cleave$ which we've assembled from various places in \textit{Higher Algebra} for the reader's convenience in \cref{AB12}.\footnote{For those fluent in the original, let us recall here that our category $\ArDopconv$ is denoted $\opnm{Step}$ in Higher Algebra, and our functor $\Cleave$ is denoted $\Phi$ in Higher Algebra.} There's a natural functor
\[
(s,t):\ArDopconv \to \Delta^\op \times \Delta^\op
\]
sending a map $f:[n] \to [k]$ in $\Delta$ with convex image to the pair
\[
(s,t)(f) := \big( [n], [k] \big).
\]
Evidently, the functors $\Cut$ and $\Cleave$ of \cref{AB12} fit together in a commuting square
\[
\begin{tikzcd}
\ArDopconv \ar[r,"\Cleave"] \ar[d,"{(s,t)}"] & 
\Tens^\otimes \ar[d]
\\
\Delta^\op \times \Delta^\op \ar[r,"\Cut \times id"] &
\Ass \times \Delta^\op.
\end{tikzcd}
\]
The functor $\Cleave_{[1]}$ is obtained from $\Cleave$ by pullback along the inclusion
\[
\Ass \times \{[1]\} \to \Ass \times \Delta^\op.
\]
The functor $\ka$ is given on objects as follows: we define $\ka([n], 1)$ to be the morphism $[n] \to [1]$ in $\Delta$ with convex image which sends all elements $i \in [n]$ to $0 \in [1]$; we define $\ka([n], 0)$ to be the morphism $[n+1] \to [1]$ which sends the elements $0, \dots, n$ to $0$ and $n+1 \mapsto 1$. The assignment at the level of morphisms is evident. 
\end{rem}

We now construct the left action of $ \Map^h_\M (A,A)$ on $\Map^h_\M(B,A)$.

\begin{construction}\label{lb9}%99999999
Fix $\M \to \Delta^\op$ a double $\infty$-category and two objects $A,B \in \M_{[0]}$ as in \cref{lbl4}.
Let $A^* \colon \Delta^\op \to \cosk_0 \M$ denote the section of 
\[
\cosk_0 \M \to \Delta^\op
\]
of \cref{lbl5}. 
Combining $A^*$ with the constant section of 
\[
\M_{[0]} \times \Delta^\op \to \Delta^\op
\]
determined by $B$, we obtain a section of 
\[
(\cosk_0 \M) \times_{\Delta^\op}
(\M_{[0]} \times \Delta^\op)
\to \Delta^\op.
\]
Composing with the equivalence of \cref{lb7}, we obtain a section $(A^*,B)$ of 
\[
\Dec \cosk_0\M \to \Delta^\op.
\]
Noting the Cartesian squares (solid arrow diagram)
\[
\begin{tikzcd}
\Dec \cosk_0 \M \ar[r] \ar[d] &
\delta^* \cosk_0 \M \ar[r] \ar[d] &
\cosk_0 \M \ar[d] \\
\Delta^\op \times \set{0}
\ar[r] \ar[rr, bend right=10, "+"'] \ar[u, dashed, bend right, "{(A^*,B)}"'] &
\Delta^\op \times \Delta^1
\ar[r] \ar[u, dashed, bend right, "{A^*B \ssearrow{A^*}}"'] &
\Delta^\op \ar[u, dashed, bend right, "{A^*}"']
\end{tikzcd}
\]
we obtain a section (denoted $A^*B \ssearrow A^*$) of the $\Delta^1$-family of coCartesian fibrations $\delta^* \cosk_0\M$ over $\Delta^\op$ with fibers $(A^*, B)$ over $\Delta^\op\times\{0\}$ (left-hand dotted arrow) and $A^*$ over $\Delta^\op \times \{1\}$ (right-hand dotted arrow = pullback of $A^*B \ssearrow A^*$ along the inclusion $\Delta^\op \times \{1\} \subset \Delta^\op \times \Delta^1 = \nato$). 
We define ${\opnm{LMap}^h_\M(B,A)}^\circledast$ by the pullback diagram in $\Cat_{\mathrm{coCart} / \nato}$ (\cref{lbl2})
\[
\tag{*}
\begin{tikzcd}
\operatorname{LMap}^h_\M (B,A)^\circledast \pbcorner
\ar[d] \ar[r] &
\delta^* \M
\ar[d, "{\delta^*\ka}"]
\\
\nato
\ar[r,"{A^*B \ssearrow A^*}"']
&
\delta^* \cosk_0 \M.
\end{tikzcd}
\]
The associated natural transformation in $\Fun(\Delta^\op, \Cat_\infty)$ is a left-action object of $\Cat_\infty$ in the sense of~\cite[def. 4.2.2.2]{HA}. 

Applying the constructions of \cref{AC13} to the left action object of $\Cat_\infty$ constructed above, we obtain an $\LM$-monoidal category 
\[
{\opnm{LMap}^h_\M(B,A)}^\otimes \to \LM^\otimes
\]
which exhibits $\Map_\M^h(B,A)$ as left-tensored over ${\Map^h_\M(A,A)}^\otimes$.
\end{construction}

\begin{ex}\label{lb10}%10101010101010
Applying \cref{lb9} to the double $\infty$-category ${\Bmod_\A(\C)}^\circledast$, we find that the straightening of ${\delta^*\Bmod_\A(\C)}^\circledast$ corresponds to a diagram of $\infty$-categories as in \cref{lb8}(D),
that $\delta^* {\cosk_0\Bmod_\A(\C)}^\circledast$ corresponds to a diagram of $\infty$-categories as in \cref{lb8} (DC), and so the map $\delta^*\ka$ corresponds to a morphism of diagrams
\[
\mathrm{\cref{lb8}(D)} \to \mathrm{\cref{lb8}(DC)}.
\]
Thus, if $\A$ is a class of algebras in $\Cc$ capable of relative tensor products and $A,B \in \A$
then ${\operatorname{LMap}^h_{\Bmod_\A(\C)^\circledast}(B,A)}^\circledast$ corresponds to a diagram like so:
\[
\begin{tikzcd}
\vdots 
%\ar@<.9ex>[d]\ar@<.3ex>[d] \ar@<-.3ex>[d] \ar@<-.9ex>[d]
\ar[d, shift left=3]\ar[d, shift left=1] \ar[d, shift right=1] \ar[d, shift right=3]
&
\vdots
%\ar@<.9ex>[d]\ar@<.3ex>[d] \ar@<-.3ex>[d] \ar@<-.9ex>[d] 
\ar[d, shift left=3]\ar[d, shift left=1] \ar[d, shift right=1] \ar[d, shift right=3]
\\
\BMod{A}{A}(\C) \times  \BMod{A}{A}(\C)
\times  \BMod{A}{B}(\C)
%\ar@<.7ex>[d] \ar[d] \ar@<-.7ex>[d]
\ar[d, shift left=2] \ar[d] \ar[d, shift right=2]
\ar[r]
&
\BMod{A}{A}(\C) \times  \BMod{A}{A}(\C)
%\ar@<.7ex>[d] \ar[d] \ar@<-.7ex>[d]
\ar[d, shift left=2] \ar[d] \ar[d, shift right=2]
\\
\BMod{A}{A}(\C) \times  \BMod{A}{B}(\C) 
%\ar@<.5ex>[d] \ar@<-.5ex>[d]
\ar[d, shift left=1] \ar[d, shift right=1]
\ar[r]
&
\BMod{A}{A}(\C)
%\ar@<.5ex>[d] \ar@<-.5ex>[d]
\ar[d, shift left=1] \ar[d, shift right=1]
\\
\BMod{A}{B}(\C)
\ar[r]
&
\ast
% \Alg(\C)
\end{tikzcd}
\]
in which the horizontal maps are projections. 
\end{ex}

\begin{construction}\label{def:BLMod_A}%11 11 11 11 11 11 11 11
Let $\C^\otimes \to \Ass$ be a monoidal category and let
$A\in \Alg(\C)$ be an algebra object.
Assume $\C$ admits realizations of $A$-bar constructions $\otimes$-compatibly. 
Let $\A$ denote the class of those algebras which are equivalent to $A$ or to $\one$. 
Then $\A$ is capable of relative tensor products (\cref{capable}), so that by \cref{link1}, $\Bmod_{\A}(\C)^\circledast$ is a double $\infty$-category.
We define  
\[
{\BLMod_A(\C)}^\otimes \to \LM^\otimes
\]
to be the $\LM$-monoidal category ${\opnm{LMap}^h_\M(\one,A)}^\otimes$ of \cref{lb9} applied to $\M = {\Bmod_{\A}(\C)}^\circledast$.
\end{construction}

\begin{rem}\label{lb12}%12 12 12 12 12 12 12 12
In view of the equivalence between left $A$-modules and $A$-$\one$-bimodules of ~\cite[cor.4.3.2.8]{HA}, the $\LM$-monoidal category 
\[
{\BLMod_A(\C)}^\otimes \to \LM^\otimes 
\]
of \cref{def:BLMod_A}
exhibits $\LMod_A(\C)$ as left tensored over $ {\BMod{A}{A}(\C)}^\otimes$. Moreover, unwinding definitions, we find that the induced tensor product functor is given by the \emph{relative tensor product}
\begin{align*}
\otimes_A \colon  & \BMod{A}{A}(\C) \times \LMod_A(\C) \to \LMod_A(\C) , \\
& (M,X) \mapsto M \otimes_A X = \left| \Bar_A(M,X)_\bullet \right|.
\end{align*}
\end{rem}

%%%%%%%%%%%%%%%
%%%%%%%%%%%%%%%%
%%%%%%%%%%%%%%
%%%%%%%%%%%%%
%%%%%%%%%%%%%
\section{External relative tensor products in \texorpdfstring{$\BM^\otimes$}{BM}-monoidal categories}
\label{sec:External}
%%%%%%%%%%%%%%
%%%%%%%%%%%%%%
%%%%%%%%%%%%%%%
%%%%%%%%%%%%%%
%%%%%%%%%%%%%%
%%%%%%%%%%%%%
%%%%%%%%%%%%%%

%6
%\input{External.tex}

Let $\C^\otimes \to \LM^\otimes$ be an $\LM$-monoidal $\infty$-category exhibiting $\C_\m$ as left-tensored over $\C_\a$ and $A\in \Alg(\C_\a)$ an algebra.
Our goal in this section is to construct an $\LM$-monoidal $\infty$-category 
\[
\BLMod_A(\C)^\otimes \to \LM^\otimes
\]
exhibiting $\LMod_A(\C_\m)$ as left-tensored over $\BMod{A}{A}(\C_\a)$; see \cref{w9f2}.
In order to leverage the results of \cite[sec. 4.4.1]{HA} we consider the slightly more general setting of $\BM$-monoidal $\infty$-categories. \Cref{LM_to_BM_monoidal_construction} will allow us to apply our $\BM$-monoidal constructions to $\LM$-monoidal categories. The reader who wishes to move on to the central constructions of Koszul duality and is willing to accept the action of bimodules on left modules on faith, will only need \cref{m24j0} (compatibility of tensor product with $\A$-bar constructions) for the sequel.

\subsection[Promoting LM-monoidal infinity-categories to BM-monoidal infinity-categories]{Promoting $\LM$-monoidal $\infty$-categories to $\BM$-monoidal $\infty$-categories}

The following statement allows us to promote an $\LM$-monoidal $\infty$-category to a $\BM$-monoidal $\infty$-category. 
\begin{notation}
Let $\Oo^\otimes$ be an $\infty$-operad. 
Following~\cite[rem. 2.4.2.6]{HA}, write $\Cat_\infty^\Oo \subset (\operatorname{Op}_\infty)_{/\Oo^\otimes}$ for the subcategory spanned by $\Oo$-monoidal $\infty$-categories and $\Oo$-monoidal functors.
\end{notation}

\begin{rem}
The terminal $\infty$-operad $\Fin_* \xto{=} \Fin_*$ defines a symmetric monoidal structure on the terminal $\infty$-category $\ast \in \Cat_\infty$.
We can therefore consider it as an object of the $\infty$-category of monoidal $\infty$-categories $\ast \in \Cat_\infty^{\operatorname{Ass}}$.
\end{rem}

\begin{prop}\label{LM_to_BM_monoidal}
The forgetful functor induces an equivalence of $\infty$-categories
\[
\Cat_\infty^{\BM} \times_{ \Cat_\infty^{\operatorname{Ass}}} \set{\ast} \iso \Cat_\infty^{\LM}.
\]
\end{prop}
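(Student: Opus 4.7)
The strategy is to exhibit $\BM^\otimes$ as a pushout in the $\infty$-category of $\infty$-operads, pass to $\Cat_\infty^{(-)}$ to convert this into a pullback of $\infty$-categories of monoidal $\infty$-categories, and then reduce to an elementary identification.

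First, I would establish the pushout square in $\POp$
\[
\begin{tikzcd}
\{\m\}^\otimes \ar[r] \ar[d] & \LM^\otimes \ar[d] \\
\operatorname{RM}^\otimes \ar[r] & \BM^\otimes,
\end{tikzcd}
\]
where $\{\m\}^\otimes$ is the trivial one-color $\infty$-operad and the maps are the inclusions of the $\m$-color and of the left- and right-module sub-operads. This can be read off from the combinatorial description of $\BM^\otimes$ recorded above: a multi-morphism with target $\m$ has inputs of the form $(\a_-^{\,k}, \m, \a_+^{\,\ell})$ and decomposes canonically as a left-action operation coming from $\LM^\otimes$ precomposed with a right-action operation coming from $\operatorname{RM}^\otimes$, while operations with target $\a_\pm$ come only from the respective side.

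Second, since $\Cat_\infty^{(-)} \colon \POp^{op} \to \widehat{\Cat}_\infty$ sends pushouts to pullbacks, this yields
\[
\Cat_\infty^{\BM} \simeq \Cat_\infty^{\LM} \times_{\Cat_\infty^{\{\m\}}} \Cat_\infty^{\operatorname{RM}},
\]
and $\Cat_\infty^{\{\m\}} \simeq \Cat_\infty$ as the trivial one-color operad carries no structure. The forgetful map $\Cat_\infty^{\BM} \to \Cat_\infty^{\operatorname{Ass}}$ is induced by $\operatorname{Ass}^\otimes \hookrightarrow \BM^\otimes$ via $\a \mapsto \a_+$ and so factors through $\Cat_\infty^{\operatorname{RM}}$. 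Hence
\[
\Cat_\infty^{\BM} \times_{\Cat_\infty^{\operatorname{Ass}}} \{\ast\}
\simeq \Cat_\infty^{\LM} \times_{\Cat_\infty} \big( \Cat_\infty^{\operatorname{RM}} \times_{\Cat_\infty^{\operatorname{Ass}}} \{\ast\} \big).
\]

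Third, I would observe directly that $\Cat_\infty^{\operatorname{RM}} \times_{\Cat_\infty^{\operatorname{Ass}}} \{\ast\} \simeq \Cat_\infty$ via restriction to the $\m$-color: if $\C^\otimes \to \operatorname{RM}^\otimes$ has $\C_\a \simeq \ast$, then each multi-operation into $\m$ becomes a functor $\C_\m \times \ast \times \cdots \times \ast \to \C_\m$ which is forced to be equivalent to the identity, so all the data of $\C^\otimes$ is encoded by the underlying $\infty$-category $\C_\m$. Substituting into the display above gives $\Cat_\infty^{\LM} \times_{\Cat_\infty} \Cat_\infty \simeq \Cat_\infty^{\LM}$, as claimed.

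The main obstacle is the operadic pushout identity in step one. Although the intuition---a bimodule structure is a compatible pair of left and right module structures, whose coherences commute---is clear, verifying this as a pushout in $\POp$ requires checking that the coherent commutativity of left and right actions matches precisely the freeness generated by the pushout. A more hands-on alternative would construct the inverse to the forgetful functor directly, by operadic left Kan extension along $\LM^\otimes \hookrightarrow \BM^\otimes$, extending an $\LM$-monoidal structure by the essentially unique trivial right action; this reduces in the end to the same computation.
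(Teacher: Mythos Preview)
Your step~1 is incorrect: $\BM^\otimes$ is \emph{not} the pushout of $\LM^\otimes$ and $\operatorname{RM}^\otimes$ along $\{\m\}^\otimes$ in $\POp$. By the universal property, an algebra over that pushout in any $\C^\otimes$ would be a pair consisting of a left $A$-module structure and a right $B$-module structure on the same object $M$, with no further compatibility. But a $\BM$-algebra is a bimodule, which carries the additional coherent identification $a \cdot (m \cdot b) \simeq (a \cdot m) \cdot b$. Concretely, the pushout has two distinct multi-operations with source $(\a_-, \m, \a_+)$ and target $\m$ --- ``act on the left, then on the right'' versus ``act on the right, then on the left'' --- whereas $\BM^\otimes$ has exactly one. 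You flagged this step as the main obstacle; it is in fact a genuine obstruction, not a verification waiting to be filled in.

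The paper's proof sidesteps this entirely. It uses the identification $\Cat_\infty^\Oo \simeq \Alg_\Oo(\Cat_\infty^\times)$ of \cite[rem.~2.4.2.6]{HA} to rewrite the statement as
\[
\Bmod(\Cat_\infty) \times_{\Alg(\Cat_\infty)} \{\ast\} \simeq \LMod(\Cat_\infty),
\]
and then invokes \cite[cor.~4.3.2.8]{HA}, which says precisely that in any $\BM$-monoidal $\infty$-category, bimodules with one acting algebra trivial are equivalent to one-sided modules. That corollary is where the commutativity of the two actions --- vacuous once $B \simeq \ast$ --- is actually handled. Your suggested alternative via operadic left Kan extension along $\LM^\otimes \hookrightarrow \BM^\otimes$ is much closer in spirit to this and could be made to work; the pushout route cannot.
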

\begin{proof}
Endow $\Cat_\infty$ with the Cartesian monoidal structure of~\cite[prop. 2.4.1.5]{HA}.
The fiber product $ \Cat_\infty^\times \times_{\Fin_*} \BM^\otimes \to \BM^\otimes$ exhibits $\Cat_\infty$ as bitensored over itself, so we can consider $\Cat_\infty$ as a $\BM$-monoidal $\infty$-category.
Applying~\cite[cor.4.3.2.8]{HA} to the $\BM$-monoidal category $\M=\Cat_\infty$ and the trivial algebra $B=\ast \in \Alg(\Cat_\infty)$ shows that the forgetful functor 
\[
\Bmod(\Cat_\infty) \times_{\Alg(\Cat_\infty)} \set{\ast} \to \LMod(\Cat_\infty)
\]
is an equivalence.
The result follows from the equivalences
\[
\Cat_\infty^{\operatorname{Ass}} \simeq \Alg(\Cat_\infty),  \quad
\Cat_\infty^\LM \simeq \LMod(\Cat_\infty), \quad
\Cat_\infty^\BM \simeq \Bmod(\Cat_\infty),
\]
which are all cases of the equivalence $\Alg_\Oo (\Cat_\infty) \simeq \Cat_\infty^\Oo$ of~\cite[rem. 2.4.2.6]{HA}.
\end{proof}

\begin{ex} \label{LM_to_BM_monoidal_construction}
By choosing a homotopy inverse to the equivalence of \cref{LM_to_BM_monoidal} we can define a functor
\[
\Cat_\infty^\LM \iso \Cat_\infty^\BM \times_{\Cat_\infty^{\operatorname{Ass}}} \set{\ast} \to \Cat_\infty^\BM
\]
where the first functor is the chosen homotopy inverse and the second is the projection.
Informally this functor extends an $\LM$-monoidal $\infty$-category $\C$ to a $\BM$-monoidal $\infty$-category by defining a right action of the terminal category $\ast$ on $\C_\m$ in which the action map
\[
\C_\m \times \ast \to \C_\m
\]
is the canonical equivalence. 
\end{ex}

\subsection[The generalized infinity operad Tens_Nabla]{The generalized $\infty$-operad $\Tens^\otimes_\nato$}

Recall that $\nato= \Delta^\op \times \Delta^1$.
In this subsection we construct a $\nato$-family of $\infty$-operads
\[
\Tens^\otimes_\nato \to \nato \times \Fin_*
\]
(\cref{def:Tens_nato}) and a functor
\[
\varphi \colon \Tens^\otimes_\nato \to \BM^\otimes
\]
(\cref{const:Tens_S_to_BM}).
In section \ref{ZZ11}, these will be used to construct a certain coCartesian fibration
\[
{\BLMod_{\A}(\C)}^\circledast \to \nato
\]
associated to a $\BM$-monoidal $\infty$-category $\C$ and a suitable class $\A$ of algebra objects of $\C_-$ under certain assumptions.
% In the next subsection we use $\Tens_\nato$ to construct the $\nato$-family $\BLMod_\A(\C)$ of a $\BM$-monoidal $\infty$-category $\C$.

We recall notation from \cref{lb6} associated to the d\'ecalage construction.

%%%%
\begin{notation} \label{def:plus_and_delta}
Let 
$(-)_+ \colon \Delta \to \Delta$
be the functor sending a finite totally ordered set $I$ to 
$I_+=I \cup \set{\infty}$,
where $i< \infty$ for all $i\in I$. The natural inclusion $I \to I_+$ defines a natural transformation $id_{\Delta} \Rightarrow (-)_+$ of functors $\Delta \to \Delta$, or equivalently a natural transformation $(-)_+ \Rightarrow id_{\Delta^\op}$ of functors $\Delta^\op \to \Delta^\op$.
Let 
\[
\delta \colon \nato \to \Delta^\op, \quad (I,0) \mapsto I_+, \,(I,1) \mapsto I
\]
be the associated functor.

We often denote the objects of $\Delta^\op$ by $[k]= \set{0<1<\cdots<k}$.
The isomorphism $[k]_+ \= [k+1]$ identifies $\infty \in [k]_+ = \set{0<1<\cdots <k<\infty}$ with $k+1 \in [k+1]$.

Given $[k]\in \Delta$ we write 
\[
0_+ \colon [k]_+ \to [0]_+ \= [1] , \quad 0_+(i) = \begin{cases}
0 & i \neq \infty \\
1 & i = \infty 
\end{cases}
\]
for the map induced by applying $(-)_+$ to the zero map $[k] \to [0]$.
\end{notation}

\begin{rem}\label{delta_lambda_description}
A morphism in $\nato$ is of one of the forms 
\[ (\lambda, id_0), (\lambda, id_1), (\lambda, 0 \to 1) \]
where $\lambda \colon [k'] \to [k] \in \Delta$ is a map of linearly ordered nonempty finite sets.
The functor $\delta$ sends $([k],0)$ to $[k]_+ \= [k+1]$, sends $([k],1)$ to $[k]$, and acts on morphisms as follows.
\begin{enumerate}
\item $\delta$ sends $(\lambda,id_0)$ to $\lambda \colon [k'] \to [k] \in \Delta$.
\item $\delta$ sends $(\lambda,id_1)$ to 
\[
\lambda_+ \colon [k']_+ \to [k]_+ \in \Delta, 
\quad \lambda_+ (i) = \begin{cases}
\lambda(i) & i\neq \infty \\
\infty & i= \infty.
\end{cases}
\]
\item $\delta$ sends $(\lambda,0 \to 1)$ to 
\(
\lambda \colon [k'] \to [k] \subset [k]_+ \in \Delta.
\)
\end{enumerate}
\end{rem}

\begin{rem} \label{tau_ext_rel_tensor}
The category $\nato$ may be depicted as follows
\[
\tag{$\nato$}
\begin{tikzcd}
\vdots 
\ar[d, shift left=3, dashed]\ar[d, shift left=1, dashed] \ar[d, shift right=1, dashed] \ar[d, shift right=3, dashed]
&
\vdots
\ar[d, shift left=3]\ar[d, shift left=1] \ar[d, shift right=1] \ar[d, shift right=3]
\\
([2],0)
\ar[d, shift left=2, dashed] \ar[d, dashed] \ar[d, shift right=2, dashed]
\ar[r, rightsquigarrow]
&
([2],1)
\ar[d, shift left=2] \ar[d] \ar[d, shift right=2]
\\
([1],0)
\ar[d, shift left=1, dashed, "\tau"] \ar[d, shift right=1, dashed]
\ar[r, rightsquigarrow]
&
([1],1)
\ar[d, shift left=1] \ar[d, shift right=1]
\\
([0],0)
\ar[r, rightsquigarrow]
&
([0],1).
\end{tikzcd}
\]
The functor $\delta: \nato \to \Delta^\op$ restricts to the identity on the right column, identifies the left column with the full subcategory of $\Delta^\op$ corresponding to the totally ordered sets $[k]$, $k\ge1$, and morphisms in $\Delta$ which preserve maximal elements (dashed arrows below)
\[
\tag{$\Delta^\op$}
\begin{tikzcd}
\vdots 
\ar[d, shift left=3, dashed]\ar[d, shift left=1, dashed] \ar[d, shift right=1, dashed] \ar[d, shift right=3, rightsquigarrow]
\\
\left[ 2\right]
\ar[d, shift left=2, dashed] \ar[d, dashed] \ar[d, shift right=2, rightsquigarrow]
\\
\left[1\right]
\ar[d, shift left=1] \ar[d, shift right=1, rightsquigarrow]
\\
\left[0\right],
\end{tikzcd}
\]
and maps the horizontal arrows of $\nato$ to those face maps which do not preserve maximal elements (squiggly arrows).

The arrow $\tau$ in $\nato$, which corresponds to the morphisms
\[
\{0\} \to \{0,1\} \quad 0 \mapsto 0
\quad
\mbox{in}
\quad \Delta,
\quad
0 \xto{id} 0
\quad
\mbox{in}
\quad
\Delta^1,
\]
will induce the \emph{external relative tensor product}
(see \cref{def:external_relative_tensor_prod})
\[
\tau_* \colon \Bmod(\C_-) \times_{\Alg(\C_-)} \Bmod(\C_{\m}) \to \Bmod(\C_{\m})
\]
and hence deserves special attention. 
\end{rem}

\begin{notation} \label{def:Tens_nato}
We denote by
\[
\Tens^\otimes_\nato \to \nato
\]
the pullback of $\Tens^\otimes \to \Delta^\op$ along $\delta$ (\cref{def:plus_and_delta}). By construction, we can identify $\Tens^\otimes_\nato|_{\Delta^\op \times \set{1} }$ with $\Tens^\otimes \to \Delta^\op$,
and $\Tens^\otimes_\nato|_{\Delta^\op \times \set{0} }$ with the pullback of $\Tens^\otimes \to \Delta^\op$ along $(-)_+ \colon \Delta^\op \to \Delta^\op$.
\end{notation}

\begin{rem} \label{Tens_nato_description}
We can unwind the definition of the category $\Tens^\otimes_\nato$ as follows.
\begin{enumerate}
\item An object of $\Tens^\otimes_\nato$ is a tuple $(b,\angled{n},[k],c_-,c_+)$ where $b\in \Delta^1$, $\angled{n}\in \Fin_*$, $[k]\in \Delta^\op$ and
\[
c_-, c_+ \colon \angled{n}^\circ \to \delta([k],b) = \begin{cases} [k]_+ & b=0 \\ [k] & b=1 \end{cases}
\]
are maps
satisfying $c_-(i) \leq c_+(i) \leq c_-(i)+1$ for every $i \in \angled{n}^\circ$.
\item Let $(b,\angled{n},[k],c_-,c_+)$ and $(b',\angled{n'},[k'],c'_-,c'_+)$ be two objects of $\Tens^\otimes_\nato$.
A morphism from $(b,\angled{n},[k],c_-,c_+)$ to $(b',\angled{n'},[k'],c'_-,c'_+)$ is a tuple $(\beta,\al,\la)$ where 
\[
\beta \colon b \to b' \in \Delta^1, 
\quad \al \colon \angled{n} \to \angled{n'} \in \Ass,
\quad \la \colon [k'] \to [k] \in \Delta
\]
satisfy the following condition.
For every $j\in \angled{n'}^\circ$ with totally ordered fiber $\al^{-1}(j)= \set{i_0 \prec i_1 \prec \cdots \prec i_m}$ the following equalities hold:
\begin{align*}
& c_-(i_0) = \delta(\la,\beta)(c'_-(j)), \quad c_+(i_m) = \delta(\la,\beta)(c'_+(j)), \\
& c_-(i_0) \leq c_+(i_0) = c_-(i_1) \leq c_+(i_1) = \cdots = c_-(i_m) \leq c_+(i_m),
\end{align*}
where in the first two equalities we consider $\delta(\la,\beta)$ as defining a morphism of totally ordered sets $\delta([k'],b') \to \delta([k],b)$.
\end{enumerate}
\end{rem}

Since $\Tens^\otimes_\nato$ and $\BM^\otimes$ are nerves of $1$-categories we will be able to define a functor $\Tens^\otimes_\nato \to \BM^\otimes$ by its action on objects and morphisms. In its construction we use a certain natural transformation $\eta$ from $\delta$ to the constant functor $\nato \to \Delta^\op$ with value $[1]$. 

\begin{construction} \label{def:delta_to_[1]}
Let $\delta \colon \nato \to \Delta^\op$ be the map of \cref{def:plus_and_delta}.
The totally ordered set $\delta([k],b)$ admits a canonical map to $[1]= \set{0 <1}$, constructed as follows.
Note that $\delta$ carries the initial object $([0],0) \in \Delta^\op \times \Delta^1$ to $[0]_+ \= [1]$, and hence induces a map on the slice categories
\[
\Delta^\op \times \Delta^1 \= (\Delta^\op \times \Delta^1)_{([0],0)/} \xto{\delta} (\Delta^\op)_{[1]/} \= (\Delta_{/[1]})^\op.
\]
Given $([k], b) \in \nato$ on the left, we define
\[
\eta:\delta([k], b) \to [1]
\]  
to be the image of $([k], b)$ in $(\Delta_{/[1]})^\op$ on the right. Explicitly the map $\eta$ is given by
\begin{align*}
\eta \colon & \delta([k],0) = [k]_+ \to [1] , 
\quad \eta(i) = \begin{cases}
0 & i \neq \infty \\
1 & i = \infty
\end{cases} \\
\eta \colon & \delta([k],1) = [k] \to [1] , \quad \eta(i)=0.
\end{align*}
\end{construction}

\begin{construction} \label{const:Tens_S_to_BM}
We define a functor  
\[ 
\varphi \colon \Tens^\otimes_\nato \to \BM^\otimes 
\] 
by its action on objects and morphisms.
\begin{enumerate}
\item 
Given $(b,\angled{n},[k],c_-,c_+) \in \Tens^\otimes_\nato$ we define
\[
\varphi(b,\angled{n},[k],c_-,c_+):=(\angled{n}, \eta \circ c_-, \eta \circ c_+),
\] 
where the maps 
\[
\eta \circ c_- , \eta \circ c_+ \colon \angled{n}^\circ \to \delta([k],b) \xto{\eta} [1]
\]
are defined by composition with the canonical map $\eta$ of \cref{def:delta_to_[1]}.
\item
Given 
\[
(\beta,\al,\la) \colon (b,\angled{n},[k],c_-,c_+) \to (b',\angled{n'},[k'],c'_-,c'_+)
\]
a morphism in $\Tens^\otimes_\nato$, we set 
\[
\varphi(\beta,\al,\la):=\al \colon \angled{n} \to \angled{n'}.
\]
Let us verify that $\al$ indeed defines a morphism in $\BM^\otimes$. By the description of morphisms in \cref{Tens_nato_description} we have 
\[
c_-(i_0) = \delta(\la,\beta)(c'_-(j)), \quad c_+(i_m) = \delta(\la,\beta)(c'_+(j)),
\]
so by naturality of the canonical map $\eta$ we have 
\[
\eta \circ c_-(i_0) = \eta \circ c'_-(j), \quad \eta \circ c_+(i_m) = \eta \circ c'_+(j).
\]
By the description of the object $(b,\angled{n},c_-,c_+) \in \Tens^\otimes_\nato$ we also have
\[
c_-(i_0) \leq c_+(i_0) = c_-(i_1) \leq c_+(i_1) = \cdots = c_-(i_m) \leq c_+(i_m).
\]
The remaining conditions follow by applying the order preserving map
\[\eta \colon \delta([k],b) \to [1].\]
\end{enumerate}

\end{construction}

% \subsection{The double infinity category of bimodules }
\subsection{Construction of $\BLMod_\A(\C)^\circledast$}
\label{ZZ11}

Let $\Cc^\otimes$ be a $\BM$-monoidal $\infty$-category which admits realizations of $\Aa$-bar constructions $\otimes$-compatibly for an appropriate class $\Aa$ of algebras in $\Cc_-$. Our goal in this subsection is to construct a certain coCartesian fibration
\[
p_\Aa: \BLMod_\Aa(\C)^\circledast
\to
\nato
\]
which will play a role similar to the role played by the Morita double $\infty$-category of \cref{lb5.5}. In \cref{extracting} we will fix algebras $A$ in $\Cc_-$ and $B\in \C_+$ and use $\BLMod_\Aa(\C)^\circledast$ to extract a left action of $A$-$A$-bimodules in $\Cc_-$ on $A$-$B$-bimodules in $\Cc_\m$. 

Throughout this subsection, we fix a $\BM$-monoidal category
\[
q \colon \Cc^\otimes \to \BM^\otimes.
\]

\begin{notation}
For any map of simplicial sets $f:K \to \nato$ we let
\[
\Tens^\otimes_f \to K
\]
denote the pullback of $\Tens^\otimes_\nato \to \nato$ along $f$. Equivalently, $\Tens^\otimes_f$ is the pullback of $\Tens^\otimes \to \Delta^\op$ along the composite
\[
K \to \nato \xto{\delta} \Delta^\op.
\]
We allow ourselves to use the notation 
\[
\Tens^\otimes_f =   \Tens^\otimes_K
\]
when we see no danger of confusion. 
We consider $\Tens^\otimes_K$ as a simplicial set over $\BM^\otimes$ via the map
\[
\Tens^\otimes_K = \Tens^\otimes_f \to \Tens^\otimes_\nato \xto{\varphi} \BM^\otimes
\]
where $\varphi$ is the functor of \cref{const:Tens_S_to_BM}.
\end{notation}

\begin{construction} \label{cons:BLMod_C_star}
In terms of our fixed $\BM$-monoidal category $q \colon \C^\otimes \to \BM^\otimes$ we define a map of simplicial sets
\[
p \colon \BLMod(\C)^\circledast \to \nato
\]
by the following universal property:
\begin{itemize}
\item[(*)] For every map of simplicial sets $K \to \nato$ there is a canonical bijection\footnote{The subscript `0' on the right refers to the \textit{set} of 0-simplices.}
\[
\Hom_{{\Set_\Delta}_{/\nato}}(K,\BLMod(\C)^\circledast) \= \Alg_{\Tens_K/\BM}(\C)_0.
\]
\end{itemize}
\end{construction}
In particular, a diagram of simplicial sets
\begin{equation*}
\begin{tikzcd}
L \ar[d] \ar[r] & {\BLMod(\C)}^\circledast \ar[d,"p"] \\
K \ar[r] \ar[ur] & \nato
\end{tikzcd}
\end{equation*}
commutes if and only if the corresponding diagram 
\begin{equation*}
\begin{tikzcd}
\Tens_L^\otimes \ar[d] \ar[r] & \C^\otimes \ar[d,"q"] \\
\Tens_K^\otimes \ar[r] \ar[ur] & \BM^\otimes
\end{tikzcd}
\end{equation*}
commutes. 

\begin{lem} \label{BTen_flat}
The 
map of simplicial sets of \cref{cons:BLMod_C_star}
\[
p \colon {\BLMod(\C)}^\circledast \to \nato
\]
is an inner fibration.
\end{lem}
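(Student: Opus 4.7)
The plan is to mimic the proof of \cref{link2} essentially verbatim, exploiting the fact that the universal property (*) of $\BLMod(\C)^\circledast$ has exactly the same form as that of $\Bmod(\C)^\circledast$, with $\nato$ replacing $\Delta^\op$ and $\BM^\otimes$ replacing $\Ass$. Concretely, I would check that $p$ has the right lifting property against every inner anodyne monomorphism $A \hookrightarrow B$ of simplicial sets. Unwinding the bijection of \cref{cons:BLMod_C_star}, this lifting problem translates into the problem of producing a dashed filler in
\[
\begin{tikzcd}
\Tens^\otimes_A \ar[d] \ar[r] & \C^\otimes \ar[d,"q"] \\
\Tens^\otimes_B \ar[r] \ar[ur, dashed] & \BM^\otimes.
\end{tikzcd}
\]

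The next step is to verify that the left-hand vertical inclusion $\Tens^\otimes_A \hookrightarrow \Tens^\otimes_B$ is a trivial cofibration in the Joyal model structure. For this I would first observe that $\Tens^\otimes_\nato \to \nato$ is, by \cref{def:Tens_nato}, a pullback of $\Tens^\otimes \to \Delta^\op$ along $\delta$; since flatness of a categorical fibration is stable under pullback, and since $\Tens^\otimes \to \Delta^\op$ is flat by \cite[thm. 4.4.3.1]{HA}, it follows that $\Tens^\otimes_\nato \to \nato$ is flat. Invoking \cite[prop. B.3.14]{HA} applied to the flat fibration $\Tens^\otimes_\nato \to \nato$ and the inner anodyne monomorphism $A \hookrightarrow B$ over $\nato$ then gives that $\Tens^\otimes_A \hookrightarrow \Tens^\otimes_B$ is a categorical equivalence of simplicial sets, as required.

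To conclude the lifting problem, I would use that $q \colon \C^\otimes \to \BM^\otimes$ is a coCartesian fibration (being a $\BM$-monoidal $\infty$-category), hence in particular a categorical fibration, so the trivial cofibration on the left does admit the dashed filler against it. This produces the desired inner anodyne lift of $A \hookrightarrow B$ against $p$, completing the verification that $p$ is an inner fibration.

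I do not anticipate any genuine obstacle: the only substantive input is the flatness of $\Tens^\otimes_\nato \to \nato$, and this follows formally from flatness of $\Tens^\otimes \to \Delta^\op$ by base change. The promotion from the $\Ass$-setting to the $\BM$-setting plays no role in this particular statement, since all we need is that the target $\BM^\otimes$ of $q$ is an $\infty$-category and that $q$ is a categorical fibration. In fact the argument is completely parallel to \cref{link2} and could in principle be stated once for any flat categorical fibration over a base.
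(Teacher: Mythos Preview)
The proposal is correct and follows essentially the same approach as the paper's proof: translate the lifting problem via the universal property, deduce flatness of $\Tens^\otimes_\nato \to \nato$ from flatness of $\Tens^\otimes \to \Delta^\op$ by base change, apply \cite[prop.~B.3.14]{HA} to get a Joyal trivial cofibration on the left, and lift against the categorical fibration $q$.
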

\begin{proof}
We verify that $p$ has the right lifting property with respect to all inner anodyne maps.
By definition of ${\BLMod(\C)}^\circledast$ checking the lifting condition against an inner anodyne map $A \to B$ is equivalent to solving the following lifting problem
\begin{equation*}
\begin{tikzcd}
\Tens^\otimes_A \ar[d] \ar[r] & \C^\otimes \ar[d,"q"] \\
\Tens^\otimes_B \ar[r] \ar[ur, dashed] & \BM^\otimes. 
\end{tikzcd}
\end{equation*}
By~\cite[thm. 4.4.3.1]{HA} the functor $\Tens^\otimes \to \Delta^\op$ is a flat categorical fibration, and by~\cite[rem. B.3.12]{HA} so is its pullback $\Tens^\otimes_\nato \to \nato$.
By~\cite[prop. B.3.14]{HA} the monomorphism $\Tens^\otimes_A \to \Tens^\otimes_B$ is a trivial cofibration in the Joyal model structure (\cite[thm. 2.2.5.1]{HTT}).
It follows that the solution to the above lifting problem exists, as the right vertical morphism $\C^\otimes \to \BM^\otimes$ is a categorical fibration.
\end{proof}

Applying~\cite[cor. 4.4.3.2]{HA} to $\Oo^\otimes=\BM^\otimes$ and 
\(
S=\nato \xto{\varphi} \BM^\otimes
\)
we get the following proposition.

\begin{prop} \label{coCart_edges_in_BLMod}
Let $\al \colon s \to s'$ be an edge in $\nato=\Delta^\op \times \Delta^1$ and let $F_0$ be a vertex of $\Alg_{\Tens_s/\BM}(\C)$.
The vertex $F_0$ corresponds to an element of 
\[
\Hom_{{\Set_\Delta}_{/\nato}} (\set{s}, {\BLMod(\C)}^\circledast),
\]
and hence to a vertex of ${\BLMod(\C)}^\circledast$ lying over $s$.
Let $F$ be a vertex of 
$\Alg_{\Tens_\al/\BM}(\C)$
extending $F_0$; equivalently, $F$ is an edge of ${\BLMod(\C)}^\circledast$ lying over $\al$ with source $F_0$.
If $F$, regarded as a map of generalized $\infty$-operads
\[
\Tens^\otimes_\al \to \C^\otimes
\]
over $\BM^\otimes$,
is an operadic $q$-left Kan extension of $F_0$, then $F$ regarded as an edge of 
$
{\BLMod(\C)}^\circledast
$
is $q$-coCartesian. 
\end{prop}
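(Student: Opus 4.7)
The plan is to identify $\BLMod(\C)^\circledast \to \nato$ as an instance of the general construction of Corollary HA.4.4.3.2 and read off the conclusion. In that corollary, Lurie takes a simplicial set $S$ equipped with a map to an $\infty$-operad $\Oo^\otimes$ together with a generalized $\infty$-operad family structure on $\Tens^\otimes_S := \Tens^\otimes \times_{\Delta^\op} S$, and constructs a coCartesian fibration classifying $\Tens^\otimes_S$-algebras in a fixed $\Oo^\otimes$-monoidal category; the coCartesian edges are exactly those corresponding to operadic left Kan extensions. In our setting we take $\Oo^\otimes = \BM^\otimes$ and $S = \nato$, with structure map $\nato \xto{\varphi\text{ through }\Tens^\otimes_\nato} \BM^\otimes$ supplied by \cref{const:Tens_S_to_BM}. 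The universal property (*) in \cref{cons:BLMod_C_star} is precisely the defining universal property of the simplicial set produced by HA.4.4.3.2, so $\BLMod(\C)^\circledast$ is canonically this fibration.

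Concretely, I would proceed in two short steps. First I would verify that the hypotheses of HA.4.4.3.2 are met: that $\Tens^\otimes_\nato \to \nato \times \Fin_*$ is a $\nato$-family of $\infty$-operads (this is immediate from the pullback along $\delta$ applied to the $\Delta^\op$-family structure of \cite[thm. 4.4.3.1]{HA}, combined with \cite[rem. B.3.12]{HA} for preservation of flatness under pullback) and that $\varphi$ is a map of $\infty$-operad families over $\Fin_*$ (checked during the construction in \cref{const:Tens_S_to_BM}). Second I would translate the general conclusion of HA.4.4.3.2: an edge $F$ in $\BLMod(\C)^\circledast$ over $\alpha \colon s \to s'$ corresponds under (*) to a morphism $\Tens^\otimes_\alpha \to \C^\otimes$ over $\BM^\otimes$, and $F$ is $p$-coCartesian if and only if that morphism is an operadic $q$-left Kan extension of its restriction to $\Tens^\otimes_s$.

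The substantive content of HA.4.4.3.2, which one should mentally expand once for this case exactly as in our proof of \cref{2}, amounts to solving the lifting problem
\[
\begin{tikzcd}
\Delta^{\{0,1\}} \ar[dr,"F"] \ar[d,hookrightarrow] & \\
\Lambda^n_0 \ar[r] \ar[d,hookrightarrow] & \BLMod(\C)^\circledast \ar[d,"p"] \\
\Delta^n \ar[r,"\sigma"] \ar[ur,dashed] & \nato
\end{tikzcd}
\]
for $n \ge 2$. By the universal property this translates to extending a commuting diagram of generalized $\infty$-operads $\Tens^\otimes_{\Lambda^n_0} \to \C^\otimes$ over $\BM^\otimes$ to $\Tens^\otimes_{\Delta^n}$, and since $F|_{\Tens^\otimes_{\Delta^{\{0,1\}}}}$ is by hypothesis an operadic $q$-left Kan extension of $F_0 = F|_{\Tens^\otimes_{\Delta^{\{0\}}}}$, the extension is supplied by \cite[thm. 3.1.2.3(B)]{HA}. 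The only real obstacle is bookkeeping: one must confirm that the operadic left Kan extension theorem of HA.3.1.2.3 applies verbatim with $\BM^\otimes$ in place of $\Assoc$. This is purely formal because that theorem is stated for arbitrary $\infty$-operads, and the flatness of $\Tens^\otimes_\nato \to \nato$ inherited from HA.4.4.3.1 ensures that the relevant horn inclusions are categorical equivalences in the Joyal model structure, making the lift exist as a map of generalized $\infty$-operads over $\BM^\otimes$.
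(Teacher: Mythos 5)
Your proposal is correct and follows essentially the same route as the paper, which proves this proposition by declaring it \emph{mutatis mutandis} the same as \cref{2}: reduce to the lifting problem against $\Lambda^n_0 \subset \Delta^n$, translate via the universal property of \cref{cons:BLMod_C_star} into a diagram of generalized $\infty$-operad maps over $\BM^\otimes$, and invoke \cite[thm. 3.1.2.3(B)]{HA} together with flatness of $\Tens^\otimes_\nato \to \nato$ (inherited by pullback from \cite[thm. 4.4.3.1]{HA}). The only caveat is that your phrasing "coCartesian edges are exactly those corresponding to operadic left Kan extensions" asserts an equivalence, whereas the proposition (and the paper) only establishes the implication from operadic left Kan extension to coCartesian edge, which is all that is needed.
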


\begin{proof}
Mutatis mutandis the same as \cref{2}.
\end{proof}

Our next goal is to introduce the notion of an $\A$-multimodule in a $\BM$-monoidal $\infty$-category (\cref{k1_A_multimodule}).

\begin{notation}
For $[k]\in \Delta^\op$ and $i=0,\ldots,k$ let 
\[
\epsilon_i \colon \Ass \to \Tens^\otimes_{[k]}
\]
be the map of~\cite[rem. 4.4.1.8]{HA}.
The map $\epsilon_i$ identifies $\Ass$ with the full subcategory of $\Tens^\otimes_{[k]}$ spanned by objects $(\angled{n},[k],c_-,c_+)$ where $c_-(j)=c_+(j)=i$ for every $j\in \angled{n}^\circ$.

Let $\mathrm{Ass}^\otimes_- \subset \BM^\otimes$ be the full subcategory of objects of the form $(\angled{n}, c_-,c_+)$ where $c_-(i)=c_+(i) = 0$ for every $i\in \angled{n}^\circ$.
The forgetful functor $\BM^\otimes \to \Ass$ restricts to an isomorphism $\mathrm{Ass}^\otimes_- \xto{\=} \Ass$, as in~\cite[rem. 4.3.1.10]{HA}.
\end{notation}

\begin{mydef}
\label{def:natomm}
Fix $s\in \nato = \Delta^\op \times \Delta^1$. An \emph{$s$-multimodule in $\C$} is a map of $\infty$-operads $\Tens^\otimes_s \to \C^\otimes$ over $\BM^\otimes$.
A \emph{multimodule in $\C$} is an $s$-multimodule in $\C$ for some $s\in \nato$.
\end{mydef}

\begin{rem}
\label{s23j1}
The $([k], i)$-multimodules of \cref{def:natomm} come in two varieties, depending on whether $i=0$ or $1$. Note that we have canonical equivalences
\[
\Tens^\otimes_{([k], 1)} \simeq 
\Tens^\otimes_{[k]}
\quad
\mbox{and}
\quad
\Tens^\otimes_{([k], 0)} \simeq 
\Tens^\otimes_{[k]_+}
\simeq
\Tens^\otimes_{[k+1]}.
\]
Starting with the case $i =1$, if $s=([k],1) \in \nato$ then
\[
\Tens^\otimes_s \to \Tens^\otimes_\nato \xto{\varphi} \BM^\otimes
\]
factors through $\mathrm{Ass}^\otimes_- \subset \BM^\otimes$. Therefore we can identify a $([k],1)$-multimodule in $\C$ with a $[k]$-multimodule in $\C_-$. 

Turning to the case $i=0$, a $([k], 0)$-multimodule 
\[
M: \Tens^\otimes_{([k],0)} 
\to \Cc^\otimes
\]
in $\Cc$ corresponds to a $[k]$-multimodule
\[
\tag{*}
_{A_0}{M^{0,1}}_{A_{1}} M^{1,2}_{A_2}
\cdots
_{A_{k-1}} M^{k-1,k}_{A_k}
\]
in $\Cc_-$ and an $A_k$-$A_\infty$-bimodule $M^{k, \infty}$ in $\C_\m$, where $A_\infty$ is an algebra in $\C_+$. This correspondence is furnished as follows. The inclusion
\[
\al: [k] \subset [k]_+ = \{0 < \dots < k < \infty\}
\]
gives rise to a map of $\infty$-operads \cite[not. 4.4.1.10]{HA}
\[
v_\al: \Tens^\otimes_{[k]} \to 
\Tens^\otimes_{[k]_+}
\]
given by
\[
v_\al \big(
\langle n \rangle, [k], c_-, c_+
\big) 
= 
\big(
\langle n \rangle, [k]_+, \al \circ c_-, \al \circ c_+
\big).
\]
The composition
\[
\Tens^\otimes_{[k]} 
\xto{v_\al} 
\Tens^\otimes_{[k]_+} 
\simeq
\Tens^\otimes_{([k],0)} 
\to \Tens^\otimes_\nato 
\xto{\varphi} 
\BM^\otimes
\]
factors through $\mathrm{Ass}^\otimes_- \subset \BM^\otimes$.
Therefore, we can identify the composition
\[
\Tens^\otimes_{[k]} 
\xto{v_\al} 
\Tens^\otimes_{([k],0)} 
\xto{M} \C^\otimes
\]
with a $[k]$-multimodule in $\C^\otimes_-$ as in (*). On the other hand, the inclusion $\beta: [1] \simeq \{k < \infty\} \subset [k]_+$ gives rise to a map of $\infty$-operads
\[
v_\be: \BM^\otimes \to \Tens^\otimes_{[k]_+}
\]
whose composition with $\phi$ is the identity map of $\BM^\otimes$. Thus, the composition
\[
\BM^\otimes
\xto{v_\be}
\Tens^\otimes_{([k],0)}
\xto{M}
\Cc^\otimes
\]
may be identified with a bimodule in $\Cc_\m$ over algebras $A'$ in $\C_-$ and $A_\infty$ in $\C_+$. A straightforward verification shows that $A' = A_k$.

\end{rem}

\begin{mydef}\label{k1_A_multimodule}
Let $\A \subseteq \Alg(\C_-)$ be a full subcategory, closed under equivalences. We say that a $([k],1)$-multimodule in $\C$ is an \emph{$\A$-multimodule} if the corresponding $[k]$-multimodule in $\C_-$ is an $\A$-multimodule.
\label{k0_A_multimodule}
We say that a $([k],0)$-multimodule $F\in \Alg_{\Tens_{([k],0)}/\BM}(\C)$ 
is an \emph{$\A$-multimodule} if the corresponding $[k]$-multimodule in $\C_-$,
\[
\Tens^\otimes_{[k]} 
\xto{v_\al} \Tens^\otimes_{([k],0)} \xto{F} \C^\otimes,
\]
is an $\A$-multimodule in $\C_-$.
\end{mydef}

\begin{mydef}
\label{aolkefamm}
Let $\A \subseteq \Alg(\C_-)$ be a full subcategory, stable under equivalences.
We say that \emph{$\C$ admits operadic left Kan extensions from $\A$-multimodules} 
if for every edge $\al \colon s \to s'$ of $\nato$ and $\A$-multimodule $F_0 \in \Alg_{\Tens_s/\BM}(\C)$ 
there exists a $q$-operadic left Kan extension $F \in \Alg_{\Tens_\al/\BM}(\C)$ of $F_0$ as in the following diagram
\begin{equation*}
\begin{tikzcd}
\Tens^\otimes_s \ar[d] \ar[r,"F_0"] & \C^\otimes \ar[d,"q"] \\
\Tens^\otimes_\al \ar[r] \ar[ur,dashed,"F"] & \BM^\otimes.
\end{tikzcd}
\end{equation*}
We also say that \emph{$\C$ admits relative tensor products of $\A$-multimodules}, generalizing~\cite[def. 4.4.2.3]{HA}.
\end{mydef}

In the terminology of \cref{coCart_edges_in_BLMod}, we immediately deduce the following statement from \cref{coCart_edges_in_BLMod}.

\begin{cor}
\label{acclfamm}
Let 
\[
p \colon \BLMod(\C)^\circledast \to \nato
\]
be the map of \cref{cons:BLMod_C_star}. If $\C$ admits operadic left Kan extensions from left $\A$-multimodules, then $\BLMod(\C)^\circledast \to \nato$ admits coCartesian lifts from left $\A$-multimodules:
for any edge
\[
\al \colon s \to s'
\]
of $\Delta^\op \times \Delta^1$ and $\A$-multimodule $F_0 \in \Alg_{\Tens_s/\BM}(\C)$ over $\al$, there exists a lift $F$ of $\al$ extending $F_0$.
\end{cor}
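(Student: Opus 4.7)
The plan is to combine the two preceding results directly: the hypothesis supplies an operadic left Kan extension, and Proposition \ref{coCart_edges_in_BLMod} translates such an extension into a $p$-coCartesian edge in $\BLMod(\C)^\circledast$. There is essentially no additional work to do beyond invoking the appropriate universal properties.

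More precisely, fix an edge $\al \colon s \to s'$ in $\nato$ and an $\A$-multimodule $F_0 \in \Alg_{\Tens_s/\BM}(\C)$. By Definition \ref{aolkefamm}, the hypothesis that $\C$ admits operadic left Kan extensions from $\A$-multimodules produces an operadic $q$-left Kan extension $F \in \Alg_{\Tens_\al/\BM}(\C)$ of $F_0$, i.e.\ a map of generalized $\infty$-operads
\[
F \colon \Tens^\otimes_\al \to \C^\otimes
\]
over $\BM^\otimes$ whose restriction to $\Tens^\otimes_s$ recovers $F_0$. Via the universal property of \cref{cons:BLMod_C_star}, this datum $F$ is precisely an edge of $\BLMod(\C)^\circledast$ lying over $\al \colon s \to s'$ and extending $F_0$.

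The second and final step is to invoke Proposition \ref{coCart_edges_in_BLMod}: since $F$ is, when regarded as a map of generalized $\infty$-operads over $\BM^\otimes$, an operadic $q$-left Kan extension of $F_0$, the same $F$ regarded as an edge of $\BLMod(\C)^\circledast$ is $p$-coCartesian. This is exactly the desired coCartesian lift of $\al$ extending $F_0$, so the corollary follows.

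No substantive obstacle is anticipated: the corollary is simply a translation between two equivalent viewpoints on the same datum (operadic left Kan extensions of multimodules versus edges of $\BLMod(\C)^\circledast$), with \cref{coCart_edges_in_BLMod} doing the real work by identifying the coCartesian edges. That proposition, in turn, ultimately rests on the flatness of $\Tens^\otimes \to \Delta^\op$ (cited in the proof of \cref{BTen_flat}), but this has already been established and can be cited without further comment.
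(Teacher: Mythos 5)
Your proof is correct and follows exactly the route the paper intends: the paper states that the corollary is deduced immediately from Proposition \ref{coCart_edges_in_BLMod}, which is precisely your argument (take the operadic $q$-left Kan extension supplied by Definition \ref{aolkefamm}, regard it as an edge of $\BLMod(\C)^\circledast$ via the universal property of Construction \ref{cons:BLMod_C_star}, and conclude it is $p$-coCartesian). Nothing further is needed.
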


\begin{mydef}
Let us temporarily use the alternative notation $\C_0 = \C_-$ and $\C_1 = \C_+$ for the two monoidal categories associated to our fixed $\BM$-monoidal category $\Cc$. If $A_{i_0}$, $A_{i_1}$ are algebras in $\Cc_{i_0}$, $\Cc_{i_1}$ with 
$
0 \le i_0 \le i_1 \le 1
$, 
then we may speak unambiguously of \emph{
$A_{i_0}$-$A_{i_1}$-bimodules in 
$\Cc$
}; we denote the 
$\infty$-category of such by
$_{A_{i_0}}\BMod{}{}_{A_{i_1}}(\Cc)$
or simply by  $_{A_{i_0}}\BMod{}{}_{A_{i_1}}$. 
More generally, given algebras
\[
A_{i_0}, A_{i_1}, \ldots, A_{i_n},  
\quad
\mbox{with}
\quad
0 \le i_0 \le i_1 \le \cdots \le i_n \le 1,
\]
we refer to a tuple $(M_{01}, M_{12}, \ldots, M_{n-1,n})$ where $M_{k,k+1}$ is an $A_{i_k}$-$A_{i_{k+1}}$-bimodule as an \emph{$(A_{i_0}, A_{i_1}, \ldots, A_{i_n})$-module}. 
% by an \emph{$(A_{i_0}, A_{i_1}, \ldots, A_{i_n})$-module} we mean a tuple $(M_{01}, M_{12}, \ldots, M_{n-1,n})$ where $M_{k,k+1}$ is an $A_{i_k}$-$A_{i_{k+1}}$-bimodule. 
\end{mydef}

\begin{ex} \label{s23j2}
By \cref{s23j1}, a $([2],1)$-module consists of algebras $A$, $B$, $C$ in $\Cc_-$, an $A$-$B$-bimodule $M$ and a $B$-$C$-bimodule $N$, hence, in our terminology and notation from~\cref{LM_to_BM_monoidal_construction}, an $(A,B,C)$-module $_A M_B N_C$ in $\Cc_-$. 
On the other hand, a $([1],0)$-module consists of algebras $A$, $B$ in $\Cc_-$, an algebra $C$ in $\Cc_+$, an $A$-$B$-bimodule $M$ in $\Cc_-$, and a $B$-$C$-bimodule $N$ in $\Cc_\mM$. 
For fixed $A$, $B$, $C$, we refer to such an object again as an $(A,B,C)$-module and we again use the notation $_A M_B N_C$. 
\end{ex}

\begin{notation} 
\label{phi_001_[2]_algebra}
Let $\tau: \Delta^1 \to \nato$ denote the functor associated to the morphism
\[
([1],0)\to ([0],0)
\]
denoted by the same symbol $\tau$ above (\cref{tau_ext_rel_tensor}). 
Note that the composition $\Delta^1 \xto{\tau} \nato \xto{\delta} \Delta^\op$ corresponds to the morphism 
$[1] \to [2]$ in $\Delta$ used to define $\Tens_{\succ}$ in~\cite[not. 4.4.2.1]{HA}.
Let $\tau$ also denote the induced morphism
\[
\Tens^\otimes_\succ \to \Tens^\otimes_\nato.
\]
Let $\phi_\succ^{001}$ and $\phi_{[2]}^{001}$ denote the composites
\[
\phi_\succ^{001} \colon 
\Tens^\otimes_\succ \xto{\tau}
\Tens^\otimes_\nato
\xto{\varphi}
\BM^\otimes, \quad
\phi_{[2]}^{001} \colon 
\Tens^\otimes_{[2]}
\to
\Tens^\otimes_\succ \xto{\tau}
\Tens^\otimes_\nato
\xto{\varphi}
\BM^\otimes.
\]
We define a \emph{$\phi_{[2]}^{001}$-algebra}
to be a morphism of $\infty$-operads
\[
F: \Tens^\otimes_{[2]} \to \Cc^\otimes
\]
over $\BM^\otimes$, or equivalently a $([1],1)$-module, i.e.
an $(A,B,C)$-module where $A, B$ are algebras in $\C_-$ and $C$ is an algebra in $\C_+$. 
\end{notation}

\begin{rem}
Similarly to \cref{phi_001_[2]_algebra}, we may define a \emph{$\phi_\succ^{001}$-algebra} to be a morphism of $\infty$-operads $\Tens^\otimes_\succ \to \Cc^\otimes$ over $\BM^\otimes$. By an analysis similar to \cref{LabelChaos}, a $\phi_\succ^{001}$-algebra includes the data of
\begin{enumerate}
\item
algebras and bimodules 
\[
A^M B^N C, \quad \text{and} \quad {A'} ^{L} C'
\]
where $A$, $B$, $A'$ are algebras in $\Cc_-$, $C$, $C'$ are algebras in $\Cc_+$, $M$ is an $A$-$B$-bimodule in $\Cc_-$, $N$ is a $B$-$C$-bimodule in $\C_\m$, and $L$ is an $A'$-$C'$-bimodule in $\C_\m$, 
\item
homomorphisms of algebras $f: A \to A'$ and $g: C \to C'$, and
\item 
a morphism $M \otimes N \to L$ in $\C_\m$ which is bilinear up to coherent higher homotopies.
\end{enumerate}
\end{rem}

\begin{mydef} \label{def:ext_bar_const}
Let $F \colon \Tens^\otimes_{[2]} \to \Cc^\otimes$  be a $\phi^{001}_{[2]}$-algebra in $\Cc^\otimes$ with associated algebras and bimodules
\[
_AM _BN _C.
\]
We define the \emph{operadic bar construction $\Bar^\otimes(F)_\bullet = \Bar_B^\otimes(M,N)_\bullet$} to be the composite functor
\[
\Delta^\op \xto{\rm{bar}} \Tens^\otimes_{[2]} 
\xto{F} \Cc^\otimes.
\]
The associated \emph{underlying bar construction}
\[
\Bar(F)_\bullet = \Bar_B(M,N)_\bullet:
\Delta^\op \to \Cc_\mM
\]
and the natural transformation 
\[
\be: \Bar^\otimes(F)_\bullet \to \Bar(F)_\bullet
\]
are then constructed in the same way as for a monoidal category; see \cref{def:bar_const}.
\end{mydef}

\begin{mydef}
\label{m24j0}
Let $A \in \Alg(\Cc_-)$ be an algebra. 
We say that $\Cc^\otimes \to \BM^\otimes$ \emph{admits realizations of $A$-bar constructions $\otimes$-compatibly} if (in the terminology of \cref{s23j2}) for any $(B,A,C)$-module $F: \Tens^\otimes_{[2]} \to \Cc^\otimes$ with $B \in \Alg(\Cc_-)$ and $C \in \Alg(\Cc_i)$ ($i = -$ or $+$) the following conditions hold:
\begin{enumerate}
\item
The appropriate $\infty$-category ($\Cc_-$ if $i = -$ and $\Cc_\mM$ if $i = +$) admits realizations of the bar constructions $\Bar_A(M,N)_\bullet$. 
\item For every $X \in \C_-$ and every  $Y \in \C_i$, the canonical map
\[
\left| X \otimes \Bar_A( M,N )_\bullet \otimes Y \right| \to X \otimes \left| \Bar_A(M,N)_\bullet \right| \otimes Y
\]
is an equivalence.
\end{enumerate}
Let $\Aa$ be a class of algebras in $\Cc_-$.
We say that \emph{$\Cc$ admits realizations of $\Aa$-bar constructions $\otimes$-compatibly} 
if $\Cc$ admits realizations of $A$-bar constructions $\otimes$-compatibly for every $A\in \Aa$. 

Dually, let $A \in \coAlg(\Cc_-) = \Alg(\Cc_-^\op)^\op$ be a coalgebra. 
We say that \emph{$\Cc$ admits totalizations of $A$-cobar constructions $\otimes$-compatibly} if $\Cc^\op$ admits realizations of $A$-bar constructions $\otimes$-compatibly, and similarly for ``\emph{admits totalizations of $\Aa$-cobar constructions $\otimes$-compatibly}''.
\end{mydef}

\begin{prop}(Generalization of \cref{29.2} and \cite[prop. 4.4.2.8(1)]{HA}.)
\label{s23j11}
Let $F_0 = {_AM_BN_C}$ be an $(A,B,C)$-module in $\Cc$ with $A,B \in \Alg \Cc_-$ and $C \in \Alg \Cc_+$.
Assume the underlying bar construction ${\Bar_B(M,N)}_\bullet$ admits a geometric realization compatibly with $\otimes$.
Suppose given a commutative diagram of generalized $\infty$-operads (solid arrow diagram below)
\begin{equation*}
\begin{tikzcd}
\Tens^\otimes_{[2]} \ar[r,"F_0"] \ar[d] & \C^\otimes \ar[d,"q"] \\
\Tens^\otimes_\succ \ar[r,"\phi^{001}_\succ"] \ar[ur,dashed,"F"] & \BM^\otimes.
\end{tikzcd}
\end{equation*}
Then there exists an operadic $q$-left Kan extension $F$ of $F_0$ (dotted arrow) making the diagram commute.
\end{prop}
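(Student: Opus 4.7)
The plan is to mimic the proof of \cref{29.2} \emph{mutatis mutandis}, working over $\BM^\otimes$ in place of $\Ass$; the only substantive new issue is colorwise bookkeeping. First, observe that the output of a $(A,B,C)$-multimodule with $A,B\in\Alg(\Cc_-)$ and $C\in\Alg(\Cc_+)$ is a bimodule in $\Cc_\m$, so the bar construction $\Bar_B(M,N)_\bullet$ is a diagram $\Delta^\op \to \Cc_\m$. By the hypothesis that $\Cc$ admits realizations of $A$-bar constructions $\otimes$-compatibly (\cref{m24j0}), this extends to a colimit diagram $\Bar_B(M,N)_\bullet^+\colon \Delta^\op_+ \to \Cc_\m$ with cone point $P\in\Cc_\m$. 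The $\otimes$-compatibility clause, via \cite[prop. 3.1.1.16]{HA}, upgrades the composite $\Delta^\op_+ \to \Cc_\m \subset \Cc^\otimes$ to an operadic $q$-colimit diagram.

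Next, I would upgrade the \emph{operadic} bar construction $\Bar_B^\otimes(M,N)_\bullet$ of \cref{def:ext_bar_const} to an augmented diagram $\Bar_B^\otimes(M,N)_\bullet^+\colon \Delta^\op_+ \to \Cc^\otimes$ whose restriction to $\Delta^\op$ is $\Bar_B^\otimes(M,N)_\bullet$, along with a natural transformation $\beta^+\colon \Bar_B^\otimes(M,N)_\bullet^+ \Rightarrow \Bar_B(M,N)_\bullet^+$ extending $\beta$. Just as in \cref{29.2}, the cone point is forced to be $P$; the construction is carried out by a right-anodyne/right-fibration lifting argument. The only modification is that the role played in \cref{29.2} by the single color $\a\in\Ass$ (and its terminality in $\Ass_{/\a}$) is now played by the color $\m\in\BM^\otimes_{\langle 1\rangle}$, which under $\phi^{001}_\succ$ is the image of the cone vertex. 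Since $\BM^\otimes$ is again the nerve of a $1$-category and $\m$ is terminal in the relevant slice of $\BM^\otimes$, the natural transformation
\[
  \alpha^+ \colon \Delta^\op \times \Delta^1 \to \BM^\otimes_{/\m}
\]
from the map corresponding to $\Delta^\op \xto{\mathrm{bar}^+} \Tens^\otimes_\succ \xto{\phi^{001}_\succ} \BM^\otimes$ to the constant diagram at $\m$ exists and is unique. The lift $\beta^+$ is then produced by applying \cite[prop. 2.1.2.1]{HTT} and \cite[cor. 2.1.2.7]{HTT} to the right fibration $\Cc^\otimes_{/P} \to \Cc^\otimes \times_{\BM^\otimes} \BM^\otimes_{/\m}$.

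With $\beta^+$ in hand, I verify that $\Bar_B^\otimes(M,N)_\bullet^+$ is itself an operadic $q$-colimit diagram: for every $x\in \Delta^\op$, the restriction $\beta^+|_{\{x\} \times \Delta^1}=\beta|_{\{x\} \times \Delta^1}$ is a $q$-coCartesian edge in $\Cc^\otimes$ by construction, and $\beta^+|_{\{[-1]\} \times \Delta^1} = \mathrm{id}_P$ is trivially $q$-coCartesian; hence \cite[prop. 3.1.1.15]{HA} applies. An appeal to \cite[prop. 4.4.2.5]{HA} concludes that $F_0$ admits the desired operadic $q$-left Kan extension $F$ along $\Tens^\otimes_{[2]} \hookrightarrow \Tens^\otimes_\succ$ over $\BM^\otimes$.

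The main obstacle, and indeed the only genuinely new content compared with \cref{29.2}, is confirming that the color combinatorics in $\BM^\otimes$ along the cone behave as in $\Ass$ along the cone of a relative tensor product. In both settings this reduces to the fact that the target of the augmentation map (here $\m$, there $\a$) is terminal in the ambient slice of a $1$-categorical $\infty$-operad; the $\otimes$-compatibility clause of \cref{m24j0}, strengthened to allow tensoring on both sides by objects of $\Cc_-$ and $\Cc_+$, is what secures the operadic colimit assertion uniformly in the $\BM$-monoidal setting.
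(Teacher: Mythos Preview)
Your proposal is correct and follows exactly the approach the paper takes: the paper's proof is the single sentence ``In view of the generality in which \cite[prop.~4.4.2.5]{HA} is stated, this is mutatis mutandis the same as \cref{29.2}.'' You have accurately identified and spelled out the required modifications---replacing the color $\a\in\Ass$ by $\m\in\BM^\otimes$ and noting that $\BM^\otimes$ is still the nerve of a $1$-category with $\mathrm{id}_\m$ terminal in $\BM^\otimes_{/\m}$---so your write-up is a faithful expansion of the paper's terse pointer.
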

\begin{proof}
In view of the generality in which \cite[prop. 4.4.2.5]{HA} is stated, 
this is mutatis mutandis the same as \cref{29.2}.
\end{proof}
\begin{mydef} \label{def:external_relative_tensor_prod}
In the situation of \cref{s23j11} we say that $F$ exhibits $F_1 := F|_{\Tens^\otimes_{[1]}}$ as an \emph{external relative tensor product of $M$ and $N$ over $A$}.
\end{mydef}

\begin{mydef}
\label{NN11}
We define the \emph{inert subcategory} $\nato_\mathrm{inert}$ of $\nato$ to be the faithful subcategory with the same objects, and morphisms given in the notation of \cref{delta_lambda_description} by
\begin{enumerate}
\item
$(\la, id_1)$ with $\la$ convex, 
\item
$(\la, id_0)$ where the image of $\la$ is convex and includes the maximal element, 
\item
$(\la, 0\to 1)$ with $\la$ convex.
\end{enumerate}
\end{mydef}

\begin{prop}[Segal conditions for ${\BLMod(\C)}^\circledast$] \label{w26j3}
Let 
\[
p \colon B^\circledast = {\Bmod(\C)}^\circledast \to \nato
\]
be the map of simplicial sets of \cref{cons:BLMod_C_star}. 
Then the pullback $p_\text{inert}$ of $p$ to $\nato_\text{inert}$ as in the following diagram
\begin{equation*}
\begin{tikzcd}
B^\circledast_\text{inert} \ar[r] \ar[d,"p_\text{inert}"] & B^\circledast \ar[d,"p"] \\
\nato_\text{inert} \ar[r] & \nato
\end{tikzcd}
\end{equation*}
is a coCartesian fibration. 
Moreover, for every $n>1$, the induced maps
\[
B^\circledast_{( [n] , 1)}  \to
B^\circledast_{( [1] , 1)} \times_{B^\circledast_{( [0], 1) } } 
\cdots \times_{B^\circledast_{ ( [0] , 1)  } } B^\circledast_{ ( [1], 1) }
\simeq
\Bmod(\Cc_-) \times_{\Alg (\Cc_-)} 
\cdots 
\times_{\Alg (\Cc_-)} 
\Bmod(\Cc_-)
\]
and
\begin{align*}
B^\circledast_{([n],0)} 
\to
B^\circledast_{([n],1)} 
&
\times_{B^\circledast_{([0],0)}} 
B^\circledast_{([0],0)} 
\\
& \simeq
\Bmod(\Cc_-) \times_{\Alg (\Cc_-)} 
\cdots 
\times_{\Alg (\Cc_-)} 
\Bmod(\Cc_-)
\times_{\Alg (\Cc_-)} 
\Bmod(\Cc_\m)
\end{align*}
are equivalences of $\infty$-categories.
\end{prop}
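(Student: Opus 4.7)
The proof closely parallels \cref{05191}, with \cref{coCart_edges_in_BLMod} playing the role of \cref{2}, and with additional bookkeeping required for the $\Delta^1$-direction in $\nato$. The overall shape is: (i) produce the required operadic $q$-left Kan extensions via a trivial application of~\cite[lem.\ 4.4.3.5(1)]{HA}; (ii) convert these to $p$-coCartesian edges using \cref{coCart_edges_in_BLMod}; (iii) deduce the Segal condition from~\cite[prop.\ 4.4.1.11]{HA}.

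For the first assertion, let $\al \colon s \to s'$ be an edge in $\nato_\text{Convex}=\Delta^\op_\text{Convex}\times\Delta^1$ and let $X_0$ be a vertex of $B^\circledast$ over $s$, viewed via \cref{cons:BLMod_C_star} as a map $X_0\colon \Tens^\otimes_s\to\C^\otimes$ over $\BM^\otimes$. Writing $\al=(\lambda,\beta)$ with $\lambda$ convex in $\Delta^\op$, the induced morphism $\delta(\al)$ in $\Delta$ is either convex itself (when $\beta$ is an identity) or factors through the natural inclusion $[k]\hookrightarrow[k]_+$ (when $\beta=0\to1$), which is also convex. In all cases, an object-by-object check using the definition of $\varphi$ (\cref{const:Tens_S_to_BM}) and the explicit description of $\eta$ (\cref{def:delta_to_[1]}) shows that for each object $Y\in\Tens^\otimes_{s'}$, the morphism in $\BM^\otimes$ that one must operadically colimit against is the identity of one of the three colors $\a_-,\m,\a_+$. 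Hence the hypothesis of~\cite[lem.\ 4.4.3.5(1)]{HA} is satisfied trivially, producing an operadic $q$-left Kan extension $X\colon\Tens^\otimes_\al\to\C^\otimes$. By \cref{coCart_edges_in_BLMod} this $X$ is a $p$-coCartesian edge with source $X_0$ lifting $\al$, and \cite[prop.\ 2.4.1.3(2)]{HTT} then yields that $p_\text{Convex}$ is a coCartesian fibration.

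For the Segal conditions, we use the identifications of \cref{s23j1}. The fiber $B^\circledast_{([n],1)}$ is the $\infty$-category of $[n]$-multimodules in $\C_-$, and the fiber $B^\circledast_{([n],0)}$ is the $\infty$-category of $[n+1]$-multimodules in $\C$ whose first $n$ bimodule entries live in $\C_-$ and whose final bimodule entry is an $A_n$-$A_\infty$-bimodule in $\C_\m$ for an algebra $A_\infty\in\Alg(\C_+)$. Applying~\cite[prop.\ 4.4.1.11]{HA} (the Segal condition for $\Tens^\otimes$) to the face inclusions $[1]\cong\{i-1,i\}\hookrightarrow[n]$ (resp.\ $\hookrightarrow[n+1]$) produces the first (resp.\ second) displayed equivalence, using that the last inclusion in the second case selects exactly the $\m$-$\a_+$ pair that manifests as $\Bmod(\C_\m)$.

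The main obstacle is the verification in Step 1 that all convex morphisms of $\nato_\text{Convex}$, and in particular the squiggly morphisms $([n],0)\to([n],1)$ which pass between distinct colors of $\BM^\otimes$, really do induce only identity morphisms in $\BM^\otimes$ after unfolding $\delta$ and $\varphi$. This reduces to a careful unwinding of \cref{Tens_nato_description} together with \cref{const:Tens_S_to_BM}: the image in $\BM^\otimes$ depends only on $\eta\circ c_\pm$, and along the structural maps of $\Tens^\otimes_\al$ for convex $\al$, these compositions do not change the color attached to any fixed object. Once this bookkeeping is secured, the trivial case of~\cite[lem.\ 4.4.3.5(1)]{HA} delivers the Kan extension, and the rest of the argument proceeds essentially as in~\cref{05191}.
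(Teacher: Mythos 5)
Your overall architecture --- produce operadic $q$-left Kan extensions, convert them to $p$-coCartesian edges via \cref{coCart_edges_in_BLMod}, and deduce the Segal condition from~\cite[prop.~4.4.1.11]{HA} --- is the same as the paper's, and your treatment of the edges $(\lambda,0\to 1)$, $(\lambda,\mathrm{id}_1)$ and of the Segal maps is correct. The gap is in your claim that $\delta(\alpha)$ is convex whenever $\beta$ is an identity. For $\alpha=(\lambda,\mathrm{id}_0)$ one has $\delta(\alpha)=\lambda_+\colon [k']_+\to[k]_+$, and if the (convex) image of $\lambda\colon[k']\to[k]$ is $\{a,\dots,b\}$ with $b<k$, then the image of $\lambda_+$ is $\{a,\dots,b\}\cup\{\infty\}$, which is \emph{not} convex in $[k]_+$. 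The simplest instance is the arrow $\tau\colon([1],0)\to([0],0)$ of \cref{tau_ext_rel_tensor}, which does lie in $\nato_\text{Convex}=\Delta^\op_\text{Convex}\times\Delta^1$: its image under $\delta$ is the map $[0]_+\to[1]_+$ with image $\{0,\infty\}$. The coCartesian pushforward along $\tau$ is exactly the external relative tensor product $(M,N)\mapsto M\otimes_{A}N$, so it cannot come from the ``trivial'' case of~\cite[lem.~4.4.3.5(1)]{HA}; it requires realizations of bar constructions, i.e.\ \cref{s23j11}. Your object-by-object check of colors in $\BM^\otimes$ does not rescue this: the obstruction is not the color of the target of the structure map but the fact that $v_{\lambda_+}$ is undefined for non-convex $\lambda_+$ (the color $\m_{k',\infty}$ would have to map to a non-adjacent pair), so the pointwise formula $F_0\circ V$ is unavailable and one is forced into the bar-resolution case of~\cite[lem.~4.4.3.8]{HA}.

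To be fair, the paper's own proof is terse at exactly this point (it only treats the edge $([k],0\to1)$ explicitly and defers the rest to \cref{05191}), and the statement of \cref{w26j3} --- a coCartesian fibration over all of $\Delta^\op_\text{Convex}\times\Delta^1$ with no hypotheses on $\C$ --- runs into the same difficulty. The repair is either to shrink the convex subcategory to $\delta^{-1}(\Delta^\op_\text{Convex})$, i.e.\ to those $(\lambda,\beta)$ with $\delta(\lambda,\beta)$ convex (this excludes $\tau$ but still contains every morphism appearing in the Segal maps, whose $\delta$-images you can check are all convex), or to keep $\nato_\text{Convex}$ as defined and add the hypothesis that $\C$ admits realizations of the relevant bar constructions $\otimes$-compatibly, handling the edges $(\lambda,\mathrm{id}_0)$ with $\lambda$ not surjective onto the top of $[k]$ via \cref{s23j11} as in \cref{m24j1}. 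Either way, your Step~1 as written does not establish the existence of coCartesian lifts over these edges.
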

\begin{proof}
Let $\al: \Delta^1 \to \nato$ denote the map of simplicial sets associated to the arrow $([k], 0\to1)$ in $\nato$, and consider a square of generalized $\infty$-operads
(solid arrow diagram)
\begin{equation*}
\begin{tikzcd}
\Tens^\otimes_{( [k ],0 ) } \ar[r,"X_0"] \ar[d] & \C^\otimes \ar[d] \\
\Tens^\otimes_\al \ar[r] \ar[ur,dashed,"X"] & \BM^\otimes.
\end{tikzcd}
\end{equation*}
An argument similar to \cite[4.4.3.5]{HA} shows that there exists an operadic $q$-left Kan extension as in the diagram. The remainder of the proof is similar to \ref{05191}.
\end{proof}

\begin{prop}\label{m24j1}
Let $\A$ be a class of algebra objects of $\C_-$ closed under equivalences. Assume that $\C$ admits realizations of $\A$-bar construction $\otimes$-compatibly (\cref{m24j0}). 
Let 
\[
p \colon {\BLMod(\C)}^\circledast \to \nato
\]
be the map of \cref{cons:BLMod_C_star}.
% be the associated map of simplicial sets (\cref{???}). 
Then ${\BLMod(\C)}^\circledast$ admits coCartesian lifts from $\A$-multimodules (\cref{acclfamm}).
\end{prop}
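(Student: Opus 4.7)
The plan is to closely follow the strategy of \cref{27.3}, replacing \cref{29.2} by its $\BM$-monoidal generalization \cref{s23j11} and handling the new horizontal morphisms $([k], 0) \to ([k], 1)$ of $\nato$ along the lines of the proof of \cref{w26j3}.

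By \cref{coCart_edges_in_BLMod}, it suffices to show that for every edge $\alpha : s \to s'$ in $\nato$ and every $\A$-multimodule $F_0 \in \Alg_{\Tens_s/\BM}(\C)$ there exists an operadic $q$-left Kan extension $F \in \Alg_{\Tens_\alpha/\BM}(\C)$ of $F_0$, and moreover that the restriction $F_1 := F|_{\Tens^\otimes_{s'}}$ is again an $\A$-multimodule. By transitivity of operadic left Kan extensions \cite[thm. 3.1.4.1]{HA} combined with flatness of $\Tens^\otimes_\nato \to \nato$ (obtained from \cite[thm. 4.4.3.1]{HA} and \cite[rem. B.3.12]{HA}), we may factor $\alpha$ and assume it is one of three elementary types, which together generate all morphisms of $\nato$ in view of \cref{delta_lambda_description}: (i) $(\lambda, \id_j)$ with $j \in \{0,1\}$ and $\lambda : [n] \to [m]$ in $\Delta$ of convex image; (ii) an inner face map $(\alpha_i, \id_j)$ with $0 < i < p$ and $j \in \{0,1\}$; (iii) the horizontal morphism $(\id, 0 \to 1) : ([k], 0) \to ([k], 1)$.

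In types (i) and (iii), one checks using \cref{const:Tens_S_to_BM} together with the naturality of the canonical map $\eta$ of \cref{def:delta_to_[1]} that for every object $X$ of $\Tens^\otimes_{s'}$ the induced arrow $f(v_\alpha X) \to f(X)$ in $\BM^\otimes$ is the identity of a color (this follows because $\eta \circ \lambda_+ = \eta$ and $\eta$ restricted to $[k] \subset [k]_+$ is constant). The hypothesis of \cite[lem. 4.4.3.5(1)]{HA} therefore holds trivially, producing $F$, and $F_1$ is an $\A$-multimodule since it is essentially the restriction of $F_0$ along an inclusion of colors (a two-sided truncation in case (i), the projection onto the $\Cc_-$-colors in case (iii)).

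In type (ii) we apply \cite[lem. 4.4.3.8]{HA} exactly as in the final paragraph of the proof of \cref{27.3}. Conditions (a), (b), (c) of that lemma concerning the maps $f(a_{\alpha(j)}) \to f(b_j)$, $f(m_{j-1,j}) \to f(n_{j-1,j})$, $f(m_{j,j+1}) \to f(n_{j,j+1})$ hold trivially, since the relevant maps in $\BM^\otimes$ are identities of a single color (namely $\a_-$ throughout when $j = 1$, and $\a_-$ on the positions touched by the face map when $j = 0$, because the restriction $0 < i < p$ guarantees that only $\Cc_-$-indexed algebras and bimodules participate in the contraction). Condition (d), which asks for the existence of an operadic $q$-left Kan extension encoding the (external) relative tensor product, is the content of \cref{s23j11} under our hypothesis that $\C$ admits realizations of $\A$-bar constructions $\otimes$-compatibly. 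Finally, condition $(a')$ of \cite[lem. 4.4.3.8]{HA} shows that $F_1$ is again an $\A$-multimodule. The main point of verification is that, in the $j = 0$ subcase, the algebra along which the (external) bar construction is taken still lies in $\Aa \subset \Alg(\Cc_-)$; this is immediate from the constraint $0 < i < p$, which forces the contracted algebra to be one of $A_1, \ldots, A_{p-1}$, all of which belong to $\Cc_-$ and hence to $\A$ by assumption on $F_0$.
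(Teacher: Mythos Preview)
Your approach is the same as the paper's brief proof (which reads ``in view of \cref{w26j3}, this is similar to \cref{27.3}''), and the overall strategy is sound. There is, however, a gap in your treatment of case (i) when $j = 0$.

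The map $v_\alpha$ of \cite[not.~4.4.1.10]{HA}, and with it the trivial verification of \cite[lem.~4.4.3.5(1)]{HA}, is only available when the underlying map in $\Delta$ has convex image. For $(\lambda, \mathrm{id}_0)$ that underlying map is $\lambda_+ \colon [n+1] \to [m+1]$, whose image is $\operatorname{im}(\lambda) \cup \{m+1\}$; this is an interval only when $\lambda$ already hits $m$. When it does not, the operadic left Kan extension is \emph{not} trivial. For instance, take $\lambda \colon [0] \to [1]$ with $\lambda(0) = 0$: then $\lambda_+$ is the \emph{inner} face $\alpha_1 \colon [1] \hookrightarrow [2]$, and the extension along the resulting edge $([1], 0) \to ([0], 0)$ sends a $([1],0)$-multimodule ${}_{A_0} M _{A_1} N _{A_2}$ (with $N \in \Cc_\mM$) to the external relative tensor product $M \otimes_{A_1} N$. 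Such edges must therefore be handled via \cite[lem.~4.4.3.8]{HA} and \cref{s23j11}, not via 4.4.3.5.

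A clean fix is to run the convex/inner-face factorization on $\delta(\alpha)$ in $\Delta$ rather than on $\alpha$ in $\nato$. Because $\lambda_+$ preserves the maximum, every intermediate stage in the $b = 0$ column is still of the form $[j]_+$, so it lifts back to $([j], 0) \in \nato$ and the structure map to $\BM^\otimes$ via $\varphi$ remains defined at each step. The inner faces that arise contract over an algebra $A_i$ with $0 < i < j$, so $A_i \in \Cc_-$ and hence $A_i \in \A$; condition (d) of \cite[lem.~4.4.3.8]{HA} is then supplied by \cref{29.2} when both adjacent bimodules lie in $\Cc_-$, and by \cref{s23j11} for the one face $i = j-1$ in which the right-hand bimodule lies in $\Cc_\mM$.
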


\begin{proof}
In view of \cref{w26j3}, this is similar to \cref{27.3}.
\end{proof}

\begin{thm} \label{w26j1}
In the situation and the notation of \cref{m24j1}, let
\[
{\BLMod_{\A}(\C)}^\circledast \subset {\BLMod(\C)}^\circledast
\]
be the full subcategory supported on $\A$-multimodules.
Assume that $\C$ admits realizations of $\A$-bar constructions $\otimes$-compatibly.
Then the restriction of $p$ to
\[
p_\Aa: {\BLMod_\A(\C)}^\circledast \to \nato
\]
is a coCartesian fibration. 
Moreover, $p_\Aa$ obeys the Segal conditions of \cref{w26j3}.
\end{thm}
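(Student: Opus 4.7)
The plan is to mirror the proof of \cref{link4}, which handled the analogous statement for the Morita pre-\emph{double $\infty$-category} in the purely monoidal setting. Since $\BLMod_\A(\C)^\circledast$ is the full subcategory of $\BLMod(\C)^\circledast$ on the $\A$-multimodules, the composite
\[
p_\A \colon \BLMod_\A(\C)^\circledast \into \BLMod(\C)^\circledast \xto{p} \nato
\]
is automatically an inner fibration, since $p$ is inner by \cref{BTen_flat} and full-subcategory inclusions are inner fibrations.

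For the coCartesian lifting property, let $F_0 \in \BLMod_\A(\C)^\circledast$ lie over $s \in \nato$ and let $\al \colon s \to s'$ be an arbitrary edge. By \cref{m24j1} there exists a $p$-coCartesian lift $F \colon F_0 \to F_1$ of $\al$ in $\BLMod(\C)^\circledast$. Provided $F_1$ is again an $\A$-multimodule, the edge $F$ lies entirely in $\BLMod_\A(\C)^\circledast$, and \cite[prop. 2.4.1.3(3)]{HTT} then implies that $F$ is $p_\A$-coCartesian. Thus the substantive point is to verify that the operadic left Kan extensions appearing in the proof of \cref{m24j1} preserve the class of $\A$-multimodules.

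This verification is the main obstacle, but it should be packaged into the proof of \cref{m24j1} itself by the same case analysis used in \cref{27.3}. After factoring $\al$ through convex morphisms and face maps (now including the squiggly arrows $([k], 0 \to 1)$ of \cref{tau_ext_rel_tensor}), the output multimodule of each elementary step has algebras drawn from the input: either by projection onto a contiguous subset of the algebras, or by dropping a single algebra sandwiched between neighboring bimodules and replacing it with a relative tensor product over that algebra (this last case being supplied by \cref{s23j11} in place of \cref{29.2}). In every case the algebras appearing in $F_1$ form a subset of those appearing in $F_0$ and therefore remain in $\A$. The squiggly arrow $([k],0\to 1)$ simply projects away the rightmost bimodule (which lives in $\C_\m$); since the algebras in $\C_-$ are unaffected, the $\A$-multimodule property is again preserved.

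For the Segal conditions, we invoke \cref{w26j3}, which provides equivalences
\[
B^\circledast_{([n],1)} \iso \Bmod(\C_-) \times_{\Alg \C_-} \cdots \times_{\Alg \C_-} \Bmod(\C_-)
\]
and an analogous decomposition for $B^\circledast_{([n],0)}$ involving one factor of $\Bmod(\C_\m)$. Being an $\A$-multimodule is a condition exclusively on the algebras in $\C_-$ appearing in the multimodule, and this condition is compatible with the Segal decomposition: a tuple decomposes into $\A$-multimodules on each factor iff each of its constituent algebras lies in $\A$. Therefore the Segal equivalences of \cref{w26j3} restrict to equivalences on the corresponding full subcategories supported on $\A$-multimodules, establishing the second assertion.
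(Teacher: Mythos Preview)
Your proof is correct and follows the same approach as the paper, which simply says ``Mutatis mutandis the same as \cref{link4}.'' You have faithfully unpacked what those changes are: the inner-fibration argument, the invocation of \cref{m24j1} in place of \cref{27.3}, the appeal to \cite[prop.~2.4.1.3(3)]{HTT}, and the crucial check that the target of the coCartesian lift remains an $\A$-multimodule. Your treatment of the Segal conditions, by restricting the equivalences of \cref{w26j3} to the full subcategories of $\A$-multimodules, is likewise in line with how the paper handles the analogous step in \cref{link1}.
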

\begin{proof}
Mutatis mutandis the same as \cref{link4}.
\end{proof}

In the situation of \cref{w26j1} with $\A =\Alg(\C_-)$ for simplicity, the coCartesian fibration
\[
p \colon \BLMod(\Cc)^\circledast \to \nato
\]
corresponds to a $\nato$-shaped diagram of $\infty$-categories which includes the vertices and arrows:
\[
\tag{D}
\begin{tikzcd}[column sep=0.5em]
\vdots 
\ar[d, shift left=3]\ar[d, shift left=1] \ar[d, shift right=1] \ar[d, shift right=3]
&
\vdots
\ar[d, shift left=3]\ar[d, shift left=1] \ar[d, shift right=1] \ar[d, shift right=3]
\\
\Bmod(\C_-) \times_{\Alg(\Cc_-)} \Bmod(\C_-)
\times_{\Alg(\Cc_-)} \Bmod(\C_\mM)
\ar[d, shift left=2] \ar[d] \ar[d, shift right=2]
\ar[r]
&
\Bmod(\C_-) \times_{\Alg(\Cc_-)} \Bmod(\C_-)
\ar[d, shift left=2] \ar[d] \ar[d, shift right=2]
\\
\Bmod(\C_-) \times_{\Alg(\Cc_-)} \Bmod(\C_\m) 
\ar[d, shift left=1] \ar[d, shift right=1]
\ar[r]
&
\Bmod(\C_-)
\ar[d, shift left=1] \ar[d, shift right=1]
\\
\Bmod(\C_\m)
\ar[r]
&
\Alg(\Cc_-).
\end{tikzcd}
\]

\subsection{Extracting the $\LM$-monoidal $\infty$-category $\BLMod_A(\C)$} \label{extracting}
We construct the $\LM$-monoidal $\infty$-category $\BLMod_A(\C)$ (\cref{w9f2}).
We will make use of a certain sub-family of $\infty$-operads $\MAss^\otimes \subset \Tens^\otimes$ which forgets those colors of $\Tens^\otimes$ which index bimodules.

\begin{mydef}
\label{20510a}
Let $\MAss^\otimes$ denote the full subcategory of $\Tens^\otimes$ whose objects
\[
c_\pm: \langle n \rangle^\circ \rightrightarrows [k]
\]
satisfy $c_- = c_+$. Thus, an object is a triple $c=(\langle n \rangle, [k], c)$ with $\lan n \ran \in \Ass$, $[k] \in \Delta$, and $c: \lan n \ran^\circ \to [k]$ a map of sets, and a morphism $c \to c'$ consists of a pair $(\al, \la)$ where $\al$ is a morphism $\lan n \ran \to \lan n' \ran$ in $\Ass$, and $\la$ is a morphism $[k] \from [k']$ in $\Delta$. These are subject to the following condition: if $\al^\circ$ denotes the partially defined function associated to $\al$, then the square
\[
\begin{tikzcd}
\lan n \ran^\circ \ar[d,"c"] \ar[r,"\al^\circ", dotted] 
& 
\lan n' \ran^\circ \ar[d,"c' "] 
\\
{[k]}   & {[k']} \ar[l,"\la"']
\end{tikzcd}
\]
commutes, in the sense that whenever $i \in \lan n \ran^\circ$ satisfies $\al(i) \neq \ast$, we have
\[
c(i) = \la c' \al(i).
\]
\end{mydef}

Our results concerning the generalized $\infty$-operad $\MAss^\otimes$ and the associated double $\infty$-categories (\cref{20510e} below) amount to an essentially trivial portion of the results concerning $\Tens^\otimes$ and the associated double $\infty$-categories $\Bmod(\Cc)^\circledast$ obtained in \cite[\S4.4.3]{HA}; we limit ourselves here to an outline interspersed with references to loc. cit. 

\begin{prop} \label{20510b}
The inclusion
\[
\ep: \MAss^\otimes \subset \Tens^\otimes
\]
is a morphism of $\Delta^\op$-families of $\infty$-operads. The composite 
\[
\MAss^\otimes 
\xto{\ep} 
\Tens^\otimes 
\xto{\pi} \Delta^\op
\]
is a Cartesian fibration, hence in particular flat (\cite[B.3.8]{HA}).
\end{prop}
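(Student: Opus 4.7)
The plan is to verify the two claims by unpacking \cref{20510a} directly and importing the analogous work done for $\Tens^\otimes \to \Delta^\op$ in \cite[thm. 4.4.3.1]{HA}. Both claims will follow from the observation that the subcategory $\MAss^\otimes \subset \Tens^\otimes$ cut out by $c_-=c_+$ is closed under the inert edges that matter, together with a convexity observation about the fillers Lurie constructs for flatness.

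First I would identify the fiber over $[k] \in \Delta^\op$. Unwinding \cref{20510a}, the fiber $\MAss^\otimes_{[k]}$ has objects $(\lan n \ran, c \colon \lan n \ran^\circ \to [k])$ with morphisms $(\al, \mathrm{id}_{[k]})$ that preserve colors. This is canonically (the underlying $\infty$-category over $\Fin_*$ of) the coproduct $\Ass^{\sqcup(k+1)}$ in $\POp$, hence an $\infty$-operad. The condition $c_- = c_+$ is preserved by every fiberwise inert morphism of $\Tens^\otimes$ (for $(\al,\mathrm{id}_{[k]})$ with $\al$ inert, each fiber $\al^{-1}(j)$ is a singleton so $c_-$ and $c_+$ pull back identically), so the inert coCartesian edges of $\Tens^\otimes$ restrict to $\MAss^\otimes$. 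The categorical fibration and Segal-type conditions required for a $\Delta^\op$-family of $\infty$-operads then restrict as well. This same closure under inert coCartesian lifts automatically promotes the full inclusion $\ep$ to a morphism of $\Delta^\op$-families of $\infty$-operads.

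For the second claim, I would apply the criterion of \cite[prop. B.3.2]{HA}: flatness of $\MAss^\otimes \to \Delta^\op$ amounts to showing that, for every 2-simplex $\sigma$ in $\Delta^\op$ and every edge of $\MAss^\otimes$ lifting the long edge of $\sigma$, the space of fillers of $\sigma$ in $\MAss^\otimes$ is weakly contractible. Unwinding \cref{20510a}, this is a concrete combinatorial problem about refining a pair of color-compatible maps through an intermediate coloring. The analogous statement for $\Tens^\otimes$ is the content of \cite[thm. 4.4.3.1]{HA}. The main obstacle — and the only substantive step — is that one cannot deduce the $\MAss^\otimes$-statement from the $\Tens^\otimes$-statement by a naive restriction, because $\MAss^\otimes$ is \emph{not} closed under arbitrary morphisms of $\Tens^\otimes$ (a morphism covering a non-injective $\la$ can break $c_- = c_+$ at the target). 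What I would check, by inspecting Lurie's construction, is that the filler he builds is preserved by the condition $c_- = c_+$: when the outer edge data satisfy $c_- = c_+$, the intermediate object produced also satisfies $c_- = c_+$. Hence the full subcategory of fillers in $\MAss^\otimes$ still admits the initial (or terminal) object that Lurie's argument produces, and is in particular weakly contractible. Combining this with the flatness of $\Tens^\otimes \to \Delta^\op$ yields flatness of the composite $\MAss^\otimes \to \Tens^\otimes \to \Delta^\op$.
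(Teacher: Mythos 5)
Your handling of the first claim is fine and fills in what the paper leaves as ``clear.'' For flatness you have the right criterion (weak contractibility of the category of fillers over a factorization in $\Delta^\op$, via \cite[rem. B.3.9 / prop. B.3.2]{HA}), and you correctly spot that one cannot just restrict the $\Tens^\otimes$-statement. But the step that actually proves contractibility is deferred to ``inspecting Lurie's construction'' and rests on the premise that Lurie's proof of \cite[thm. 4.4.3.1]{HA} exhibits an initial or terminal object in the filler category for $\Tens^\otimes$, which you would then check lies in $\MAss^\otimes$. That premise is not something you can assume: the evident candidate terminal filler for $\Tens^\otimes$ --- pull the target coloring back along $\la''\colon [k'']\to[k']$ --- generally fails to be an object of $\Tens^\otimes_{[k']}$, because $\la''\circ c''_+$ can exceed $\la''\circ c''_- + 1$ when $\la''$ skips values; this is exactly why flatness of $\Tens^\otimes$ needs a genuinely nontrivial argument rather than a one-line terminal object. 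So as written, the key step is a gap.

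The fix is that for $\MAss^\otimes$ you do not need \cite[thm. 4.4.3.1]{HA} at all, and this is what the paper does. A morphism in $\MAss^\otimes$ is nothing but a commuting square
\[
  \begin{tikzcd}
    \lan n \ran \ar[d,"c"] \ar[r,"\al"] & \lan n' \ran \ar[d,"c'"] \\
    k & k' \ar[l,"\la"']
  \end{tikzcd}
\]
with no further admissibility constraint on the coloring, so given $(\al,\la)\colon c \to c''$ and a factorization $[k] \xfrom{\la'} [k'] \xfrom{\la''} [k'']$, the filler category has the obvious terminal object obtained by taking the intermediate pointed set to be $\lan n''\ran$ with coloring $\la''\circ c''$ (and maps $\al$, $\mathrm{id}$). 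The obstruction that makes this fail in $\Tens^\otimes$ --- the condition $c_-\le c_+\le c_-+1$ --- is vacuous once $c_-=c_+$, since $c_-=c_+$ is manifestly preserved by postcomposition with any $\la''$. Hence the filler category is weakly contractible and the composite $\MAss^\otimes\to\Delta^\op$ is flat, by a short self-contained argument rather than by transport from the hard theorem.
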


\begin{proof}
The first statement is clear. For the second statement, since $\pi \ep$ is a functor between 1-categories, it's automatically an inner fibration. Suppose given a morphism $\la:[k'] \from [k]$ in $\Delta$ and an object $c: \angled{n}^\circ \to [k]$ of $\MAss^\otimes$ lying over $[k] \in \Delta^\op$. Then the square diagram
\[
\begin{tikzcd}
\lan n \ran^\circ \ar[d,"\la c", dotted] \ar[r,"id", dotted] 
& 
\lan n \ran^\circ \ar[d,"c "] \\
k   
& 
k \ar[l,"\la"']
\end{tikzcd}
\]
describes a $\pi\ep$-Cartesian edge in $\MAss^\otimes$.
\end{proof}

Given an object $I \in \Delta^\op$, we denote by $\MAss^\otimes_I$ the fiber of $\MAss^\otimes \to \Delta^\op$ over $I$. For any $k \ge 0$, we have an equivalence of $\infty$-operads
\[
\MAss^\otimes_{[k]} \simeq (\Ass)^{\coprod (k+1)}.
\]
Indeed, this follows from the proof of \cite[Proposition 4.4.1.11]{HA} (Segal condition for $\Tens^\otimes$) and can also easily be checked directly. 

We denote by
\[
\ep_\nato: \MAss^\otimes_{\nato} 
\to
\Tens^\otimes_\nato
\]
the pullback of the inclusion $\ep: \MAss^\otimes \subset \Tens^\otimes$ along the map
\[
\delta: \nato \to \Delta^\op 
\]
of \cref{def:plus_and_delta}. Given a map of simplicial sets $f: K \to \nato$ we denote by
\[
\MAss^\otimes_f = \MAss^\otimes_K
\]
the pullback of $\MAss^\otimes_\nato$ along $f$.

\begin{mydef} \label{20510d}
Let $q: \Cc^\otimes \to \BM^\otimes$ be a $\BM$-monoidal $\infty$-category. 
We define a map of simplicial sets
\[
q_a \colon
\mathrm{LAlg}(\Cc)^\circledast 
\to
\nato
\]
by the following universal property:
\begin{itemize}
\item[(*)] For every map of simplicial sets $K \to \nato$ there is a canonical bijection
\[
\Hom_{{\Set_\Delta}_{/\nato}}(K,\rm{LAlg}(\C)^\circledast) \= \Alg_{\MAss_K/\BM}(\C)_0.
\]
Here the subscript `0' on the right refers to the collection of 0-simplices, and the subscript ``$\MAss_K/\BM$''
refers to the composite morphism of generalized $\infty$-operads
\[
\MAss^\otimes_{\nato} 
\xto{\epsilon_\nato}
\Tens^\otimes_\nato
\xto{\varphi}
\BM^\otimes
\]
where $\varphi$ is the map constructed in \cref{const:Tens_S_to_BM}.
\end{itemize}
\end{mydef}

\begin{prop}
\label{20510e}
Let $q: \Cc^\otimes \to \BM^\otimes$ be a $\BM$-monoidal category. 
Then the map $q_a \colon \mathrm{LAlg}(\Cc)^\circledast \to \nato$ of \cref{20510d} is a coCartesian fibration. 
\end{prop}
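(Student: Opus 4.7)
The proof will mirror that of \cref{w26j1}, but with substantial simplifications, since $\MAss^\otimes$ contains no bimodule colors. We must show both that $q_a$ is an inner fibration and that it admits coCartesian lifts along every edge of $\nato$. The key structural input for both parts is the flatness of $\MAss^\otimes \to \Delta^\op$ established in \cref{20510b}; by \cite[rem.~B.3.12]{HA} this implies flatness of the pullback $\MAss^\otimes_\nato \to \nato$.

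First, to verify that $q_a$ is an inner fibration, we follow the pattern of \cref{BTen_flat}. By the defining universal property \cref{20510d}(*), checking the right lifting property against an inner anodyne map $A \hookrightarrow B$ reduces to solving
\begin{equation*}
  \begin{tikzcd}
    \MAss^\otimes_A \ar[d] \ar[r] & \C^\otimes \ar[d,"q"] \\
    \MAss^\otimes_B \ar[r] \ar[ur, dashed] & \BM^\otimes,
  \end{tikzcd}
\end{equation*}
and flatness of $\MAss^\otimes_\nato \to \nato$ combined with \cite[prop.~B.3.14]{HA} ensures that the left vertical arrow is a trivial cofibration in the Joyal model structure, so a lift exists because $q$ is a categorical fibration.

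Next, for the coCartesian lifts, the argument of \cref{coCart_edges_in_BLMod} carries over verbatim from $\Tens$ to $\MAss$, since its essential ingredient is only flatness of the underlying $\infty$-operad family. Therefore we reduce to showing that for every edge $\al \colon s \to s'$ in $\nato$ and every $F_0 \in \Alg_{\MAss_s/\BM}(\Cc)$, a $q$-operadic left Kan extension $F \in \Alg_{\MAss_\al/\BM}(\Cc)$ of $F_0$ exists. Here the absence of bimodule colors in $\MAss^\otimes$ is decisive: the composite $\MAss^\otimes_\nato \to \BM^\otimes$ factors through the ``algebra-only'' full subcategory $\mathrm{Ass}^\otimes_- \cup \mathrm{Ass}^\otimes_+ \subset \BM^\otimes$ (see \cite[rem.~4.3.1.10]{HA} and its symmetric counterpart), so on individual colors the morphism $\al$ acts purely by reindexing according to the underlying map in $\Delta$ determined by $\delta$. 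The main obstacle, such as it is, is a routine case analysis along the three edge types $(\la, id_0)$, $(\la, id_1)$, $(\la, 0\to 1)$ of \cref{delta_lambda_description}: in each case an argument in the spirit of \cite[lem.~4.4.3.5(1)]{HA}, which handles the analogous ``convex'' situation in $\Tens^\otimes$ without invoking any relative tensor products, exhibits a canonical terminal object in each slice category relevant to the operadic left Kan extension. Consequently the extension exists unconditionally: no bar-compatibility hypotheses are required, and the result follows.
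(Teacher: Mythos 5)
Your proposal is correct and follows essentially the same route as the paper: flatness of $\MAss^\otimes \to \Delta^\op$ gives the inner fibration, the reduction to operadic $q$-left Kan extensions proceeds as in \cref{coCart_edges_in_BLMod}, and the absence of bimodule colors makes those extensions exist unconditionally. The paper packages your final step more explicitly by observing that for \emph{every} $\mu$ in $\Delta$ the correspondence $\MAss^\otimes_\mu$ is associated to a functor $v_\mu$ given by composition, so $\iota_0$ admits a section $V_\la$ and $F := F_0 \circ V_\la$ is the required operadic left Kan extension — this is exactly the ``every edge behaves like the convex case'' phenomenon you describe via terminal objects in the relevant slice categories.
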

\begin{proof}
The map $\MAss^\otimes_\nato \to \nato$ is flat since it is a pullback of the flat map
\[
\MAss^\otimes \to \Delta^\op,
\]
see \cref{20510b}.
As in \cref{BTen_flat}, it follows that $q_a$ is an inner fibration. 
Fixing a square (solid arrow diagram below)
\[
\tag{*}
\begin{tikzcd}
\Delta^0 
\ar[r, "F_0"] \ar[d, "\{0\}", swap] &
\LAlg(\Cc)^\circledast
\ar[d, "q_a"]
\\
\Delta^1
\ar[ur, "F", dashed]
\ar[r, "\la", swap]
          &
          \nato
\end{tikzcd}
\]
we must establish the existence of a coCartesian lift $F$ of $\la$ extending $F_0$ as indicated. 
If we regard $\la$ also as a morphism $\la:s \to s'$ in $\nato$, the square diagram (*) corresponds to a diagram
\[
\tag{**}
\begin{tikzcd}
\MAss^\otimes_s
\ar[r, "F_0"] \ar[d, "\iota_0", swap] &
\Cc^\otimes
\ar[d, "q"]
\\
\MAss^\otimes_\la
\ar[ur, "F", dashed]
\ar[r]
            &
            \BM^\otimes. 
\end{tikzcd}
\]
In view of the flatness of $\MAss^\otimes_\nato \to \nato$, the proof of \cref{coCart_edges_in_BLMod} applies with little change to show that it suffices to establish the existence of an operadic $q$-\textit{left Kan extension} $F$ as in diagram (**). 

If $\mu:[k]\to [k']$ is any morphism in $\Delta$ then $\MAss^\otimes_\mu$ regarded a correspondence of $\infty$-operads from $\MAss^\otimes_{[k']}$ to $\MAss^\otimes_{[k]}$ is actually the Cartesian fibration of $\infty$-operads associated with a map of $\infty$-operads
\[
v_\mu: \MAss^\otimes_{[k']} \from 
\MAss^\otimes_{[k]}
\]
given by composition with $\mu$ (compare \cite[Notation 4.4.1.10]{HA}, where the analogous correspondences are nontrivial, except when $\mu$ has convex image). 
Consequently, the map $\iota_0$ of diagram (**) admits a retraction $V_\la$ and (similarly to \cite[Example 4.4.3.6]{HA}), 
\[
F:= F_0 \circ V_\la
\]
is an operadic $q$-\textit{left Kan extension}. 

Indeed, since $V_\la$ is a functor between 1-categories, it may be constructed in an elementary way as follows. Suppose 
\[\begin{tikzcd}
	\Delta^1 & \nato & \Delta^\op
	\arrow["\lambda"', from=1-1, to=1-2]
	\arrow["\mu", curve={height=-20pt}, from=1-1, to=1-3]
	\arrow["\delta"', from=1-2, to=1-3]
\end{tikzcd}\]
corresponds to a morphism $\mu: [k] \from [k']$ in $\Delta$. An object of $\MAss^\otimes_\la$ consists of a map of sets $c: \lan n \ran^\circ \to [l]$ where $l \in \{k, k'\}$, and a morphism consists of data
\[
\begin{tikzcd}
\tag{$\al, \nu$}
\lan n \ran^\circ \ar[d,"c"] \ar[r,"\al", dotted] 
& 
\lan n' \ran^\circ \ar[d,"c' "] 
\\
{[l]}   & {[l']} \ar[l,"\nu"']
\end{tikzcd}
\]
as in ~\cref{20510a} in which $\nu$ may be the identity of $[k]$, the identity of $[k']$ or the morphism $\mu$. We define $V_\la$ on objects by
\[
V_\la(c) := \begin{cases}
c & \text{if } l = k
\\
v_\mu(c) = \mu \circ c & \text{if } l = k',
\end{cases}
\]
and on morphisms by
\[
V_\la(\al, \nu) = \begin{cases}
(\al, \nu) & \text{if } \nu = id_{[k]}
\\
(\al, id_{[k]}) & \text{if } \nu = id_{[k']}
\\
(\al, id_{[k]}) & \text{if } \nu = \mu.
\end{cases}
\]

Subsequently, the operadic $q$-colimits that make $F$ into an operadic $q$-\textit{left Kan extension} may be established in two steps, in analogy with paragraphs 4.4.3.4 -- 4.4.3.6 of \cite{HA}, and we hope the reader will be content with an outline. Since the projection map
\[
\MAss^\otimes_\la \to \Delta^1
\]
is a Cartesian fibration, for every object $X \in \MAss^\otimes_{[k']}$ in the fiber above $1 \in \Delta^1$, the \oo-category
\[
K :=\MAss^\otimes_{[k]} \times_{\MAss^\otimes_\la} (\MAss^\otimes_\la)^{\rm{act}}_{/X}
\]
contains $v_\mu(X)$ as a final object. Hence the morphism $v_\mu(X) \to \infty$ to the cone point in the cocone $K^\rhd$ is cofinal. Since $V_\la$ sends the Cartesian edge $v_\mu(X) \to X$ of $\MAss^\otimes_\la$ to an identity morphism in $\MAss^\otimes_{[k]}$, it follows that
\[
F\big(v_\mu(X)\big) \to F(X)
\]
is an identity morphism in $\Cc^\otimes$, and hence an operadic $q$-colimit diagram. In view of the cofinality noted above, it follows that $F$ is an operadic $q$-\textit{left Kan extension} as claimed.
\end{proof}

We remark that the diagram of $\infty$-categories associated to the coCartesian fibration $q_a \colon \mathrm{LAlg}(\Cc)^\circledast \to \nato$ of \cref{20510e} includes the vertices and arrows
\[
\tag{D}
\begin{tikzcd}
\vdots 
\ar[d, shift left=3]\ar[d, shift left=1] \ar[d, shift right=1] \ar[d, shift right=3]
&
\vdots
\ar[d, shift left=3]\ar[d, shift left=1] \ar[d, shift right=1] \ar[d, shift right=3]
\\
\Alg(\Cc_-)^3 \times \Alg(\Cc_+)
\ar[d, shift left=2] \ar[d] \ar[d, shift right=2]
\ar[r]
&
\Alg(\Cc_-)^3
\ar[d, shift left=2] \ar[d] \ar[d, shift right=2]
\\
\Alg(\Cc_-)^2 \times \Alg(\Cc_+) 
\ar[d, shift left=1] \ar[d, shift right=1]
\ar[r]
&
\Alg(\Cc_-)^2
\ar[d, shift left=1] \ar[d, shift right=1]
\\
\Alg(\Cc_-) \times \Alg(\Cc_+)
\ar[r]
&
\Alg(\Cc_-)
\end{tikzcd}
\]
The map $\ep_\nato:\MAss^\otimes_{\nato} 
\to
\Tens^\otimes_\nato$ gives rise to a map $\rho$ of simplicial sets over $\nato$
\begin{equation*}
\begin{tikzcd}
\BLMod(\Cc)^\circledast 
\ar[dr] \ar[rr, "\rho"]
&&
\rm{LAlg}(\Cc)^\circledast
\ar[dl, "q_a"]
\\
& \nato .
\end{tikzcd}
\end{equation*}
The map
\[
\varphi: \Tens^\otimes_\nato \to \BM^\otimes
\]
restricts to a map
\[
\varphi: \MAss^\otimes_\nato \to (\Ass)^{\coprod 2}
\subset \BM^\otimes.
\]
If $A$ is an algebra in $\Cc_-$ and $B$ is an algebra in $\Cc_+$, we let $A^*B$ be the section of $q_a$ associated to the object
\[
\MAss^\otimes_\nato 
\xto{\varphi}
(\Ass)^{\coprod 2}
\xto{(A, B)}
\C_-^\otimes \coprod \C_+^\otimes 
\subset
\Cc^\otimes
\]
of $\Alg_{\MAss^\otimes_\nato/\BM^\otimes}(\C)$.

\begin{construction} \label{w9f2}
Let $\C^\otimes \to \BM^\otimes$ be a $\BM$-monoidal $\infty$-category and let  $A \in \Alg(\C_-)$, $B \in \Alg(\Cc_+)$ be algebra objects. 
Let $\Aa \subset \Alg(\C_-)$ be the full subcategory spanned by all algebras equivalent to $A$ and assume $\Cc^\otimes$ admits realizations of $\Aa$-bar constructions $\otimes$-compatibly, so that by \cref{w26j1} we have an associated coCartesian fibration 
\[
p_\Aa: \BLMod_\Aa(\Cc)^\circledast \to \nato
\]
which obeys the Segal conditions of \cref{w26j3}. 
We define the coCartesian fibration
\[
_A \BLMod _B(\Cc)^\circledast
\to
\nato
\]
by the pullback
\begin{equation*}
\begin{tikzcd}
_A \BLMod_B(\C)^\circledast \pbcorner 
\ar[r] \ar[d, "\rho_{A,B}"'] 
& 
\BLMod_\A(\C)^\circledast 
\ar[d,"\rho"] 
\\
\nato \ar[r,"A^*B"] 
&  
\rm{LAlg}(\C)^\circledast.
\end{tikzcd}
\end{equation*}
in $\Cat^{coCart}_\nato$. Applying \cref{AC13}, we obtain an $\LM$-monoidal category
\[
_A\BLMod_B(\Cc)^\otimes \to \LM^\otimes
\]
witnessing $\BMod{A}{B}(\C_\m)$ as left-tensored over $\BMod{A}{A}(\Cc_-)^\otimes$.

Now suppose given additional algebras $A' \in \Alg(\C_-)$, $B' \in \Alg(\C_+)$ and morphisms of algebras 
\[
f: A' \to A
\quad 
\mbox{and}
\quad
g:B' \to B.
\]
Then then there's an evident natural transformation
\[
f^*g: \Delta^1 \times \nato \to \rm{LAlg}(\Cc)^\circledast
\]
from $(A')^*B'$ to $A^*B$. Consider the pullback 
\begin{equation*}
\begin{tikzcd}
_f \BLMod_g(\C)^\circledast \pbcorner 
\ar[r] \ar[d, "\rho_{f,g}"'] 
&
\BLMod_\A(\C)^\circledast 
\ar[d,"\rho"] 
\\
\Delta^1 \times \nato \ar[r,"f^*g"] 
&
\rm{LAlg}(\C)^\circledast.
\end{tikzcd}
\end{equation*}
Then $\rho_{f,g}$ is a coCartesian fibration with fibers $_{A'} \BLMod_{B'}(\C)^\circledast$ over $0 \in \Delta^1$ and $_{A} \BLMod_{B}(\C)^\circledast$ over $1 \in \Delta^1$. (CoCartesian lifts of the arrow $(0 \to 1, id_x)$ for $x \in \nato$ are given by extension of scalars along $f$ and $g$.) Hence the composition
\[
\tag{*}
{_f \BLMod_g(\C)^\circledast }
\to \Delta^1 \times \nato 
\to \Delta^1
\]
is a coCartesian fibration. Additionally, the composition (*) is also a Cartesian fibration, and $\rho_{f,g}$ is a morphism of Cartesian-coCartesian fibrations over $\Delta^1$, hence gives rise to commuting triangles\footnote{Since this may cause confusion, let us emphasize that the diagonal maps $\rho_{?,?}$ are \textit{coCartesian fibartions}.}
\[
\begin{tikzcd}
_{A'} \BLMod_{B'}(\C)^\circledast
\ar[dr,"\rho_{A',B'}"']
\ar[rr, shift right = 2, "{(f,g)^*}"']
&&
_{A} \BLMod_{B}(\C)^\circledast
\ar[ll, "{(f,g)_*}"'] \ar[dl, "\rho_{A,B}"]
\\
&
\nato
\end{tikzcd}
\]
plus an adjunction with $(f,g)^*$ left adjoint to $(f,g)_*$. 
If $e$ is a $\rho_{A,B}$-coCartesian edge in $_{A} \BLMod_{B}(\C)^\circledast$ such that $\rho_{A,B}(e)$ is inert (\cref{NN11}), then $(f,g)_*(e)$ is $\rho_{A',B'}$-coCartesian. Hence $(f,g)_*$ gives rise to a lax $\LM$-monoidal functor 
\[
_{A'}\BLMod_{B'}(\Cc)^\otimes 
\from
_A\BLMod_B(\Cc)^\otimes 
\]
given informally by restriction of scalars along $f$ and $g$ (compare \cite[Definition 1.1.8]{DAGII}). 

More generally, we may replace the map $A^*B$ by the universal family of such maps
\[
\Alg(\C_-) \times \Alg(C_+) \times \nato
\to \opnm{LAlg}(\C)^\circledast
\]
to obtain a lax-monoidal functoriality in pairs $(A,B) \in \Alg(\C_-) \times \Alg(C_+)$.

\end{construction}

%%%%%%%%%%%%
%%%%%%%%%%%%
%%%%%%%%%%%%
%%%%%%%%%%%%
%%%%%%%%%%%%%%%%
\section{\texorpdfstring{$\LM$}{LM}-algebras and \texorpdfstring{$\LM$}{LM}-monoidal pairings}%%%%%%%%%%%%%%
\label{sec:LM_monoidal_pairings}%%%%
%%%%%%%%%%%%%%
%%%%%%%%%%%%
%%%%%%%%%%%%
%%%%%%%%%%%%
%%%%%%%%%%%%

%7
%\input{LMAlgebras.tex}

% \input{LM_pairings.tex}
In this section we collect a few preliminaries in preparation for \cref{SectionKoszLM}.

%%%%%%%%%%
\subsection{\texorpdfstring{$\LM$}{LM}-monoidal $\infty$-categories and \texorpdfstring{$\LM$}{LM}-algebras}

% \subsection{$\LM$-algebras in $\BLMod_A(\C)$}

Let $\C^\otimes$ be an $\LM$-monoidal $\infty$-category and $A\in \Alg(\C_\a)$ an algebra. Under the assumption that $\C$ admits realizations of $A$-bar constructions $\otimes$-compatibly, the following proposition provides a concrete interpretation of an $\LM$-algebra in the $\LM$-monoidal category $\BLMod_A( \C)^\otimes$.

\begin{prop} \label{LModBLMod_as_PB}
Let $\C^\otimes$ be an $\LM$-monoidal $\infty$-category and $A\in \Alg(\C_\aA)$ an algebra. 
Assume $\C$ admits realizations of $A$-bar constructions $\otimes$-compatibly.
Then we have a pullback square of $\infty$-categories 
\[
\begin{tikzcd}
\LMod( \BLMod_A( \C) ) 
\ar{d}{\mathrm{fgt}} \ar{r} \pbcorner 
&
\Alg( \BMod{A}{A}(\C_\aA) ) 
\ar{d}{\mathrm{fgt}} 
\\
\LMod(\C) \ar{r} & \Alg(\C_\aA).
\end{tikzcd}
\]
\end{prop}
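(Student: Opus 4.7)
My plan is to identify both sides of the square as parametrizing the same data: a map of algebras $A \to B$ in $\Cc_\aA$ together with a left $B$-module in $\Cc_\mM$.

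First, since the unit of the monoidal $\infty$-category $\BMod{A}{A}(\Cc_\aA)^\otimes$ is $A$, a standard bar-complex argument (see, for instance,~\cite[\S4.4.3]{HA}) identifies $\Alg\bigl(\BMod{A}{A}(\Cc_\aA)\bigr) \simeq \Alg(\Cc_\aA)_{A/}$, compatibly with the forgetful functor to $\Alg(\Cc_\aA)$. Hence the upper right corner of the square classifies pairs $(B, A \to B)$, and the right vertical forgets the unit map $A \to B$.

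Next I would unpack the left hand side. By \cref{def:BLMod_A} (and its $\LM$-extension via \cref{LM_to_BM_monoidal_construction,w9f2}), the $\LM$-monoidal category $\BLMod_A(\Cc)^\otimes$ exhibits $\LMod_A(\Cc_\mM)$ as left-tensored over $\BMod{A}{A}(\Cc_\aA)^\otimes$ by relative tensor product over $A$. An $\LM$-algebra $Z$ in $\BLMod_A(\Cc)^\otimes$ is therefore a pair $(Z_\aA, Z_\mM)$ with $Z_\aA \in \Alg\bigl(\BMod{A}{A}(\Cc_\aA)\bigr)$ and $Z_\mM$ a left $Z_\aA$-module in $\LMod_A(\Cc_\mM)$. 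The key reduction is then the equivalence
\[
\LMod_{Z_\aA}\bigl(\LMod_A(\Cc_\mM)\bigr) \simeq \LMod_B(\Cc_\mM),
\]
where $B \in \Alg(\Cc_\aA)$ denotes the underlying algebra of $Z_\aA$: a $B$-action on a left $A$-module $M$ whose restriction along $A \to B$ recovers the given $A$-action is equivalent to a $B$-action on the underlying object of $\Cc_\mM$, since the $A$-module structure is determined by the $B$-module structure through the unit map. Granting this, an object of $\LMod(\BLMod_A(\Cc))$ amounts exactly to a triple $(A \to B,\, B,\, M)$ with $M \in \LMod_B(\Cc_\mM)$; the left vertical of the proposed square sends this to $(B,M) \in \LMod(\Cc)$, the top to $(A \to B) \in \Alg(\Cc_\aA)_{A/}$, and the bottom map $\LMod(\Cc) \to \Alg(\Cc_\aA)$ sends $(B,M)$ to $B$, so the diagram is manifestly a pullback.

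I expect the main obstacle to be making the displayed equivalence rigorous. The intuitive content is immediate, but the precise statement requires tracking coherent left-module structures on both sides. The cleanest route I see is to lean on \cref{w9f2}'s construction of $\BLMod_A(\Cc)^\otimes$ as extracted from a pullback of coCartesian fibrations of $\BM$-monoidal $\infty$-categories along a section picking out $A$, and then invoke the fact that the functor $\LMod(-)$, being representable by $\LM^\otimes \in \operatorname{Op}_\infty$, preserves such pullbacks. The mild subtlety here is that extracting the $\LM$-monoidal structure from the $\nato$-family of \cref{w9f2} mixes the two fibers $\a$ and $\m$ in a nontrivial way, so some care is needed in unfolding the construction before matching corners with the square in the proposition.
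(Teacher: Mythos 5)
You have correctly located the crux: everything reduces to showing that, for each $B \in \Alg(\BMod{A}{A}(\C_\aA))$ with image $B \in \Alg(\C_\aA)$ under the forgetful functor, the comparison functor $\LMod_B(\LMod_A(\C_\mM)) \to \LMod_B(\C_\mM)$ is an equivalence. (To make the reduction to fibers legitimate you should first record that both vertical forgetful functors in the square are categorical fibrations, by \cite[def. 4.2.1.13]{HA}, so that the strict fiber product computes the homotopy pullback and equivalences can be tested fiberwise.) The gap is in how you propose to establish this fiber equivalence. Your ``clean route'' --- that $\LMod(-)$ preserves pullbacks and that $\BLMod_A(\C)^\otimes$ arises as a pullback in \cref{w9f2} --- does not work: the pullback there is taken over $\nato$ along the section $A^*B$, and is not of a shape whose image under $\LMod(-)$ is the square of the proposition. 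Concretely, if the proposition's square were obtained by applying $\LMod(-)$ to a pullback of $\LM$-monoidal categories over $\C^\otimes$, the $\mM$-fiber of $\BLMod_A(\C)^\otimes$ would have to be a fiber product over $\C_\mM$; instead it is $\LMod_A(\C_\mM)$, which maps to $\C_\mM$ by a monadic, non-fully-faithful forgetful functor. So the assertion ``a $B$-module structure refining a given $A$-module structure is the same as a $B$-module structure on the underlying object'' is exactly the content that still requires proof, and your sketch leaves it at the level of intuition.

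The paper supplies the missing argument via Barr--Beck--Lurie. Both $\LMod_B(\LMod_A(\C_\mM))$ and $\LMod_B(\C_\mM)$ are monadic over $\C_\mM$ (as composites of monadic forgetful functors), the respective left adjoints send $X$ to $B \otimes_A (A \otimes X)$ and to $B \otimes X$, and the canonical map $B \otimes_A (A \otimes X) \to B \otimes X$ is an equivalence; then \cite[cor. 4.7.3.16 and rem. 4.7.3.17]{HA} identifies the two module categories. Note that this is precisely where the hypothesis that $\C$ admits realizations of $A$-bar constructions $\otimes$-compatibly enters (one needs $B \otimes_A -$ to exist), a hypothesis your argument never invokes --- a sign that something essential is missing. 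Your opening identification $\Alg(\BMod{A}{A}(\C_\aA)) \simeq \Alg(\C_\aA)_{A/}$ is true and useful for interpreting the statement, but it is not needed in the proof itself.
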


\begin{proof}

Consider first a general $\LM$-monoidal $\infty$-category $\Ee^\otimes \to \LM^\otimes$  exhibiting $\E_\m$ as left-tensored over $\E_\a$ \cite[Definition 4.2.1.19]{HA}. The forgetful functor
\[
f:\LMod(\Ee) \to \Alg(\E_\a)
\]
is a Cartesian fibration. Indeed, under the weaker assumptions of \cite[4.2.1.13]{HA}, $f$ is already a categorical fibration; here, the map of left modules
\[
(A,M) \to (B,M)
\]
given by a morphism of algebras $\phi:A \to B$ and the identity map of $M$ provides a Cartesian lift of $\phi$ along $f$. Applying this to $\Ee^\otimes = \Cc^\otimes$ and to $\Ee^\otimes = \BLMod_A(\C)^\otimes$,
we find that it's enough to check that for every
\[
B \in \Alg(\BMod{A}{A}(\Cc_\aA))
\]
the induced functor between the fibers 
\begin{align}
\label{eq:fiber_funct}
U:\LMod_B ( \BLMod_A( \C)_\m ) \to \LMod_B(\C_\m)
\end{align}
is an equivalence (we abuse notation by writing $B$ again for the projection of $B$ to $\Alg(\C_\aA)$ along the forgetful functor). 

A candidate inverse 
\begin{align}
\label{eq:fiber_funct}
V:\LMod_B ( \BLMod_A( \C)_\m ) \from \LMod_B(\C_\m)
\end{align}
may be constructed as follows. Restriction of scalars (\cref{w9f2}) along the unit $A \to B$ of $B$ provides a functor 
\begin{align}
\label{provfunc}
\LMod(\BLMod_B(\C)) \to \LMod(\BLMod_A(\C)).
\end{align}
Using the structure of $B$ as a trivial algebra in $B$-$B$-bimodules, as a special case of the equivalence 
$
\LMod_\one(\Ee_\m) \simeq \Ee_\mM
$
in a general $\LM$-monoidal category we have an equivalence
\[
\LMod_B(\C_\m)
\simeq
\LMod_B(\BLMod_B(\C)).
\]
Combining with \ref{provfunc} we obtain a functor 
\[
\LMod_B(\C_\m)
\simeq
\LMod_B(\BLMod_B(\C)) \to \LMod_B(\BLMod_A(\C)).
\]
Using the functoriality of restriction of scalars, one can check that $U$ and $V$ are indeed inverse to one another.
\end{proof}

\begin{rem}
\label{UC23}
In the setting of \cref{LModBLMod_as_PB}, there's a further equivalence
\[
\Alg( \BMod{A}{A}(\C_\aA) ) 
\simeq
\Alg(\C_\aA)_{A/}.
\]
This may be extracted from \textit{Higher Algebra} as follows. Assuming only that $\Cc_\aA \to \Assoc$ is a fibration of $\infty$-operads, Lurie introduces in Definition 3.3.3.8 and Theorem 3.3.3.9 of \cite{HA} a fibration of $\infty$-operads
\[
\opnm{Mod}_A^\mathrm{Assoc}(\Cc_\aA)^\otimes
\to
\Assoc,
\]
and constructs in Corollary 3.4.1.7 a categorical equivalence
\[
\Alg \big(\opnm{Mod}_A^\mathrm{Assoc}(\Cc_\aA) \big)
\xto{\sim}
\Alg(\Cc)^{A/}
\simeq
\Alg(\Cc)_{A/}
\]
(proposition 4.2.1.5 of \cite{HTT} for this last equivalence). Assuming $\Cc^\otimes \to \Assoc$ is a monoidal $\infty$-category and that the class of algebras equivalent to $A$ is capable of relative tensor products, we construct in our Construction \ref{lbl5} and Example \ref{lb5.5} the monoidal $\infty$-category 
\[
{_A \opnm{BMod}_A}(\Cc)^\otimes \to \Assoc.
\]
Theorem 4.4.1.28 of \cite{HA} gives rise to an equivalence of $\infty$-operads 
\[
\opnm{Mod}_A^\mathrm{Assoc}(\Cc_\aA)^\otimes
\simeq
{_A \opnm{BMod}_A}(\Cc)^\otimes 
\]
over $\Assoc$.

\end{rem}

% \subsection{Left representable pairings}
\subsection{LM-monoidal pairings}\label{DualityFromPairing}
We review the notion of a left representable pairing of $\infty$-categories from \cite[sec. 5.2.1]{HA}.

A \emph{pairing} of $\infty$-categories is a right fibration 
\[
\la \colon \M \to \C \times \D.
\]
Let $\chi \colon \C^{op} \times \D^{op} \to \Ss$ be the associated functor. 
We can view $\chi$ also as a contravariant functor from $\C$ to presheaves on $\D$, 
\[
\hat{\Ddual}_\lambda \colon \C^{op} \to \Psh(\D).
\]
An object $M\in \M$ over $(X,Y) \in \C \times \D$ is \emph{left universal} if it is a terminal object of the fiber $\M \times_{\C} \set{X}$ (see \cite[def. 5.2.1.8]{HA}).
We say that $M\in \M$ is a left universal lift of $X \in \C$.
We say that the pairing $\lambda$ is \emph{left representable} if every $X\in \C$ has a left universal lift.
If the pairing $\lambda$ is left representable then $\hat{\Ddual}_\lambda$ factors through the Yoneda embedding, and hence defines a functor
\[
\Ddual_\lambda \colon \C^{\op} \to \D
\]
(loc. cit.). We refer to $\D_\lambda \colon \C^{op} \to \D$ as the \emph{left duality functor} associated to $\lambda$.

Similarly, the pairing $\lambda$ defines a functor \( \hat{\Ddual}'_\lambda \colon \D^{op} \to \Psh(\C) \).
An object $M\in \M$ over $(X,Y)\in \C \times \D$ is \emph{right universal} if it is terminal in $\M \times_\D \set{Y}$.
A pairing is \emph{right representable} if every $Y\in \D$ has a right universal lift.
If the pairing $\lambda$ is right representable then $\hat{\Ddual}'_\lambda$ factors through the Yoneda embedding $\C \to \Psh(\C)$, and defines a \emph{right duality functor}
\[
\Ddual'_\lambda \colon \D^{op} \to \C.
\]
Moreover, if the pairing $\lambda$ is both left and right representable then the duality functors associated to $\lambda$ define an adjunction
\begin{align*}
\Ddual_\lambda^{op} \colon & \C \adj \D^{op} \noloc \Ddual'_\lambda , \\
\Map_{\D} ( Y, \Ddual_\lambda(X) ) \simeq & \chi(X,Y) \simeq \M \times_{\C \times \D} \set{(X,Y)} \simeq \Map_{\C} ( X, \Ddual'_\lambda(Y)).
\end{align*}

% \subsection{\texorpdfstring{$\LM$}{LM}-monoidal pairings.}
Our definition of ``$\LM$-monoidal pairing'' is a special case of~\cite[def. 5.2.2.20]{HA}
\begin{mydef}\label{monpair}
A pairing of $\LM$-monoidal $\infty$-categories, or simply an $\LM$-monoidal pairing, 
is given by three $\LM$-monoidal categories $\C^\otimes,\D^\otimes,\M^\otimes$ and an $\LM$-monoidal functor 
\[
\la \colon \M^\otimes \to \C^\otimes \times_{\LM^\otimes} \D^\otimes 
\]
which is a categorical fibration (best understood using~\cite[cor. 2.4.6.5]{HTT}) 
and which induces right fibrations
\[
\la_\a \colon M_\a \to \C_\a \times \D_\a, \quad \la_\m \colon M_\m \to \C_\m \times \D_\m
\]
after taking the fibers over $\a, \m \in \LM^\otimes_{\angled{1}}$.
We define the \emph{associated monoidal pairing} 
\[
\la_\a^\otimes \colon
\M_\a^\otimes \to \C_\a^\otimes 
\times_{\Ass} \D_\a^\otimes 
\]
by the pullback of $\la$ along $\Ass \to \LM^\otimes$. If the underlying pairings $\la_\a$, $\la_\m$ are both left representable, we say the monoidal pairing $\la$ is \emph{left representable}. 
\end{mydef}

\begin{rem}
If the pairing $\la_\a$ is left representable, then the duality functor 
\[
\Ddual_{\la_a} \colon \C_\a^\op \to \D_\a
\]
is lax monoidal.
We can therefore view $\D_\m$ as lax left tensored over $\C_\a^\op$ via $\Ddual_{\la_\a}$.
If in addition the pairing $\la_\m$ is left representable (so that in the terminology of \cref{monpair} $\la$ itself is left representable),
then the duality functor
\[
\Ddual_{\la_\m} \colon \C_\m^\op \to \D_\m
\]
is ``lax left linear over $\C_\a^\op$'':
for every $x\in \C_\a, y\in \C_\m$ there exists a natural morphism in $\D_\m$
\[
\Ddual_{\la_\a}(x) \otimes \Ddual_{\la_\m}(y) \to \Ddual_{\la_\m} ( x \otimes y),
\]
and these morphisms are coherently compatible.
\end{rem}

Let
\[
\la \colon \M^\otimes \to \C^\otimes \times_{\LM^\otimes} \D^\otimes
\]
be an $\LM$-monoidal pairing.
By~\cite[rem. 5.2.2.26]{HA} $\la$ induces a pairing of $\infty$-categories 
\begin{align*}
\LMod(\lambda)  \colon \LMod(\M) \to \LMod(\C) \times \LMod(\D).
\end{align*}
The goal of \cref{SectionKoszLM} is to show that $\LMod(\la)$ is left representable (under suitable assumptions), and hence defines a duality functor 
\[
\Ddual_{\LMod(\la)} \colon \LMod(\C)^\op \to \LMod(\D).
\]
% In other words, we will show that every object $X\in \LMod(\C)$ has a left universal lift, i.e. the $\infty$-category $\LMod(M) \times_{\LMod(\C)} \set{X}$ has a terminal object. 
We will produce left universal lifts by studying $\LM$-monoidal pairings of modules.

% \begin{mydef}
% An object $M\in \LMod(\M)$ over $(X,Y)\in \LMod(\C)\times \LMod(\D)$ is a \emph{left universal object} if it is a terminal object of $\LMod(\M) \times_{\LMod(\C)}  \set{X}$. 
%   We say that $M$ is a left universal lift of $X$.
%   The pairing
%   \[
%     \LMod(\lambda)  \colon \LMod(\M) \to \LMod(\C) \times \LMod(\D)
%   \]
%    is \emph{left representable} if every object $X\in \LMod(\C)$ has a left universal lift, i.e. the $\infty$-category $\LMod(M) \times_{\LMod(\C)} \set{X}$ has a terminal object. 
% \end{mydef}

\subsection{$\LM$-monoidal pairings of bimodules}
Let
\[
\lambda^\otimes \colon \M^\otimes \to \C^\otimes \times_{\LM^\otimes} \D^\otimes
\]
be an $\LM$-monoidal pairing,
and let $M\in \Alg(\M_\aA)$ be an object over $(A,B)\in \Alg(\C_\aA)\times \Alg(\D_\aA)$. 
Assume $\M$ admits geometric realizations of $M$-bar constructions $\otimes$-compatibly, and similarly for $A$ and $B$. 
By its functoriality, construction \ref{w9f2} gives rise to a map of $\LM$-monoidal $\infty$-categories
\[
\la_M^\otimes:
{\BLMod_{M}(\M)}^\otimes
\to
{\BLMod_{A}(\C)}^\otimes 
\times_{\LM^\otimes} {\BLMod_{B}(\D)}^\otimes.
\]
However, in that construction we lose track of individual simplices. 
Consequently, we are unable to tell if  $ \la_M^\otimes$ is a categorical fibration. 
Thus, a direct analog of~\cite[lem. 5.2.2.29]{HA} is lacking.
Instead, in \cref{BLMod_construction} we take a fibrant replacement of $\la^\otimes_M$ in the model category of preoperads.

\begin{prop} \label{BLMod_construction}
In the situation and the notation described above, there exists an $\LM$-monoidal pairing 
\[
\tilde\lambda_M^\otimes \colon 
\widetilde {\BLMod_{M}(\M)}^\otimes
\to
{\BLMod_{A}(\C)}^\otimes 
\times_{\LM^\otimes} {\BLMod_{B}(\D)}^\otimes, 
\]
and an equivalence of $\LM$-monoidal categories 
\[
\tau \colon {\BLMod_{M}(\M)}^\otimes \iso
\widetilde {\BLMod_{M}(\M)}^\otimes
\]
such that $ \tilde\lambda_M^\otimes \circ \tau =  \lambda_M^\otimes$.
\end{prop}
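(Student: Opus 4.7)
The plan is to obtain $\tilde\la_M^\otimes$ as a fibrant replacement of $\la_M^\otimes$ in a suitable model-theoretic setting, mirroring the approach of \cite[lem. 5.2.2.29]{HA} in the monoidal case. The obstruction that this replacement overcomes is exactly the one flagged in the paragraph preceding the statement: Construction \ref{w9f2} is only functorial up to equivalence, so $\la_M^\otimes$, while a map of $\LM$-monoidal categories, cannot be checked directly to be a categorical fibration (nor to induce right fibrations on fibers over $\a$ and $\m$, as required by \cref{monpair}).

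First I would work in the $\LM$-preoperadic model structure on marked simplicial sets over $\LM^\otimes$ of \cite[\S2.4.4]{HA}, in which fibrant objects are $\LM$-monoidal $\infty$-categories. Both $\BLMod_M(\M)^\otimes$ and
\[
T^\otimes := \BLMod_A(\C)^\otimes \times_{\LM^\otimes} \BLMod_B(\D)^\otimes
\]
are fibrant, by Construction \ref{w9f2} applied to $\M$, $\C$, $\D$. Apply the factorization axiom to $\la_M^\otimes$ to obtain
\[
\BLMod_M(\M)^\otimes \xrightarrow{\tau} \widetilde{\BLMod_M(\M)}^\otimes \xrightarrow{\tilde\la_M^\otimes} T^\otimes,
\]
with $\tau$ a trivial cofibration and $\tilde\la_M^\otimes$ a fibration. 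Since $T^\otimes$ is fibrant, so is $\widetilde{\BLMod_M(\M)}^\otimes$, which thereby acquires the structure of an $\LM$-monoidal $\infty$-category. The map $\tau$, as a trivial cofibration between fibrant objects, is an equivalence of $\LM$-monoidal $\infty$-categories; the map $\tilde\la_M^\otimes$, as a fibration between fibrant objects, is a categorical fibration and an $\LM$-monoidal functor.

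Next I would verify that $\tilde\la_M^\otimes$ is an $\LM$-monoidal pairing, i.e.\ that the induced fiber maps $(\tilde\la_M^\otimes)_\a$ and $(\tilde\la_M^\otimes)_\m$ are right fibrations. By the functoriality of Construction \ref{w9f2} at the end of \cref{extracting}, and by the assumption that $\la^\otimes$ is an $\LM$-monoidal pairing (so that $\la_\a$ and $\la_\m$ are right fibrations), the unreplaced fiber maps $\la_{M,\a}$ and $\la_{M,\m}$ are right fibrations as well. Since $\tau_\a$ and $\tau_\m$ are categorical equivalences over the target, the categorical fibrations $(\tilde\la_M^\otimes)_\a$ and $(\tilde\la_M^\otimes)_\m$ are categorically equivalent over a common base to right fibrations, and hence are themselves right fibrations: this can be seen via the contravariant model structure on marked simplicial sets over the base, in which right fibrations appear as fibrant objects, so that a categorical fibration equivalent over the base to a right fibration satisfies the defining right-horn lifting property.

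The main obstacle is this last homotopy-invariance step for right fibrations, which is strict enough that its verification benefits from a cleaner setup. An alternative route, which I would fall back on if the naive argument above proves insufficient, is to combine the $\LM$-preoperadic model structure with the pairing model structure of \cite[\S5.2.1]{HA} into a model structure whose fibrant objects are exactly $\LM$-monoidal pairings, so that the sought factorization is read off directly. This is precisely the mechanism used in the proof of \cite[lem. 5.2.2.29]{HA} in the monoidal case, and it adapts to the $\LM$-monoidal case since the formation of fibers over $\a,\m \in \LM^\otimes$ is a right Quillen operation. All remaining verifications are routine consequences of the model-categorical machinery together with Construction \ref{w9f2}.
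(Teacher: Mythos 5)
Your fibrant-replacement scaffolding is exactly the paper's: factor $\la_M^{\otimes,\natural}$ in ${\POp}_{/\LM^{\otimes,\natural}}$ as a trivial cofibration $\tau$ followed by a fibration, observe that ${\BLMod_A(\C)}^{\otimes,\natural}\times_{\LM^{\otimes,\natural}}{\BLMod_B(\D)}^{\otimes,\natural}$ is fibrant, use the mapping-space description of weak equivalences in that model category to see that $\tau$ is an equivalence of $\infty$-operads over $\LM^\otimes$, and transfer the coCartesian-fibration property along $\tau$ via \cite[prop. 2.4.4.3]{HTT}. The reduction of ``$(\tilde\la_M^\otimes)_\a$, $(\tilde\la_M^\otimes)_\m$ are right fibrations'' to ``$\la_{M,\a}$, $\la_{M,\m}$ are right fibrations'' (by composing with the fiberwise equivalences induced by $\tau$, using that a categorical fibration equivalent over the base to a right fibration is one) is also how the paper proceeds.

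The gap is in your justification of that latter claim. That $\la_{M,\a}\colon \BMod{M}{M}(\M_\a)\to \BMod{A}{A}(\C_\a)\times\BMod{B}{B}(\D_\a)$ and $\la_{M,\m}\colon \LMod_M(\M_\m)\to\LMod_A(\C_\m)\times\LMod_B(\D_\m)$ are right fibrations does not follow from ``the functoriality of Construction \ref{w9f2}'' together with $\la_\a,\la_\m$ being right fibrations: the functoriality noted at the end of \cref{extracting} only produces a lax-monoidal restriction-of-scalars functor and says nothing about lifting properties of the induced pairing of module categories. This verification is the actual technical content of the proposition and occupies the second half of the paper's proof. For the $\a$-fiber one invokes the proof of \cite[lem. 5.2.2.29]{HA}; for the $\m$-fiber one must argue afresh: exhibit $\la_{M,\m}$ as a pullback of a map $\theta\colon \LMod(\M)\to \Alg(\M_\a)\times_{\Alg(\C_\a)\times\Alg(\D_\a)}\big(\LMod(\C)\times\LMod(\D)\big)$, reduce to showing that $\Alg(\la_\a)$ and $\LMod(\la)$ are right fibrations, and check the two right-fibration criteria of \cite[prop. 2.4.2.4]{HTT} for $\LMod(\la)$ by applying \cite[cor. 3.2.2.3]{HA} to the $\infty$-operad $\LM^\otimes$ and the right fibrations $\la_\a,\la_\m$. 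Without some such argument the proof is incomplete at its central step; your homotopy-invariance observation and your fallback via a combined pairing model structure are both reasonable, but neither substitutes for this verification.
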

\begin{proof}
We let $\POp$ denote the category of $\infty$-preoperads of~\cite[def. 2.1.4.2]{HA} endowed with the model structure of~\cite[prop. 2.1.4.6]{HA}. 
Given an $\infty$-operad $\Oo^\otimes$ we let $\Oo^{\otimes, \natural}$ denote the associated $\infty$-preoperad of~\cite[not. 2.1.4.5]{HA}. 
We recall that the overcategory 
${\POp}_{/\LM^{\otimes, \natural}}$
inherits a model structure in which weak equivalences, fibrations, and cofibrations are precisely the morphisms that become such after forgetting the morphism to  $\LM^{\otimes, \natural}$. 
Proposition 2.1.4.6 of \cite{HA} shows that the object
\begin{align*}
{\BLMod_A(\C)}^{\otimes, \natural}
& \times_{\LM^{\otimes, \natural}}
{\BLMod_B(\D)}^{\otimes, \natural} \\
&=
\left(
{\BLMod_A(\C)}^{\otimes}
\times_{\LM^{\otimes}}
{\BLMod_B(\D)}^{\otimes} 
\right)^\natural
\to 
\LM^{\otimes, \natural}
\end{align*}
of 
${\POp}_{/\LM^{\otimes, \natural}}$
is fibrant. 
We factor the induced map $\lambda_M^{\otimes, \natural}$ as
\[
{\BLMod_M(\M)}^{\otimes, \natural} 
\xto{\tau}
X
\xto{\mu}
{\BLMod_A(\C)}^{\otimes, \natural}
\times_{\LM^{\otimes, \natural}}
{\BLMod_B(\D)}^{\otimes, \natural} 
\]
where $\tau$ is a trivial cofibration and $\mu$ is a fibration.
Referring again to~\cite[prop. 2.1.4.6]{HA}, we find that there exists a fibration of $\infty$-operads $\tilde \lambda_M^\otimes$ as in the proposition such that $\mu$ is induced by a weak equivalence 
\[
X \simeq {\widetilde\BLMod_M(\M)}^{\otimes, \natural}  
\]
in ${\POp}_{/\LM^{\otimes, \natural}}$. 
In fact, ${\widetilde\BLMod_M(\M)}^{\otimes}$ is obtained from $X$ simply by forgetting the marked edges.
We will not distinguish notationaly between 
\[
\tau \colon {\BLMod_{M}(\M)}^\otimes \iso
\widetilde {\BLMod_{M}(\M)}^\otimes
\]
and the induced map on marked simplicial sets. Composing with $\tau$ we obtain a weak equivalence 
\[
{\BLMod_M(\M)}^{\otimes, \natural}
\simeq
{\widetilde\BLMod_M(\M)}^{\otimes, \natural}.
\]
By~\cite[prop. 2.1.4.6]{HA}, the map $\tilde \lambda_M^\otimes$ is a categorical fibration. 

We claim that $\tau$ induces an equivalence of $\LM^\otimes$-monoidal categories as in the lemma.
By~\cite[prop. 2.1.4.6]{HA} a fibrant object in ${(\POp)}_{/\LM^{\otimes,\natural}}$ is given by an $\infty$-operad $\Oo^\otimes$ and a categorical fibration 
\[
\Oo^{\otimes, \natural} \fib \LM^{\otimes,\natural}.
\]
If $\Oo^\otimes$, $\Oo'^{ \otimes}$ are $\infty$-operads over $\LM^\otimes$ then the Kan complex 
\[
\Map_{{(\POp)}_{/\LM^{\otimes,\natural}}}
(\Oo^{ \otimes, \natural},\Oo'^{ \otimes, \natural})
\]
is a model for the mapping space 
$
\Map_{\mathrm{Op}_{\infty_{/\LM^\otimes}}} 
(\Oo^{ \otimes},\Oo'^{ \otimes}).
$
Therefore, by the Yoneda lemma, a weak equivalence 
\[
f \colon \Oo^{\otimes, \natural} \to \Oo'^{\otimes, \natural}
\]
in ${(\POp)}_{/\LM^{\otimes, \natural}}$ is induced by an equivalence of  $\infty$-operads $f \colon \Oo^\otimes \iso \Oo'^\otimes$ over $\LM^\otimes$.
If $\Oo^\otimes$ is an $\LM$-monoidal category (i.e $\Oo^\otimes \fib \LM^\otimes$ is a coCartesian fibration), then by the abstract characterization of coCartesian edges~\cite[prop. 2.4.4.3]{HTT} we see that $\Oo'^\otimes \fib \LM^\otimes$ is also an $\LM$-monoidal category.
Our claim follows by taking $f=\tau$.

In particular, $\tau$ induces equivalences of underlying $\infty$-categories
\begin{align*}
\BMod{M}{M}(\M_\a) = {\BLMod_{M}(\M)}^\otimes_{\a} \iso
\widetilde {\BLMod_{M}(\M)}^\otimes_{\a}, \\
\LMod_{M}(\M_\m) = {\BLMod_{M}(\M)}^\otimes_{\m} \iso
\widetilde {\BLMod_{M}(\M)}^\otimes_{\m}.
\end{align*}

Taking fibers over the two colors of $\LM^\otimes$, it remains to show that the associated maps
\begin{small}
\begin{align*}
\tilde \lambda_{M, \a}^\otimes & \colon 
{\widetilde\BLMod_{M}(\M)}^\otimes_{\a} \to 
{\BLMod_{A}(\C)}^\otimes_{\a} \times {\BLMod_{B}(\D)}^\otimes_{\a} = \BMod{A}{A}(\C_\a) \times \BMod{B}{B}(\D_\a), \\
\tilde \lambda_{M, \m}^\otimes & \colon 
{\widetilde\BLMod_{M}(\M)}^\otimes_{\m} \to 
{\BLMod_{A}(\C)}^\otimes_{\m} \times {\BLMod_{B}(\D)}^\otimes_{\m} = \LMod_{A}(\C_\m) \times \LMod_{B}(\D_\m)  
\end{align*}
\end{small}
are right fibrations. 

It suffices to show that the compositions 
\begin{align*}
\BMod{M}{M}(\M_\a)
& \iso
{\widetilde\BLMod_{M}(\M)}^\otimes_{\a} \xto{\tilde \lambda_{M, \a}^\otimes}
\BMod{A}{A}(\C_\a) \times \BMod{B}{B}(\D_\a), \\
\LMod_{M}(\M_\m)
& \iso
{\widetilde\BLMod_{M}(\M)}^\otimes_{\m} \xto{\tilde \lambda_{M, \m}^\otimes} 
\LMod_{A}(\C_\m) \times \LMod_{B}(\D_\m)  
\end{align*}
obtained using the factorization of $\lambda^{\otimes,\natural}_M$ above, are right fibrations. 
The first is a right fibration by the proof of~\cite[lem. 5.2.2.29]{HA}.
The second is a right fibration by the same argument \textit{mutatis mutandis}; we nevertheless give the details. 

The various categories of algebras and left modules at play form a diagram as follows,
\[
\begin{tikzcd}[ampersand replacement=\&]
\LMod_M(\M_\m) 
\ar[d]{}{} \ar[r]
\& 
\LMod(\M) 
\ar[dd] \ar[dr]
\\
\LMod_A(\C_\m) \times \LMod_B(\D_\m) 
\ar[d] \ar[rr]
\&\&
\LMod(\C) \times \LMod(\D)
\ar[d]
\\
\set{*}
\ar[r]
\&
\Alg(\M_\a)
\ar[r]
\&
\Alg(\C_\a) \times \Alg(\D_\a)
\end{tikzcd}
\]
in which the lower and left rectangles are Cartesian. 
Consequently, the induced square 
\[
\begin{tikzcd}[ampersand replacement=\&,column sep=1em]
\LMod_M(\M_\m) 
\ar[d]{}{} \ar[r] 
\& 
\LMod(\M) 
\ar[d]{}{\theta}
\\
\LMod_A(\C_\m) \times \LMod_B(\D_\m) 
\ar[r]
\&
\Alg(\M_\a)
\times_{\Alg(\C_\a)\times\Alg(\D_\a)}
\big(
\LMod(\C) \times \LMod(\D)
\big)
\end{tikzcd}
\]
is Cartesian. 
We will show that $\theta$ is a right fibration. 
Let $\theta'$ be the projection 
\[
\Alg(\M_\a)
\times_{\Alg(\C_\a)\times\Alg(\D_\a)}
\big(
\LMod(\C) \times \LMod(\D)
\big)
\to
\LMod(\C) \times \LMod(\D).
\]
Since $\theta$ is a categorical fibration, it will suffice to show that $\theta'$ and
\[
\theta \circ \theta' = \LMod{}{}(\lambda)
\colon
\LMod{}{}(\M) \to \LMod{}{}(\C)\times \LMod{}{}(\D)
\]
are right fibrations. 
The map $\theta'$ is a pullback of 
\[
\Alg(\lambda_\a)\colon
\Alg(\M_\a) \to \Alg(\C_\a)\times \Alg(\D_\a).
\]
We are therefore reduced to showing that $\Alg(\lambda_\a)$ and $\LMod{}{}(\lambda)$ are right fibrations. 
To fix ideas, let us focus on the latter. 
Let us denote 
\[
\E^\otimes :=  \C^\otimes \times \D^\otimes \quad \text{and} \quad q:= \LMod{}{}(\lambda).
\]
Since $\lambda_\a$ is by assumption a categorical fibration, it follows that $\Alg(\lambda_\a)$ is an inner fibration, and hence that $\theta'$ is an inner fibration. 
Since $\theta$ is an inner fibration as well, it follows that
\[
q: \LMod{}{}(\M) \to \LMod{}{}(\E)
\]
is an inner fibration.
According to~\cite[prop. 2.4.2.4]{HTT}, to show that $q$ is a right fibration we must check two properties:
\begin{itemize}
\item[(A)] for any morphism in $\LMod{}{}(\E)$, any lift of the target may be extended to a Cartesian lift of the morphism, 
\item[(B)] every morphism in $\LMod{}{}(\M)$ is $q$-Cartesian.
\end{itemize}
Recall the characterization of a Cartesian edge as relative limit from~\cite[ex. 4.3.1.4.]{HTT}.
We apply~\cite[cor. 3.2.2.3]{HA} with $\Oo^\otimes := \LM^\otimes$, 
\[
p := \lambda \colon \M^\otimes \to \E^\otimes,
\]
and $K := \Delta^0$ (noting that the objects $X \in \Oo$ considered there have as two possible values the two colors $\a, \m$ of the $\infty$-operad $\LM^\otimes$).
The condition of the Corollary holds because of our assumption that $\lambda_\a$ and $\lambda_\m$ are right-fibrations, and the conclusions (1) and (2) of the Corollary imply the two properties (A), (B) as required.
\end{proof}

%%%%%%%%%%%%
%%%%%%%%%%%%
%%%%%%%%%%%%
%%%%%%%%%%%%
%%%%%%%%%%%%%%%%
\section{Koszul duality for \texorpdfstring{$\LM$}{LM}-monoidal pairings}%%%%%%%%%%%%%%
\label{SectionKoszLM}%%%%
%%%%%%%%%%%%%%
%%%%%%%%%%%%
%%%%%%%%%%%%
%%%%%%%%%%%%
%%%%%%%%%%%%

%8
%\input{Koszul.tex}

Let
\[
\la \colon \M^\otimes \to \C^\otimes \times_{\LM^\otimes} \D^\otimes
\]
be a left-representable $\LM$-monoidal pairing (\cref{monpair}). 
Our goal in this section is to prove that under certain assumptions, the induced pairing
\[
\LMod(\la) \colon
\LMod(\M)
\to
\LMod(\C) \times \LMod(\D) 
\] 
is left representable (\cref{LM52227}) --- our analog of \cite[5.2.2.27]{HA}. An outline of the proof was already given in \S\ref{SectionOverview}. However, armed as we are now with the constructions of sections \ref{sec:External} and \ref{sec:LM_monoidal_pairings}, we can be more precise. 

The proof revolves around the commutative diagram\footnote{In fact, we will modify this diagram using \cref{BLMod_construction} to ensure that the vertical maps are categorical fibrations.}
\begin{equation*}
\begin{tikzcd}
\LMod(\BLMod_M(\M)) \ar[d] \ar[r] & \LMod(\M) \ar[d,"\LMod(\la)"] \\
\LMod(\BLMod_A(\C)) \times \LMod(\BLMod_\one(\D)) \ar[r] & \LMod(\C) \times \LMod(\D), 
\end{tikzcd}
\end{equation*}
where $A=X_\a \in \Alg(\C_\a) $ is the underlying algebra of a left-module $X = (X_\a, X_\m) \in \LMod(\C)$ and $M\in \Alg(\M_\a)$ a lift of $(A,\one)$, whose existence requires further assumptions on the pairing $\la_\a$.\footnote{for example, the assumption that the unit object $\one \in \D_\a$ is terminal.}
We start by lifting $X$ to a left-module
\[
X' \in \LMod(\BLMod_A(\C))
\]
using \cref{triv_BL_mod_lift}.
% (which is an easy task, since $A$ is the underlying algebra of $X$).
The advantage of $X'$ over $X$ is that its underlying algebra is the unit object of $\BMod{A}{A}(\C)$.
This makes it is easy to find a left universal lift $Z' \in \LMod(\BLMod_M(\M))$ of $X'$, provided that the underlying pairings of
\[
\BLMod_M(\M)^\otimes \to \BLMod_A(\C)^\otimes \times_{\LM^\otimes} \BLMod_\one(\D)^\otimes
\]
are left representable, see \cref{LM52228}.
Showing the left representability of these underlying pairings is the main step of the proof.

The left representability of 
\[
\BMod{M}{M}(\M) \to \BMod{A}{A}(\C) \times \BMod{\one}{\one}(\D)
\]
was proved in \cite[lem. 5.2.2.40]{HA} using free resolutions of bimodules.
In \cref{LM52240} we use free resolutions of left modules to show the left representability of 
\[
\LMod_{M}(\M) \to \LMod_{A}(\C) \times \LMod_{\one}(\D).
\]

To complete the proof we show that the forgetful functor
\[
\LMod(\BLMod_M(\M)) \to \LMod(\M)
\]
preserves left representable objects, see \cref{LM52230}.
Applying the forgetful functor to $Z'\in \LMod(\BLMod_M(\M))$ produces the desired left universal lift $Z\in \LMod(\M)$ of $X$.
% }

\subsection{Lifting left modules to left modules over algebras in bimodules}
Let $\C^\otimes \to \LM^\otimes$ be an $\LM$-monoidal category and $A\in \Alg(\C_\aA)$ an algebra. 
Let $\BLMod_{A}(\C)$ be the $\LM$-monoidal category of \cref{w9f2}.
Consider the forgetful functor 
\begin{align*}
\LMod(\BLMod_{A}(\C)) \to \LMod(\C).
\end{align*}
\begin{prop}
\label{triv_BL_mod_lift}
Assume $\C^\otimes$ admits realizations of $A$-bar constructions $\otimes$-compatibly. 
% Consider the $\LM$-monoidal category $\BLMod_{A}(\C)$ together with the forgetful functor 
% \begin{align*}
%   \LMod(\BLMod_{A}(\C)) \to \LMod(\C).
% \end{align*}
Then every object $X\in \LMod(\C)$ over $A\in \Alg(\C_\aA)$ admits a lift to $\LMod(\BLMod_{A}(\C))$ over the trivial algebra $A\in \Alg(\BMod{A}{A}(\C_\aA))$. 
\end{prop}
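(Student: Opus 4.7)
The plan is to deduce the proposition formally from the pullback square of \cref{LModBLMod_as_PB},
\[
\begin{tikzcd}
\LMod(\BLMod_A(\C)) \ar[r] \ar[d,"\mathrm{fgt}"'] \pbcorner & \Alg(\BMod{A}{A}(\C_\aA)) \ar[d,"\mathrm{fgt}"] \\
\LMod(\C) \ar[r] & \Alg(\C_\aA),
\end{tikzcd}
\]
whose standing hypothesis---that $\C$ admit realizations of $A$-bar constructions $\otimes$-compatibly---is exactly what we assume. Given this square, producing the desired lift of $X$ amounts to producing a compatible pair $(X, B)$ with $X \in \LMod(\C)$ and $B \in \Alg(\BMod{A}{A}(\C_\aA))$ having canonically equivalent images in $\Alg(\C_\aA)$.

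The first step is to identify the trivial algebra $A \in \Alg(\BMod{A}{A}(\C_\aA))$ explicitly. Under the standard equivalence
\[
\Alg(\BMod{A}{A}(\C_\aA)) \simeq \Alg(\C_\aA)_{A/},
\]
the unit algebra in the monoidal $\infty$-category $\BMod{A}{A}(\C_\aA)$ corresponds to the initial object $\mathrm{id}_A \colon A \to A$ of $\Alg(\C_\aA)_{A/}$, since the unit of any monoidal $\infty$-category carries an essentially unique algebra structure characterized by this universal property. In particular, the image of $A_{\mathrm{triv}}$ under the right vertical forgetful functor to $\Alg(\C_\aA)$ is just $A$ itself.

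Next, since $X \in \LMod(\C)$ lies over $A \in \Alg(\C_\aA)$ by hypothesis, the pair $(X, A_{\mathrm{triv}})$, together with the identity equivalence of their common image $A \in \Alg(\C_\aA)$, defines an object $Z$ of the pullback. By \cref{LModBLMod_as_PB}, $Z$ is precisely the desired lift: its underlying $\LM$-algebra in $\C^\otimes$ is $X$, while its underlying algebra in $\BMod{A}{A}(\C_\aA)$ is the trivial algebra on $A$. I expect no substantive obstacle: once \cref{LModBLMod_as_PB} is in hand and the trivial algebra is identified with $\mathrm{id}_A$, the lift is produced formally by the universal property of the pullback of $\infty$-categories.
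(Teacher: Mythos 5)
Your proposal is correct and matches the paper's own proof, which likewise deduces the lift directly from the universal property of the pullback square in \cref{LModBLMod_as_PB} applied to the pair $(X, A)$. The extra identification of the trivial algebra with $\mathrm{id}_A$ under $\Alg(\BMod{A}{A}(\C_\aA)) \simeq \Alg(\C_\aA)_{A/}$ is a harmless elaboration of the same argument.
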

\begin{proof}
This follows directly from \cref{LModBLMod_as_PB} using the universal property of the pullback:
\[
\begin{tikzcd}
\ast \ar["X"', bend right]{ddr} \ar["A", bend left=10]{drr} \ar[dr, dashed,"\exists!"] 
\\
& \LMod( \BLMod_A( \C) ) \ar{d}{\mathrm{fgt}} \ar{r} \pbcorner 
& \Alg( \BMod{A}{A}(\C_\aA) ) \ar{d}{\mathrm{fgt}}  
\\
& \LMod(\C) \ar{r} & \Alg(\C_\aA). \qedhere
\end{tikzcd} 
\]
\end{proof}

\subsection{Koszul duals of modules over trivial algebras.}
The next proposition, which is analogous to~\cite[prop. 5.2.2.28]{HA}, concerns the special case of our Koszul duality in which the algebra in question is trivial. In \cref{LM52227} we apply this to an arbitrary module $(A,X)$ by regarding $A$ as the trivial algebra in the category of $A$-$A$-bimodules. 
Thus, we will apply \cref{LM52228} to an $\LM$-monoidal pairing of the form
\[
\BLMod_M(\M)^\otimes \to 
\BLMod_A(\C)^\otimes 
\times_{\LM^\otimes} \BLMod_{\one}(\D)^\otimes
\simeq
\BLMod_A(\C)^\otimes
\times_{\LM^\otimes} 
\D^\otimes.
\]
\begin{prop} \label{LM52228}
Let 
\[
\la \colon \M^\otimes \to \C^\otimes \times_{\LM^\otimes} \D^\otimes
\] 
be an $\LM$-monoidal pairing.
Assume the underlying pairings
\[
\la_\a \colon \M_\a \to \C_\a \times \D_\a
\quad \mbox{and} \quad
\la_\m \colon \M_\m \to \C_\m \times \D_\m
\]
are left representable. 
Consider the induced pairing of $\infty$-categories
\[
\LMod(\lambda)  \colon \LMod(\M) \to \LMod(\C) \times \LMod(\D).
\]
Let $A\in \Alg(\C_\a)$ be a trivial algebra object of $\C_\a$.
Then:
\begin{enumerate}
\item Every left $A$-module 
$ X\in \LMod(\C) $
over
$ A \in \Alg(\C_\a) $
has a left universal lift, i.e a left universal object $Y\in \LMod(\M)$ over $X\in \LMod(\C)$. 
\item Let $X\in \LMod(\C)$ be a left module over $A\in \Alg(\C_\a)$ and 
let $Y\in \LMod(\M)$ be an object over $X$. 
Then $Y\in \LMod(\M)$ is a left universal lift of $X$ if and only if its images $Y_\m \in \M_\m, Y_\a \in \M_\a$ are left universal lifts of $X_\m \in C_\m,\, A\in \C_\a$ respectively.
\end{enumerate}
\end{prop}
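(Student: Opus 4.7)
The plan is to prove part (2) first, and then to deduce part (1) by constructing an $\LM$-algebra lift directly out of the left universal lifts produced by the underlying pairings.

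\textbf{Strategy for (2).} Suppose $Y \in \LMod(\M)$ lies over $X \in \LMod(\C)$ and its fibers $Y_\a \in \M_\a$, $Y_\m \in \M_\m$ are left universal lifts of $A$ and $X_\m$ respectively. To show $Y$ is left universal in $\LMod(\lambda)$, fix an arbitrary $Y' \in \LMod(\M)$ over $X$ and consider the mapping space from $Y'$ to $Y$ in the fiber of $\LMod(\lambda)$ over $X$; we need to verify it is contractible. The combinatorics of $\LM^\otimes$ let us describe this fiber as the space of pairs of morphisms $Y'_\a \to Y_\a$ in $\M_\a$ over $\mathrm{id}_A$ and $Y'_\m \to Y_\m$ in $\M_\m$ over $\mathrm{id}_{X_\m}$, equipped with coherence data compatible with the higher action operations. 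The left universality of $Y_\a$ and $Y_\m$ individually makes each underlying mapping space contractible, and the triviality of $A$ — which implies that the $\LM$-algebra structure on $X = (A, X_\m)$ carries essentially no data beyond the underlying objects — forces the higher coherence data to be contractible as well.

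\textbf{Strategy for (1).} Using left representability of $\lambda_\a$ and $\lambda_\m$, choose left universal lifts $Y_\a \in \M_\a$ of $A$ and $Y_\m \in \M_\m$ of $X_\m$. The next step is to promote this pair to an object of $\LMod(\M)$ projecting to $X$. Because $A$ is trivial, the space of algebra structures on $Y_\a$ compatible with the given $\lambda_\a$-data is (essentially) contractible: such a structure is determined by its underlying map $Y_\a \to \lambda_\a^{-1}(A)$, which is the chosen universal object. Similarly, the left $Y_\a$-module structure on $Y_\m$ witnessing the action of $A$ on $X_\m$ is essentially unique, using the universality of $Y_\m$ together with the operadic structure of $\lambda^\otimes$. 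Packaging these yields an $\LM$-algebra $Y \in \LMod(\M)$ lifting $X$, whose fibers are precisely the chosen universal objects. By part (2), $Y$ is left universal.

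\textbf{Main obstacle.} The core difficulty is the construction in (1): promoting the pair $(Y_\a, Y_\m)$ to an $\LM$-algebra in $\M$ requires coherently producing all higher-arity multiplication and action data. To carry this out rigorously, I would work with the $\infty$-operadic model for $\LMod(\M)$ as sections $\LM^\otimes \to \M^\otimes$ and construct the section by a relative left Kan extension (or a direct lifting) over $\LM^\otimes$, leveraging the right fibration property of $\lambda_\a$ and $\lambda_\m$ pointwise and the fact that a trivial algebra structure presents the initial algebra with the prescribed underlying augmentation. The $\LM$-monoidal compatibility of $\lambda$ guarantees that successive operadic constraints can be solved, since on each color they reduce to finding lifts in the underlying pairings, while the triviality of $A$ ensures no nontrivial obstruction arises from the algebra side.
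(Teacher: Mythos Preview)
Your intuition is sound, but the proposal as written has a real gap, and the paper's proof takes a much cleaner route that sidesteps the obstacle you yourself flag.

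The gap is in your treatment of (2): the claim that ``the triviality of $A$ forces the higher coherence data to be contractible'' is the entire content of the proposition, and you have not justified it. The $\infty$-operadic model does not hand you a decomposition of the mapping space in $\LMod(\M) \times_{\LMod(\C)} \{X\}$ into pieces at the two colors plus separable coherence data; making that precise is essentially as hard as the proposition itself. Likewise, your plan for (1) --- building the section $\LM^\otimes \to \M^\otimes$ by hand via lifting or relative Kan extension --- is precisely the delicate construction the paper's argument is designed to avoid, and you have not indicated how the lifting problems would actually be solved.

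The paper proves (1) and (2) simultaneously with a single categorical move. The key observation is that since $A$ is a trivial algebra, the section $X \colon \LM^\otimes \to \C^\otimes$ carries coCartesian edges to coCartesian edges. Hence the pullback
\[
\N^\otimes := \M^\otimes \times_{\C^\otimes} \LM^\otimes
\]
(formed in coCartesian fibrations over $\LM^\otimes$) is again an $\LM$-monoidal category. One then has identifications
\[
\LMod(\N) \simeq \LMod(\M) \times_{\LMod(\C)} \{X\}, \qquad \N_\a \simeq \M_\a \times_{\C_\a} \{A\}, \qquad \N_\m \simeq \M_\m \times_{\C_\m} \{X_\m\}.
\]
The two fiber categories $\N_\a$, $\N_\m$ have terminal objects by left representability of $\la_\a$, $\la_\m$. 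Now apply \cite[Cor.~3.2.2.5]{HA} (with $K=\emptyset$): the forgetful functor from $\LM$-algebras preserves and reflects limits, in particular terminal objects. This yields existence (1) and the characterization (2) at once, with no hand analysis of mapping spaces and no section to build. The role of ``$A$ trivial'' is isolated cleanly: it is exactly the hypothesis guaranteeing that $X$ preserves coCartesian edges, so that $\N^\otimes$ is $\LM$-monoidal rather than merely an $\infty$-operad.
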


\begin{proof}
We have to show that $\LMod(\M) \times_{\LMod(\C)} \set{X}$ has a terminal object, and that an object $Y\in \LMod(\M) \times_{\LMod(\C)} \set{X}$ is terminal if and only if the objects 
$Y_\m \in \M_\m \times_{\C_\m} \set{X_\m}$ and $ Y_\a \in \M_\a \times_{\C_\a} \set{A} $ are terminal. 

Consider $\M^\otimes, \C^\otimes$ and $\LM^\otimes$ as coCartesian fibrations over $\LM^\otimes$.
Since $A$ is a trivial algebra, the functor $X \colon \LM^\otimes \to \C^\otimes$ carries coCartesian edges to coCartesian edges.
Define $\N^\otimes \to \LM^\otimes$ as the pullback
\[
\begin{tikzcd}
\N^\otimes \pbcorner \ar[d] \ar[r] & \M^\otimes \ar[d] \\
\LM^\otimes \ar[r, "X"] & \C^\otimes
\end{tikzcd}
\]
in ${(\Cat_{\infty})}^{coCart}_{/\LM^\otimes}$ (\cref{lbl2}).
Verification of the Segal conditions, which is straightforward, shows that $\N^\otimes \to \LM^\otimes$ is an $\LM$-monoidal category.
Note that the $\infty$-category $\N^\otimes$ is equivalent to the fibered product $\M^\otimes \times_{\C^\otimes} \LM^\otimes$, since the forgetful functor
\[ 
{(\Cat_\infty)}^{coCart}_{/\LM^\otimes} \to \Cat_\infty
\]
preserves limits (see \cref{lbl3.1}).
Unwinding definitions, we find that we have equivalences 
\[
\LMod(\N) \= \LMod(\M) \times_{\LMod(\C)} \set{X} , \quad
\N_\a \= \M_\a \times_{\C_\a} \set{A}, \quad
\N_\m \= \M_\m \times_{\C_\m} \set{X_\m}.
\]
The categories $\N_\a \= \M_\a \times_{\C_\a} \set{A}$ and $\N_\m \= \M_\m \times_{\C_\m} \set{X_{\m}}$ each possess a terminal object by assumption.
Since the forgetful functor preserves and reflects limits \cite[cor. 3.2.2.5 applied with $K=\emptyset$]{HA}, $\LMod(\N)$ has a terminal object,
and, moreover, $Y\in \LMod(\N)$ is terminal if and only if
the underlying objects $Y_\m \in \M_\m \times_{\C_\m} \set{X_\m}$ and $ Y_\a \in \M_\a \times_{\C_\a} \set{A} $ are both terminal objects, as claimed.
\end{proof}

\subsection{Left representability for left modules}
\label{sec:left_rep_for_left_mod}
Let
\[
\la \colon
\M^\otimes \to
\Cc^\otimes \times_{\LM^\otimes}
\D^\otimes
\]
be an $\LM$-monoidal pairing and $M \in \Alg(\M_\a)$ be an algebra over
\[
(A,B) \in \Alg(\C_\a) \times \Alg(\D_\a).
\]
Consider the $\LM$-monoidal pairing 
\[
\tilde\la_M^\otimes \colon 
\widetilde{\BLMod}_M(\M)^\otimes \to \BLMod_A(\C)^\otimes \times \BLMod_B(\D)^\otimes
\]
of \cref{BLMod_construction}.
Our next goal is to show that the underlying pairings of $\tilde\la_M^\otimes$ are left representable (see \cref{LM52240}).
We will need the following statement, which is an addendum to~\cite[lem. 5.2.2.32]{HA}.

%%%%
\begin{lem} \label{LM52232}
In the situation and the notation above, assume that $B$ is a trivial algebra, that $\M$ admits realizations of $M$-bar constructions $\otimes$-compatibly, and that $\C$ admits realizations of $A$-bar constructions $\otimes$-compatibly.\footnote{These assumptions ensure that \cref{w9f2} applies to make $\LMod_M (\M)$ left-tensored over $\operatorname{BMod}_M(\M)$ and similarly for $\LMod_A(\C)$. In keeping with the spirit of this article, we impose these assumptions generously, although they may not be strictly necessary for the present lemma.} Let
\[
F \colon  \M_\mM \to \LMod_M (\M), \quad  F \colon X \mapsto M \otimes X
\]
be a left adjoint to the forgetful functor. Then $F$ carries left universal objects of $\M_\mM$ with respect to the pairing of $\infty$-categories 
\[
\lambda_\mM \colon  \M_\mM \to \C_\mM \times \D_\mM
\] 
to left universal objects of $\LMod_M(\M)$ with respect to the pairing of $\infty$-categories
\[
{(\lambda_M)}_\m \colon \LMod_M(\M) \to \LMod_A(\C) \times \LMod_B(\D).
\]
\end{lem}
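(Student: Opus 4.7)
The plan is to reduce the assertion to an equivalence of mapping spaces, apply the monadic bar resolutions together with the free-forgetful adjunctions, and leverage the universality of $Y$ combined with the hypothesis that $B$ is trivial.

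First, by unwinding the characterization of left universal objects (\cref{DualityFromPairing}), I would reduce to showing that for every $Z \in \LMod_M(\M_\m)$, the natural comparison map
\[
  \theta_Z \colon \Map_{\LMod_M(\M_\m)}(Z,\ F(Y)) \longrightarrow \Map_{\LMod_A(\C_\m)}(\phi Z,\ A \otimes X),
\]
induced by the forgetful functor $\phi = \pi_1 \circ (\la_M)_\m$, is an equivalence. Next, using the assumption that $\M$ admits realizations of $M$-bar constructions $\otimes$-compatibly (and similarly for $A$ in $\C$), I would resolve $Z$ by its canonical monadic bar resolution $Z \simeq |F(M^{\otimes \bullet} \otimes UZ)|_\Delta$ in $\LMod_M(\M_\m)$ and $\phi Z \simeq |F_\C(A^{\otimes \bullet} \otimes \phi_\m UZ)|_\Delta$ in $\LMod_A(\C_\m)$, where $F_\C(V) = A \otimes V$. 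Combining $F \dashv U$ with $F_\C \dashv U_\C$ and the compatibility $\phi_\m U = U_\C \phi$ (which follows from $\la$ being $\LM$-monoidal), $\theta_Z$ becomes the totalization of the cosimplicial map
\[
  \theta^n_Z \colon \Map_{\M_\m}(M^{\otimes n} \otimes UZ,\ M \otimes Y) \longrightarrow \Map_{\C_\m}(A^{\otimes n} \otimes \phi_\m UZ,\ A \otimes X),
\]
and it remains to show that this cosimplicial map yields an equivalence after totalization.

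The heart of the argument will invoke the universality of $Y$: since $Y$ is left universal over $X$, for every $W \in \M_\m$ there is an equivalence $\Map_{\M_\m}(W, Y) \simeq \Map_{\C_\m}(\phi_\m W, X)$. Combined with the triviality of $B$---which forces $\psi_\m(M \otimes Y) = B \otimes \psi_\m Y = \psi_\m Y$ and $\psi_\m(M^{\otimes n} \otimes UZ) = B^{\otimes n} \otimes \psi_\m UZ = \psi_\m UZ$, both independent of $n$---the right-fibration property of $\la_\m$ constrains the $\D_\m$-contribution to each $\theta^n_Z$ to be uniformly identified with the single mapping space $\Map_{\D_\m}(\psi_\m UZ,\ \psi_\m Y)$, independent of $n$. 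After passing to totalizations this uniform factor is absorbed and the resulting map on $\Tot$ becomes an equivalence.

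The main obstacle will be that $M \otimes Y$ is \emph{not} left universal in $\M_\m$ itself (it sits over $(A \otimes X,\ \psi_\m Y)$ but is not terminal in the relevant fiber), so one cannot simply establish the levelwise comparison $\theta^n_Z$ as an equivalence on the nose. Instead, one must carefully exploit the simultaneous coherence of the simplicial structure of the bar resolution with the uniform $\D_\m$-data, which is made possible precisely by the triviality of $B$. This mirrors Lurie's strategy in the bimodule case \cite[lem. 5.2.2.32]{HA}, simplified here by the fact that we deal only with the left free functor $Y \mapsto M \otimes Y$ rather than the two-sided free bimodule construction.
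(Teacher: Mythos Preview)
Your proposal has a genuine gap, and the overall strategy diverges significantly from both the paper's proof and Lurie's proof of the bimodule analogue.

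First, your opening reduction is not justified. Left universality of $FY$ means only that $FY$ is terminal in the fiber $\LMod_M(\M_\m) \times_{\LMod_A(\C_\m)} \{A \otimes X\}$. The condition ``$\theta_Z$ is an equivalence for \emph{every} $Z \in \LMod_M(\M_\m)$'' is the strictly stronger assertion that $FY$ is $\phi$-terminal (that $\phi$ admits a local right adjoint at $A\otimes X$ with value $FY$). The stronger statement certainly implies the lemma, but you give no reason to expect it holds, and the analogue one level down already fails: $M\otimes Y$ lies over $A\otimes X$ in $\M_\m$ yet is \emph{not} left universal there (as you yourself note), so the corresponding comparison of mapping spaces in $\M_\m$ is not an equivalence.

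Second, and more seriously, your endgame is not an argument. You concede that the levelwise maps $\theta^n_Z$ are not equivalences and then assert that a ``uniform $\D_\m$-factor is absorbed after totalization.'' Mapping spaces in $\M_\m$ do not split as a product of $\C_\m$- and $\D_\m$-contributions (the pairing $\la_\m$ is a right fibration over the product, not a product of right fibrations), so there is no literal factor to cancel; and a map of cosimplicial spaces that fails levelwise can easily fail on $\Tot$. You would need a precise mechanism here, and none is offered.

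The paper's proof avoids bar resolutions entirely. It shows directly that the free functor induces an \emph{equivalence of fiber categories}
\[
  f \colon \M_\m \times_{\C_\m} \{X\} \;\longrightarrow\; \LMod_M(\M_\m) \times_{\LMod_A(\C_\m)} \{A \otimes X\},
\]
which of course carries terminal objects to terminal objects. A short general lemma (\cref{PullbackAdjunctionLemma}) extracts from the ambient adjunction $F\dashv U$ a right adjoint $g$ to $f$ on fibers; one then checks that the unit and counit of $f\dashv g$ are equivalences by observing that their images in $\D_\m$ are (up to the unit map $\eta_X$) the unit and counit of the free--forgetful adjunction for $B$-modules, hence equivalences since $B$ is trivial, and then invoking the right-fibration property to reflect equivalences. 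Incidentally, this is exactly Lurie's argument for \cite[lem.~5.2.2.32]{HA} as well --- contrary to your final remark, no bar resolution appears there either.
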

The proof will require the following categorical lemma.
\begin{lem} \label{PullbackAdjunctionLemma}
Let 
\[
\begin{tikzcd}
\overline{\C} \ar[r,"\overline{F}"] \ar[d,"p"] & \overline{\D} \ar[d,"q"] \\
\C \ar[r,"F"] & \D
\end{tikzcd} 
\]
be a commutative diagram of $\infty$-categories. 
Assume that $p,q$ are Cartesian fibrations, and that $F,\overline{F}$ have right adjoints $F \dashv U, \, \overline{F} \dashv \overline{U}$ such that $p\circ \overline{U} = U \circ q$. 
Let $X\in \C$ be an object, and denote the fiber of $p$ over $X$ by $\overline{\C}_{X}$
and the fiber of $q$ over $FX$ by $\overline{\D}_{FX}$.
Then the restriction of $\overline{F}$ to the fibers over $X$ and $FX$, 
\[
\overline{F}_X \colon  \overline{\C}_X \to \overline{\D}_{FX} ,
\]
has a right adjoint given by the composition 
\[
\overline{\D}_{FX} \xto{\overline{U}_{FX}} \overline{\C}_{UFX} \xto{\eta^*_X} \overline{\C}_X,
\]
where $\eta^*_X$ is the functor induced by the Cartesian fibration $p$ and the unit map $\eta_X \colon  X \to UFX$.
\end{lem}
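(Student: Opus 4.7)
The plan is to verify the adjunction pointwise by producing a natural equivalence of mapping spaces. Fix $A \in \overline{\C}_X$ and $B \in \overline{\D}_{FX}$, and write $G_X := \eta_X^* \circ \overline{U}_{FX}$ for the proposed right adjoint. The goal is a natural equivalence
\[
  \Map_{\overline{\C}_X}\bigl(A,\, G_X(B)\bigr) \simeq \Map_{\overline{\D}_{FX}}\bigl(\overline{F}_X(A),\, B\bigr).
\]

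The approach has three steps. First, I would use the Cartesian property of the chosen edge $e \colon \eta_X^*\overline{U}(B) \to \overline{U}(B)$ lying over $\eta_X$ (\cite[prop. 2.4.4.3]{HTT}) to identify $\Map_{\overline{\C}_X}(A, G_X(B))$ with the homotopy fibre over $\eta_X$ of
\[
  p_* \colon \Map_{\overline{\C}}(A, \overline{U}(B)) \to \Map_\C(X, UFX):
\]
composition with $e$ sets up an equivalence between morphisms $A \to G_X(B)$ over $\id_X$ and morphisms $A \to \overline{U}(B)$ over $\eta_X$. Second, the adjunction $\overline{F} \dashv \overline{U}$ supplies an equivalence $\Map_{\overline{\C}}(A, \overline{U}(B)) \simeq \Map_{\overline{\D}}(\overline{F}(A), B)$, while the adjunction $F \dashv U$ supplies $\Map_\C(X, UFX) \simeq \Map_\D(FX, FX)$ sending $\eta_X \mapsto \id_{FX}$ by the triangle identity. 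Third, a compatibility check shows that under these two equivalences $p_*$ corresponds to $q_* \colon \Map_{\overline{\D}}(\overline{F}(A), B) \to \Map_\D(FX, FX)$; hence the fibre over $\eta_X$ of $p_*$ matches the fibre over $\id_{FX}$ of $q_*$, which is by definition $\Map_{\overline{\D}_{FX}}(\overline{F}_X(A), B)$. Naturality in $A$ and $B$ follows step by step from the naturality of each of the three ingredients.

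The main obstacle is the compatibility in step three: the claim that the projection $p_*$ intertwines with $q_*$ through the two adjunction equivalences. This reduces to identifying the image under $p$ of the unit $\eta^{\overline{F},\overline{U}}_A \colon A \to \overline{U}\overline{F}(A)$ with the unit $\eta_X^{F,U} \colon X \to UFX$ of $F \dashv U$. The hypotheses $q \circ \overline{F} = F \circ p$ and $p \circ \overline{U} = U \circ q$ guarantee that $p(\eta^{\overline{F},\overline{U}}_A)$ is at least a morphism $X \to UFX$ with the correct source and target; it is then forced to coincide with $\eta_X^{F,U}$ by the uniqueness of the unit of an adjunction, since precomposition with $p(\eta^{\overline{F},\overline{U}}_A)$ implements the adjunction equivalence $\Map_\D(FX, FX) \simeq \Map_\C(X, UFX)$ induced from $\Map_{\overline{\D}}(\overline{F}(A), B) \simeq \Map_{\overline{\C}}(A, \overline{U}(B))$ by $p$ and $q$. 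Once this compatibility is established, the mapping-space computation above goes through and the lemma follows. Alternatively, one could phrase the entire argument in terms of relative adjunctions in the sense of \cite[\S7.3]{HA}, but the pointwise mapping-space approach is elementary and suffices here.
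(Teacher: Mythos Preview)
Your proposal is correct and follows the same approach as the paper, which summarizes the pointwise argument simply as ``an elementary calculation with adjoints and Cartesian fibrations'' and then remarks that upgrading the equivalence to full naturality in $A$ and $B$ requires a further (tedious) construction, for instance via the twisted arrow category, or alternatively an appeal to \cite[prop.~2.1.7]{Arad}. Your justification of the step-three compatibility is a bit brisk---the claim $p(\eta^{\overline{F},\overline{U}}_A) \simeq \eta^{F,U}_X$ is really the statement that the given identification $p\circ\overline{U} \simeq U\circ q$ is the Beck--Chevalley mate of $q\circ\overline{F} \simeq F\circ p$, not something one deduces from ``uniqueness of the unit''---but under that natural reading of the hypothesis the conclusion holds.
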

\begin{proof}
With $\overline X \in \overline \C_X$ and $\overline Y \in \ol \D_{FX}$ fixed, an elementary calculation with adjoints and Cartesian fibrations yields a homotopy equivalence of mapping spaces 
\[
\tag{*}
\Map_{\overline \C_X} (\ol X, \eta_X^* \ol U_{FX} \ol Y)
\simeq
\Map_{\ol \D_{FX}}(F \ol X, \ol Y).
\]
A straightforward but tedious construction, based, for instance, on the twisted arrow category, upgrades (*) to an equivalence of left fibrations over $\ol \C_X^{op} \times \ol \D_{FX}$. 
Alternatively (by interpreting $F$, $\ol F$ as a morphism of diagrams of $\infty$-categories), this is a special case of~\cite[prop. 2.1.7]{Arad}.
\end{proof}
\begin{proof}[Proof of \cref{LM52232}]
Let
\[ 
F' \colon \C_\mM \to \LMod_A(\C), \quad
F'' \colon \D_\mM \to \LMod_B(\D)
\]
be left adjoints to the forgetful functors
\[
G' \colon  \LMod_A(\C) \to \C_\mM, \quad
G'' \colon  \LMod_B(\D) \to \D_\mM.
\]
Modifying $F$ by a homotopy if necessary, we may assume that the diagram 
\[
\begin{tikzcd}
\M_\mM \ar[r,"F"] \ar[d] & \LMod_M(\M) \ar[d] \\
\C_\mM \times \D_\mM \ar[r,"F' \times F''"] & \LMod_A(\C) \times \LMod_B(\D)
\end{tikzcd}
\]
is commutative.
For each  
$X \in \C_\mM$, $F$ induces a functor
\begin{align*}
f \colon \M_\mM \times_{\C_\mM} \set{X} 
\to 
\LMod_M(\M) \times_{\LMod_A(\C)} \set{F'(X)}, 
\quad & 
f \colon  Z \mapsto M \otimes Z.
\end{align*}
We have to show that $F$ preserves left universal objects over $X$, or equivalently that $f$ preserves terminal objects. 
We will show that $f$ is an equivalence of $\infty$-categories. 
By \cref{PullbackAdjunctionLemma}, the functor $f$ has a right adjoint $g$, given by the composition 
\[
g \colon  
\LMod_M(\M) \times_{\LMod_A(\C)} \set{F'(X)} 
\to 
\M_\mM \times_{\C_\mM} \set{G' F' X} 
\xto{\eta^*_X} 
\M_\mM \times_{\C_\mM} \set{X}
\]
of the forgetful functor and the functor $\eta^*_X$ induced by the unit map $\eta_X \colon X \to G' F' X$. 
We claim that the unit and counit of $f \dashv g$ are both natural equivalences.

Let $u \colon  id \to g \circ f$ be the unit map of $f \dashv g$.
For every object 
$
Z \in \M_\mM \times_{\C_\mM} \set{X}
$ 
with image $Y\in \D_\mM$, the unit map $u_Z \colon  Z \to (g\circ f)(Z)$ has image in $\D_\mM$ equivalent to the unit map 
\[
Y \to (G'' \circ F'') (Y) \simeq B \otimes Y.
\]
Since $B$ is a trivial algebra, the
latter is an equivalence.
Since $\M_\mM \times_{\C_\mM} \set{X} \to \D_\mM$ is a right fibration it follows from~\cite[prop. 2.1.1.5]{HTT} that $u_Z$ is an equivalence.

The same argument shows that the counit of $f \dashv g$ is an equivalence, proving our claim.
\end{proof}

With \cref{LM52232} at hand, we return to showing that the underlying pairings of $\tilde\la_M^\otimes$ are left representable.
This statement is an addendum to~\cite[lem. 5.2.2.40]{HA}.
\begin{prop} \label{LM52240}
Let $M\in \Alg(\M_\aA)$ be an object over $(A,B)\in \Alg(\C_\aA)\times \Alg(\D_\aA)$ and assume $\M$ admits realizations of $M$-bar constructions $\otimes$-compatibly, and similarly for $A$ and $B$.
Assume in addition that:
\begin{enumerate}
\item The object $B\in \Alg(\D_\aA)$ is a trivial algebra.
\item The underlying pairings $\la_\aA, \la_\mM$ are left representable.
\item The $\infty$-categories $\D_\aA, \D_\mM$ admit totalizations of cosimplicial objects. 
\end{enumerate}
Then the induced pairings of $\infty$-categories
\begin{align*}
{(\lambda_M)}_\a & \colon  \BMod{M}{M}(\M) \to \BMod{A}{A}(\C) \times \BMod{B}{B}(\D)  , \\
{(\lambda_M)}_\m & \colon  \LMod_{M}(\M) \to \LMod_{A}(\C) \times \LMod_{B}(\D)  
\end{align*}
are both left representable.

\end{prop}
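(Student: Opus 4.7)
My plan is to prove the two parts in parallel via a common strategy: reduce to \emph{free} modules using a monadic bar resolution, lift the free terms by combining the left representability of the underlying pairings with \cref{LM52232}, and then assemble the representing object as a totalization in the Koszul-dual category. The bimodule statement for $(\la_M)_\a$ is essentially \cite[lem. 5.2.2.40]{HA}, and I expect its proof to go through \emph{mutatis mutandis} in our restricted bar-construction framework by working inside the Morita double $\infty$-category $\Bmod_\Aa(\C)^\circledast$ of \cref{link1}. I will therefore sketch only the new left-module case $(\la_M)_\m$ in detail.

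Fix $X \in \LMod_A(\C_\m)$. The monadic bar resolution presents $X$ as a geometric realization $X \simeq |\Bar_A(A, X)_\bullet|$ in $\LMod_A(\C_\m)$, with each term $\Bar_A(A, X)_n \simeq A^{\otimes (n+1)} \otimes X_\m = F'(E_n)$, where $E_n := A^{\otimes n} \otimes X_\m \in \C_\m$ and $F' \colon \C_\m \to \LMod_A(\C_\m)$ denotes the free-module functor $E \mapsto A \otimes E$. This realization exists, and is preserved by $\otimes$, by the standing assumption that $\C$ admits realizations of $A$-bar constructions $\otimes$-compatibly. By assumption (2), each $E_n$ admits a left universal lift $E_n' \in \M_\m$ with respect to $\la_\m$. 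Applying \cref{LM52232}, whose hypotheses hold because $B$ is trivial (assumption (1)) and by the standing compatibility of $M$- and $A$-bar constructions in $\M$ and $\C$, the free $M$-module $F(E_n') = M \otimes E_n' \in \LMod_M(\M)$ is a left universal lift of $F'(E_n) = \Bar_A(A,X)_n$ with respect to $(\la_M)_\m$. Consequently the presheaf $\hat\Ddual_{(\la_M)_\m}(\Bar_A(A,X)_n) \in \PSh(\LMod_B(\D))$ is represented by $Y_n := \Ddual_{\la_\m}(E_n)$, viewed inside $\LMod_B(\D) \simeq \D_\m$ via the equivalence arising from the triviality of $B$.

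With the free terms representable, I will assemble $X$. For each $Z \in \LMod_B(\D)$ the functor $\hat\Ddual_{(\la_M)_\m}(-)(Z)$ is the classifying presheaf of the right fibration $(\la_M)_\m$ with $Z$ fixed, hence sends colimits in $\LMod_A(\C_\m)$ to limits of spaces. Consequently $\hat\Ddual_{(\la_M)_\m}(X) \simeq \lim_{[n] \in \Delta} \hat\Ddual_{(\la_M)_\m}(\Bar_A(A,X)_n)$ in $\PSh(\LMod_B(\D))$, a totalization of representables. By full faithfulness of the Yoneda embedding the $Y_n$ lift to a cosimplicial object $Y_\bullet$ in $\LMod_B(\D)$; since Yoneda preserves limits, the limit above is represented by $Z_\D := \mathrm{Tot}\,Y_\bullet$. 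This totalization exists in $\D_\m \simeq \LMod_B(\D)$ by assumption (3). Representability of $\hat\Ddual_{(\la_M)_\m}(X)$ then provides a terminal object in the fiber of $(\la_M)_\m$ over $X$ lying over $Z_\D$ --- the desired left universal lift.

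The most delicate step will be importing Lurie's bimodule proof into the restricted compatibility regime of \cref{m24j0}: here the refined Morita machinery of Section 3 pays off, since substituting $\Bmod_\Aa(\C)^\circledast$ for $\Bmod(\C)^\circledast$ should let the argument of \cite[lem. 5.2.2.40]{HA} be recycled nearly verbatim. No essentially new obstacle arises in the left-module half, because the cosimplicial diagram $Y_\bullet$ lives entirely in $\D_\m$ and only totalizations in this one $\infty$-category are needed.
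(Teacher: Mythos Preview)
Your overall strategy matches the paper's, but there is a genuine gap in the step where you write: ``For each $Z \in \LMod_B(\D)$ the functor $\hat\Ddual_{(\la_M)_\m}(-)(Z)$ is the classifying presheaf of the right fibration $(\la_M)_\m$ with $Z$ fixed, hence sends colimits in $\LMod_A(\C_\m)$ to limits of spaces.'' This inference is false: by unstraightening, \emph{every} presheaf on $\LMod_A(\C_\m)$ is classified by a right fibration, so your claim would force every presheaf to take colimits to limits. That property characterizes representable presheaves, which is exactly what you are trying to prove.

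The paper confronts this head-on. After setting up the bar resolution, it states as a separate claim that the augmented cosimplicial diagram $\chi'_{X_\bullet}$ is a limit diagram and devotes most of the proof to it. The mechanism is the \emph{split} structure: the monadic bar resolution $X_\bullet$ is $U_A$-split, and one shows that for each fixed $Y \in \LMod_B(\D)$ the lifted simplicial object $N_\bullet$ in $\LMod_M(\M_Y)$ (where $\M_Y^\otimes := \M^\otimes \times_{\D^\otimes} \LM^\otimes$ is the pullback along $Y$) is $U_{\M_Y}$-split, using that right fibrations reflect split augmentations \cite[cor.~4.7.2.11]{HA}. Then \cite[ex.~4.7.2.5]{HA} gives the colimit of $N_\bullet$, and \cite[prop.~5.2.2.39]{HA} converts existence of this $q$-colimit into the desired homotopy equivalence $\chi_Y(X) \simeq \lim \chi_Y(X_\bullet)$. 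In short, you cannot replace this split-simplicial argument with a one-line appeal to right fibrations; the bar resolution is not an arbitrary colimit but a very special one, and that specialness is what makes the limit comparison go through.
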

\begin{proof} % [Proof of \cref{LM52240}]
The right fibration ${(\lambda_M)}_\a$ is left representable by~\cite[prop. 5.2.2.40]{HA}.
We will prove that ${(\lambda_M)}_\m$ is left representable.
Our proof rests on the following facts:
\begin{enumerate}
\item~\cite[ex. 4.7.2.5]{HA}: Let 
$ U \colon \LMod_A(\C_\m) \to \C_\m $ 
be the forgetful functor and let $X_\bullet\in \LMod_A(\C)$ be a $U$-split simplicial object.
Then $X_\bullet$ admits a colimit in $\LMod_A(\C)$ which is preserved by $U$.
\item~\cite[cor. 4.7.2.11]{HA}: Let \( q \colon \overline{\C} \to \C \) be a right fibration of $\infty$-categories and $X_\bullet$ a simplicial object of $\overline{\C}$. 
If $q(X_\bullet)$ is split in $\C$ then $X_\bullet$ is split in $\overline{\C}$.
\end{enumerate}

Fix an object $X \in \LMod_A(\C_\mM)$. Let
\[
U_A  \colon  \LMod_A(\C_\m) \to \C_\mM
\]
denote the forgetful functor and $f$ its left adjoint. 
By Lurie-Barr-Beck~\cite[thm. 4.7.3.5]{HA}, the functor $U_A$ exhibits \( \LMod_A(\C) \) as monadic over $\C_\mM$. 
Invoking~\cite[prop. 4.7.3.14]{HA} we deduce that there exists a $U_A$-split simplicial object (in the sense of \cite[def 4.7.2.2]{HA}):
\begin{equation*}
\begin{tikzcd}[column sep=tiny]
\Delta_{+}^{op} \ar[d] \ar[r,equal,"\sim"] & (\Delta^{op})^\triangleright \ar[r,"{\overline{X}_\bullet}"] &[2em]  \LMod_A(\C) \ar[d,"U_A"] \\
\Delta_{-\infty}^{op} \ar[rr] & & \C_\m
\end{tikzcd}
\end{equation*}
such that $\overline{X}_{-\infty} = X$ and $\overline X_\bullet$ is a colimit diagram witnessing an equivalence $X \simeq \colim X_\bullet $ between $X$ and the colimit of $X_\bullet := \overline{X}|_{\Delta^{op}_+}$,
and such that each $X_n \in \LMod_A(\C), \, n \neq -\infty$, belongs to the essential image of the functor $f$ (i.e. $X_\bullet$ is a free resolution of $X\in \LMod_A(\C)$).

The pairing ${(\lambda_M)}_\m$ is classified by functors 
\begin{align*}
\chi  \colon  {\LMod_B(\D)}^{op} \to \PSh(\LMod_A(\C)) , \quad  \chi \colon  Y \mapsto \chi_Y \\
\chi'  \colon  {\LMod_A(\C)}^{op} \to \PSh(\LMod_B(\D)) , \quad  \chi' \colon  X \mapsto \chi'_X.
\end{align*}
In particular, $\chi_Y$ is classified by the right fibration 
\begin{align}\label{chi_Y_right_fib}
(\lambda_M)_Y \colon \LMod_M(\M) \times_{\LMod_B(\D)} \set{Y} \to \LMod_A(\C) \times \set{Y}.
\end{align}
We claim that the augmented cosimplicial diagram 
\begin{align} \label{Xop_ch'}
\Delta_{+} \xto{X_\bullet^{op}} {\LMod_A(\C)}^{op} \xto{\chi'} \PSh(\LMod_B(\D))  
\end{align}
is a limit diagram, witnessing an equivalence of presheaves
\begin{align}
\chi'_X \iso \invlim \chi'_{X_\bullet}.
\end{align}

Before establishing this claim, we explain how the proposition follows from the claim. We wish to prove that $\chi'_X$ is representable. Since $\D$ admits totalizations of cosimplicial objects, it is enough to show that each $\chi'_{X_n}$ is representable, which follows from condition (2) together with \cref{LM52232}.  

To establish the claim, it will suffice to check that 
for arbitrary $Y\in \LMod_B(\D)$ 
the induced map
\[
\chi'_X (Y) \to \invlim \chi'_{X_\bullet}(Y) 
\]
is a homotopy equivalence.
Since \( \chi_Y(X) \) and \(\chi'_X(Y) \) are both equivalent to the fiber of ${(\lambda_M)}_\m$ over $(X,Y)$, we  may equally show that
\begin{align}  \label{resolution_comparison_map}
\chi_Y(X) \to \invlim \chi_Y (X_\bullet) 
\end{align}
is a homotopy equivalence of spaces.
Therefore we fix $Y\in \LMod_B(\D)$ for the rest of the proof.
We will prove that \cref{resolution_comparison_map} is a homotopy equivalence using~\cite[prop 5.2.2.39]{HA}, which requires some preparation. 

Write $Y_\m\in \D_\m$ for the image of $Y$ under the forgetful functor $\LMod_B(\D) \to \D_\m$.
We first construct an $\LM$-monoidal $\infty$-category $\M_Y$ exhibiting $\M_\m \times_{\D_\m} \set{Y_\m}$ as left tensored over $\M_\a \times_{\D_\a} \set{B}$.
% First, since $B$ is a trivial algebra we have an equivalence of $\infty$-categories
% \begin{align*}
%   \LMod_B(\D) \iso \D ,
% \end{align*}
% see~\cite[prop 4.2.4.9]{HA}.
% Choose a lift $\overline{Y}\in \LMod_B(\D)$
% \footnote{ID: there seems to be a mistake here: $Y$ is already $\in \LMod_B(\Dd)$. I'm skipping the rest of this proof for now.}
% of $Y\in \D$.
% Consider the $\LM$-monoidal category $\D^\otimes \times_{\Ass} \LM^\otimes$ exhibiting $\D$ as left tensored over itself (see~\cite[ex. 4.2.1.16]{HA}).
Let $\M^\otimes_Y$ % \to \LM^\otimes$
be the pullback 
\begin{equation*}
\begin{tikzcd}
\M_Y^\otimes \ar[d] \ar[r] \pbcorner & \M^\otimes  \ar[d] \\
\LM^\otimes \ar[r,"Y"] & \D^\otimes.
\end{tikzcd}
\end{equation*}
% of the $\LM$-monoidal functor $\M^\otimes \to \D^\otimes \times_{\Ass} \LM^\otimes$ along $\sigma_{B,Y}$.
% The left $B$-module $Y$ defines an $\LM$-monoidal functor $\sigma_{Y} \colon  \LM^\otimes \to \D^\otimes$.
Since $B$ is a trivial algebra we see that $\M^\otimes_Y  \to \LM^\otimes$ is a coCartesian fibration.
%, exhibiting $\M_\a \times_{\D_\a} \set{Y_\a}$ as left tensored over $\M_\a \times_{\D_\a} \set{B}$.
Also, note that the horizontal top morphism $\M_Y \to \M$ is an $\LM$-monoidal functor.

Next we express the source of \eqref{chi_Y_right_fib} as a category of left modules in $\M_Y$.
Regard $M$ as an algebra in $(\M_Y)_\a \simeq \M_\a \times_{\D_\a} \set{B}$, and consider the $\infty$-category 
\begin{align*}
\LMod_M(\M_Y)
\simeq 
\LMod_M(\M) \times_{\LMod_B(\D)} \set{Y}.
\end{align*}
% Consider the $\infty$-category 
% \begin{align*}
%   \LMod_M \left( \M_Y )
%   \simeq 
%   \LMod_M(\M) \times_{\LMod_B(\D)} \set{Y},
% \end{align*}
% where we regard $M$ as an algebra in $\M_\a \times_{\D_\a} \set{B}$.
% Consider $M$ as an algebra in $\M_\a \times_{\D_\a} \set{B}$.
% Define 
% \begin{align*}
%   \N := 
%   \LMod_M \left( \M \times_{\D} \set{Y} \right)
%   \simeq 
%   \LMod_M(\M) \times_{\LMod_B(\D)} \set{\overline{Y}}.
% \end{align*}
We have a commuting square
\begin{align*}
\begin{tikzcd}[ampersand replacement=\&]
\LMod_M(\M_Y) \ar[d,"q"] \ar[r,"U_{\M_Y}"] 
\& 
\M_\m \times_{\D_\m} \set{Y_\m} \ar[d,"q'"] \\
\LMod_A(\C) \ar[r,"U_A"] 
\& 
\C_\m
\end{tikzcd}
\end{align*}
where
\begin{align*}
U_{\M_Y}  \colon  \LMod_M (\M_Y)  \to (\M_Y)_\m = \M_\m \times_{\D_\m} \set{Y_\m} 
% U_{\M_Y}  \colon  \LMod_M(\M_Y) = \LMod_M \left( \M \times_{\D} \set{Y} \right)  \to \M \times_{\D} \set{Y} 
\end{align*}
is the forgetful functor,
and $q', q$ are the compositions 
\begin{align*}
& q' \colon \M_\m \times_{\D_\m} \set{Y_\m} \to \M_\m \xto{\lambda_\m} \C_\m \times \D_\m \to \C_\m , \\
& q \colon \LMod_M(\M_Y) \to \LMod_M(\M) \xto{{(\lambda_M)_\m}} \LMod_A(\C) \times \LMod_B(\C) \to \LMod_A(\C).
\end{align*}
Moreover, the right fibration $q$ is equivalent to the right fibration $(\lambda_M)_Y$ of~\eqref{chi_Y_right_fib} and hence classifies the presheaf $\chi_Y$.
To see this, consider the following diagram:
\begin{equation*}
\begin{tikzcd}
\LMod_M(\M_Y) \ar[d,"(\lambda_M)_Y"] \ar[r] \pbcorner & \LMod_M(\M) \ar[d,"(\lambda_M)_\m"] \\
\LMod_A(\C) \times \set{Y} \pbcorner \ar[d] \ar[r] & \LMod_A(\C) \times \LMod_B(\D) \ar[d] \ar[r] & \LMod_A(\C) \\
\set{Y} \ar[r] & \LMod_B(\D).
\end{tikzcd}
\end{equation*}
and note that the composition of the middle row is the identity, and that $q$ is given by the composition of the top path from $\LMod_M(\M_Y)$ to $\LMod_A(\C)$.

We are therefore in the situation of~\cite[cor. 5.2.2.39]{HA}, i.e. the following conditions are equivalent:
\begin{enumerate}
\item The map~\eqref{resolution_comparison_map} is a homotopy equivalence.
\item For every simplicial object $N_\bullet$ in $\LMod_M(\M_Y)$ making the following solid diagram commute
\[
\begin{tikzcd}
\Delta^{op} \ar[d] \ar[r]{}{N_\bullet} & \LMod_M(\M_Y) \ar[d]{}{q} \\
(\Delta^{op})^{\triangleright} \ar[ru, dashed]{}{\exists N^+_\bullet} \ar[r]{}{\ol{X}_\bullet} & \LMod_A(\C)
\end{tikzcd}
\]
there exists an extension to a $q$-colimit diagram $N^+_\bullet$ as indicated.
Equivalently (since $\ol{X}_\bullet$ itself is a colimit diagram) there exists a geometric realization $|N_\bullet|$ which is preserved by the forgetful functor $q$.
\end{enumerate}
Observe that $U_{\M_Y}(N_\bullet)$ is a simplicial object in $\M_\m \times_{\C_\m} \set{Y_\m}$ and the functor $q' \colon \M_\m \times_{\C_\m} \set{Y_\m} \to \C_\m$ is a right fibration.
Since the image 
\[
q'(U_{\M_Y}(N_\bullet)) = U_A ( q(N_\bullet)) = U_A(X_\bullet|_{\Delta^{op}})
\]
is a split simplicial object and $q'$ is a right fibration, it follows from~\cite[cor. 4.7.2.11]{HA} that $U_{\M_Y}(N_\bullet)$ is a split simplicial object in $\M_\mM \times_{\C_\mM} \set{Y_\mM}$.

We therefore have a $U_{\M_Y}$-split simplicial object $N_\bullet$ of
\[
\LMod_M(\M_Y)= \LMod_M \left( \M \times_{\D} \set{Y} \right),
\]
so~\cite[ex. 4.7.2.5]{HA} implies that $N_\bullet$ admits a colimit in $\LMod_M(\M_Y)$ which is preserved by $U_{\M_Y}$.

We claim that the colimit of $\LMod_M(\M_Y)$ is preserved by $q$.
To see this, note that $U_{\M_Y}(N_\bullet)$ is split, so the colimit of $N_\bullet$ is preserved by $q' \circ U_{\M_Y} = U_A  \circ q$.
Applying~\cite[cor. 4.7.2.11]{HA} again, we conclude that the colimit of $N_\bullet$ is preserved by the right fibration $q$.    
It follows that the augmented cosimplicial diagram~\eqref{resolution_comparison_map} is a limit diagram, as claimed. This completes the proof of the proposition.  
\end{proof}

\subsection{Forgetting left module structures and left universal objects}
Let $\C^\otimes$ be an $\LM$-monoidal category and let $A$ be an algebra in $\C_\aA$. 
Assume $\C^\otimes$ admits realizations of $A$-bar constructions $\otimes$-compatibly. 
Then the unit map $ \one \to A $ gives rise to a lax $\LM$-monoidal map of $\LM$-monoidal categories
\[
\BLMod_A(\C)^\otimes \to 
\BLMod_\one(\C)^\otimes \simeq
\C^\otimes
\]
(\cref{w9f2}), hence to a functor 
\[
\LMod\left(\BLMod_A(\C)\right)
\to
\LMod(\C) 
\]
which corresponds under the equivalence of \cref{LModBLMod_as_PB} to forgetting $A$. 

We return to the situation of \cref{sec:left_rep_for_left_mod}
given by an $\LM$-monoidal pairing
\[
\lambda^\otimes \colon  \M^\otimes 
\to 
\C^\otimes \times_{\LM^\otimes} \D^\otimes,
\] 
and an algebra $M\in \Alg(\M_\aA)$ over algebras $(A,B)\in \Alg(\C_\aA)\times \Alg(\D_\aA)$. Assume $\M$ admits realizations of $M$-bar constructions $\otimes$-compatibly, and similarly for $A$. 
Applying functorial fibrant replacement in the model category  ${\POp}_{/\LM^{\otimes, \natural}}$ of $\infty$-preoperads over 
$\LM^{\otimes, \natural}$ 
as in the proof of \cref{BLMod_construction}, we obtain an equivalence of $\LM$-monoidal pairings
\[
\begin{tikzcd}
\M^\otimes \ar[rr, "\simeq"]
\ar[dr]
&& \widetilde \M^\otimes
\ar[dl]
\\
&\C^\otimes \times_{\LM} \D^\otimes
\end{tikzcd}
\]
(in the sense of~\cite[Construction 5.2.1.14]{HA}) and a functor 
\[
\widetilde 
{\BLMod_M(\M)}^\otimes 
\to 
\widetilde \M^\otimes, 
\]
giving rise to a morphism of pairings
\[
\begin{tikzcd}
\LMod \left(\widetilde{\BLMod_M(\M)} \right)
\ar[r] \ar[d]
&
\LMod (\widetilde{\M} ) 
\ar[d]
\\
\LMod \left( \BLMod_A(\C) \right)
\times
\LMod \left( \BLMod_B(\D) \right)
\ar[r]
&
\LMod(\C) 
\times 
\LMod(\D).
\end{tikzcd}
\]
The following proposition, concerning this map of pairings, mirrors~\cite[prop. 5.2.2.30]{HA}. 
\begin{prop} \label{LM52230}
Let $M \in \Alg(\M_\aA)$ be an object over $(A,B)\in \Alg(\C_\aA)\times \Alg(\D_\aA)$. 
Assume that $B\in \Alg(\D_\aA)$ is a trivial algebra. 
Assume $\M$ admits realizations of $M$-bar constructions $\otimes$-compatibly, and similarly for $A$. 
Assume that for every $C \in \C_\aA$, the Kan complex
\[
\lambda_\aA^{-1} \set{ (C, B) } \subseteq \M
\]
is contractible. 
Then (with tilde denoting functorial fibrant replacement in the model category of $\infty$-preoperads as above) the forgetful functor 
\[ 
\LMod \left(\widetilde{\BLMod_M(\M)} \right) \to \LMod (\widetilde{\M} ) 
\] 
carries left universal objects to left universal objects.
\end{prop}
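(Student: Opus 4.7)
The proof mirrors \cite[prop. 5.2.2.30]{HA}, adapted to the $\LM$-setting via the pullback description of \cref{LModBLMod_as_PB}. That lemma identifies
\[
\LMod(\widetilde{\BLMod_M(\M)}) \simeq \Alg(\widetilde{\BMod{M}{M}}(\M_\aA)) \times_{\Alg(\widetilde\M_\aA)} \LMod(\widetilde\M),
\]
and similarly for $\BLMod_A(\C)$ and $\BLMod_B(\D)$ downstairs. Under these identifications the forgetful functor becomes projection onto the second (module) factor. So an $\LM$-algebra in $\widetilde{\BLMod_M(\M)}$ is a pair $(N',P')$ consisting of an algebra $N'$ in $\widetilde\M_\aA$ equipped with a map $M \to N'$ and a left $N'$-module $P'$ in $\widetilde\M$, and the forgetful sends $(N',P') \mapsto P'$.

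Given a left universal $Z' = (N',P')$ lying over some $X' \in \LMod(\BLMod_A(\C)) \times \LMod(\BLMod_B(\D))$, set $Z := P'$ and let $X \in \LMod(\C) \times \LMod(\D)$ denote the image of $X'$. To show $Z$ is terminal in the fiber of $\LMod(\widetilde\M)$ over $X$, fix any $Y$ in that fiber. We will lift $Y$ to $Y' = (N'_1, Y)$ in $\LMod(\widetilde{\BLMod_M(\M)})$ over $X'$. Producing $Y'$ amounts to equipping the underlying algebra $C_1 \in \Alg(\widetilde\M_\aA)$ of $Y$ with a morphism $M \to C_1$ lying over a prescribed arrow in $\Alg(\C_\aA) \times \Alg(\D_\aA)$ whose $\D$-component is essentially forced by the triviality of $B$. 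The contractibility of $\lambda_\aA^{-1}\{(C,B)\}$ for every $C$, combined with the categorical-fibration property of $\widetilde\lambda_M$ from \cref{BLMod_construction}, ensures that such a lift exists and is unique up to contractible choice. Once $Y'$ is chosen, terminality of $Z'$ produces a contractible space of maps $Y' \to Z'$ over $X'$; projecting under the forgetful yields maps $Y \to Z$ in the fiber over $X$, and by the same hypothesis the homotopy fiber of this projection (a space of algebra-under-$M$ extensions of a given arrow in $\Alg(\C_\aA) \times \Alg(\D_\aA)$) is contractible. Combining, the mapping space $\Map(Y,Z)$ in the fiber over $X$ is contractible, proving terminality of $Z$.

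The main obstacle is transferring the contractibility of $\lambda_\aA^{-1}\{(C,B)\}$ through the operations of passing to algebras and to the fibrant-replaced pairing $\widetilde\lambda_{M,\aA}$: one must verify that $\Alg(\lambda_\aA)$ has contractible fibers over any algebra whose $\D$-component is trivial, and that this contractibility persists for the induced map on categories of algebras under $M$. This is precisely the point where the contractibility hypothesis is exploited, and the argument closely follows the (rather delicate) template laid down in the proof of \cite[prop. 5.2.2.30]{HA}; the fibrant replacement of \cref{BLMod_construction} is essential in guaranteeing that the relevant lifts exist at the level of simplicial sets and not merely up to homotopy.
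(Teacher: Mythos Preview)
Your overall strategy---reduce to \cite[prop.~5.2.2.30]{HA} via the pullback description \cref{LModBLMod_as_PB}---is the same as the paper's, and your final paragraph correctly identifies the crux. But the paper organizes the argument differently, and more cleanly, than your lifting-and-mapping-spaces approach.

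Rather than lifting an arbitrary $Y$ to $Y'$ and then analyzing the projection on mapping spaces, the paper shows directly that the induced functor between fibers
\[
\LMod\big(\BLMod_M(\M)\big) \times_{\LMod(\BLMod_A(\C))} \{X\} \;\longrightarrow\; \LMod(\M) \times_{\LMod(\C)} \{X_0\}
\]
is an \emph{equivalence} (not merely that it preserves terminal objects). Since both sides are total spaces of right fibrations over $\LMod(\BLMod_B(\D)) \simeq \LMod(\D)$ (using that $B$ is trivial), it suffices to check fiberwise over each $(X,Y)$ versus $(X_0,Y_0)$. The key move is then a ``nine-object diagram'' manipulation: using \cref{LModBLMod_as_PB} one rewrites each double fiber as an iterated pullback, reverses the order of limits, and obtains a homotopy pullback square in which one vertical map is exactly the map proved to be an equivalence in \cite[prop.~5.2.2.30]{HA}. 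This immediately gives the other vertical map---your map \eqref{LM30_induced_map}---as an equivalence.

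Your approach amounts to proving essential surjectivity (the lift $Y \mapsto Y'$) and full faithfulness (contractibility of the projection's fiber on mapping spaces) separately, which is logically equivalent but messier. In particular, your claim that ``the homotopy fiber of this projection is contractible'' is precisely the content of \cite[prop.~5.2.2.30]{HA} at the slice-algebra level, and you do not actually verify it---you only note that it should follow from the object-level contractibility hypothesis. The paper's diagram chase is exactly the clean reduction that makes this step transparent: it isolates the $\Alg$-under-$M$ part as one face of a pullback square, so that Lurie's result can be quoted verbatim rather than reproved.
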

\begin{proof}
In order to lighten notation, we drop the tildes throughout the proof. Let $X\in \LMod( \BLMod_A(\C))$ and let $X_0 \in \LMod(\C)$ be its image under the forgetful functor 
\(
\LMod \left( \BLMod_A(\C) \right) \to \LMod( \C ).
\) 
Consider the commutative diagram 
\[
\begin{tikzcd}
\LMod \left( \BLMod_M(\M) \right) \times_{\LMod(\BLMod_A(\C))} \set{X} \ar{r} \ar[d] &  \LMod ( \M ) \times_{\LMod(\C)} \set{X_0} \ar[d] \\
\LMod \left( \BLMod_B(\D) \right) \times \set{X} \ar[r] & \LMod ( \D ) \times \set{X_0}.
\end{tikzcd}
\]
We have to show that the top horizontal functor preserves terminal objects.
We will show it is an equivalence. 
Since $B\in \Alg(\D_\aA)$ is a trivial algebra, the bottom horizontal map is an equivalence. 
Since the vertical maps are right fibrations, it will suffice to show that for each $Y \in \LMod (\BLMod_B(\D))$ with image $Y_0 \in \LMod_B(\D)$, the induced map 
\begin{align}
\label{LM30_induced_map}
\begin{tikzcd} 
\LMod \left( \BLMod_M(\M) \right) \times_{\LMod(\BLMod_A(\C)) \times \LMod(\BLMod_B(\D))} \set{(X,Y)} \ar{d} \\
\LMod ( \M ) \times_{\LMod(\C) \times \LMod(\D)} \set{(X_0,Y_0)} 
\end{tikzcd}
\end{align}
is a homotopy equivalence. 
We leverage Lurie's proof of~\cite[prop. 5.2.2.30]{HA} to verify this claim.

By \cref{LModBLMod_as_PB} and \cref{UC23} we have an equivalence of $\infty$-categories 
\[
\LMod(\BLMod_A(\C)) \simeq 
\LMod(\C) \times_{\Alg(\C_\aA)} {\Alg(\C_\aA)}_{A/},
\]
and similarly for $\LMod(\BLMod_B(\D))$ and $\LMod(\BLMod_M(\M))$. 
Let
\[
A' \in {\Alg(\C_\aA)}_{A/}, \quad B' \in {\Alg(\D_\aA)}_{B/}
\]
be the images of 
\[
X \in \LMod(\C) \times_{\Alg(\C_\aA)} {\Alg(\C_\aA)}_{A/}, \quad Y \in \LMod(\D) \times_{\Alg(\D_\aA)} {\Alg(\D_\aA)}_{B/},
\]
respectively, and let \( A'_0 \in \Alg(\C_\aA), \, B'_0 \in \Alg(\D_\aA) \) be the images of $A',B'$, respectively. 
Thus for instance, 
\[
A' = (A \to A_0')
\]
is an $A$-algebra, and $X_0$ records the action of $A_0'$ on a left module. 
With this notation, we can identify the domain of~\eqref{LM30_induced_map} with the pullback of the diagram
\[
\begin{tikzcd}
& \LMod(\M) \times_{\Alg(\M_\aA)} \Alg(\M_\aA)_{M/} 
\ar{d}
\\
\set{(X',Y')}  \ar{r}
& 
\left(\LMod(\C) \times_{\Alg(\C_\aA)} \Alg(\C_\aA)_{A/} \right) \times \left( \LMod(\D) \times_{\Alg(\D_\aA)} \Alg(\D_\aA)_{B/} \right).
\end{tikzcd}
\]
The above pullback may be constructed from the following nine-object diagram 
\[
\begin{tikzcd}
\LMod(\M) \ar{d} \ar{r} 
& 
\Alg(\M_\aA) \ar{d} 
& 
\ar{l} \Alg(\M_\aA)_{M/} \ar{d} 
\\
\LMod(\C) \times \LMod(\D) \ar{r} 
& 
\Alg(\C_\aA) \times \Alg(\D_\aA) 
& 
\ar{l} \Alg(\C_\aA)_{A/} \times \Alg(\D_\aA)_{B/} 
\\
\ast \ar{r}{=} \ar{u}{(X_0, Y_0)} 
& 
\ast \ar{u}{(A'_0, B'_0)} 
& 
\ar[l, "="'] \ast \ar{u}{(A',B')} ,
\end{tikzcd}
\]
by first taking pullbacks along the rows. Reversing the order of the limits,
we obtain a homotopy
pullback square 
\begin{equation*}
\begin{tikzcd}[column sep=-10em]
\LMod \left( \BLMod_M(\M) \right) \underset{\LMod(\BLMod_A(\C)) \times \LMod(\BLMod_B(\D))}\times \set{(X,Y)} \ar{dd} \ar{dr} 
\\
& \Alg(\M_\aA)_{M/} \underset{\Alg(\C_\aA)_{A/} \times \Alg(\D_\aA)_{B/} }\times \set{(A',B')} \ar{dd} 
\\
\LMod ( \M ) \underset{\LMod(\C) \times \LMod(\D)}\times \set{(X_0,Y_0)} \ar{dr} 
\\
& \Alg(\M_\aA) \underset{\Alg(\C_\aA) \times \Alg(\D_\aA)}\times \set{(A',B')},
\end{tikzcd}
\end{equation*}
where the left vertical map is equivalent to~\eqref{LM30_induced_map}.
By the proof of~\cite[prop. 5.2.2.30]{HA}, the right vertical map is a homotopy equivalence. 
It follows that~\eqref{LM30_induced_map} is a homotopy equivalence, as claimed.
\end{proof}

\subsection{Koszul duality for $\LM$-monoidal pairings}
\label{subseckdlm}
The following proposition corresponds to~\cite[prop. 5.2.2.27]{HA}.
\begin{thm} \label{LM52227}
Let 
\[
\lambda^\otimes \colon  
\M^\otimes \to 
\C^\otimes \times_{\LM^\otimes} \D^\otimes
\]
be an $\LM$-monoidal pairing which 
satisfies the following properties:
\begin{enumerate}
\item 
If $\one$ denotes the unit object of $\D_\aA$, then the right fibration \( \M_\aA \times_{\D_\aA} \set{\one} \to \C_\aA \) is a categorical equivalence. 
\item 
The underlying pairings $\la_\aA$, $\la_\mM$ are left representable. 
\item 
The $\infty$-categories $\D_\aA, \D_\mM$ admit totalizations of cosimplicial objects. 
\item 
If $B\in \Alg(\D_\aA)$ is a trivial algebra and $M\in\Alg(\M_\aA)$ lies over $(A,B) \in \Alg(\C_\aA) \times \Alg(\D_\aA)$, then $\C$ admits realizations of $A$-bar constructions $\otimes$-compatibly and $\M$ admits realizations of $M$-bar constructions $\otimes$-compatibly.
\end{enumerate}
Then the induced pairing 
\[
\LMod(\lambda)  \colon  \LMod(\M) \to \LMod(\C) \times \LMod(\D)
\]
is left representable. 
\end{thm}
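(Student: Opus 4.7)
The plan is to chain together the four key propositions established in this section (\cref{triv_BL_mod_lift}, \cref{LM52228}, \cref{LM52240}, and \cref{LM52230}) to produce a left universal lift of an arbitrary object $X \in \LMod(\C)$. Fix such an $X$ with underlying algebra $A = X_\a \in \Alg(\C_\a)$. First I would use condition (1), namely that $\M_\a \times_{\D_\a}\{\one\} \to \C_\a$ is a categorical equivalence, to lift $A$ to an algebra $M \in \Alg(\M_\a)$ lying over the pair $(A,\one)$; by the equivalence of $\Alg$'s between right-fibrations over $\C_\a$ (condition (1) then passes to algebra categories since $\Alg$ preserves limits of right fibrations), such a lift exists and is essentially unique. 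Hypothesis (4) then guarantees that $\M$ admits realizations of $M$-bar constructions $\otimes$-compatibly and $\C$ admits realizations of $A$-bar constructions $\otimes$-compatibly; $\D$ admits realizations of $\one$-bar constructions $\otimes$-compatibly trivially.

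Next I would apply \cref{triv_BL_mod_lift} to lift $X$ along the forgetful functor $\LMod(\BLMod_A(\C)) \to \LMod(\C)$ to an object $X' \in \LMod(\BLMod_A(\C))$ whose underlying algebra is the \emph{trivial} algebra $A \in \Alg(\BMod{A}{A}(\C_\a))$. This is the crucial move: by trading our arbitrary algebra $A$ for an object which is trivial inside an enriched setting, we buy ourselves access to the special-case theorem \cref{LM52228}. Then I would invoke \cref{BLMod_construction} to replace $\la_M^\otimes$ by an $\LM$-monoidal pairing
\[
\tilde\la_M^\otimes \colon \widetilde{\BLMod_M(\M)}^\otimes \to \BLMod_A(\C)^\otimes \times_{\LM^\otimes}\D^\otimes,
\]
where I have identified $\BLMod_\one(\D)^\otimes \simeq \D^\otimes$ since $\one$ is trivial.

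Now the main technical input enters: by \cref{LM52240}, conditions (2) (left representability of the underlying pairings $\la_\a$, $\la_\m$) and (3) (totalizations in $\D$) imply that the two underlying pairings of $\tilde\la_M^\otimes$ are left representable. This is the step I expect to be the hardest; it is already carried out in \cref{LM52240} and relies on (a) the left-adjointness of free-module functors and the transfer of left universality through them (\cref{LM52232}), and (b) a delicate Lurie--Barr--Beck style argument reducing the representability question on an arbitrary left $A$-module $X$ to that on a simplicial resolution of $X$ by free left $A$-modules, where representability is immediate. Granting this, \cref{LM52228} applies to the $\LM$-monoidal pairing $\tilde\la_M^\otimes$ and the trivial algebra $A \in \Alg(\BMod{A}{A}(\C_\a))$, producing a left universal lift $Z' \in \LMod(\widetilde{\BLMod_M(\M)})$ of $X'$.

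Finally I would apply \cref{LM52230} to the forgetful functor $\LMod(\widetilde{\BLMod_M(\M)}) \to \LMod(\widetilde{\M})$: since $\one$ is a trivial algebra in $\D_\a$ and condition (1) ensures that $\la_\a^{-1}\{(C,\one)\}$ is contractible for every $C \in \C_\a$ (hypothesis of \cref{LM52230}), the forgetful functor preserves left universal objects. Thus the image $Z \in \LMod(\widetilde{\M})$ of $Z'$ is a left universal lift of $X$; transporting back along the equivalence $\M^\otimes \simeq \widetilde{\M}^\otimes$ of $\LM$-monoidal pairings from \cref{BLMod_construction}, we obtain the desired left universal lift in $\LMod(\M)$. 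Since $X$ was arbitrary, $\LMod(\la)$ is left representable.
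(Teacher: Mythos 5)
Your proposal is correct and follows essentially the same route as the paper's proof: lift $A$ to $M\in\Alg(\M_\a)$ over $(A,\one)$ via condition (1), lift $X$ to $X'$ over the trivial algebra using \cref{triv_BL_mod_lift}, pass to the fibrant replacement of \cref{BLMod_construction}, establish left representability of the underlying pairings via \cref{LM52240}, produce $Z'$ via \cref{LM52228}, and push forward with \cref{LM52230} before transporting along the equivalence $\M^\otimes\simeq\widetilde{\M}^\otimes$. Your explicit remark that condition (1) yields the contractibility of the fibers $\la_\a^{-1}\{(C,\one)\}$ required by \cref{LM52230} is a welcome detail that the paper leaves implicit.
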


\begin{proof}
Fixing $X\in \LMod(\C)$ a left module over \( A \in \Alg(\C_\aA) \), our goal is to show that there exists a left universal object in $\LMod(\M)$ lying over $X$.   
By \cref{triv_BL_mod_lift}, we can lift $X\in \LMod(\C)$ to a left module $X' \in \LMod \left(\BLMod_A(\C)\right)$ over the trivial algebra $A\in \Alg \big(\BMod{A}{A}(\C_\aA) \big)$.

Let $B$ be a trivial algebra object of $\D_\aA$. Then the fiber \( \M_\aA \times_{\D_\aA} \set{B} \) has an induced monoidal structure and condition (1) implies that the right fibration
\[
\M_\aA \times_{\D_\aA} \set{B} \to \C_\aA,
\]
which upgrades naturally to a monoidal functor, is an equivalence of monoidal $\infty$-categories.
It follows that the pair $(A,B)$ can be lifted to an object $M \in \Alg(\M_\aA)$ in an essentially unique way. 
By \cref{w9f2} and \cref{BLMod_construction}, we then have an $\LM$-monoidal category $ \widetilde \BLMod_{M}(\M)^\otimes$ which witnesses the action of the monoidal $\infty$-category
\[
\widetilde \BLMod_{M}(\M)^\otimes_\a
\simeq 
{_M \mathrm{BMod}_M(\M)^\otimes}
\]
on the $\infty$-category
\[
\widetilde \BLMod_{M}(\M)_\m
\simeq
\mathrm{LMod}_M(\M)
\]
by relative tensor products, as well as an $\LM$-monoidal pairing 
\[
\lambda_{M}  \colon  
\widetilde \BLMod_{M}(\M)^\otimes
\to 
\BLMod_A(\C)^\otimes 
\times_{\LM^\otimes} 
\BLMod_B(\D)^\otimes.
\]
By \cref{LM52240}, the underlying pairings 
\begin{align*}
{(\lambda_{M})}_\a & \colon  \BMod{M}{M}(\M) \to \BMod{A}{A}(\C) \times \BMod{B}{B}(\D)  , \\
{(\lambda_{M})}_\m & \colon  \LMod_{M}(\M) \to \LMod_{A}(\C) \times \LMod_{B}(\D)  
\end{align*}
are both left-representable. 
Since $A$ is a trivial algebra of ${_A \mathrm{BMod}_A(\C)^\otimes}$, we may therefore apply \cref{LM52228} by substituting the $\LM$-monoidal pairing denoted $\la_M$ here for the $\LM$-monoidal pairing denoted $\la$ there. 
In this way, we obtain a left universal object 
\[
Z' \in \LMod \left( \widetilde {\BLMod_{M}(\M)} \right)
\]
lying over $X'$.

Let 
\[
\M^\otimes \to \widetilde{\M}^\otimes 
\to
\C^\otimes \times_{\LM^\otimes}  \D^\otimes
\]
denote the factorization induced by functorial fibrant replacement in the model category of $\infty$-operads as in the preamble to \cref{LM52230}. 
By \cref{LM52230}, the forgetful functor 
\[
\LMod \big( \widetilde {\BLMod_{M}(\M)} \big)
\to \LMod( \widetilde{\M}) 
\]
carries $Z'$ to a left universal object $\widetilde{Z} \in \LMod( \widetilde{\M})$ over $X \in \LMod(\C)$. 
Since the induced functor 
\[
\mathrm{LMod}(\widetilde{\M})
\to
\mathrm{LMod}({\M})
\]
over $\mathrm{LMod}(\C) \times \mathrm{LMod}(\D)$ is an equivalence of pairings, it follows that $\widetilde{Z}$ lifts to a left universal object $Z \in \mathrm{LMod}({\M})$ over $X$, as hoped. 
\end{proof}

%%%%%%%%%%%%
%%%%%%%%%%%%
%%%%%%%%%%%%
%%%%%%%%%%%%
%%%%%%%%%%%%%%%%%%
\section{The case of the twisted arrow category}%%%%%
\label{sec:Twisted_arrow_bar_construction}
%%%%%%%%%%%%%%%%
%%%%%%%%%%%%
%%%%%%%%%%%%
%%%%%%%%%%%%
%%%%%%%%%%%%

%9
%\input{CaseOf.tex}

% \input{KoszulTwistedArrow.tex}
\Cref{LM52227} applies in a straightforward way to the twisted arrow category of a suitable symmetric monoidal category in which the unit object is terminal. Our first goal in this section (\cref{BarCobarAdjunction}) is to compare the underlying objects of the resulting Koszul duals with the bar construction. Our second goal is to dispense with the assumption regarding the unit object. The final result is stated in \cref{BarCobarAdjunction-XXXversion}.

\subsection{The right duality functor}
As mentioned in \cref{SectionOverview}, we use the right duality functor to identify the underlying objects produced by the left duality functor. Our results concerning the right duality functor are best developed in the general setting of $\LM$-monoidal pairings. We thus continue with $\LM$-monoidal pairings in this subsection before specializing to the twisted arrow category in the next subsection. We begin with an $\LM$-monoidal addendum to~\cite[lem. 5.2.2.33]{HA}.

\begin{lem} \label{LM52233}
Let $\lambda^\otimes \colon \M^\otimes \to \C^\otimes \times_{\LM^\otimes} \D^\otimes$ be an $\LM$-monoidal pairing 
and let $M \in \Alg(\M_\a)$ be an algebra over $(A,B)\in \Alg(\C_\a) \times \Alg(\D_\a)$, where $B$ is a trivial algebra object of $\D_\a$.
Let $\one$ denote the unit object of $\M_\a$, and suppose we are given an augmentation $\epsilon \colon M \to \one$.
Then the induced map
\begin{align*}
\M_\m \simeq \LMod_\one(\M_\m)  \to  \LMod_M(\M_\m) 
\end{align*}
carries right universal objects of $\M_\m$ to right universal objects of $\LMod_M(\M_\m)$.
\end{lem}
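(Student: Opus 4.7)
The plan is to reduce right universality in $\LMod_M(\M_\m)$ to right universality in $\M_\m$ by exploiting the adjunction between restriction and extension of scalars along $\epsilon$, combined with the triviality of $B$ on the $\D$-side. To begin, write $(X_0, Y) := \la_\m(N)$ so that $N$ is terminal in the Kan complex $\F_Y := \M_\m \times_{\D_\m} \{Y\}$. Under the forgetful equivalence $\LMod_B(\D_\m) \simeq \D_\m$ (using that $B$ is trivial), the object $Y$ lifts essentially uniquely to $\hat Y \in \LMod_B(\D_\m)$, and the goal becomes to show that $\epsilon^* N$ is terminal in $\F_Y^M := \LMod_M(\M_\m) \times_{\LMod_B(\D_\m)} \{\hat Y\}$. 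Under the implicit assumption that $\M$ admits realizations of $M$-bar constructions $\otimes$-compatibly, the restriction functor $\epsilon^* \colon \M_\m \simeq \LMod_\one(\M_\m) \to \LMod_M(\M_\m)$ has a left adjoint $\epsilon_! = \one \otimes_M (-)$, and for any $X \in \LMod_M(\M_\m)$ one has a natural equivalence
\[
  \Map_{\LMod_M(\M_\m)}(X, \epsilon^* N) \simeq \Map_{\M_\m}(\epsilon_! X, N).
\]

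Next I would check that this equivalence sits inside a commuting square
\[
\begin{tikzcd}[column sep=large]
  \Map_{\LMod_M(\M_\m)}(X, \epsilon^* N) \ar[r,"\simeq"] \ar[d] &
  \Map_{\M_\m}(\epsilon_! X, N) \ar[d] \\
  \Map_{\LMod_B(\D_\m)}(\hat Y, \hat Y) \ar[r,"\simeq"] &
  \Map_{\D_\m}(Y, Y)
\end{tikzcd}
\]
whose bottom equivalence is the analogous adjunction $\epsilon_{B,!} \dashv \epsilon_B^*$ on $\D_\m$, an equivalence because $B$ is trivial. Taking the fibers of the two vertical maps over the respective identity morphisms then yields an equivalence $\Map_{\F_Y^M}(X, \epsilon^* N) \simeq \Map_{\F_Y}(\epsilon_! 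X, N)$, and the right-hand side is contractible since $N$ is terminal in the Kan complex $\F_Y$. Because $X \in \F_Y^M$ was arbitrary, this shows that $\epsilon^* N$ is terminal in $\F_Y^M$, i.e.\ right universal, as required.

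The hard part will be establishing the commutativity of the square above with full homotopy coherence, rather than merely up to homotopy on connected components. This coherence should follow from the fact that $\la$ is a morphism of $\LM$-monoidal $\infty$-categories together with the functoriality of the construction of \cref{w9f2}: the induced map on left-module and bimodule categories is then itself $\LM$-monoidal, so $\la_\m$ carries the relative tensor product computing $\epsilon_! X$ to the analogous relative tensor product $\one_{\D_\a} \otimes_B Y$ in $\D_\m$, which simplifies to $Y$ by triviality of $B$. To make the argument fully precise one likely wants to work with the fibrant-replaced pairing $\widetilde{\BLMod_M(\M)}^\otimes$ of \cref{BLMod_construction}, invoking \cref{LM52240} to ensure that the relevant projections are right fibrations so that the fiberwise mapping spaces can be computed as described.
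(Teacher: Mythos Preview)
Your approach via the left adjoint $\epsilon_! = \one \otimes_M (-)$ is workable in spirit but is both weaker and harder than necessary, and it differs substantially from the paper's argument.

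The paper's proof never touches the left adjoint. Instead, for a fixed $D \in \D_\m$ it observes that the full subcategory
\[
  \M_D := \bigl(\M_\a \times_{\D_\a} \{B\}\bigr) \ \amalg \ \bigl(\M_\m \times_{\D_\m} \{D\}\bigr) \subset \M
\]
is closed under the monoidal operations (because $B$ is trivial), hence underlies an $\LM$-monoidal subcategory $\M_D^\otimes \subset \M^\otimes$ by \cite[Prop.~2.2.1.1]{HA}. The map $\M_\m \times_{\D_\m}\{D\} \to \LMod_M(\M_\m) \times_{\LMod_B(\D_\m)} \{D\}$ is then identified with restriction of scalars $\LMod_\one(\M_D) \to \LMod_M(\M_D)$ \emph{inside} this subcategory, and this preserves all limits---in particular terminal objects---by \cite[Cor.~4.2.3.3]{HA}. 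That is the entire proof.

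Two concrete problems with your route. First, you add a hypothesis not in the statement: the lemma does not assume $\M$ admits realizations of $M$-bar constructions $\otimes$-compatibly, so $\epsilon_!$ need not exist. The paper's argument works without it. Second, the commuting square you flag as ``the hard part'' really is the bulk of the work in your approach, and the machinery you propose to establish it (\cref{w9f2}, \cref{BLMod_construction}, \cref{LM52240}) is precisely what this lemma is meant to \emph{feed into}, not rely on; invoking it here risks circularity. The insight you are missing is that $\epsilon^*$ already preserves limits on the nose, so there is no need to pass through its left adjoint at all.
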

\begin{proof}
Fix an object $D\in \D_\m$. 
Let $\M = \M_\a \coprod \M_\m$ denote the fiber of $ \M^\otimes \to \LM^\otimes \to \Fin_*$ over $\langle 1 \rangle \in \Fin_*$, and consider the full subcategory\foot{Here we identify the trivial algebra $B\in \Alg(\D_\a)$ with its underlying object $B \simeq \one_{\D_a}\in\D_\a$.}
\[ 
\M_D \subseteq \M, \quad \M_D := \M_\a \times_{\D_\a} \set{B} \coprod \M_\m \times_{\D_\m} \set{D}.
\]
By~\cite[Proposition 2.2.1.1]{HA} the $\infty$-category $\M_D$ is the underlying $\infty$-category of an $\LM$-monoidal subcategory $\M_D^\otimes \subseteq \M^\otimes$, which exhibits $\M_\m \times_{\D_\m} \set{D}$ as left tensored over $\M_\a \times_{\D_\a} \set{B}$.

We are interested in the restriction of the induced map $\M_\m \to \LMod_M(\M_\m)$ to the fibers over $D$.
Note that $M$ is an algebra object of $\M_\a \times_{\D_\a} \set{B}$, and we have an equivalence 
\[
\LMod_M (\M_D) \simeq \LMod_M( \M_\m) \times_{ \LMod_{B}(\D_\m)} \set{D}.
\]
Moreover, the following diagram commutes  
\[
\begin{tikzcd}[column sep=0.7em]
\M_\m \ar[no head, "\simeq"]{r} & \LMod_\one (\M_\m) \ar[r] & \LMod_M(\M_\m) \\
\M_\m \times_{\D_\m} \set{D} \ar[hook]{u} \ar[no head, "\simeq"]{r} & \LMod_\one (\M_\m) \times_{\LMod_{B}(\D_\m)} \set{D} \ar[hook]{u} \ar[r] & \LMod_M (\M_\m) \times_{\LMod_{B}(\D_\m)} \set{D} \ar[hook]{u} \\
                                          & \LMod_\one (\M_D) \ar[no head, "\simeq"]{u} \ar[r] & \LMod_M ( \M_D) \ar[no head, "\simeq"]{u}.
\end{tikzcd}
\]
The bottom map is the map induced by the augmentation $\epsilon \colon M \to \one$, considered as a map of associative algebras in the fiber $\M_\a \times_{\D_\a} \set{B}$.
By~\cite[cor. 4.2.3.3]{HA}, this map preserves all limits, and in particular terminal objects.
\end{proof}

Following~\cite[variant 5.2.1.16]{HA}, we say that a morphism of right representable pairings
\[
\begin{tikzcd}
\M \ar[r,"\gamma"] \ar[d, "\la"] & \M' \ar[d, "\la' "] 
\\
\C \times \D \ar[r] & \C' \times \D'
\end{tikzcd}
\]
is right representable, if $\gamma$ carries right universal objects to right universal objects. 
We recall from~\cite[Proposition 5.2.1.17]{HA} that under this assumption, the induced diagram of $\infty$-categories
\[
\begin{tikzcd}
\C \ar[d] & \D^\op \ar[d] \ar[l]
\\
\C' & \D'^{\op} \ar[l]
\end{tikzcd}
\]
commutes up to canonical homotopy.

The following proposition mirrors~\cite[cor. 5.2.2.34]{HA}.
\begin{cor}\label{lm52234}
In the situation and the notation of \cref{LM52233}, if
\[
\lambda_\m \colon \M_\m \to \C_\m \times \D_\m
\]
is right representable, then the induced pairing 
\[
\LMod(\lambda) \colon \LMod_M (\M_\m) \to \LMod_A(\C_\m) \times \LMod_B(\D_\m)
\]
is right representable, and the duality functor $\Ddual'_{\LMod(\lambda)}$ can be identified with the composition 
\[
{\LMod_B(\D_\m)}^\op \simeq \D_\m^\op \xto{\Ddual'_\lambda} \C_\m \to \LMod_A(\C_\m) ,
\]
where the last map is induced by the augmentation $\epsilon$.
\end{cor}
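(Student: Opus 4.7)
The plan is to deduce the result essentially formally from \cref{LM52233}, mirroring Lurie's derivation of \cite[cor. 5.2.2.34]{HA} from \cite[lem. 5.2.2.33]{HA}. First, I will fix an object $D \in \D_\m$ and observe that, since $B \in \Alg(\D_\a)$ is trivial, the equivalence $\LMod_B(\D_\m) \simeq \D_\m$ allows us to identify $D$ with a left $B$-module, so the problem reduces to producing a right universal object of $\LMod_M(\M_\m)$ lying over $D$ with respect to $\LMod(\lambda)_\m$.

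Next, I will invoke the right representability of $\lambda_\m$ to obtain a right universal object $\widetilde D := \Ddual'_{\lambda}(D) \in \M_\m$ over $(\Ddual'_\lambda(D), D)$. Regarding $\widetilde D$ as a left $\one$-module via the canonical equivalence $\M_\m \simeq \LMod_\one(\M_\m)$, the augmentation $\epsilon \colon M \to \one$ induces a restriction-of-scalars functor
\[
\LMod_\one(\M_\m) \to \LMod_M(\M_\m)
\]
whose image on $\widetilde D$ has the same underlying object of $\M_\m$, but now equipped with an $M$-module structure. By \cref{LM52233}, this image is a right universal object of $\LMod_M(\M_\m)$ over $D$. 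Since $D$ was arbitrary, $\LMod(\lambda)_\m$ is right representable.

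To identify the duality functor, I will note that under the right fibration $\LMod(\lambda)_\m$, the right universal object constructed above lies over $(\Ddual'_\lambda(D)_{\mathrm{Mod}}, D)$, where $\Ddual'_\lambda(D)_{\mathrm{Mod}} \in \LMod_A(\C_\m)$ denotes the object $\Ddual'_\lambda(D) \in \C_\m$ equipped with the $A$-module structure induced by restriction along the algebra map $A \to \one_{\C_\a}$ underlying $\epsilon$. Thus $\Ddual'_{\LMod(\lambda)_\m}$ factors as the composite
\[
\LMod_B(\D_\m)^\op \simeq \D_\m^\op \xto{\Ddual'_\lambda} \C_\m \to \LMod_A(\C_\m)
\]
stated in the corollary. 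The potential subtlety — checking that this restriction-of-scalars really agrees with the $A$-module structure naturally carried by the right universal lift in $\LMod(\lambda)_\m$ — is a direct consequence of the commuting diagram at the end of the proof of \cref{LM52233} together with functoriality of the equivalence $\LMod_\one \simeq \mathrm{id}$ under the augmentation. I do not expect any serious obstacle beyond carefully unwinding the identifications between $B$-modules and objects of $\D_\m$, and between $\one$-modules and objects of $\M_\m$, in a manner compatible with the ambient $\LM$-monoidal structure.
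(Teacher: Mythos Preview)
Your proposal is correct and follows essentially the same route as the paper: both use \cref{LM52233} to push right universality through the restriction-of-scalars functor, then use triviality of $B$ to conclude. The one place the paper is tighter is in the identification of $\Ddual'_{\LMod(\lambda)}$: rather than tracking right universal lifts pointwise and arguing functoriality by hand, the paper observes that the augmentation induces a \emph{right representable morphism of right representable pairings} and invokes \cite[prop.~5.2.1.17]{HA} to obtain the commuting square of duality functors up to canonical homotopy --- this is exactly the content of the ``potential subtlety'' you flag, so you should cite that proposition rather than appeal to the diagram inside the proof of \cref{LM52233}.
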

\begin{proof}
We begin with the first statement. 
Consider the morphism of pairings given by
\[
\tag{*}
\begin{tikzcd}
\M_\m \ar[d,"\lambda_\m"] \ar[r] & \LMod_M(\M_\m) \ar[d,"\LMod(\lambda)"] \\
\C_\m \times \D_\m \ar[r] & \LMod_A(\C_\m) \times \LMod_B(\D_\m) ,
\end{tikzcd}
\]
where the horizontal maps are induced by the augmentation $\epsilon$. 
Given arbitrary $D \in \D_\m$, there exists a right universal object $Z \in \M_\m$ lying over $D$ by assumption. 
By \cref{LM52233} the image of $Z$ is a right universal object in $\LMod_M(\M_\m)$. 
Since $B\in \Alg(\D_\a)$ is a trivial algebra, the map $\D_\m \to \LMod_B(\D_\m)$ is an equivalence. 
It follows that $\LMod(\lambda)$ is right representable, as stated.

Moreover, it follows that diagram (*) is a right representable morphism of right representable pairings, hence by~\cite[prop. 5.2.1.17]{HA} the diagram
\[
\begin{tikzcd}
\D_\m^{\op} \ar[d] \ar[r,"\Ddual'_\lambda"] & \C_\m \ar[d] \\
\LMod_B(\D_\m)^{\op} \ar[r,"\Ddual'_{\LMod(\lambda)}"] & \LMod_A(\C_\m)
\end{tikzcd}
\]
commutes up to canonical homotopy. 
Since $B\in \Alg(\D_\a)$ is a trivial algebra, the left vertical map is an equivalence, and the proof is complete. 
\end{proof}
% \begin{rem} 
%   %     If the pairing 
%     \[
%       \LMod(\la) \colon \LMod_M(\M_\m) \to \LMod_A(\C_\m) \times \LMod_B(\D_\m)
%     \]
%     of \cref{lm52234} is also left representable, then its associated duality functor
%     \[
%       \Ddual_{\LMod(\la)} \colon \LMod_A(\C_\m) \to \LMod_B(\D_\m)
%     \]
%     is given by the one sided bar construction $\Ddual_{\LMod(\la)}(X') = \Ddual_\la( \Bar(A,X'))$, as its right adjoint is given by restriction along the augmentation.
% 
% \end{rem}
% \begin{rem} 
%   %     Consider the pairing
%     \[
%       \LMod(\la) \colon \LMod_M(\M_\m) \to \LMod_A(\C_\m) \times \LMod_B(\D_\m)
%     \]
%     of \cref{lm52234}.
%     If $\LMod(\la)$ is also left representable then we can identify the (opposite of the) associated duality functor $\Ddual_{\Lmod(\la)}$ as left adjoint to $\Ddual'_{\LMod(\la)}$ of \cref{lm52234},
%     \[
%       \Ddual_{\LMod(\la)} \colon \LMod_A(\C_\m) \to \C_\m \to \xto{\Ddual'_\la} \D_\m \simeq \LMod_B(\D_\m).
%     \]
%     If $\C$ admits realizations of $A$ bar constructions $\otimes$-compatibly, then
%     \[
%       \Ddual_{\LMod(\la)} \colon 
%       X \mapsto \Ddual_\la( \one \otimes_A X)
%     \]
% 
% \end{rem}
\subsection{The \texorpdfstring{$\LM$}{LM}-monoidal twisted arrow category}

%Let $\C_\m$ be an $\infty$-category left-tensored over a monoidal $\infty$-category $\Cc_\a^\otimes$. Then $\TwArr(\C_\m)$ is left tensored over $\TwArr(\C_\a)$. This gives rise to an $\LM$-monoidal category 

Let $\C$ be an $\infty$-category and $\TwArr(\C)$ the twisted arrow category of~\cite[sec. 5.2.1]{HA}.
In order to apply \cref{LM52227} we construct an $\LM$-monoidal structure on the twisted arrow category, extending~\cite[ex. 5.2.2.23]{HA}.
\begin{prop}
\label{lmtw}
Let $\C^\otimes \to \LM^\otimes$ be an $\LM$-monoidal $\infty$-category.
Then there exists an $\LM$-monoidal pairing 
\[ 
{\TwArr(\C)}^\otimes \to \C^\otimes \times_{\LM^\otimes} {(\C^\op)}^\otimes,
\]
where the $\LM$-monoidal category ${\TwArr(\C)}^\otimes$ witnesses $\TwArr(\C_\m)$ as left tensored over the mo\-noi\-dal category $\TwArr(\C_\a)$.
\end{prop}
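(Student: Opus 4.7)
The plan is to adapt Lurie's construction of the monoidal twisted arrow pairing \cite[Example 5.2.2.23]{HA} from the associative to the $\LM$-operadic setting. Given the coCartesian fibration $\pi \colon \C^\otimes \to \LM^\otimes$, the functoriality of $\TwArr$ produces a map $\TwArr(\pi) \colon \TwArr(\C^\otimes) \to \TwArr(\LM^\otimes)$, and we set
\[
{\TwArr(\C)}^\otimes := \TwArr(\C^\otimes) \times_{\TwArr(\LM^\otimes)} \LM^\otimes,
\]
where $\LM^\otimes \to \TwArr(\LM^\otimes)$ is the section $\iota$ sending each object to its identity arrow. The source-and-target projection $\TwArr(\C^\otimes) \to \C^\otimes \times (\C^\op)^\otimes$ then restricts to a map
\[
{\TwArr(\C)}^\otimes \to \C^\otimes \times_{\LM^\otimes} (\C^\op)^\otimes,
\]
since the image of $\iota$ in $\LM^\otimes \times (\LM^\op)^\otimes$ lies in the diagonal.

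I would then verify that ${\TwArr(\C)}^\otimes \to \LM^\otimes$ is a coCartesian fibration satisfying the Segal condition, so that it defines an $\LM$-monoidal category. Passing through straightening, the coCartesian fibration $\pi$ corresponds to a functor $\phi \colon \LM^\otimes \to \Cat_\infty$ satisfying the Segal decomposition along inert morphisms. Postcomposing with $\TwArr$ gives a functor $\TwArr \circ \phi$; since $\TwArr$ preserves finite products (a formal consequence of its universal property as classifying the Hom functor, $\Hom$ itself sending products to products in each variable), $\TwArr \circ \phi$ again satisfies the Segal condition, and its unstraightening recovers ${\TwArr(\C)}^\otimes$ as an $\LM$-monoidal category with underlying $\infty$-categories $\TwArr(\C_\a)$ and $\TwArr(\C_\m)$ over the two colors.

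Next I would verify that the pairing satisfies the conditions of \cref{monpair}. The underlying maps
\[
\TwArr(\C_\a) \to \C_\a \times \C_\a^\op, \qquad \TwArr(\C_\m) \to \C_\m \times \C_\m^\op
\]
are right fibrations by \cite[Proposition 5.2.1.3]{HA}. Categorical fibrancy of the total map reduces fiberwise to the same property, using the Segal-type decomposition on higher fibers to propagate fibrancy up from the colors. The main subtlety I anticipate is ensuring that the morphism is genuinely $\LM$-\emph{monoidal}, i.e.\ carries coCartesian edges over $\LM^\otimes$ to coCartesian edges; this reduces to checking that the natural identification $\TwArr(\Xx \times \Yy) \simeq \TwArr(\Xx) \times \TwArr(\Yy)$ used in the Segal-condition argument is functorial in the correct sense, which can be established either by direct simplicial computation or by invoking the universal property characterizing $\TwArr$. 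On the level of objects, the resulting action sends an arrow $f \colon A \to A'$ in $\C_\a$ and an arrow $g \colon M \to M'$ in $\C_\m$ to the tensor arrow $f \otimes g \colon A \otimes M \to A' \otimes M'$ in $\C_\m$, as expected.
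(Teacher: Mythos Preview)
Your proposal is correct in outline and close in spirit to the paper's argument, but the paper packages things more efficiently. Both approaches rest on the same key fact: the twisted-arrow construction preserves finite products and may therefore be applied to left-action objects in $\Cat_\infty$. You implement this by straightening $\C^\otimes$ to a Segal functor $\LM^\otimes \to \Cat_\infty$, postcomposing with $\TwArr$, unstraightening, and then separately verifying the conditions of \cref{monpair} (categorical fibrancy, right fibrations on fibers, preservation of coCartesian edges). The paper instead invokes the product-preserving functor $\Cat_\infty \to CPair^{\mathrm{perf}}$, $\E \mapsto (\TwArr(\E) \to \E \times \E^\op)$, of \cite[rem.~5.2.1.20]{HA}: applying it directly to the left-action object $(\C_\a, \C_\m)$ yields a left-action object in $CPair^{\mathrm{perf}}$, which is exactly the desired $\LM$-monoidal pairing. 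This bypasses the manual verifications you flag as the ``main subtlety,'' since the pairing conditions are already encoded in the target category $CPair^{\mathrm{perf}}$.

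One minor caution: your explicit pullback model $\TwArr(\C^\otimes) \times_{\TwArr(\LM^\otimes)} \LM^\otimes$ is plausible but you never verify it coincides with the unstraightening of $\TwArr \circ \phi$, nor do you directly establish that it is a coCartesian fibration. Since your straightening argument stands on its own, this is a presentational loose end rather than a logical gap, but you should either supply the comparison or drop the explicit model.
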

\begin{proof}
Identify the $\LM$-monoidal category $\C^\otimes$ with a left action object in the $\infty$-category $\Cat_\infty$, exhibiting the left action of $\C_\a$ on $\C_\m$. 
As in~\cite[ex. 5.2.2.23]{HA}, we can apply the functor 
\[
\Cat_\infty \to CPair^\mathrm{perf}, \quad \E \mapsto \left( \TwArr(\E) \to \E \times \E^\op \right)
\]
of~\cite[rem. 5.2.1.20]{HA} to the underlying $\infty$-categories $\C_\a$ and $\C_\m$.
Following~\cite[prop. 5.2.1.10]{HA}, we see that the pairing $\TwArr(\C_\m) \to \C_\m \times \C_\m^\op$, considered as an object of $CPair^\mathrm{perf}$, admits a left action of $\TwArr(\C_\a) \to \C_\a \times \C_\a^\op$, considered as a monoid object in $CPair^\mathrm{perf}$.
We can now identify this left action object with the desired  $\LM$-monoidal pairing 
\(
{\TwArr(\C)}^\otimes \to \C^\otimes \times_{\LM^\otimes} {(\C^\op)}^\otimes. 
\)
\end{proof}
\begin{rem}
The coCartesian fibration \( {(\C^\op)}^\otimes \to \LM^\otimes \) appearing in the previous proposition can be identified with the ``fiberwise opposite'' of $\C^\otimes \to \LM^\otimes$.
Explicitly, the fiberwise opposite is given by applying the dualizing construction of~\cite{DualizingCarFibs} to the coCartesian fibration $\C^\otimes \to \LM^\otimes$, then taking the opposite functor.
\end{rem}

\subsection{Bar constructions in the twisted arrow category}
% If $\C$ is an $\infty$-category, we denote by $\TwArr(\C)$ the twisted arrow category of~\cite[sec. 5.2.1]{HA}. 
Denote by $s$ and $t$ the \emph{source} and \emph{target} maps
\[
s \colon   \TwArr(\C) \to \C \times \C^\op \to \C, \quad
t \colon \TwArr(\C) \to \C \times \C^\op   \to \C^\op
\]
of the twisted $\infty$-category.

\begin{prop}\label{prop:colimits_in_TwArr}
Let $\C$ be an $\infty$-category, and let $K$ be a weakly contractible simplicial set. 
\begin{enumerate}
\item 
Let  
\[
p \colon K \to \TwArr(\C) 
\]
be a diagram in the twisted arrow category. 
If $sp$ has a colimit in $\C$ and $tp$ is equivalent to a constant diagram, then $p$ admits a colimit in $\TwArr(\C)$.
\item
Let
\[
\overline p: K^\triangleright \to \TwArr(\C)
\]
be a diagram in the twisted arrow category. 
If $s \overline p$ is a colimit diagram and $t \overline p$ is equivalent to a constant diagram, then $\overline p$ is a colimit diagram. 
\end{enumerate}
\end{prop}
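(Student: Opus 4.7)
Plan: The idea is to exhibit the colimit via the right fibration $\lambda: \TwArr(\C) \to \C \times \C^\op$ classifying $\Hom_\C$. Giving a diagram $q: J \to \TwArr(\C)$ is equivalent to giving $sq: J \to \C$, $tq: J \to \C^\op$, and a coherent system of morphisms $s(q(j)) \to t(q(j))$ in $\C$. In the setting of (1), since $tp \simeq \const_Y$, this coherent system assembles into a natural transformation $\alpha: sp \to \const_Y$ in $\Fun(K, \C)$. Writing $L := \colim_K sp$, the universal property of the colimit produces a canonical morphism $\bar{\alpha}: L \to Y$, defining an object $(\bar{\alpha}: L \to Y) \in \TwArr(\C)$ equipped with a natural cocone structure under $p$.

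The key step will be verifying that this cocone is a colimit. For a test object $(\xi: Z \to W) \in \TwArr(\C)$, I compute the mapping space
\[
  \Map_{\TwArr(\C)}\!\bigl((\bar{\alpha}: L \to Y),\ (\xi: Z \to W)\bigr)
\]
as the space of pairs $(a: L \to Z,\ b: W \to Y)$ together with a filler witnessing $b \circ \xi \circ a \simeq \bar{\alpha}$ in $\C$. On the other hand, the space of cocones $\Map_{\Fun(K, \TwArr(\C))}(p,\ \const_{(\xi)})$ decomposes analogously via $\lambda$ into the space of pairs consisting of a natural transformation $sp \to \const_Z$ in $\Fun(K, \C)$---equivalently, by the universal property of $L$, a morphism $L \to Z$---together with a natural transformation $\const_W \to tp$ in $\Fun(K, \C^\op)$---equivalently, using that $tp \simeq \const_Y$ and that $\Map_{\Fun(K, \C)}(\const_W, \const_Y) \simeq \Map_\C(W, Y)$ since $K$ is weakly contractible, a morphism $W \to Y$---together with a compatible filler in $\Fun(K, \C)$, which under the colimit identification corresponds to the filler above. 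The two spaces are thus naturally equivalent, establishing that $(\bar{\alpha}: L \to Y)$ is the colimit.

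For part (2), apply part (1) to $p := \overline{p}|_K$ to obtain a canonical colimit cocone with cone point $(\bar{\alpha}: L \to Y)$. The given cocone $\overline{p}$ has cone point $\overline{p}(\infty) = (\gamma: M \to Y')$ in $\TwArr(\C)$, and induces, by the universal property, a comparison morphism $(\bar{\alpha}: L \to Y) \to (\gamma: M \to Y')$ in $\TwArr(\C)$. The source component of this comparison is the canonical morphism $L \to M$ in $\C$ induced by the cocone $s\overline{p}$---an equivalence by the hypothesis that $s\overline{p}$ is a colimit diagram---and its target component is the canonical morphism $Y' \to Y$ in $\C$ induced by the cocone $t\overline{p}$, which is an equivalence since $t\overline{p}$ is equivalent to a constant diagram. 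Hence the comparison is an equivalence in $\TwArr(\C)$, so $\overline{p}$ is itself a colimit diagram.

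The main obstacle is the rigorous unpacking of cocones and mapping spaces in $\TwArr(\C)$. While the heuristic that ``a diagram in $\TwArr(\C)$ is a pair of diagrams joined by a natural transformation'' is conceptually clear, making this precise in the $\infty$-categorical setting requires care with coherences and depends essentially on the right-fibration structure of $\lambda$ classifying $\Hom_\C$, together with the weak contractibility of $K$ (which collapses the ``constant target'' to the data of a single morphism). A slicker alternative would be to argue via the Cartesian fibration $t: \TwArr(\C) \to \C^\op$ (whose fiber at $Y$ is $\C_{/Y}$): the forgetful functor $\C_{/Y} \to \C$ creates colimits, and a direct Cartesian-lifting argument shows that fiber colimits indexed by weakly contractible diagrams remain colimits in the total space $\TwArr(\C)$.
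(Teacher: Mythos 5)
Your proposal is correct in substance but runs in the opposite direction from the paper's proof, and by a different method. The paper proves (2) first: after normalizing $t\overline p$ to be constant at $E$, the whole diagram lands in the fiber $t^{-1}(E)\simeq \C_{/E}$, where colimits are detected by the projection to $\C$ (HTT 1.2.13.8); one then promotes the fiberwise colimit to a colimit in $\TwArr(\C)$ using the fact that $t$ is a right fibration (so colimit diagrams in the fiber are $t$-colimit diagrams, HTT 4.3.1.16) together with weak contractibility of $K$ (so the constant diagram $t\overline p$ is itself a colimit diagram, HTT 4.4.4.10, and HTT 4.3.1.5(2) applies). Part (1) then follows by lifting the colimit of $sp$ into $\C_{/E}$. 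This is exactly the ``slicker alternative'' you sketch in your last sentence. Your main argument instead proves (1) directly by building the candidate cone point $(\bar\alpha\colon L\to Y)$ and computing mapping spaces via the classifying right fibration $\lambda$, then deduces (2) by a cone-point comparison (which is fine: $\lambda$ is conservative, and both components of the comparison map are equivalences for the reasons you give). The mapping-space route is more self-contained and makes the answer visibly explicit, but the step you yourself flag is genuinely the hard one: it is not enough that $\Map_{\TwArr(\C)}(\bar\alpha,\xi)$ and the cocone space are abstractly equivalent --- you must check that the equivalence is implemented by restriction along the cocone you constructed, and organizing the fillers over $K$ coherently (the fiber of $\Fun(K,\TwArr(\C))\to\Fun(K,\C\times\C^{\op})$ over a pair of transformations is a limit of mapping spaces over $K^{\op}$) is precisely the bookkeeping that the relative-colimit lemmas in HTT are designed to absorb. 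If you intend to keep the direct computation, that naturality check should be written out; otherwise the fiberwise argument is shorter and is what the paper does.
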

\begin{proof}
We begin with part (2). 
Without loss of generality, assume that $t\overline p$  is constant with value $E$. 
We can therefore consider 
\[
\ol p \colon K^\triangleright \to {\TwArr(\C)} \times_{\C^\op} \set{E}.
\]
as a diagram in the fiber of $t$ over $E$.
% $\overline p$ as a diagram in the fiber ${\TwArr(\C)} \times_{\C^\op} \set{E}$ of $t$ over $E$,
% \[
%   \ol p \colon K^\triangleright \to {\TwArr(\C)} \times_{\C^\op} \set{E}.
% \]
By~\cite[prop. 5.2.1.8]{HA} applied to $id_E \in \TwArr(\C)$ and~\cite[prop. 4.4.4.5]{HTT}, ${\TwArr(\C)} \times_{\C^\op} \set{E}$ is equivalent to the slice category $\C_{/E}$.
Therefore, by~\cite[prop. 1.2.13.8]{HTT}, $\ol p$ is a colimit diagram in the fiber ${\TwArr(\C)} \times_{\C^\op} \set{E}$.

By~\cite[prop. 5.2.1.3]{HA}, $t$ is a right fibration. 
So by~\cite[cor. 4.3.1.16]{HTT} $\ol{p}$ is a $t$-colimit diagram in $\TwArr(\C)$. 
Since $K$ is weakly contractible, the constant diagram $t \circ \ol{p}$ is a colimit diagram (see~\cite[cor. 4.4.4.10]{HTT}). 
Therefore $\ol{p}$ is a colimit diagram in $\TwArr(\C)$ by combining~\cite[prop. 4.3.1.5 (2)]{HTT} with~\cite[ex. 4.3.1.3]{HTT}.

We now show (1). 
The projection $s \circ p$ has a colimit in $\C$ by assumption, which by~\cite[prop. 1.2.13.8]{HTT} lifts to 
\[
\ol{p} \colon K^\triangleright \to \TwArr(\C) \times_{\C^\op} \set{E},
\]
a colimit of $p$ in $\C_{/E} \simeq {\TwArr(\C)} \times_{\C^\op} \set{E}$.
By (2), $\ol{p}$ is also a colimit diagram in $\TwArr(\C)$. 
\end{proof}

\begin{cor}\label{prop:Bar_const_in_TwArr}
Let $\C^\otimes \to \LM$ be an $\LM$-monoidal $\infty$-category, and let
\[
M \in \Alg(\TwArr(\C_\aA))
\]
be an algebra lying over $(A,B)\in \Alg(\C_\aA) \times \Alg(\C_\aA^\op)$. 
Assume $B$ is a trivial algebra and $\C^\otimes$ admits realizations of $A$-bar constructions $\otimes$-compatibly (\cref{m24j0} applied to $\C^\otimes$ regarded as a $\BM$-monoidal category as in \cref{LM_to_BM_monoidal_construction}).
Then $\TwArr(\C)^\otimes$ admits realizations of $M$-bar constructions $\otimes$-compatibly.
\end{cor}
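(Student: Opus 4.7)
The plan is to verify both conditions of \cref{m24j0} for $\TwArr(\C)^\otimes$ by projecting the bar simplicial object along the source and target maps
\[
s \colon \TwArr(\C) \to \C, \qquad t \colon \TwArr(\C) \to \C^\op,
\]
and then invoking \cref{prop:colimits_in_TwArr}. Concretely, fix a $(B',M,C')$-module in $\TwArr(\C)^\otimes$ (with the variance of $C'$ as in \cref{m24j0}), and consider the associated bar simplicial object $\Bar_M(X,Y)_\bullet$ in $\TwArr(\C)$. Since $s$ is a (strong) monoidal functor for the $\LM$-monoidal structure of \cref{lmtw}, the composite $s \circ \Bar_M(X,Y)_\bullet$ is canonically equivalent to the $A$-bar construction $\Bar_A(sX, sY)_\bullet$ in $\C$, which admits a geometric realization $\otimes$-compatibly by the hypothesis on $\C$. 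Similarly, $t \circ \Bar_M(X,Y)_\bullet$ is canonically equivalent to $\Bar_B(tX, tY)_\bullet$ in $\C^\op$; but $B$ is trivial, so the algebra structure on $tM \simeq B$ is equivalent to the unit, all the tensor powers $B^{\otimes n}$ are canonically equivalent to $\one$, and all simplicial structure maps of $t\circ\Bar_M(X,Y)_\bullet$ are equivalences. Hence $t \circ \Bar_M(X,Y)_\bullet$ is essentially constant.

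These two facts put us precisely in the situation of \cref{prop:colimits_in_TwArr}(1) with $K = \Delta^\op$ (which is weakly contractible): the source projection has a colimit, and the target projection is constant. Therefore $\Bar_M(X,Y)_\bullet$ admits a colimit in $\TwArr(\C)$, verifying condition~(1) of \cref{m24j0}. Moreover, by part~(2) of the same proposition, the resulting colimit diagram in $\TwArr(\C)$ is detected by $s$, in the sense that $s$ takes the colimit in $\TwArr(\C)$ to the colimit in $\C$ (equivalently, $s$ preserves the realization).

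For the $\otimes$-compatibility condition~(2) of \cref{m24j0}, fix objects $X',Y'$ in the appropriate fibers of $\TwArr(\C) \to \LM$, and consider the comparison map
\[
\bigl| X' \otimes \Bar_M(X,Y)_\bullet \otimes Y' \bigr| \longrightarrow X' \otimes \bigl| \Bar_M(X,Y)_\bullet \bigr| \otimes Y'.
\]
Equivalences in $\TwArr(\C)$ are detected by the pair $(s,t)$ (the pairing $\lambda$ is a right fibration, and hence jointly conservative on each fiber). Under $s$, the comparison becomes the corresponding comparison for $\Bar_A(sX, sY)_\bullet$ in $\C$, which is an equivalence by the $A$-bar $\otimes$-compatibility of $\C$ together with the preservation of both realizations by $s$ discussed above. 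Under $t$, both sides project to essentially constant simplicial objects in $\C^\op$ (since $B$ is trivial), whose geometric realizations agree with the underlying value, and the comparison map is automatically an equivalence there.

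The only step requiring care is the target-side analysis: one must verify that the comparison of realizations on the $t$-side really is an equivalence, which amounts to checking that an essentially constant simplicial object in a category admitting realizations has a realization equivalent to its value and that this is preserved by $X' \otimes - \otimes Y'$; this is a routine consequence of weak contractibility of $\Delta^\op$ as in the proof of \cref{prop:colimits_in_TwArr}(2). With both projections verified, the conservativity of $(s,t)$ yields the required $\otimes$-compatibility in $\TwArr(\C)$, completing the proof.
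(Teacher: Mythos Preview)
Your proposal is correct and follows essentially the same strategy as the paper: project along $s$ and $t$, use triviality of $B$ to get constancy on the $t$-side, use the $A$-bar hypothesis on the $s$-side, and invoke \cref{prop:colimits_in_TwArr}. The only stylistic difference is in verifying condition~(2): the paper applies \cref{prop:colimits_in_TwArr}(2) directly to the tensored augmented diagram $X'\otimes \Bar_M(X,Y)^+_\bullet \otimes Y'$ to see it is a colimit diagram, whereas you phrase the same check as showing the comparison map is an equivalence via conservativity of $(s,t)$; these are equivalent arguments using the same ingredients.
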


\begin{proof}
We regard all $\LM$-monoidal categories as $\BM$-monoidal (\cref{LM_to_BM_monoidal_construction}). 
In order to simplify the ensuing division into cases, we introduce some notation. We define a partial composition law on the set $C=\{0,\mM, 1\}$ of colors of $\BM^\otimes$ by
\[
0^2 = 0
\quad
0 \mM = \mM
\quad
\mM 1 = \mM
\quad
1^2 = 1
\]
and a relation by $i <j$ if the product $ij$ is defined. 

We first verify condition (1) of \cref{m24j0} (existence of realizations). 
Let $(X,Y)$ be an $(L,M,R)$-module in $\TwArr(\Cc)^\otimes$ with $L \in \Alg \TwArr(\Cc_0)$ and $R \in \Alg \TwArr(\Cc_
r)$ ($r = 0$ or $1$). 
There's a unique color $i \in C$ with $0<i<r$. 
Moreover, $Y$ belongs to $\TwArr(\Cc_i)$ and $0i=i$ so that for all $n \ge 0$, $X \otimes M^{\otimes n} \otimes Y$ belongs again to $\TwArr(\Cc_i)$. 
Consider the diagrams
\[
\Delta^\op \xto{{\Bar_M(X,Y)}_\bullet} \TwArr(\Cc_i) \xto{s} \Cc_i,
\quad
\Delta^\op \xto{{\Bar_M(X,Y)}_\bullet} \TwArr(\Cc_i) \xto{t} \Cc_i^\op.
\]
The diagram $t \circ \Bar_M(X,Y)_\bullet$ is equivalent to a constant diagram since $B\in \Alg(\C_\aA^\op)$ is trivial. On the other hand, the diagram $s \circ \Bar_M(X,Y)_\bullet$ admits a colimit in $\Cc_i$ by assumption, since it is equivalent to an $A$-bar construction in $\Cc_i$. Therefore $\Bar_M(X,Y)_\bullet$ has a colimit in $\TwArr(\Cc_i)$ by \cref{prop:colimits_in_TwArr}

Next we verify condition (2) (realizations of $M$-bar constructions are compatible with tensor products). We continue to work with an $(L,M,R)$-module $(X,Y)$ with $R \in \Cc_r$ and $Y \in \Cc_i$ as above. Let $j>i$ and fix $X' \in \TwArr(\C_\aA)$, $Y' \in \TwArr(\C_j)$ so that the tensor product $Y \otimes Y'$ is defined and lies in $\Cc_{ij}$. Let 
\[
{\Bar_M(X,Y)}_\bullet^+ 
\colon 
\Delta^\op_+ 
\to 
\TwArr(C_i)
\]
be a colimit diagram extending the bar construction $\Bar_M(X,Y)_\bullet$. Consider the diagram 
\[
\ol p \colon 
\Delta^\op_+ \xto{{\Bar_M(X,Y)}_\bullet^+} \TwArr(\C_i) 
\xto{X' \otimes - \otimes Y'} 
\TwArr(C_{ij})
\] 
defined by applying the functor $X' \otimes - \otimes Y'$ to  ${\Bar_M(X,Y)}_\bullet^+$.
We claim that $\ol p$ is a colimit diagram. Since $B$ is a trivial algebra, the diagram 
\(
{\Bar_B(t(X),t(Y))}_\bullet^+
\)
is equivalent to a constant diagram with value $t(X) \otimes t(Y)$, hence $t \circ \ol{p}$ is equivalent to the constant diagram with value $t(X') \otimes t(X) \otimes t(Y) \otimes t(Y')$. Since realizations of $A$-bar constructions in $\C$ are by assumption $\otimes$-compatible, $s \circ \ol p$ is a colimit diagram. By \ref{prop:colimits_in_TwArr}(2), $\ol p$ is a colimit diagram, as desired.
\end{proof}

In the proof of \cref{BarCobarAdjunction} below, we compare the Koszul dual $\Ddual_\mathrm{Ksz}(A,X)$ with the relative tensor product $\one \otimes_A X$ by showing that (after forgetting the comodule structure on the former) both fulfill the same universal mapping property in $\C_\mM$. For this purpose, following \cite[Def. 5.2.2.1]{HA}, we define the ``bar object'' $\Bar(A,X)$ purely in terms of the universal mapping property. As noted by Lurie, the bar object can exist even in the absence of relative tensor products. 

\begin{mydef}\label{DefBar}
Let $\C^\otimes$ be an $\LM$-monoidal category with unit object $\one \in \C_\a$ and let
\[
\ep: A \to \one
\]
be a morphism in $\Alg(\C_\a)$. Then there are induced functors 
\[
\rho^\ep_\a: \C_\a \simeq
\BMod{\one}{\one}(\C_\a)
\to
\BMod{A}{A}(\C_\a),
\quad
\rho^\ep_\m:
\C_\m \simeq
\LMod_\one(\C_\m) \to \LMod_A(\C_\m).
\]
Let $C\in \C_\a$.
A morphism $f: A \to \rho^\ep_\a(C)$ in $\BMod{A}{A}(\C_\a)$ is said to \emph{exhibit $C$ as the bar object on $A\to \one$} (or simply \emph{on $A$}) if, for every object $D \in \C_\a$, composition with $f$ induces a homotopy equivalence 
\[
\Map_{\C_\a}(C,D) \xto{\sim} 
\Map_{\BMod{A}{A}(\C_\a)}(A, \rho^\ep_\a(D)).
\]
When such an object $C$ exists it is uniquely determined up to equivalence and denoted by $\Bar(A)$.

Similarly, if $X \in \LMod_A(\C_\m)$ and $Y\in \C_\m$,
we say that a morphism $g: X \to \rho^\ep_\m(Y)$ in $\LMod_A(\C_\m)$ \emph{exhibits $Y$ as the bar object on $(A,X)$} if, for every object $Z \in \C_\m$, composition with $g$ induces a homotopy equivalence 
\[
\Map_{\C_\m}(Y,Z) \xto{\sim} 
\Map_{\LMod_{A}(\C_\m)}(X, \rho^\ep_\m(Z)).
\]
When such a $Y$ exists it is uniquely determined up to equivalence and we denote it by $\Bar(A,X)$.
\end{mydef}

\begin{rem}
\label{Bar_def_as_Bar_construction}
Let $\C^\otimes$ be an  $\LM$-monoidal $\infty$-category which admits realizations of bar constructions $\otimes$-compatibly. 
It follows from~\cite[prop. 4.6.2.17]{HA} that if $\ep:A \to \one$ is an augmented associative algebra object of $\C_-$, then the forgetful functor 
\[
\rho^\ep_\m: \C_\mM \simeq \LMod_\one(\C)
\to 
\LMod_A(\C)
\]
admits a left adjoint, given by the relative tensor product
\[
X \mapsto \one \otimes_A X.
\]
It follows that the \textit{bar object} $\Bar(A,X)$ exists and is given by the relative tensor product 
\[
\Bar(A,X) = \one \otimes_A X.
\]
\end{rem}

The next theorem mirrors~\cite[thm. 5.2.2.17]{HA}.

\begin{thm} 
\label{BarCobarAdjunction}
Let $\C^\otimes \to \LM^\otimes$ be an $\LM$-monoidal $\infty$-category, and let
\[
{\lambda: \TwArr(\C)}^\otimes \to \C^\otimes \times_{\LM^\otimes} {(\C^\op)}^\otimes
\]
be the associated $\LM$-monoidal pairing of \cref{lmtw}.
\begin{enumerate}
\item
The induced map
\[
\LMod(\la) \colon
\LMod(\TwArr(\C)) \to
\LMod(\C) \times \LMod(\C^\op)
\]
is a pairing of $\infty$-categories. 
\item
Assume that the unit object $\one \in \C_\aA$ is final and that $\C_\aA, \C_\mM$ admit geometric realizations of simplicial objects. 
Assume, moreover, that $\C^\otimes$ admits realizations of bar constructions $\otimes$-compatibly (\cref{m24j0}). 
Then the pairing $\LMod(\la)$ is left representable, and therefore determines a functor 
\[
\Ddual^\op_\mathrm{Ksz}:=\Ddual_{\LMod(\la)}: {\LMod(\C)}^\op \to \LMod(\C^\op).
\]
Moreover, the composition with the forgetful functor 
\[
{\LMod(\C)}^\op \xto{ \Ddual^\op_\mathrm{Ksz} } \LMod(\C^\op) \to
\C^\op_\a \times \C^\op_\m 
\]
is given by the bar objects
\[
(A,X) \mapsto (\Bar(A), \Bar(A,X))
\]
as defined in \cref{DefBar}.
\item
Dually, assume that the unit object $\one \in \C_\aA$ is initial and that $\C$ admits totalizations of cosimplicial objects. 
Assume, moreover, that  $\C$ admits totalizations of cobar constructions $\otimes$-compatibly (\cref{m24j0}).
Then the pairing $\la$ is right representable, and therefore determines a functor 
\[
\Ddual^{'\op}_\mathrm{Ksz}:=\Ddual'_{\LMod(\la)}: {\LMod(\C^{\op})}^{\op} \to \LMod(\C).
\]
Moreover, the composition with the forgetful functor
\[
{\LMod(\C^{\op})}^{\op} \xto{ \Ddual^{'\op}_\mathrm{Ksz} } \LMod(\C) \to
\C_\a \times \C_\m  
\]
is given by
\[
(C,Y) \mapsto  ( \opnm{CoBar}(Y), \opnm{CoBar}(C,Y) ),
\]
where $\opnm{CoBar}(Y)$ and $\opnm{CoBar}(C,Y)$ denote  bar objects in $\C^{\op}$.
\end{enumerate}
\end{thm}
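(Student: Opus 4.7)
Part (1) should follow immediately from the general fact, already recorded in \cref{DualityFromPairing} via~\cite[rem. 5.2.2.26]{HA}, that any $\LM$-monoidal pairing induces a pairing of $\infty$-categories on $\LM$-algebra objects; nothing specific to the twisted arrow category is needed. So the plan is to focus on part (2), and then deduce part (3) by applying part (2) to the opposite $\LM$-monoidal category $(\C^\op)^\otimes$, using that $\C^\op$ admits realizations of bar constructions $\otimes$-compatibly if and only if $\C$ admits totalizations of cobar constructions $\otimes$-compatibly.

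For part (2), the main step is to verify the four hypotheses of \cref{LM52227} applied to the $\LM$-monoidal pairing $\la^\otimes \colon \TwArr(\C)^\otimes \to \C^\otimes \times_{\LM^\otimes} (\C^\op)^\otimes$ of \cref{lmtw}. Hypothesis (1): the unit object of $(\C^\op)_\a = \C_\a^\op$ is $\one$, which is initial in $\C_\a^\op$ (because terminal in $\C_\a$); by the classical identification $\TwArr(\E) \times_{\E^\op} \{Y\} \simeq \E_{/Y}$ combined with the contractibility of $\E_{/Y}$ when $Y$ is terminal in $\E$, the fiber $\TwArr(\C_\a) \times_{\C_\a^\op} \{\one\} \to \C_\a$ is a trivial fibration, hence a categorical equivalence. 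Hypothesis (2): both $\la_\a$ and $\la_\m$ are right fibrations classifying the mapping space functor, and standard results on twisted arrow categories show they are both left and right representable (with left and right universal lifts given by identity morphisms). Hypothesis (3): since $\C_\a$ and $\C_\m$ admit geometric realizations of simplicial objects, their opposites admit totalizations of cosimplicial objects. Hypothesis (4): for $M \in \Alg(\TwArr(\C_\a))$ lying over $(A, \one)$ with $\one$ trivial, $\C$ admits realizations of $A$-bar constructions $\otimes$-compatibly by hypothesis, and $\TwArr(\C)^\otimes$ admits realizations of $M$-bar constructions $\otimes$-compatibly by \cref{prop:Bar_const_in_TwArr}. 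Thus \cref{LM52227} produces the functor $\Ddual_\mathrm{Koszul}^\op = \Ddual_{\LMod(\la)}$.

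The core step — and the main obstacle — will be identifying the underlying object of $\Ddual_\mathrm{Koszul}(A,X)$ with the pair $(\Bar(A), \Bar(A,X))$. The plan is to follow the strategy of \S\ref{comprev}. Given $(A,X) \in \LMod(\C)$, lift $A$ to $M \in \Alg(\TwArr(\C_\a))$ classifying the unique (since $\one$ is terminal) augmentation $\ep \colon A \to \one$. Passing through $\BLMod_M(\TwArr(\C))^\otimes$ and applying \cref{LM52228}, a left universal lift $Z$ of $X$ in $\LMod(\TwArr(\C))$ is obtained from left universal lifts of $(A,X)$ at the level of bimodules. To recognise these objects, exploit that the pairing is also \emph{right} representable (condition needed to apply \cref{lm52234}): the augmentation $M \to \one$ and \cref{lm52234} together identify the right duality functor
\[
\Ddual'_{\LMod(\la_M),\m} \colon \C_\m^\op \to \LMod_A(\C_\m)
\]
with the restriction-of-scalars functor $\rho^\ep_\m$ of \cref{DefBar}, and similarly for the bimodule part. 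Since $\lambda_M$ is both left and right representable, the duality functors form an adjunction $\Ddual_{\lambda_M}^\op \dashv \Ddual'_{\lambda_M}$, so $\Ddual_{\LMod(\lambda_M),\m}(X)$ is characterised as the value at $X$ of the left adjoint to $\rho^\ep_\m$; by the very definition of bar objects this value is exactly $\Bar(A,X)$. An identical argument on the algebra component produces $\Bar(A)$. Combining these identifications with the naturality of the forgetful functor $\LMod(\C^\op) \to \C_\a^\op \times \C_\m^\op$ gives the claimed formula $(A,X) \mapsto (\Bar(A), \Bar(A,X))$.

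The subtle point I expect to require real care is the verification that the right universal objects produced by \cref{lm52234} in the bimodule setting descend to right universal objects in $\LMod(\TwArr(\C))$ after passing through the fibrant replacement $\widetilde{\BLMod_M(\TwArr(\C))}$ from \cref{BLMod_construction}, so that the adjunction $\Ddual_{\lambda_M}^\op \dashv \Ddual'_{\lambda_M}$ really identifies the left-universal lift with the object corepresenting $\rho^\ep_\m$. This is a diagram chase analogous to, but more delicate than, the ones carried out in \cref{LM52230,lm52234}, and is where the bulk of the bookkeeping will go.
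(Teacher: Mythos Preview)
Your proposal is correct and follows essentially the same route as the paper: part (1) via~\cite[rem.~5.2.2.26]{HA}, part (2) by verifying the four hypotheses of \cref{LM52227} and then identifying the underlying objects through the adjunction $\Ddual_{(\lambda_M)_\m}^\op \dashv \Ddual'_{(\lambda_M)_\m}$ furnished by \cref{LM52240} and \cref{lm52234}, and part (3) by symmetry.

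The one place you overcomplicate matters is your final paragraph. No delicate ``descent through fibrant replacement'' is needed. The point is simply that in the proof of \cref{LM52227} the left universal lift of $(A,X)$ is produced via \cref{LM52228}, and \cref{LM52228}(2) says precisely that an object of $\LMod(\BLMod_M(\TwArr(\C)))$ is left universal if and only if its $\a$- and $\m$-components are left universal for $(\lambda_M)_\a$ and $(\lambda_M)_\m$. Hence the underlying $(\C_\a^\op \times \C_\m^\op)$-object of $\Ddual_{\LMod(\la)}(A,X)$ is automatically $(\Ddual_{(\lambda_M)_\a}(A),\Ddual_{(\lambda_M)_\m}(X))$, and your adjunction argument then finishes the identification with $(\Bar(A),\Bar(A,X))$. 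The fibrant replacement $\widetilde{\BLMod_M}$ is an equivalence of $\LM$-monoidal categories and plays no role in this identification; the paper simply cites ``the construction carried out in the proof of \cref{LM52227}'' and moves on.
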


\begin{proof}[Proof of \cref{BarCobarAdjunction}]
(1) is a special case of~\cite[rem. 5.2.2.26]{HA}. 
We turn to (2). 
Let $\D = \C^\op$. We claim that the $\LM$-monoidal pairing 
\[
\la^\otimes \colon
{\TwArr(\C)}^\otimes \to
\C^\otimes \times_{\LM} {\D}^\otimes
\]
satisfies the hypotheses of \cref{LM52227}:
\begin{description}
\item[\ref{LM52227}(1)] 
The map \( \M_\aA \times_{\D_\aA} \set{\one} \to \C_\aA \) is an equivalence since $\one \in \C_\aA$ is terminal by assumption.
\item[\ref{LM52227}(2)] 
The underlying pairings $\la_\aA$, $\la_\mM$ are left representable by~\cite[prop. 5.2.1.10]{HA}.
\item[\ref{LM52227}(3)] 
The $\infty$-categories $\D_\aA, \D_\mM$ admit totalizations by assumption. 
\item[\ref{LM52227}(4)] 
Let $B \in \Alg(\D_\aA)$ be a trivial algebra and $M \in \Alg(\M_\aA)$ an algebra lying over $(A,B) \in \Alg(\C_\aA) \times \Alg(\D_\aA)$. 
By assumption, $\C$ admits realizations of $A$-bar constructions $\otimes$-compatibly. 
Since $B\in \Alg(\C_\aA^\op)$ is a trivial algebra, \cref{prop:colimits_in_TwArr} implies that $\TwArr(\C)$ admits realizations of $M$-bar constructions $\otimes$-compatibly. 
\end{description}
We thus have a duality functor $\Ddual_{\LMod \la}$ as above.

Let $(A,X)$ be a left module in $\C$ and let
\[
(B,Y) = \Ddual_{\LMod \la}(A,X) \in \LMod(\C^{\op}).
\]
By~\cite[thm. 5.2.2.17 (2)]{HA}, the image of $B$ in $\C_\a$ is equivalent to $\Bar(A)$. We must show that the image of $Y$ in $\C_\m$ is equivalent to $\Bar(A,X)$.
Since the unit object $\one \in \C_\a$ is final, there exists an essentially unique augmentation
\[
\ep:A \to \one.
\]
We can then identify $\ep$ with an algebra object of the monoidal $\infty$-subcategory
\[
{(\C_\a)}_{/\one} \simeq 
\TwArr(\C_\a) \times_{\C_\a^{\op}} \set{ \one }
\subset \TwArr(\C_\a).
\]
Let $M$ denote the associated algebra object in $\TwArr(\C_\a)$:
\[
\TwArr(\C_\a) \to \C_\a \times \C_\a^\op, \quad M \mapsto (A, \one).
\]
By \cref{LM52240} and \cref{lm52234}, the induced pairing $\LMod_M(\la)$:
\[
\LMod_M(\TwArr(\C_\m)) \to
\LMod_A(\C_\m) \times
\LMod_\one(\C^{\op}_\m) 
\simeq
\LMod_A(\C_\m) \times
\C^{\op}_\m 
\]
is both left and right representable, hence induces adjoint duality functors
\[
\Ddual^\op_{\LMod_M (\la)}:
\LMod_A(\C_\m) \leftrightarrows
{(\C_\m^{\op})}^{\op} \simeq \C_\m
:\Ddual'_{\LMod_M (\la)}
\]
as in \cref{DualityFromPairing}.
By \cref{lm52234}, $\Ddual'_{\LMod_M (\la)}$ is the functor induced by the augmentation of $A$. 
It follows that its left adjoint $\Ddual_{\LMod_M (\la)}$ is given by
\[
X' \mapsto  \Bar(A, X').
\]
By the construction of the $\LM$-Koszul duality functor $\Ddual_{\LMod(\la)}$ (outlined in the introduction and carried out in the proof of \cref{LM52227}), as an object of the $\infty$-category $\C^{\op}_\a \times \C_\m^{\op}$, we can identify $(B,Y)$ with 
\[
\big(\Ddual_{\BMod{M}{M}(\la)}(A), 
\Ddual_{\LMod_M(\la)}(X) \big), 
\]
hence with $(\Bar(A), \Bar(A,X))$ as claimed. 
Statement (3) follows by symmetry. 
\end{proof}

\subsection{Dispensing with the assumption that the unit object is terminal}

\begin{construction}
\label{cons:C_1}
Let $\C^\otimes$ be an $\LM$-monoidal category witnessing the $\infty$-category $\C_\m$ as left tensored over the monoidal $\infty$-category $\C_\a$, and let $\one \in \C_\a$ denote a unit object. We construct a new $\LM$-monoidal category $\C_{/\one}^\otimes$ which witnesses $\C_\m$ as left tensored over the over-category $\C_{/\one}^\otimes$ with its induced monoidal structure. Recall that $(\C_\a)_{/\one}^\otimes$ comes together with monoidal functor 
\[
\epsilon:(\C_\a)_{/\one} \to \C_\a
\]
\[
(X \to \one) \mapsto X.
\]
We may regard $\C^\otimes$ as an object of the $\infty$-category $\LMod(\Cat_\infty)$ of left modules in the $\infty$-category $\Cat_\infty$ of $\infty$-categories (with its cartesian symmetric monoidal structure), and we may regard $\ep$ as a morphism in the $\infty$-category $\Alg(\Cat_\infty)$ of algebras in $\Cat_\infty$. The Cartesian fibration
\[
\LMod(\Cat_\infty) \to \Alg(\Cat_\infty)
\]
\[
(\E_\a, \E_\m) \mapsto \E_\a
\]
gives rise to a pullback functor
\[
\epsilon^*: 
\LMod_{{(\C_\a)}_{/\one}}(\Cat_\infty)
\from
\LMod_{\C_\a}(\Cat_\infty).
\]
We let
\[
\Cc_{/\one}^\otimes := \ep^*\C^\otimes.
\]
We denote the $\infty$-category of $\LM$-algebras in $\C^\otimes_{/ \one}$ by  
\[
\LMod^\mathrm{aug}(\C): = \LMod( \C_{/\one}).
\]
Evidently, the unit object in ${(\C_\a)}_{/\one}$ is terminal.

Similarly, applying the construction above to (the opposite of) the symmetric monoidal functor  
\[
{(\C_\a)}_{\one // \one} :=  {\left( {(\C_\a)}_{/\one} \right)}_{\one /}  \to {(\C_\a)}_{/\one} 
\]
constructs an $\LM$-monoidal category $\C^\otimes_{\one / /\one}$ which exhibits $\C_\m$ as tensored over $\C_{ \one / / \one}$. We remark that
\[
\LMod(\C_{\one//\one}) \simeq \LMod^\mathrm{aug}(\C).
\]

\begin{mydef}
Define $\opnm{coLMod}^\mathrm{aug}(\C) := {\LMod^\mathrm{aug}(\C^{\op})}^{\op}$.
\end{mydef}
Informally, one can describe an object of $\operatorname{coLMod}^\mathrm{aug}(\C)$ 
as a pair $(C,Y)$ of an augmented coalgebra $C$ in $\C_\a$ and a left comodule $Y$ over $C$. 
\end{construction}

\begin{cor}
\label{BarCobarAdjunction-XXXversion}
Let $\C^\otimes \to \LM^\otimes$ be an $\LM$-monoidal $\infty$-category. There is an associated $\LM$-monoidal pairing
\[
\la\colon {\TwArr(\C_{\one//\one})}^\otimes 
\to
\C^\otimes_{\one//\one} 
\times_{\LM^\otimes} 
{(\C_{\one//\one}^\op)}^\otimes
\]
and an induced pairing of $\infty$-categories
\[
\LMod(\la) \colon
\LMod(\TwArr(\C_{\one//\one})) \to
\LMod(\C_{\one//\one}) \times \LMod(\C^\op_{\one//\one}).
\]
If $\Cc_\aA$, $\Cc_\mM$ admit geometric realizations of simplicial objects and $\C^\otimes$ admits geometric realizations of bar constructions $\otimes$-compatibly, then the pairing $\LMod(\la)$ is left representable, and therefore determines a functor 
\[
\Ddual_\mathrm{Ksz} =    \Ddual_{\LMod(\la)}^\op \colon \LMod^\mathrm{aug}(\C) \to \opnm{coLMod}^\mathrm{aug}(\C) ,
\]
whose composition with the forgetful functor 
\[
\opnm{coLMod}^\mathrm{aug}(\C) \to
\C_\a \times \C_\m
\]
is given by
\[
(A,X) \mapsto (\one \otimes_A \one, \one \otimes_A X).
\]

If $\C_\a, \C_\m$ admit both realizations of simplicial objects and totalization of cosimplicial objects, and if $\Cc^\otimes$ admits realizations of bar constructions and totalizations of cobar constructions $\otimes$-compatibly, then the induced duality functors form an adjunction
\[
\Ddual_\mathrm{Ksz} \colon \LMod^\mathrm{aug}(\C) \adj \operatorname{coLMod}^\mathrm{aug}(\C) \noloc \Ddual'_\mathrm{Ksz}.
\]
\end{cor}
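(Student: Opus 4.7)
The plan is to reduce to \cref{BarCobarAdjunction} by passing from $\C^\otimes$ to the auxiliary $\LM$-monoidal category $\C^\otimes_{/\one} \to \LM^\otimes$ of \cref{cons:C_1}, in which the unit object of $(\C_{/\one})_\aA = (\C_\aA)_{/\one}$ is terminal (the identity $\one \to \one$). First, I would unpack the definitions to observe that $\LMod(\C_{/\one}) \simeq \LMod^{\mathrm{aug}}(\C)$ tautologically, and that the associated $\LM$-monoidal pairing
\[
  \la' \colon \TwArr(\C_{/\one})^\otimes \to
  \C_{/\one}^\otimes \times_{\LM^\otimes} (\C_{/\one}^\op)^\otimes
\]
satisfies $\LMod((\C_{/\one})^\op)^\op \simeq \opnm{coLMod}^{\mathrm{aug}}(\C)$, so that \cref{BarCobarAdjunction} applied to $\C_{/\one}^\otimes$ produces a functor landing in the desired category.

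Next, I would verify that $\C_{/\one}^\otimes$ admits realizations of bar constructions $\otimes$-compatibly. This should follow from the construction of $\C_{/\one}^\otimes$ as a pullback along $(\C_\aA)^\otimes_{/\one} \to \C_\aA^\otimes$: the $\aA$-component of a bar construction in $\C_{/\one}$ is computed in $(\C_\aA)_{/\one}$, whose colimits and tensor products are created by the forgetful functor to $\C_\aA$ (slice categories under the terminal cofiber preserve colimits, and the monoidal structure on $(\C_\aA)_{/\one}$ is the one induced from $\C_\aA^\otimes$ via base change), while the $\mM$-component is literally computed in $\C_\mM$. Hypothesis on $\C^\otimes$ then transfers verbatim.

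With this in place, applying \cref{BarCobarAdjunction}(2) to $\C_{/\one}^\otimes$ yields left representability of $\LMod(\la')$ and hence the functor
\[
  \Ddual_{\mathrm{Koszul}} = \Ddual_{\LMod(\la')}^\op \colon
  \LMod^{\mathrm{aug}}(\C) \to \opnm{coLMod}^{\mathrm{aug}}(\C),
\]
whose image in $\C_\aA \times \C_\mM$ under the forgetful functor sends $(A \to \one, X)$ to the pair of bar objects $(\Bar(A \to \one), \Bar(A \to \one, X))$ of \cref{DefBar}. By \cref{Bar_def_as_Bar_construction} (applicable by the compatibility hypothesis), these bar objects are represented by the relative tensor products $\one \otimes_A \one$ and $\one \otimes_A X$ respectively, yielding the stated formula.

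For the final statement, under the additional hypothesis on cobar constructions, one applies \cref{BarCobarAdjunction}(3) to $\C_{/\one}^\otimes$ — after checking that totalizations of cobar constructions transfer to $\C_{/\one}^\otimes$ by a dual argument — to produce a right duality functor $\Ddual'_{\mathrm{Koszul}}$, and then invokes the formal fact from \cref{DualityFromPairing} that left and right duality functors associated to a pairing which is both left and right representable form an adjunction $\Ddual_{\mathrm{Koszul}}^\op \dashv \Ddual'_{\mathrm{Koszul}}$, which rewritten gives the adjunction as stated. The main obstacle I anticipate is the first bookkeeping step: verifying that the $\LM$-monoidal pairing on $\TwArr(\C_{/\one})^\otimes$ really produces $\opnm{coLMod}^{\mathrm{aug}}(\C)$ as the target, since this requires tracing the pullback construction of $\C_{/\one}^\otimes$ through the twisted arrow construction of \cref{lmtw} and checking compatibility with taking opposites; everything else is a direct invocation of already-proved results.
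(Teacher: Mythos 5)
Your proposal is correct and follows essentially the same route as the paper, whose proof of this corollary is exactly the one-line combination you flesh out: pass to the $\LM$-monoidal category $\C^\otimes_{/\one}$ of \cref{cons:C_1} (where the unit is terminal), apply \cref{BarCobarAdjunction}, and identify the resulting bar objects with the relative tensor products via \cref{Bar_def_as_Bar_construction}. The bookkeeping point you flag at the end — matching $\LMod((\C_{/\one})^\op)^\op$ with $\opnm{coLMod}^{\mathrm{aug}}(\C) = \LMod((\C^\op)_{/\one})^\op$ — is indeed the only nontrivial identification, and it goes through because the unit is terminal in $(\C_\aA)_{/\one}$ (so coalgebras there are automatically augmented), exactly as in Lurie's treatment of the algebra case.
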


\begin{proof}
If $\C_\a$, $\C_\m$ admit realizations of simplicial objects and $\C^\otimes$ admits geometric realizations of bar constructions $\otimes$-compatibly, then the same holds with $\C_\a$ replaced by $(\C_\a)_{\one//\one}$ and $\C^\otimes$ replaced by $\C^\otimes_{\one//\one}$. A similar remark applies to totalizations of cosimplicial objects / cobar constructions. The theorem follows by combining \cref{BarCobarAdjunction} and \cref{Bar_def_as_Bar_construction}.
\end{proof}

% \addcontentsline{toc}{section}{References}
\bibliographystyle{plain}
\bibliography{references}

\vfill

\Small\textsc{
ID: Department of Mathematics, Ben-Gurion University of the Negev.
}

\texttt{ishaida@bgu.ac.il}

\Small\textsc{
AH: Work carried out at Ben Gurion University of the Negev and at Stockholm University. 
}

\texttt{asafklm@gmail.com}

\end{document}